\def\bse{\begin{subequations}}
\def\ese{\end{subequations}}
\def\eq{\begin{equation}}
\def\endeq{\end{equation}}
\def\bbm{\begin{bmatrix}}
\def\ebm{\end{bmatrix}}
\def\bpm{\begin{pmatrix}}
\def\epm{\end{pmatrix}}
\def\bvm{\begin{vmatrix}}
\def\evm{\end{vmatrix}}
\def\Real{\mathbb{R}}
\def\Complex{\mathbb{C}}
\def\A{\mathbf{A}}
\def\C{\mathbf{C}}
\def\E{\mathbf{E}}
\def\F{\mathbf{F}}
\def\G{\mathbf{G}}
\def\I{\mathbb{I}}
\def\K{\mathbf{K}}
\def\M{\mathbf{M}}
\def\N{\mathbf{N}}
\def\O{\mathcal{O}}
\def\P{\mathbf{P}}
\def\Q{\mathbf{Q}}
\def\S{\mathcal{S}}
\def\bS{\mathbf{S}}
\def\V{\mathbf{V}}
\def\W{\mathbf{W}}
\def\bphi{\boldsymbol{\phi}}
\def\brho{\boldsymbol{\rho}}
\def\tr{{\mathrm{tr}}}
\def\b#1{\overline{#1}}
\def\dbar{\overline\partial}
\def\sech{\mathrm{sech}}
\def\Xint#1{\mathchoice
   {\XXint\displaystyle\textstyle{#1}}%
   {\XXint\textstyle\scriptstyle{#1}}%
   {\XXint\scriptstyle\scriptscriptstyle{#1}}%
   {\XXint\scriptscriptstyle\scriptscriptstyle{#1}}%
   \!\int}
\def\XXint#1#2#3{{\setbox0=\hbox{$#1{#2#3}{\int}$}
     \vcenter{\hbox{$#2#3$}}\kern-.5\wd0}}
\def\dashint{\Xint-}
\newtheorem{rhp}{Riemann-Hilbert Problem}
\newtheorem{rhdp}{Riemann-Hilbert-$\b{\partial}$ Problem}
\newtheorem{dbp}{$\b{\partial}$ Problem}
\newtheorem{theorem}{Theorem}
\newtheorem{corollary}{Corollary}
\newtheorem{lemma}{Lemma}
\theoremstyle{definition}
\newtheorem{definition}{Definition}
\newtheorem{assumption}{Assumption}
\newtheorem{remark}{Remark}
\numberwithin{equation}{section}
\newcommand{\ee}{\mathrm{e}}
\newcommand{\ii}{\mathrm{i}}
\newcommand{\dd}{\,\mathrm{d}}
\title[Sharp-line Maxwell-Bloch System]{On the Maxwell-Bloch System in the Sharp-Line Limit Without Solitons}
\author{Sitai Li}
\address[S. Li]{Department of Mathematics\\ University of Michigan\\East Hall\\ 530 Church St.\\ Ann Arbor, MI 48109.}
\email{sitaili@umich.edu}
\author{Peter D. Miller}
\address[P. D. Miller]{Department of Mathematics\\University of Michigan\\East Hall\\ 530 Church St.\\Ann Arbor, MI 48109.}
\email{millerpd@umich.edu}
\urladdr{http://www.math.lsa.umich.edu/~millerpd/}
\thanks{The second author was partly supported by the National Science Foundation under grant number DMS-1812625.}
\dedicatory{For Hermann Flaschka and Charlie Doering, pioneers of nonlinear science who left us too soon.}
\begin{document}

\begin{abstract}
We study the (characteristic) Cauchy problem for the Maxwell-Bloch equations of light-matter interaction via asymptotics,
under assumptions that prevent the generation of solitons.
Our analysis clarifies some features of the sense in which physically-motivated initial/boundary conditions are satisfied.
In particular,
we present a proper Riemann-Hilbert problem that generates the unique \textit{causal} solution to the Cauchy problem,
that is, the solution vanishes outside of the light cone.
Inside the light cone,
we relate the leading-order asymptotics to self-similar solutions that satisfy a system of ordinary differential equations related to the Painlev\'e-III (PIII) equation.
We identify these solutions and show that they are related to a family of PIII solutions recently discovered in connection with several limiting processes involving the focusing nonlinear Schr\"odinger equation.
We fully explain a resulting boundary layer phenomenon in which,
even for smooth initial data (an incident pulse),
the solution makes a sudden transition over an infinitesimally small propagation distance.
At a formal level,
this phenomenon has been described by other authors in terms of the PIII self-similar solutions.
We make this observation precise and for the first time we relate the PIII self-similar solutions to the Cauchy problem.
Our analysis of the asymptotic behavior satisfied by the optical field and medium density matrix reveals slow decay of the optical field in one direction that is actually inconsistent with the simplest version of scattering theory.
Our results identify a precise generic condition on an optical pulse incident on an initially-unstable medium sufficient for the pulse to stimulate the decay of the medium to its stable state.
\end{abstract}
\maketitle

\tableofcontents

\section{Introduction}
\label{s:intro}
\subsection{The Maxwell-Bloch equations and their self-similar solutions}
The scalar Maxwell-Bloch system of equations (MBEs),
first derived in 1965~\cite{ab1965},
describes light-matter interactions in a two-level active medium,
and is completely integrable in certain limits~\cite{ae1975,as1981}.
The system has attracted great interest since then,
due to its important role in the successful explanation of self-induced transparency~\cite{mh1967,mh1969,mh1970}
and the closely-related phenomenon of superfluorescence~\cite{z1980,gzm1983,gzm1984,gzm1985}.

The Cauchy problem of the system models the injection of a known incident optical pulse through a boundary point $z=0$ into a finite or semi-infinitely long medium with a known initial state.
The modeling assumes that atoms in the medium have two states: a ground state and an excited state.
Macroscopically, the medium can be initially in a pure ground state (the \emph{initially-stable case}), a pure excited state (the \emph{initially-unstable case}) or a mixed state.
Assuming the incident pulse vanishes in the distant past and future,
the Cauchy problem of an initially-stable medium was among the first few systems analyzed by the inverse scattering transform (IST) in the 1970's~\cite{akn1974},
and the initially-unstable case was studied via IST a few years later~\cite{gzm1985}.
There are many recent works formulating ISTs for the MBE and other related systems,
and using the transforms to study the behavior of solutions; a sampling of these works (not intended to be exhaustive) includes~\cite{fkm2017,physrep1990,bgk2003,CPA2014,gk2,gk1,msr1988,rt1993}.
The case of a medium initially in a mixed state requires a compatible nonvanishing optical pulse in the distant past ($t\to -\infty$).
Further assuming a nonvanishing pulse as $t\to+\infty$,
the mixed-state case was studied recently by IST methods~\cite{lbkg2018,bgkl2019}.
In general, as assumed in the aforementioned works,
the medium exhibits inhomogeneous broadening due to the Doppler effect or other physical phenomena (e.g. static crystalline electric and
magnetic fields in solids)~\cite{mh1969}.
The distribution of atoms is characterized by the spectral line shape function $g(\lambda)$ where $\lambda$ is the difference between the atomic transition frequency and the resonant frequency.
Mathematically, $g(\lambda)$ can be an arbitrary probability density function (or distribution).

This paper concerns the aftereffect in an active medium of the passage of an optical pulse as modeled by the MBEs.
Physically, one is interested in the form of any residual optical pulse and the remaining state after a long time at any given point in the active medium.
This problem is addressed by calculating the asymptotic behavior of solutions as $t\to+\infty$ for a fixed position $z > 0$.
We take a reasonably large function space for the incident optical pulse
and consider propagation in media both initially stable and initially unstable, but neglecting inhomogeneous broadening.
A simple part of our study that is nonetheless crucial from the point of view of uniqueness is the analysis of the solution outside of the light cone, i.e., for $t < 0$, but more interesting phenomena appear within the light cone, as $t\to+\infty$.
Our analysis is based on applying the Deift-Zhou steepest descent method~\cite{dz1993-1,dz1993-2} and the $\overline{\partial}$ approach~\cite{mm2006,mm2008,dmm2019} to a suitable Riemann-Hilbert problem (RHP) encoding a particular solution of the Cauchy problem that is most relevant for physical applications,
assuming no discrete eigenvalues or spectral singularities are present.
The combination of the Deift-Zhou nonlinear steepest descent method and the $\overline{\partial}$ approach allows one to avoid assuming any analyticity of the reflection coefficient,
without the need to use complicated rational approximations.
Our results are novel,
applicable to a wide variety of incident pulses,
provide rigorous proofs (in some cases of results obtained at a physical level of rigor in other papers),
and come with precise error estimates.

In terms of results obtained earlier by other authors,
in a series of works~\cite{z1980,gzm1983,gzm1984,gzm1985,mn1986},
formal asymptotic analysis of solutions of the MBE system
suggested the importance of self-similar solutions, and such solutions also appear in our rigorous analysis.
Later, the Deift-Zhou nonlinear steepest descent method was applied to a related problem~\cite{fm1999},
but only initially-stable media were considered and the results were somewhat incomplete in the sense that (i) error estimates were omitted (although in principle they are accessible via the methodology employed) and (ii) the leading-order term was given implicitly in terms of the solution of a singular integral equation that is difficult to compare with the Riemann-Hilbert characterization we offer below.
Very recently,
assuming periodic incident pulses injected into an initially-stable medium,
the large-$t$ asymptotic problem was revisited and analyzed by the nonlinear steepest descent method~\cite{fk2020}.

In the setting that inhomogeneous broadening is absent from the system
(equivalently taking the spectral line shape to be the Dirac delta $g(\lambda) = \delta(\lambda)$) and that
the optical pulse vanishes in the distant past (so the initial state of the medium is one of the two pure states, stable or unstable),
the Cauchy problem for the MBEs takes the form
\begin{equation}
\label{e:mbe}
\begin{aligned}
q_z(t,z) & = -P(t,z)\,,\qquad
P_t(t,z) =  - 2q(t,z)D(t,z)\,,\qquad
D_t(t,z) = 2\Re(\overline{q(t,z)} P(t,z))\,,\\
q(t,0) & = q_{0}(t)\,,\qquad t\in\Real\,,\\
\lim_{t\to-\infty}q(t,z) & = 0\,,\qquad
D_- \coloneq \lim_{t\to-\infty}D(t,z) = \pm1\,,\qquad
P_- \coloneq \lim_{t\to-\infty} P(t,z) = 0\,,\qquad z\ge0\,.
\end{aligned}
\end{equation}
where the subscripts $t$ and $z$ denote partial derivatives,
and $\b{q}$ denotes the complex conjugate of $q$.
The variables $z = z_{\mathrm{lab}}$ and $t = t_{\mathrm{lab}} - z_{\mathrm{lab}}/c$ are the propagation distance and retarded time,
respectively, with $c$ denoting the speed of light in the vacuum ($(z_\mathrm{lab},t_\mathrm{lab})$ denote space and time coordinates in a fixed laboratory frame).
The unknowns are the optical pulse $q(t,z)\in\Complex$,
the population inversion $D(t,z)\in\Real$ of the medium,
and its polarization $P(t,z)\in\Complex$.
We refer to the evolution equation on $q$ in~\eqref{e:mbe} as the Maxwell equation,
and to the two equations on $P$ and $D$ as the Bloch subsystem.
Even though the MBE system is completely integrable,
there is only one global conservation law~\cite{as1981}, namely that $D^2 + |P|^2$
is independent of $t$, and for the given values of $D_-$ and $P_-$ in \eqref{e:mbe} we have $D^2+|P|^2=1$ for all $t\in\mathbb{R}$ and $z\ge 0$.
The quantity $D_-$ is the initial population inversion,
with $D_- = -1$ (resp., $D_- = 1$) denoting an initially-pure stable (resp., unstable) medium.

Although we think of $t$ and $z$ as mathematical spatial and temporal variables respectively,
the asymptotic behavior of $D(t,z)$ as $t\to +\infty$ need not be specified.
In fact, because of the first-order nature of the Bloch subsystem in MBEs~\eqref{e:mbe} one cannot arbitrarily specify the behavior of $D(t,z)$ in both limits $t\to\pm\infty$.
An influential early work~\cite{z1980} considers the IST and solutions of the MBEs,
in the hope of analyzing the physical phenomenon of superfluorescence.
In that paper, it is assumed that \emph{both} $D_- = 1$ and $D(t,z) \to -1$ as $t\to+\infty$,
i.e., that the medium is initially in the unstable excited state and decays to the stable ground state in the future.
Although such an assumption is natural
from the physical perspective,
it is not clear mathematically how one can enforce two asymptotic values for $D(t,z)$ at $t = \pm\infty$ simultaneously due to the first-order nature of the Bloch subsystem.
In fact, it is recognized in~\cite{z1980} that imposing two asymptotic conditions on $D(t,z)$ may be mathematically incorrect,
but the resolution proposed --- a causality requirement --- is also not fully justified.
In this paper, we prove that \textit{under the causality requirement
and other mild assumptions on the incident optical pulse,
an unstable excited medium indeed decays naturally to the stable ground state as $t\to+\infty$}.
Hence, by fully rigorous arguments
we validate the causality requirement originally proposed in~\cite{z1980}.

If solutions to the MBE system are restricted to real-valued functions, under the substitutions $P = \sin(\Theta)$, $D = \cos(\Theta)$ and $q = -\Theta_t/2$
the system~\eqref{e:mbe} becomes the sine-Gordon equation in characteristic coordinates:  $\Theta_{tz} = 2\sin(\Theta)$.
The asymptotic behavior of solutions of the sine-Gordon equation for large values of the independent variables was studied in 1999~\cite{cvz1999} and again quite recently~\cite{hl2018,cll2020}.
However, even if real solutions of the MBEs are considered, our work goes in quite a different direction for two related reasons:
\begin{itemize}
\item
The Cauchy problem considered in \cite{cvz1999,hl2018,cll2020} is the second-order initial-value problem for the sine-Gordon equation in the form $\Theta_{\tau\tau}-\Theta_{\chi\chi}+\sin(\Theta)=0$ with two initial conditions given at $\tau=0$. In this setting, the reflection coefficient $r(\lambda)$ comes from the Faddeev-Takhtajan scattering problem,
which automatically yields $r(0) = 0$.
However, no such condition is guaranteed for a given incident pulse $q_0(t)$ in the context of the MBEs (or the characteristic sine-Gordon equation).
This is because for the latter system the reflection coefficient comes instead from the non-selfadjoint Zakharov-Shabat problem, which gives
$r(0)\ne0$ in general.
\item
The analysis of sine-Gordon given in \cite{cvz1999,hl2018,cll2020} concerns the limit $\tau\to\infty$ in which $\chi=v \tau$.  The hyperbolic nature of the sine-Gordon equation is exhibited in the asymptotic confinement of the solution to the light cone $|v|<1$.  As $|v|\uparrow 1$, the solution decays, a result that is mathematically a direct consequence of the condition $r(0)=0$.  Since we cannot generally assume $r(0)=0$ for the MBE system, the boundary of the light cone becomes the most interesting regime for the asymptotic behavior, and hence we assume exclusively in this paper that $z/t\to 0$ as $t\to+\infty$ and we show that the generally-nonzero quantity $r(0)$ plays a crucial role in this regime.
\end{itemize}
In this paper, we show that in the aforementioned regime a boundary layer phenomenon occurs for the MBE system:
for a variety of incident pulses $q_0(t)$,
the solutions exhibit a sudden transition between the boundary of the medium $z = 0$ and the interior $z > 0$.
Roughly speaking, no matter how fast the incident pulse $q_0(t)$ decays as $t\to+\infty$,
after an infinitesimal propagation distance the optical pulse $q(t,z)$ always decays at a fixed slow rate as $t\to+\infty$.
Physically, the residual pulse remains in the active medium for a long time,
due to strong nonlinear interactions between light and the active medium.
The decay rate is slowest when $r(0)$, a spectral quantity we call below the ``moment'' of the incident pulse $q_0(t)$ (see Definition~\ref{def:moments}), is nonzero.

The slow decay of the optical pulse within the boundary layer is resolved at the leading order by a family of universal profiles expressible in terms of
a family of certain Painlev\'e-III (PIII) solutions.
This resolution occurs most clearly in the limit $t\to+\infty$ with $z = \O(1/t)$.
The PIII solutions that occur are closely related to PIII solutions appearing in some other recent works:
\begin{itemize}
\item
A particular PIII solution was uncovered by Suleimanov~\cite{Suleimanov2017} (along with its dilations by a scaling transformation) through a formal analysis of weakly-dispersive corrections to a self-similar singular solution (Talanov pulse) of the dispersionless focusing NLS (nonlinear Schr\"odinger) equation.  In work in progress, Buckingham, Jenkins, and Miller~\cite{bjm2021} are proving Suleimanov's observation rigorously and also generalizing its applicability to the whole family of Talanov pulses that are not necessarily self-similar.
\item
The Suleimanov solution was shown by Bilman, Ling, and Miller~\cite{blm2020} to describe the near-field/high-order limit of fundamental rogue-wave solutions of the focusing NLS with a nonzero background, in which context it was called the \emph{rogue wave of infinite order}.
\item
A one-parameter family of PIII solutions generalizing the Suleimanov solution was shown by Bilman and Buckingham~\cite{bb2019} to describe the near-field high-order limit of multiple-pole soliton solutions of focusing NLS with a zero background.
\end{itemize}

The solutions of PIII that arise in this problem are determined from spectral properties of the incident pulse $q_0(t)$ and from the initial state of the medium, and unlike in some earlier works we provide asymptotic formul\ae\ for the Bloch (medium) fields $D$ and $P$, as well as for the optical pulse $q$.
Although the PIII solutions appear just in the leading terms of an asymptotic expansion, these terms alone constitute an exact self-similar solution of the MBE system; hence such self-similar solutions appear naturally and universally just inside the light cone in media both initially stable and initially unstable.
Self-similar solutions of the MBE system are known to be connected with the PIII equation, having been derived via asymptotic analysis at various levels of rigor in several earlier papers~\cite{bc1969,lamb1969,z1980,gzm1983,gzm1984,gzm1985,fm1999}.
For $z\ge 0$ and $t\ge 0$, a natural similarity variable is $x=\sqrt{2tz}$, and it is straightforward to see that the Maxwell equation and Bloch subsystem in \eqref{e:mbe} admit exact solutions for which $tq$, $P$, and $D$ are real-valued functions of $x\ge 0$ alone.  Writing $x=X$ and assuming
\begin{equation}
q=t^{-1}y(X),\quad P=\frac{2}{X}s(X),\quad D=1-\frac{2}{X}U(X),
\label{e:similarity-form}
\end{equation}
one easily obtains the coupled ordinary differential equations
\begin{equation}
\begin{split}
y'(X)&=-2s(X)\\
Xs'(X)&=s(X)-2Xy(X)+4y(X)U(X)\\
XU'(X)&=U(X)-4y(X)s(X).
\end{split}
\label{e:coupled-self-similar}
\end{equation}
If one analytically continues a real-valued solution of the coupled system \eqref{e:coupled-self-similar} from the positive $X$-axis to the negative imaginary axis, then replacing $X=x$ with $X=-\ii x$ and $(P,D)$ with $(-P,-D)$ in \eqref{e:similarity-form}, one obtains another similarity solution of the MBEs provided that the fields $q$, $P$, and $D$ remain real under this continuation.
From either of these, complex-valued self-similar solutions can be obtained by the symmetry $(q,P,D)\mapsto (\xi q,\xi^{-1}P,D)$ of the original MBE system, where $\xi$ is any complex constant of unit modulus:  $|\xi|=1$.

Another system is closely related to \eqref{e:coupled-self-similar}, namely
\begin{equation}
\begin{split}
y'(X)&=-2s(X)\\
Xs'(X)&=s(X)-2Xy(X)+4y(X)U(X)\\
XU'(X)&= U(X)-4X\frac{y(X)U(X)}{s(X)}+4\frac{y(X)U(X)^2}{s(X)}.
\end{split}
\label{e:coupled-PIII-0-0}
\end{equation}
Indeed, the latter system has the first integral
\begin{equation}
J:=\frac{U(X)(U(X)-X)}{s(X)^2}
\label{e:J-constant}
\end{equation}
and for the specific value $J=-1$ the systems \eqref{e:coupled-self-similar} and \eqref{e:coupled-PIII-0-0} coincide.  Regardless of the value of $J$, it is straightforward to deduce from \eqref{e:coupled-PIII-0-0} that the quantity $u(X):=-y(X)/s(X)$ satisfies
\begin{equation}
u''(X)=\frac{u'(X)^2}{u(X)}-\frac{u'(X)}{X}+\frac{4}{X}+4u(X)^3-\frac{4}{u(X)},
\label{e:PIII}
\end{equation}
which is a form of the Painlev\'e-III equation.  Generally, solutions of these equations exhibit branch-point type singularities at $X=0$, but they also admit solutions that are analytic at the origin and are determined by the first few Taylor coefficients.  If one assumes without justification that these are the solutions of interest (such as in \cite{gzm1983}) then it becomes possible to match the self-similar solutions with other information such as experimental data or assumed asymptotic behavior of solutions in another regime and determine the solutions uniquely.  One of the aims of our paper is to provide the rigorous justification behind such assumptions.  We prove the desired properties of the self-similar solutions by deducing a Riemann-Hilbert representation for the solutions.  This ``spectral'' representation both allows us to explicitly relate the relevant initial values to the incident pulse profile generating the self-similar response and to simultaneously clarify the connection with the family of PIII solutions obtained in \cite{bb2019}.  The Riemann-Hilbert representation is also preferable to a purely local one (like specifying initial conditions), in that it allows us to obtain the asymptotic behavior of the self-similar solution for large $X$.
Again making contact with the literature, the PIII solution obtained for a related problem in \cite{fm1999} is also specified spectrally, via a system of singular integral equations.  However that system is equivalent to a Riemann-Hilbert problem with the real line as a jump contour, and we do not see how it can be identified or compared with the one we formulate below, which instead has the unit circle as the jump contour.  Indeed, from the isomonodromy point of view, our solutions of PIII correspond to trivial Stokes phenomenon and nontrivial connection between solutions near two irregular singular points whereas the solutions in \cite{fm1999} appear to instead have trivial connection and nontrivial Stokes phenomenon.

\begin{remark}[On notation]
In the rest of this paper, we use boldface fonts to denote $2\times 2$ matrices, with the exception of the identity matrix $\I$ and the Pauli matrices
\begin{equation}
\sigma_1\coloneq\bpm 0&1\\1&0\epm,\quad\sigma_2\coloneq\bpm 0 & -\ii\\\ii & 0\epm,\quad\text{and}\quad
\sigma_3\coloneq\bpm 1&0\\0 & -1\epm.
\end{equation}
The imaginary unit is denoted $\ii$, and complex conjugation is indicated with a bar:  $\b{\lambda}$.  We denote the characteristic function of a set $S$ by $\chi_S$.
\end{remark}

\subsection{Assumptions and causality}
We now start to make our assumptions more precise to lay the groundwork for us to present our results.
The causality requirement imposed in some earlier works~\cite{z1980,gzm1983,gzm1984,gzm1985} is that
the optical pulse $q(t,z)$ should vanish identically in the past outside of the light cone in order to obtain a unique solution from the Gel'fand-Levitan-Marchenko equation of inverse scattering.  In particular upon taking $z=0$, it should hold that $q_0(t)$ is supported on a positive half-line which we may take without loss of generality to be $t\ge 0$.
Combining causality with a level of smoothness and decay that is both convenient and natural from the point of view of the IST, we make the following basic assumption on the incident optical pulse $q_0(\cdot)$.
\begin{assumption}[Basic condition on $q_0$]
The incident optical pulse $q_0:\Real\to\Complex$ satisfies $q_0\in\mathscr{S}(\Real)$ with $q_0(t)\equiv 0$ for $t<0$.
\label{ass:q0-assumption}
\end{assumption}
Here $\mathscr{S}(\Real)$ denotes the Schwartz class.

%

\begin{definition}[Causal solutions]
\label{def:causal}
A solution of the Cauchy problem~\eqref{e:mbe} for a given incident pulse $q_0$ satisfying Assumption~\ref{ass:q0-assumption} is called \emph{causal} if $q(z,t)=0$ holds for all $t<0$ and $z\ge 0$.
From the Bloch subsystem in \eqref{e:mbe} it is clear that for a causal solution it also holds that $D(z,t)=D_-$ and that $P(z,t)=0$ for all $t<0$ and $z\ge 0$.
\end{definition}
Many of the most familiar solutions of the MBE system are non-causal, for instance the soliton solutions.
However, a key result is the following.
\begin{theorem}
Given an incident pulse $q_0$ satisfying Assumption~\ref{ass:q0-assumption}, there exists at most one causal solution of the MBE Cauchy problem~\eqref{e:mbe}.
\label{thm:causal-uniqueness}
\end{theorem}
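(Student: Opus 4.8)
The plan is to exploit the structural fact that a causal solution is frozen for $t<0$ (there $q\equiv 0$, $P\equiv 0$, $D\equiv D_-$), so that the Cauchy problem \eqref{e:mbe} effectively has a \emph{finite domain of dependence}. Reading the Maxwell equation as an ODE in $z$ started from $z=0$ (where $q=q_0$ is prescribed) and the Bloch subsystem as an ODE in $t$ started from $t=0$ (where, by causality and continuity, $P=0$ and $D=D_-$), one sees that the values of $(q,P,D)$ at any point $(t_0,z_0)$ with $t_0,z_0\ge 0$ are determined through \eqref{e:mbe} solely by the values of the fields on the bounded rectangle $[0,t_0]\times[0,z_0]$. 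Hence it suffices to fix arbitrary $T,Z>0$ and show that any two causal solutions of \eqref{e:mbe} with the same $q_0$ agree on $R:=[0,T]\times[0,Z]$; letting $T,Z\to+\infty$ and then invoking causality gives $q_1\equiv q_2$ on $\{t\in\Real,\ z\ge 0\}$, and the Bloch fields agree as well because the Bloch subsystem is a first-order ODE in $t$ with $P(0,z)=0$, $D(0,z)=D_-$ fixed.

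First I would recast \eqref{e:mbe} as a system of Volterra-type integral equations for the differences $\delta q:=q_1-q_2$, $\delta P:=P_1-P_2$, $\delta D:=D_1-D_2$ of two causal solutions. Using the identities $q_1D_1-q_2D_2=\delta q\,D_1+q_2\,\delta D$ and $\overline{q_1}P_1-\overline{q_2}P_2=\overline{\delta q}\,P_1+\overline{q_2}\,\delta P$, and using $\delta q(\cdot,0)\equiv 0$ together with $\delta P(0,\cdot)=\delta D(0,\cdot)\equiv 0$ (the latter by causality and continuity at $t=0$), one obtains on $R$, with all integrands below evaluated at $(s,z)$,
\begin{equation}
\delta q(t,z)=-\int_0^z\delta P(t,z')\dd z',\qquad \delta P(t,z)=-2\int_0^t\bigl(\delta q\,D_1+q_2\,\delta D\bigr)\dd s,
\end{equation}
\begin{equation}
\delta D(t,z)=2\Re\int_0^t\bigl(\overline{\delta q}\,P_1+\overline{q_2}\,\delta P\bigr)\dd s.
\end{equation}
The global conservation law $D^2+|P|^2=1$ recorded after \eqref{e:mbe} supplies the a priori bounds $|P_j|\le 1$ and $|D_j|\le 1$ at no cost, and $M:=\max\{1,\sup_R|q_1|,\sup_R|q_2|\}<\infty$ because a classical solution is continuous, hence bounded, on the compact set $R$.

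Next I would run a nested Gr\"onwall estimate. Fixing $z\in[0,Z]$ and writing $H(z):=\sup_{0\le s\le T}|\delta q(s,z)|$, the last two displayed equations give, for $0\le t\le T$,
\begin{equation}
|\delta P(t,z)|+|\delta D(t,z)|\le 4TH(z)+2M\int_0^t\bigl(|\delta P(s,z)|+|\delta D(s,z)|\bigr)\dd s,
\end{equation}
so Gr\"onwall's inequality yields $\sup_{0\le t\le T}|\delta P(t,z)|\le C_1H(z)$ with $C_1:=4T\ee^{2MT}$ independent of $z$. Substituting into the first displayed equation, the function $\Phi(z):=\sup_{0\le t\le T}|\delta q(t,z)|$ is continuous, satisfies $\Phi(0)=0$, and obeys $\Phi(z)\le C_1\int_0^z\Phi(z')\dd z'$; Gr\"onwall once more forces $\Phi\equiv 0$ on $[0,Z]$, whence $\delta q\equiv 0$ on $R$ and then $\delta P\equiv\delta D\equiv 0$ on $R$. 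Since $T$ and $Z$ were arbitrary, the argument concludes as described above.

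I expect the main obstacle to be the bookkeeping behind the first paragraph: one must verify carefully that, owing to causality, the right-hand sides of the integral equations only ever sample the fields within the fixed rectangle $R$, so that the constant $M$ --- and hence $C_1$ --- is genuinely finite and uniform on $R$. This is precisely what forces the reduction to a bounded rectangle rather than working directly on the quarter-plane, where a classical solution need not be bounded in $t$. It also means one should fix at the outset the regularity intended by the word ``solution'' (continuity on compact sets, automatic for classical solutions, is enough). Everything else is a routine Volterra/Gr\"onwall estimate, and it is worth noting that only the causality/support hypothesis $q_0\equiv 0$ for $t<0$ is used here, not the full Schwartz regularity of Assumption~\ref{ass:q0-assumption}.
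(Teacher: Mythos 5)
Your proposal is correct, and it follows the same basic strategy as the paper's proof in Appendix~\ref{s:causal-uniqueness}: form the differences of two causal solutions, convert the Maxwell equation and Bloch subsystem into Volterra integral equations integrated from $z=0$ and $t=0$ respectively, and use the conservation law $D^2+|P|^2=1$ for a priori bounds on $P$ and $D$. The two arguments diverge only in how they close the estimate. The paper derives the explicit a priori bound $|q^{(j)}(t,z)|\le|q_0(t)|+z$ from $q_z=-P$ (rather than simply invoking boundedness of a continuous solution on a compact rectangle, as you do), and then shows that on any rectangle $[t_0,t_1]\times[z_0,z_1]$ small enough that a certain explicit combination of its dimensions and $\int_{t_0}^{t_1}|q_0|$ is less than $1$, the supremum $S$ of the differences satisfies $S\le cS$ with $c<1$, forcing $S=0$; the quarter-plane is then covered by such rectangles via a two-level induction over horizontal strips. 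Your nested Gr\"onwall argument --- first in $t$ at fixed $z$ to get $\sup_t|\delta P(\cdot,z)|\le C_1\sup_t|\delta q(\cdot,z)|$ with $C_1=4T\ee^{2MT}$, then in $z$ for $\Phi(z)=\sup_t|\delta q(t,z)|$ --- avoids the smallness condition and the covering induction entirely, which is arguably cleaner; what the paper's version buys is an explicit, derived bound on $q$ in place of an appeal to continuity, and constants that are tracked concretely. Both routes use only the support condition $q_0\equiv 0$ for $t<0$ (via causality and continuity at $t=0$ to get $\delta P(0,\cdot)=\delta D(0,\cdot)=0$), not the full Schwartz regularity, exactly as you observe.
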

The proof does not rely on integrability and is given in Appendix~\ref{s:causal-uniqueness}.  It is equally important to note that \emph{in general the Cauchy problem~\eqref{e:mbe} is ill-posed} in the sense that it admits multiple non-causal solutions for the same data.  This point will be discussed in more detail later (see Corollary~\ref{cor:no-uniqueness-without-causality} below), as it follows from our asymptotic results.

\subsection{Integrability and Riemann-Hilbert representation of causal solutions}
The MBEs~\eqref{e:mbe} can be equivalently written in matrix form,
\begin{equation}
\everymath{\displaystyle}
\label{e:mbe-matrix}
\begin{aligned}
\brho_t & = [\Q,\brho]\,,\qquad
\Q_z = -\frac{1}{2}[\sigma_3,\brho]\,,\\
\Q(t,z) & = \bpm 0 & q(t,z) \\ -\b{q(t,z)} & 0 \epm\,,\qquad
\brho(t,z) = \bpm D(t,z) & P(t,z) \\ \b{P(t,z)} & -D(t,z) \epm\,,\qquad
\sigma_3 = \bpm 1 & 0 \\ 0 & -1 \epm\,,
\end{aligned}
\end{equation}
where $[\cdot,\cdot]$ is the matrix commutator.
The matrix $\brho$ is called the \emph{density matrix}, and it satisfies the identities $\brho^\dagger=\brho$, $\tr(\brho)=0$ and $\det(\brho) = -1$,
where the superscript $\dagger$ denotes conjugate transpose.
The Lax pair for the MBEs in the form~\eqref{e:mbe-matrix} is given by
\begin{gather}
\label{e:LP}
\bphi_t= (\ii\lambda \sigma_3 + \Q)\,\bphi\,,\qquad
\bphi_z= -\frac{\ii}{2\lambda}\brho\,\bphi\,,
\end{gather}
where $\bphi=\bphi(t,z;\lambda)$; in other words, the system~\eqref{e:mbe-matrix} is the compatibility condition under which there exists a basis of simultaneous solutions of the two equations~\eqref{e:LP} for all $\lambda\in\mathbb{C}\setminus\{0\}$.  Since $t$ plays the mathematical role of a spatial variable, the differential equation with respect to $t$ in~\eqref{e:LP} is called the scattering problem (here, the well-known non-selfadjoint Zakharov-Shabat problem), whose scattering data evolves in mathematical ``time'' $z$ according to the other equation in the Lax pair~\eqref{e:LP}.  The IST for the system~\eqref{e:mbe-matrix} is therefore based on the direct and inverse problems for the Zakharov-Shabat equation, which has been systematically and rigorously studied in several papers such as~\cite{bc1984,z1998,bjm2017}.  For the direct problem, one takes $\Q=\Q(t,0)$ in terms of the incident pulse $q_0(t)$ and defines
the scattering matrix $\bS(\lambda)\coloneq \bphi_-(t;\lambda)^{-1}\bphi_+(t;\lambda)$ (independent of $t\in\Real$)
for $\lambda\in\mathbb{R}$ in terms of $\bphi_\pm(t;\lambda)$, the Jost eigenfunctions of the Zakharov-Shabat equation for $z=0$ normalized at $t\to\pm\infty$,
i.e., $\bphi_\pm(t;\lambda)=\ee^{\ii\lambda t\sigma_3}+o(1)$ as $t\to\pm\infty$.
It is worth noting that due to $q_0(t)\equiv0$ for $t < 0$,
we have $\bphi_-(t;\lambda) \equiv \ee^{\ii\lambda t\sigma_3}$ for $t \le 0$ and $\bphi_-(0;\lambda) = \I$,
so by taking $t=0$ without loss of generality, the scattering matrix is simply $\bS(\lambda) = \bphi_+(0;\lambda)$. It satisfies the basic identities
\begin{equation}
\det(\bS(\lambda))=1 \quad\text{and}\quad \bS(\lambda)=\sigma_2\b{\bS(\lambda)}\sigma_2.
\label{e:scattering-matrix}
\end{equation}
The \emph{reflection coefficient} $r(\lambda)$ defined by
\begin{equation}
r(\lambda) \coloneq \frac{S_{2,1}(\lambda)}{S_{1,1}(\lambda)} = \frac{\phi_{+,2,1}(0;\lambda)}{\phi_{+,1,1}(0;\lambda)}
\label{e:reflection-coefficient}
\end{equation}
plays a crucial role in the IST and consequently the long-time asymptotics.
In general (i.e., without the cutoff assumption for $t<0$ in Assumption~\ref{ass:q0-assumption}), $r(\lambda)$ is only defined on the continuous spectrum $\Real$,
but $S_{1,1}(\lambda)$ admits continuation into the upper half $\lambda$ plane as an analytic function continuous up to the real line.
The zeros of $S_{1,1}(\lambda)$ in the open upper half plane are the discrete eigenvalues corresponding to solitons,
whereas real zeros are called spectral singularities, i.e., poles of the reflection coefficient.  Under some assumptions that are difficult to justify fully, the $z$-equation in the Lax pair \eqref{e:LP} then defines an explicit evolution of the scattering data in $z$, and for the inverse problem one constructs $\Q=\Q(t,z)$ from the $z$-evolved scattering data.
It is well known that under some additional conditions the incident pulse $q_0(t)$ is encoded completely in the reflection coefficient $r(\lambda)$, which subsequently determines the solution for all $z\ge 0$.
In this direction, we have the following result.

\begin{lemma}[Properties of the reflection coefficient]
\label{lemma:reflection}
Suppose that the incident pulse $q_0$ satisfies Assumption~\ref{ass:q0-assumption}, and that there exist no discrete eigenvalues, i.e., $S_{1,1}(\lambda)\neq 0$ for $\lambda$ in the upper half-plane.
Then, the reflection coefficient $r(\lambda)$ for the non-selfadjoint Zakharov-Shabat equation admits continuation to the open upper half-plane as an analytic function.  If also $q_0$ generates no spectral singularities (i.e., $S_{1,1}(\lambda)\neq 0$ for $\lambda\in\Real$), then $r(\cdot)\in\mathscr{S}(\Real)$.
\end{lemma}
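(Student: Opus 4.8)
The plan is to read the scattering data off the Jost solution $\bphi_+(t;\lambda)$ of the $t$-equation in \eqref{e:LP} at $z=0$. Because $q_0\equiv 0$ for $t<0$ we have $\bphi_-(0;\lambda)=\I$ exactly, hence $\bS(\lambda)=\bphi_+(0;\lambda)$; writing $m(t;\lambda)=\ee^{-\ii\lambda t}\bphi_+^{(1)}(t;\lambda)=(m_1,m_2)^{\mathsf{T}}$ for the modified first column of $\bphi_+$ (normalized to $(1,0)^{\mathsf{T}}$ as $t\to+\infty$), a direct computation turns the Lax equation into the Volterra system $m_1(t)=1-\int_t^\infty q_0(s)m_2(s)\dd s$ and $m_2(t)=\int_t^\infty\ee^{2\ii\lambda(s-t)}\,\overline{q_0(s)}\,m_1(s)\dd s$, with $S_{1,1}(\lambda)=m_1(0;\lambda)$ and $S_{2,1}(\lambda)=m_2(0;\lambda)$. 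On the closed upper half-plane the kernel $\ee^{2\ii\lambda(s-t)}$ (here $s\ge t$) is bounded by $1$, and since $q_0\in\mathscr S(\Real)\subset L^1(\Real)$ the Neumann series for this system converges uniformly on compact subsets of $\{\Im\lambda\ge 0\}$, each term analytic in $\{\Im\lambda>0\}$. Hence $S_{1,1}$ and $S_{2,1}$ continue to the open upper half-plane as analytic functions, continuous up to $\Real$; as $S_{1,1}$ has no zeros in the open upper half-plane by assumption, $r=S_{2,1}/S_{1,1}$ is analytic there. This proves the first claim.

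For the Schwartz property on $\Real$ I would proceed in three steps. First, smoothness: differentiating the Volterra system in $\lambda$ under the integral sign (justified by the decay of $q_0$) gives, via Gronwall-type bounds with constants $\int_0^\infty s^j|q_0(s)|\dd s<\infty$, that $\partial_\lambda^k m(s;\lambda)$ is bounded uniformly for $(s,\lambda)\in[0,\infty)\times\Real$; in particular $S_{1,1},S_{2,1}\in C^\infty(\Real)$ with all derivatives bounded, and $S_{1,1}(\lambda)\to 1$ as $\lambda\to\pm\infty$. Second, rapid decay of $S_{2,1}$: I would estimate the Neumann series term by term. The leading term $\int_0^\infty\ee^{2\ii\lambda s}\,\overline{q_0(s)}\dd s$ is, thanks to $\mathrm{supp}\,q_0\subset[0,\infty)$, the Fourier transform at $-2\lambda$ of the Schwartz function $\overline{q_0}$, hence lies in $\mathscr S(\Real)$; a general term is an iterated oscillatory integral which one integrates by parts repeatedly in the innermost variable, producing at each step a factor $(2\ii\lambda)^{-1}$ together with a differentiated amplitude built from $q_0$, $\overline{q_0}$, their derivatives, and iterated antiderivatives such as $\int_s^\infty|q_0|^2$ (smooth and bounded with bounded derivatives), the crucial point being that all boundary terms at the lower endpoint $s=0$ vanish because $q_0$ and all its derivatives vanish there. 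These amplitudes are products of Schwartz functions with smooth bounded functions, hence Schwartz, so every term is $O(|\lambda|^{-N})$ for every $N$, uniformly in the term index since the series converges geometrically with ratio controlled by $\|q_0\|_{L^1}$; differentiating in $\lambda$ only introduces polynomial-in-integration-variable factors absorbed by the decay of $q_0$. Therefore $S_{2,1}$ and all its derivatives decay faster than any power of $|\lambda|$.

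Third, I would assemble the quotient. Since $S_{1,1}$ is continuous on $\Real$, has no zeros there (no spectral singularities), and tends to $1$ at $\pm\infty$, we get $\inf_{\lambda\in\Real}|S_{1,1}(\lambda)|>0$; together with boundedness of the derivatives of $S_{1,1}$ this makes $1/S_{1,1}$ smooth and bounded on $\Real$ with all derivatives bounded. By the Leibniz rule every derivative of $r=S_{2,1}\cdot(1/S_{1,1})$ is a finite sum of products of a rapidly decaying factor (a derivative of $S_{2,1}$) with a bounded factor (a derivative of $1/S_{1,1}$), so $r\in\mathscr S(\Real)$, proving the second claim. The main obstacle is the second step, the decay estimate for $S_{2,1}$ and its $\lambda$-derivatives: one must control the $\lambda$-dependence entering through the deeper Jost factors $m_1,m_2$ in the Neumann series and verify that the repeated integrations by parts lose no decay — this is precisely where the support condition $q_0\equiv 0$ on $t<0$ is used a second time, both to annihilate the boundary terms at $s=0$ and to convert the half-line integrals into genuine Fourier transforms over $\Real$. (Once the analytic-continuation part has been done by hand, this step can alternatively be quoted from the known mapping properties of the direct scattering transform for the Zakharov–Shabat problem established in the references cited above.)
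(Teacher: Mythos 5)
Your proof is correct and follows essentially the same route as the paper: for analyticity you use the Volterra/Neumann-series representation of the first column of $\bphi_+(t;\lambda)$ evaluated at $t=0$ (which is exactly what the cutoff condition in Assumption~\ref{ass:q0-assumption} buys, and is the content of Lemma~\ref{lem:r-asymptotics} and its proof), then divide by the nonvanishing $S_{1,1}$. For the Schwartz-class claim the paper simply cites it as a standard result, whereas you sketch a direct Neumann-series/integration-by-parts argument; the only delicate point in that sketch is making the $\O(|\lambda|^{-N})$ bounds genuinely summable over the term index (the constants grow with each integration by parts), but your stated fallback of quoting the known mapping properties of the Zakharov--Shabat direct transform is precisely what the paper does, so nothing essential is missing.
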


%
\begin{proof}
The statement that $q_0(\cdot)\in\mathscr{S}(\Real)$ and $S_{1,1}(\lambda)\neq 0$ for $\lambda\in\Real$ implies $r(\cdot)\in\mathscr{S}(\Real)$ is a standard result,
so one only needs to prove the analyticity of $r(\lambda)$ in the upper half plane.
Using the cutoff condition in Assumption~\ref{ass:q0-assumption}, the expression in \eqref{e:reflection-coefficient} in terms of the Jost matrix $\bphi_+(t;\lambda)$ evaluated at the finite point $t=0$ immediately gives this result. Indeed, since the first column of $\bphi_+(t;\lambda)$ is analytic in the open upper half-plane and continuous in the closed upper half-plane for each fixed $t\in\mathbb{R}$ as is well known (see
Lemma~\ref{lem:r-asymptotics} and its proof in Appendix~\ref{s:proof-reconstruction} below), in particular upon evaluation at $t=0$ the desired analyticity follows provided that $\phi_{+,1,1}(0;\lambda)=S_{1,1}(\lambda)\neq 0$, a condition that is guaranteed for $\Im(\lambda)\ge 0$ by hypothesis.
%
\end{proof}


\begin{remark}
Assumption~\ref{ass:q0-assumption} is quite strong, but the reader will see that our asymptotic results require far less, just the existence of sufficiently many continuous and absolutely integrable derivatives of the reflection coefficient associated with $q_0$, along with some auxiliary conditions related to discrete spectrum.  It is difficult to give simple conditions on $q_0$ sufficient to control a given number of derivatives of the reflection coefficient in the $L^1$ sense, although a weighted $L^2$-Sobolev bijection result has been proven by Zhou \cite{z1998}.
\label{rem:Schwartz-too-strong}
\end{remark}

A derivation of an IST for the MBE system based on the direct/inverse scattering theory for the non-selfadjoint Zakharov-Shabat equation can be found in numerous papers going back to~\cite{akn1974}.  This derivation is fundamentally problematic, because it presumes that $q(\cdot,z)\in L^1(\Real)$ for all $z\ge 0$ to define the relevant eigenfunctions and scattering data; however after the fact it can be shown that even if this condition holds at $z=0$ (as is guaranteed by Assumption~\ref{ass:q0-assumption}) it is generically violated for all $z>0$ (see Corollary~\ref{cor:not-L1} below).  Rather than repeat these arguments, we will simply postulate a well-posed Riemann-Hilbert problem (RHP) whose solution, by a dressing-method argument, encodes the unique causal solution of the Cauchy problem.

%
\begin{figure}
\includegraphics[width = 0.56\textwidth]{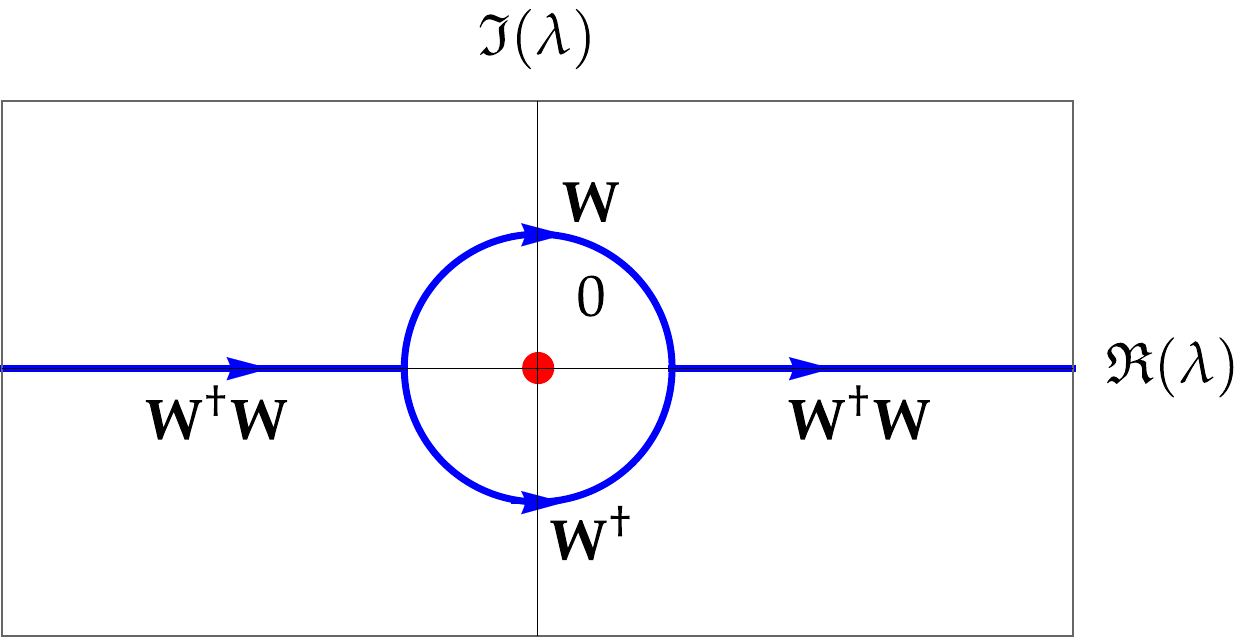}
\caption{The jump contour $\Sigma_\mathbf{M}$ for RHP~\ref{rhp:M}, consisting of the circle $|\lambda|=\gamma>0$ and the real intervals $|\lambda|>\gamma$, oriented as shown.  The jump matrix on each arc of $\Sigma_\mathbf{M}$ is as indicated.}
\label{f:M}
\end{figure}
\begin{rhp}
\label{rhp:M}
Let $\gamma>0$ be fixed and consider the contour $\Sigma_\mathbf{M}$ shown in Figure~\ref{f:M}.  For a given
Schwartz-class function $r(\lambda)$
that is the boundary value of a function analytic for $\Im(\lambda)>0$ (denoted also by $r(\lambda)$), for a given sign $D_-:=\pm 1$, and for given $(t,z)\in\Real^2$, seek
a $2\times 2$ matrix-valued function $\lambda\mapsto\M(\lambda;t,z)$ that is analytic for $\lambda\in\Complex\setminus\Sigma_\mathbf{M}$; that satisfies $\M\to\I$ as $\lambda\to\infty$;
and that takes continuous boundary values on $\Sigma_\M$ from each component of the complement related by the following jump conditions
\begin{equation}
\begin{aligned}
\M^+(\lambda;t,z) & = \M^-(\lambda;t,z)\W^\dagger(\lambda;t,z)\W(\lambda;t,z)\,,\qquad&&
\lambda\in(-\infty,-\gamma)\cup(\gamma,+\infty)\,,\\
\M^+(\lambda;t,z) & = \M^-(\lambda;t,z) \W(\lambda;t,z)\,,\qquad&&
|\lambda| = \gamma\,,\qquad \Im(\lambda) > 0\,,\\
\M^+(\lambda;t,z) & = \M^-(\lambda;t,z) \W^\dagger(\lambda;t,z)\,,\qquad&&
|\lambda| = \gamma\,,\qquad \Im(\lambda) < 0\,,
\end{aligned}
\end{equation}
where
a matrix $\W(\lambda;t,z)$ is defined for $\Im(\lambda)\ge 0$ by
\begin{equation}
\everymath{\displaystyle}
\W(\lambda;t,z) = \bpm 1 & 0 \\ r(\lambda)\ee^{-2\ii\theta(\lambda;t,z)} & 1\epm\,,\qquad
\Im(\lambda)\ge 0\,,\qquad
\theta(\lambda;t,z):= \lambda t - \frac{D_-z}{2\lambda}\,,
\label{e:phase-def-general}
\end{equation}
and where $\W^\dagger(\lambda;t,z)$ denotes the Schwarz reflection $\b{\W(\b{\lambda};t,z)}^\top$.
\end{rhp}

\begin{remark}
The jump matrix satisfies the conditions of Zhou's vanishing lemma \cite{z1989}, implying that RHP~\ref{rhp:M} is uniquely solvable for all $(t,z)\in\mathbb{R}^2$.
\end{remark}

\begin{remark}
Here, and in the rest of the paper, we use the convention that a superscript ``$+$'' (resp., ``$-$'') denotes a boundary value taken from the left (resp., right) by orientation.
There is an essential singularity at $\lambda = 0$ in the exponential factors
$\ee^{\pm2\ii\theta(\lambda;t,z)}$
that is avoided by the jump contour $|\lambda| = \gamma$ with arbitrary radius $\gamma>0$.
We observe that \emph{if the medium is initially stable ($D_- = -1$)}
we can pass to the limit $\gamma=0$,
because these factors decay as $\lambda\to 0$ from within in the respective half-disks.
This yields an equivalent RHP with the real line as the only jump contour.  In~\cite{z1980} the inverse problem was formulated instead as a system of Gel'fand-Levitan-Marchenko equations corresponding to a RHP on the real line, and it was suggested that the essential singularity at $\lambda=0$ which appears to require careful interpretation is responsible for the observed slow decay of the optical pulse as $t\to+\infty$ and leading to a loss of $L^1(\Real)$ integrability.  However, this phenomenon is also generated from RHP~\ref{rhp:M} whose contour completely avoids the origin.
\end{remark}

We then have the following result, on which the rest of our paper is based.
\begin{theorem}
\label{thm:reconstruction}
Let $q_0$ be an incident pulse satisfying Assumption~\ref{ass:q0-assumption}, and suppose further that $q_0$ generates no discrete eigenvalues or spectral singularities under the direct transform associated with the Zakharov-Shabat equation.  Then
the unique causal solution to the Cauchy problem~\eqref{e:mbe} can be reconstructed from the solution of RHP~\ref{rhp:M} in which $r(\lambda)$ denotes the reflection coefficient for $q_0$ by the
following formul\ae:
\begin{equation}
\label{e:reconstruction}
q(t,z) = -2\ii\lim_{\lambda\to\infty}\lambda M_{1,2}(\lambda;t,z)\,,\qquad
\brho(t,z) = D_- \M(0;t,z)\sigma_3 \M(0;t,z)^{-1}\,,\quad t\in\mathbb{R}\,,\quad z\ge 0\,.
\end{equation}
\end{theorem}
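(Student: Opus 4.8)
The plan is to verify that the matrix $\M(\lambda;t,z)$ solving RHP~\ref{rhp:M} is precisely the fundamental solution produced by the standard IST/dressing construction applied to the reflection coefficient $r(\lambda)$ of $q_0$, and then to read off the causal solution from the reconstruction formulae by a direct calculation. First I would set up the (formal) IST side: starting from the Jost solutions $\bphi_\pm$ of the Zakharov–Shabat scattering problem at $z=0$ (which, as noted in the excerpt, satisfy $\bphi_-(t;\lambda)\equiv\ee^{\ii\lambda t\sigma_3}$ for $t\le 0$ because $q_0\equiv 0$ there), propagate the scattering data in $z$ using the second Lax equation in~\eqref{e:LP}, which — thanks to the normalization of the Jost functions and the known limiting behavior $\brho\to\diag(D_-,-D_-)$ as $t\to-\infty$ — multiplies the reflection coefficient by exactly $\ee^{-2\ii\theta(\lambda;t,z)}$ with $\theta(\lambda;t,z)=\lambda t - D_-z/(2\lambda)$. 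Assembling the sectionally-analytic matrix out of $\bphi_\pm$ and the scattering matrix, renormalizing so that $\M\to\I$ at $\lambda=\infty$, and folding the lower-triangular half of the real-line jump together with its Schwarz reflection onto the two arcs of $|\lambda|=\gamma$, one recovers precisely the jump conditions of RHP~\ref{rhp:M}. The symmetry $\bS(\lambda)=\sigma_2\overline{\bS(\b\lambda)}\sigma_2$ together with the analogous symmetry of $\M$ is what makes the $\gamma$-circle and $\W^\dagger$ structure consistent, and the absence of discrete eigenvalues and spectral singularities guarantees no poles are missed.

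Next I would run the dressing argument in reverse to confirm that $\M$ actually generates a solution of the MBE Lax pair and hence of~\eqref{e:mbe-matrix}. Define $\bphi(t,z;\lambda):=\M(\lambda;t,z)\ee^{\ii\theta(\lambda;t,z)\sigma_3}$ and check that, because the jump matrices of $\M$ conjugated by $\ee^{-\ii\theta\sigma_3}$ are independent of $(t,z)$, the logarithmic derivatives $\bphi_t\bphi^{-1}$ and $\bphi_z\bphi^{-1}$ extend to entire functions of $\lambda$; a Liouville/growth argument at $\lambda=\infty$ (using $\M=\I+\M_1/\lambda+O(\lambda^{-2})$) and at $\lambda=0$ (using the value $\M(0;t,z)$, which is finite since the contour avoids the origin) forces them to be exactly $\ii\lambda\sigma_3+\Q$ and $-\tfrac{\ii}{2\lambda}\brho$ respectively, with $\Q$ and $\brho$ given by the reconstruction formulae~\eqref{e:reconstruction}. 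The large-$\lambda$ expansion gives $q=-2\ii (M_1)_{1,2}$ and its conjugate in the $(2,1)$ slot; the $\lambda=0$ evaluation gives $\brho=D_-\M(0)\sigma_3\M(0)^{-1}$, which manifestly satisfies $\brho^\dagger=\brho$, $\tr\brho=0$, $\det\brho=-1$, i.e. $D^2+|P|^2=1$. Compatibility of the two Lax equations then yields~\eqref{e:mbe-matrix}, equivalently~\eqref{e:mbe}.

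It then remains to check the initial/boundary data and causality. For the boundary condition at $z=0$: when $z=0$ the phase reduces to $\theta=\lambda t$ and the RHP becomes exactly the inverse-scattering RHP for the potential $q_0$, so uniqueness of its solution plus the standard reconstruction give $q(t,0)=q_0(t)$; I would cite the Zakharov–Shabat inverse theory (Lemma~\ref{lem:r-asymptotics} / Appendix~\ref{s:proof-reconstruction}) for this. For the behavior as $t\to-\infty$: for $t<0$ I would show that $\W(\lambda;t,z)\to\I$ in a suitable sense — more carefully, that the jump on the circle arcs, governed by $r(\lambda)\ee^{-2\ii\theta}$ with $\theta=\lambda t - D_-z/(2\lambda)$, can be deformed away because $r$ extends analytically to $\Im\lambda>0$ (Lemma~\ref{lemma:reflection}), so that contour deformation of the $\gamma$-circle into the half-planes, combined with the decay of $\ee^{-2\ii\theta}$ as $t\to-\infty$ on the deformed contours, shows $\M(\lambda;t,z)\to\I$ and hence $q(t,z)\to 0$, $\brho(t,z)\to D_-\sigma_3$, i.e. $D_-=\pm1$, $P_-=0$, as required; in fact the same deformation shows $q(t,z)\equiv 0$ for all $t<0$, establishing causality directly, after which Definition~\ref{def:causal} gives $D\equiv D_-$, $P\equiv 0$ for $t<0$. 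Uniqueness of the causal solution is then Theorem~\ref{thm:causal-uniqueness}.

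The main obstacle I anticipate is the rigorous justification of the dressing step — that $\M$ is differentiable in $(t,z)$ with the claimed Lax structure and that the reconstructed $\Q$, $\brho$ actually produce a genuine (classical, or at least distributional) solution of~\eqref{e:mbe} — rather than merely a formal one, since, as the excerpt emphasizes, $q(\cdot,z)\notin L^1(\Real)$ for $z>0$ and the naive IST breaks down. This forces one to argue entirely on the RHP side: establish smooth dependence of the RHP solution on the parameters $(t,z)$ via the small-norm / vanishing-lemma solvability together with standard results on parameter dependence of Cauchy integral operators (the jump depends smoothly on $(t,z)$ because $r$ is Schwartz and $\ee^{-2\ii\theta}$ is entire in $\lambda$ away from $0$), and then push the Lax-pair verification through using only the analyticity and normalization of $\M$. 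Handling the essential singularity at $\lambda=0$ carefully when evaluating $\M(0;t,z)$ and when extracting the $-\tfrac{\ii}{2\lambda}\brho$ term is the delicate point; the fixed circle $|\lambda|=\gamma>0$ is exactly what makes this tractable, and $\gamma$-independence of the reconstructed fields should be checked as a consistency condition.
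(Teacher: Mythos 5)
Your proposal matches the paper's proof in all essentials: the dressing argument (Liouville's theorem applied to $\bphi_t\bphi^{-1}$ and $\bphi_z\bphi^{-1}$ with $\bphi=\M\ee^{\ii\theta\sigma_3}$, noting the removable singularity in the first and the simple pole producing $-\tfrac{\ii}{2\lambda}\brho$ in the second), the reduction at $z=0$ to the standard Zakharov--Shabat RHP on the real line together with the Schwartz-class bijection, and the explicit trivial solution of the RHP for $t<0$ using the analyticity of $r$ in the upper half-plane and the decay of $r(\lambda)\ee^{\mp 2\ii\theta}$ in the appropriate half-planes. Your opening paragraph deriving RHP~\ref{rhp:M} from the forward IST is not part of the paper's argument --- it is precisely the step the paper declines to make rigorous, since $q(\cdot,z)\notin L^1(\Real)$ for $z>0$ --- but you correctly observe at the end that the proof must run entirely on the RHP side, which is exactly what your remaining steps accomplish.
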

Theorem~\ref{thm:reconstruction} is proved in Appendix~\ref{s:proof-reconstruction}.
The solution generated from RHP~\ref{rhp:M} is necessarily causal
because this problem
can be solved exactly and trivially when $t<0$,
which is a direct consequence of the analyticity of the reflection coefficient $r(\lambda)$
resulting from the cutoff condition for $q_0(\cdot)$ in Assumption~\ref{ass:q0-assumption}.
The argument applies regardless of the initial state of the medium
because for large $\lambda$ the phase $\theta(\lambda;t,z)$
becomes independent of $D_-=\pm 1$; the fact that the contour $\Sigma_\M$ avoids the origin then allows
one to ``bypass'' the fact that $\theta(\lambda;t,z)$ is strongly dependent on $D_-$ for small $\lambda$.

%
%
%

In order to present our results,
we now introduce the notion of the ``moments'' of the incident optical pulse.
\begin{definition}[nonlinear moments]
The nonlinear \textit{moment} with index $m\ge0$ of an incident optical pulse $q_0(t)$ is defined via the reflection coefficient,
\begin{equation}
r_0^{(m)}\coloneq \frac{\dd^m r(\lambda)}{\dd \lambda^m}\bigg|_{\lambda = 0}\,,\qquad m\ge 0\,. 
\end{equation}
If the index $m$ is unspecified, the term ``moment'' refers to the zeroth moment which we denote for brevity by $r_0=r_0^{(0)}$.
We also denote the index of the first nonzero moment by $M$,
i.e., $r_0^{(M)}\ne 0$ and $r_0^{(m)} = 0$ for all $m = 0,1,\dots,M-1$.
\label{def:moments}
\end{definition}
\begin{remark}
The moment $r_0$ cannot be calculated explicitly for a generic incident pulse
$q_0(t)\in\mathscr{S}(\Real)$.
However, if $q_0(t)$ is real-valued, when $\lambda=0$ the Jost solution $\bphi_+(t;\lambda)$ is given explicitly by
\begin{equation}
\bphi_+(t;0)=\begin{pmatrix}\cos(I(t)) & \;-\sin(I(t))\\\sin(I(t)) & \;\cos(I(t))\end{pmatrix},\quad I(t)\coloneq\int_t^{+\infty}q_0(\tau)\dd\tau,
\label{e:JostPlus0}
\end{equation}
so from \eqref{e:reflection-coefficient} we have:
\begin{equation}
\label{e:r0-real}
r_0 = \tan(I(0))=\tan\bigg(\int_0^\infty q_0(t)\dd t\bigg)\,,\quad q_0(\cdot)\;\text{real-valued and supported on $\Real_+$}.
\end{equation}
In particular, this shows that when the total integral of a real-valued $q_0$ is an odd half-integer multiple of $\pi$,
a spectral singularity appears at the origin.  One can also calculate higher derivatives of $r(\lambda)$ at $\lambda=0$ assuming sufficient decay of $q_0$.  For instance, letting $\dot{\bphi}_+(t;\lambda)$ denote the partial derivative of the Jost solution with respect to $\lambda$, differentiation of \eqref{e:reflection-coefficient} gives
\begin{equation}
r'(0)=\frac{\dot{\phi}_{+,2,1}(0;0)\phi_{+,1,1}(0;0)-\phi_{+,2,1}(0;0)\dot{\phi}_{+,1,1}(0;0)}{\phi_{+,1,1}(0;0)^2} = \frac{\cos(I(0))\dot{\phi}_{+,2,1}(0;0)-\sin(I(0))\dot{\phi}_{+,1,1}(0;0)}{\cos^2(I(0))},
\end{equation}
and differentiation of the Zakharov-Shabat equation for $\bphi_+(t;\lambda)$ with respect to $\lambda$ at $\lambda=0$ gives
\begin{equation}
\dot{\bphi}_{+,t}(t;0)=q_0(t)\begin{pmatrix} 0&1\\-1&0\end{pmatrix}\dot{\bphi}_+(t;0) + \ii\sigma_3\bphi_+(t;0).
\end{equation}
Assuming for simplicity that $\mathrm{supp}(q_0)=[0,T]$ for some $T>0$, we can use $\bphi_+(t;0)$ given by \eqref{e:JostPlus0} as a fundamental solution matrix for the homogeneous equation and, since $\bphi_+(t;0)=\mathbb{I}$ and $\dot{\bphi}_+(t;0)=\ii t\sigma_3$ both hold for $t\ge T$, we get by variation of parameters:
\begin{equation}
\dot{\bphi}_+(t;0)=\bphi_+(t;0)\left[\ii T\sigma_3 -\ii\int_t^T\bphi_+(s;0)^{-1}\sigma_3\bphi_+(s;0)\dd s\right],
\end{equation}
so that when $t=0$,
\begin{equation}
\dot{\bphi}_+(0;0)=\begin{pmatrix}\cos(I(0)) & \;-\sin(I(0))\\\sin(I(0)) & \;\cos(I(0))\end{pmatrix}
\left[\ii T\sigma_3-\ii\int_0^T\bphi_+(s;0)^{-1}\sigma_3\bphi_+(s;0)\dd s\right].
\end{equation}
From \eqref{e:JostPlus0} we obtain
\begin{equation}
\begin{split}
\bphi_+(s;0)^{-1}\sigma_3\bphi_+(s;0)&=\begin{pmatrix}\cos^2(I(s))-\sin^2(I(s)) & -2\sin(I(s))\cos(I(s))\\
-2\sin(I(s))\cos(I(s)) & \sin^2(I(s))-\cos^2(I(s))\end{pmatrix}\\
&=\begin{pmatrix}\cos(2I(s)) & -\sin(2I(s))\\-\sin(2I(s)) & -\cos(2I(s))\end{pmatrix}.
\end{split}
\end{equation}
It then follows that if $q_0$ is real-valued and supported on $[0,T]$,
\begin{equation}
r'(0)=\ii\sec^2(I(0))\int_0^T\sin(2I(s))\dd s = \ii\sec^2\left(\int_0^Tq_0(t)\dd t\right)\int_0^T\sin\left(2\int_t^Tq_0(s)\dd s\right)\dd t.
\label{e:rprime-zero}
\end{equation}
The compact support assumption can then be dropped due to rapid decay of $q_0(t)$ as $t\to+\infty$ for $q_0\in\mathscr{S}(\Real)$; one simply sets $T=+\infty$ in \eqref{e:rprime-zero}.
\end{remark}
Another quantity we need that is related to the reflection coefficient is the following.
\begin{definition}[phase $\aleph$]
A real phase $\aleph$ is defined by the principal value integral
\begin{equation}
\aleph\coloneq \frac{1}{\pi}\,\,\dashint_\Real\ln(1+|r(\lambda)|^2)\frac{\dd\lambda}{\lambda}.
\label{e:aleph-def}
\end{equation}
Note that $\aleph$ is finite when $r(\cdot)\in\mathscr{S}(\Real)$ as guaranteed under some conditions by Lemma~\ref{lemma:reflection}.
\label{def:aleph}
\end{definition}
\begin{remark}
The quantity $\aleph$ vanishes when the incident pulse $q_0(t)$ is a real function.
This is because for a real potential,
the corresponding non-selfadjoint Zakharov-Shabat reflection coefficient $r(\lambda)$ enjoys an additional symmetry, namely that $\b{r(\lambda)} = r(-\lambda)$, making the integrand in
\eqref{e:aleph-def} an odd function.
\label{rem:aleph}
\end{remark}
%

When $z=0$, the density matrix $\brho(t,0)$ satisfying the Bloch subsystem $\brho_t=[\Q,\brho]$ for $t>0$ with initial condition $\brho(0,0)=D_-\sigma_3$ can be expressed explicitly in terms of the Jost solutions of the Zakharov-Shabat system with potential $q_0(t)$, evaluated at the origin $\lambda=0$.  Indeed, defining
\begin{equation}
\brho(t,0)\coloneq D_-\bphi_-(t;0)\sigma_3\bphi_-(t;0)^{-1},\quad t>0,
\label{e:density-matrix-z-zero}
\end{equation}
one checks easily that for $z=0$,
\begin{equation}
\frac{\dd\bphi_-}{\dd t}=\begin{pmatrix}0&q_0(t)\\-\b{q_0(t)}&0\end{pmatrix}\bphi_-\implies\frac{\dd\brho}{\dd t}=\left[\begin{pmatrix}0&q_0(t)\\-\b{q_0(t)}&0\end{pmatrix},\brho\right],
\end{equation}
and $\brho(0,0)=D_-\sigma_3$ because $\bphi_-(0;0)=\mathbb{I}$.  The formula \eqref{e:density-matrix-z-zero} allows one to determine the asymptotic behavior of $\brho(t,0)$ as $t\to+\infty$.  For this purpose, we recall the defining identity $\bphi_+(t;\lambda)=\bphi_-(t;\lambda)\bS(\lambda)$ for the scattering matrix to obtain the equivalent representation
\begin{equation}
\brho(t,0)=D_-\bphi_+(t;0)\bS(0)^{-1}\sigma_3\bS(0)\bphi_+(t;0)^{-1},\quad t>0.
\end{equation}
Since $\bphi_+(t;0)\to\mathbb{I}$ as $t\to+\infty$, the following limit evidently exists:
\begin{equation}
\lim_{t\to+\infty}\brho(t,0)=D_-\bS(0)^{-1}\sigma_3\bS(0).
\end{equation}
Using the identities \eqref{e:scattering-matrix},
and looking at the first-row elements then gives
\begin{equation}
\lim_{t\to+\infty}P(t,0)=-D_-\frac{2\b{r_0}}{1+|r_0|^2}\frac{S_{2,2}(0)}{S_{1,1}(0)}\quad\text{and}\quad
\lim_{t\to+\infty}D(t,0)=D_-\frac{1-|r_0|^2}{1+|r_0|^2}.
\end{equation}
The fraction $S_{2,2}(0)/S_{1,1}(0)$ has unit modulus, and under the assumptions of Theorem~\ref{thm:reconstruction} (absence of eigenvalues or spectral singularities) this fraction can be expressed via a \emph{trace identity}, which we now recall.  Because $S_{1,1}(\lambda)$ is analytic for $\Im(\lambda)>0$, continuous for $\Im(\lambda)\ge 0$, bounded away from zero for $\Im(\lambda)\ge 0$, and satisfies $S_{1,1}(\lambda)\to 1$ as $\lambda\to\infty$, one can write $S_{1,1}(\lambda)=\ee^{F^+(\lambda)}$, where $F^+(\lambda)$ denotes the boundary value of a function analytic for $\Im(\lambda)>0$ and continuous for $\Im(\lambda)\ge 0$ that vanishes as $\lambda\to\infty$.  Likewise $S_{2,2}(\lambda)=\ee^{-F^-(\lambda)}$, where $F^-(\lambda)$ is the boundary value of a function analytic for $\Im(\lambda)<0$ and continuous for $\Im(\lambda)\le 0$ that vanishes as $\lambda\to\infty$.  The identities \eqref{e:scattering-matrix} and the definition \eqref{e:reflection-coefficient} of the reflection coefficient $r(\lambda)$ then imply that for real $\lambda$, $F^+(\lambda)-F^-(\lambda)=-\ln(1+|r(\lambda)|^2)$.  From the Plemelj formula it then follows that $F^\pm(\lambda)$ are the boundary values of the following function analytic for $\lambda\in\Complex\setminus\Real$:
\begin{equation}
F(\lambda)=-\frac{1}{2\pi\ii}\int_\Real \frac{\ln(1+|r(s)|^2)\dd s}{s-\lambda},\quad\lambda\in\Complex\setminus\Real.
\label{e:f-lambda}
\end{equation}
Evaluating the sum of the boundary values at $\lambda=0$ and comparing with Definition~\ref{def:aleph} gives $F^+(0)+F^-(0)=\ii\aleph$.  Therefore under the assumptions of Theorem~\ref{thm:reconstruction}
we obtain
\begin{equation}
\lim_{t\to+\infty}P(t,0)=-D_- \frac{2\b{r_0}\ee^{-\ii\aleph}}{1+|r_0|^2}\quad\text{and}\quad
\lim_{t\to+\infty}D(t,0)=D_-\frac{1-|r_0|^2}{1+|r_0|^2}
\label{e:final-state-z-zero}
\end{equation}
for the final state of the active medium induced by the incident optical pulse $q_0(\cdot)$ exactly at the edge $z=0$.  Obviously, the medium is not in a pure state as $t\to+\infty$ for $z=0$, unless $r_0=0$, in which case the medium returns to its initial pure state (which could be stable or unstable).  This is the clearest demonstration so far that for the medium to decay to the stable state asymptotically as $t\to+\infty$ for all $z>0$, some kind of boundary layer is generally needed to resolve the transition near the edge.

\subsection{Precise definition of self-similar solutions}
\label{s:precise-self-similar}
The differential equation \eqref{e:PIII} is a special case of the general four-parameter family of Painlev\'e-III equations for which (in the notation of \cite[Chapter 32]{dlmf}) $\alpha=0$, $\beta=4$, $\gamma=4$ and $\delta=-4$. It also fits into the isomonodromy scheme of Jimbo and Miwa (described for instance in \cite{fikn2006}) with parameters $\Theta_0=\Theta_\infty=0$.
Most solutions have a branch point at the origin, which is the unique fixed singular point for \eqref{e:PIII}.  However, there are two one-parameter families of solutions that are analytic at $X=0$.  Indeed, given any nonzero complex number $u_0\in\mathbb{C}\setminus\{0\}$, there is a unique solution analytic at $X=0$ with $u(0)=u_0$.  A second family consists of analytic solutions vanishing at $X=0$.  Here, the equation \eqref{e:PIII} determines $u'(0)=1$ and $u''(0)=0$, but $u'''(0)=\omega\in\mathbb{C}$ is arbitrary, after which all subsequent Taylor coefficients are uniquely determined in terms of $\omega$. The latter solutions are the ones relevant here, with $\omega$ restricted to the open interval $(-3,3)$, and we denote them by $u=u(X;\omega)$.  These solutions are all odd functions of $X\in\mathbb{C}$ analytic at the origin and globally meromorphic, with Taylor expansion
\begin{equation}
u(X;\omega)=X+\omega\frac{X^3}{3!} + 40\frac{X^5}{5!} + \O(X^7),\quad X\to 0.
\label{e:u-series}
\end{equation}
For $\omega\in (-3,3)$ and $X$ on the real (resp., imaginary) axis in the complex plane, these solutions are real-valued (resp., purely imaginary).  The family of solutions $u(X;\omega)$ of the PIII equation \eqref{e:PIII} for $\omega\in (-3,3)$ coincides with that appearing in~\cite[Theorem 2]{bb2019}.

Given such a solution $u=u(X;\omega)$ of \eqref{e:PIII}, we consider the auxiliary functions $y(X)$, $U(X)$, and $s(X)$ satisfying the related first-order system \eqref{e:coupled-PIII-0-0}.  Using the relation $u=-y/s$, the latter system can be written in the form
\begin{equation}
\frac{\dd y}{\dd X}  = \frac{2y}{u}\,,\qquad
X \frac{\dd s}{\dd X} = 
s+2Xus-4usU\,,\qquad
X \frac{\dd U}{\dd X} = -4u U^2 + (4uX + 1) U.
\label{e:PIII-2}
\end{equation}
From \eqref{e:u-series}, upon substituting power series for $y(X)$, $s(X)$, and $U(X)$, one easily sees that solutions of these equations analytic at the origin necessarily vanish there:  $y(0)=U(0)=s(0)=0$, and $y'(0)=0$ must also hold.  The values $y''(0)=y_2$, $U'(0)=U_1$, and $s'(0)=s_1$ are free, and then all subsequent Taylor coefficients are determined in terms of $y_2$, $s_1$, $U_1$, and $\omega$ (via the Taylor coefficients of $u(X;\omega)$) by \eqref{e:PIII-2} and \eqref{e:u-series}.
For instance, from the first equation in \eqref{e:PIII-2} and the expansion \eqref{e:u-series}, one finds that the solution with $y(0)=y'(0)=0$ and $y''(0)=y_2$ has the series
\begin{equation}
y(X)=y_2\frac{X^2}{2!} - 2y_2\omega\frac{X^4}{4!} + \O(X^6),\quad X\to 0.
\label{e:y-series}
\end{equation}
In fact, the values $y_2$, $s_1$, and $U_1$ are not independent.  Indeed,
since $s=-y/u$, we can combine the series \eqref{e:u-series} and \eqref{e:y-series} to find that $s_1$ is not actually arbitrary:
\begin{equation}
s(X)=-\frac{y(X)}{u(X;\omega)} = -\frac{1}{2}y_2X + y_2\omega\frac{X^3}{3!}+\O(X^5),\quad X\to 0.
\label{e:s-series}
\end{equation}
Likewise, using the series \eqref{e:u-series} 
and \eqref{e:s-series} in the second equation in \eqref{e:PIII-2} shows that $U_1$ is also not arbitrary:
\begin{equation}
U(X)=
\frac{s(X)+2Xu(X;\omega)s(X)-Xs'(X)}{4u(X;\omega)s(X)}=
\left(\frac{1}{6}\omega+\frac{1}{2}\right)X + \O(X^3),\quad X\to 0.
\label{e:U-series}
\end{equation}
Finally, to ensure that the functions $y(X)$, $s(X)$, and $U(X)$ solve not only the system \eqref{e:coupled-PIII-0-0} but also the self-similar MBEs in the form \eqref{e:coupled-self-similar}, we need to impose the condition $J=-1$ (see \eqref{e:J-constant}), which in light of the series \eqref{e:s-series}--\eqref{e:U-series} means that
\begin{equation}
y_2^2=1-\left(\frac{\omega}{3}\right)^2.
\label{e:y2-u3}
\end{equation}
For $\omega\in (-3,3)$, selecting the positive square root in \eqref{e:y2-u3} uniquely determines a one-parameter family of solutions of \eqref{e:coupled-self-similar} that we denote by $y(X;\omega)$, $s(X;\omega)$, and $U(X;\omega)$.
These three functions are also globally meromorphic for $X\in\Complex$, but they are analytic on the real and imaginary axes, and $y(X;\omega)$ is even while $U(X;\omega)$ and $s(X;\omega)$ are both odd functions (as is $u(X;\omega)$).  All four of the functions are real-valued for real $X$; $y(X;\omega)$ is also real for imaginary $X$ while $s(X;\omega)$, $U(X;\omega)$, and $u(X;\omega)$ are purely imaginary there.  An additional symmetry is that
\begin{equation}
y(-\ii X;\omega)=-y(X;-\omega),\quad
s(-\ii X;\omega)=-\ii s(X;-\omega),\quad\text{and}\quad
U(-\ii X;\omega)=\ii U(X;-\omega)-\ii X,
\label{e:PIII-rotate-X-identities}
\end{equation}
implying that also $u(-\ii X;\omega)=-y(-\ii X;\omega)/s(-\ii X;\omega)= -\ii u(X;-\omega)$.  All of the properties of the functions $u(X;\omega)$, $y(X;\omega)$, $s(X;\omega)$, and $U(X;\omega)$ described above are proved rigorously in Section~\ref{s:Properties-of-Y} below.
With this setup, we may now define two families of self-similar solutions of the MBE system.
\begin{definition}[Particular self-similar solutions]
Let $\omega\in (-3,3)$ and $\xi=\ee^{\ii\kappa}$, $\kappa\in\mathbb{R}\pmod{2\pi}$.  Let $y(X;\omega)$, $s(X;\omega)$, and $U(X;\omega)$ be the unique solutions of \eqref{e:coupled-self-similar} analytic at $X=0$ and satisfying the initial conditions
\begin{equation}
y(0;\omega)=0,\quad y'(0;\omega)=0,\quad y''(0;\omega)=\sqrt{1-\left(\frac{\omega}{3}\right)^2},
\label{e:y-Taylor}
\end{equation}
\begin{equation}
s(0;\omega)=0,\quad s'(0;\omega)=-\frac{1}{2}\sqrt{1-\left(\frac{\omega}{3}\right)^2},\quad\text{and}
\label{e:s-Taylor}
\end{equation}
\begin{equation}
U(0;\omega)=0,\quad U'(0;\omega)=\frac{1}{6}\omega+\frac{1}{2}.
\label{e:U-Taylor}
\end{equation}
Then with $x=\sqrt{2tz}\ge 0$, one real-valued self-similar solution of the MBE system is $q(t,z)=q_\mathrm{u}(t,z;\omega,\xi)$, $P(t,z)=P_\mathrm{u}(t,z;\omega,\xi)$, and $D(t,z)=D_\mathrm{u}(t,z;\omega)$ where
\begin{equation}
\begin{split}
q_\mathrm{u}(t,z;\omega,\xi)&:=t^{-1}\xi y(x;\omega)\\
P_\mathrm{u}(t,z;\omega,\xi)&:=2\xi x^{-1}s(x;\omega)\\
D_\mathrm{u}(t,z;\omega)&:=1-2x^{-1}U(x;\omega),
\end{split}
\label{e:unstable-self-similar}
\end{equation}
and another is $q(t,z)=q_\mathrm{s}(t,z;\omega,\xi)$, $P(t,z)=P_\mathrm{s}(t,z;\omega,\xi)$, and $D(t,z)=D_\mathrm{s}(t,z;\omega)$ where
\begin{equation}
\begin{split}
q_\mathrm{s}(t,z;\omega,\xi)&:=t^{-1}\xi y(-\ii x;\omega)\\
P_\mathrm{s}(t,z;\omega,\xi)&:=-2\ii \xi x^{-1}s(-\ii x;\omega)\\
D_\mathrm{s}(t,z;\omega)&:= -1+2\ii x^{-1}U(-\ii x;\omega).
\end{split}
\label{e:stable-self-similar}
\end{equation}
\label{def:self-similar}
\end{definition}
Although for given $\omega\in (-3,3)$ and $\xi=\ee^{\ii\kappa}$ these two self-similar solutions are derived from exactly the same solution of the system \eqref{e:coupled-self-similar}, the fact that that solution is sampled along two orthogonal axes in the complex plane leads in general to very different behavior.
Plots of these solutions are shown for representative values of $\omega$ and $\xi=1$ in Figures~\ref{f:unstable-self-similar}--\ref{f:stable-self-similar}.  Note that the plots for $\omega=0$ in the two cases are comparable due to \eqref{e:PIII-rotate-X-identities}.
\begin{figure}[h]
\centering
\includegraphics[width = 0.49\textwidth]{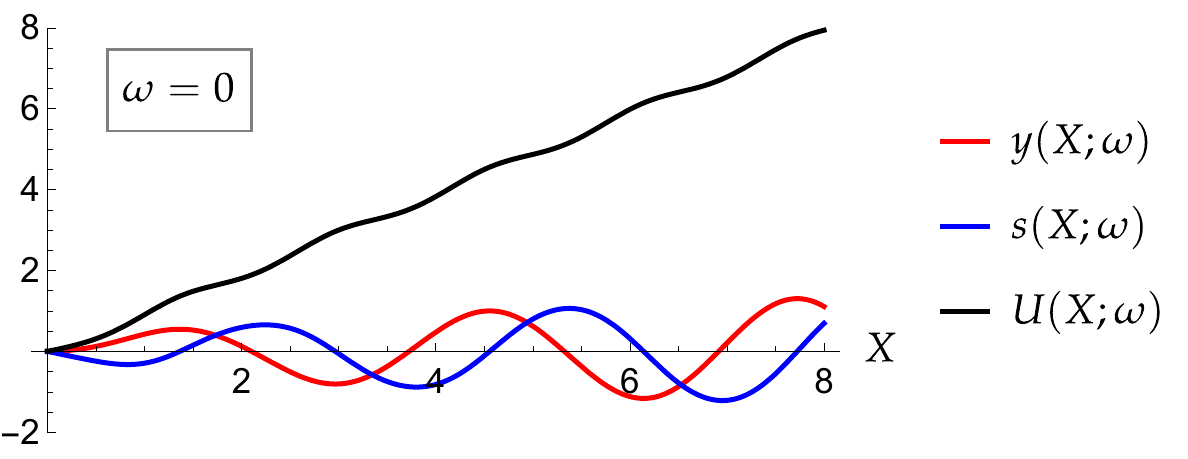}
\includegraphics[width = 0.48\textwidth]{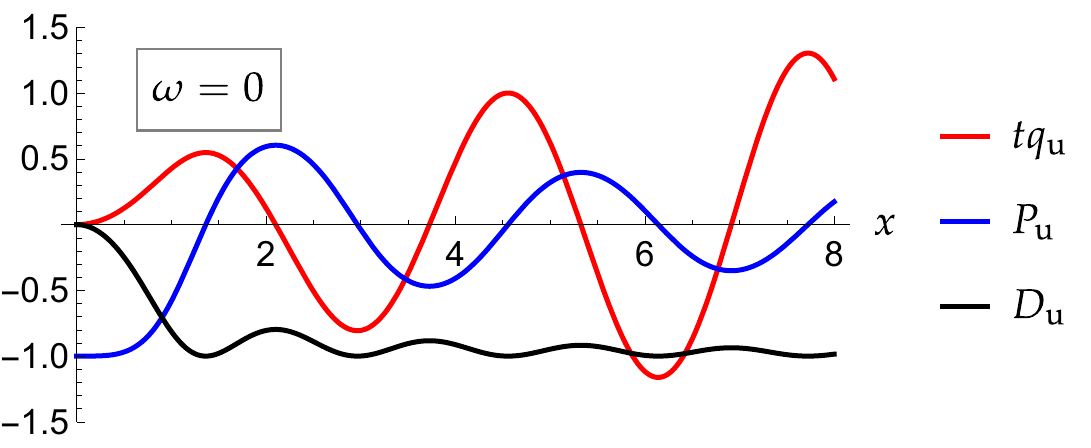}\\
\includegraphics[width = 0.49\textwidth]{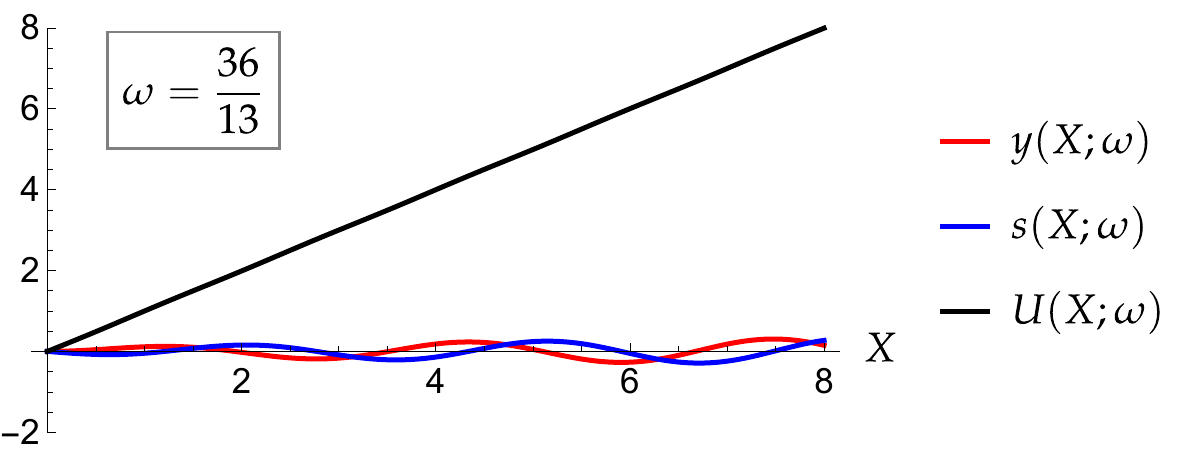}
\includegraphics[width = 0.48\textwidth]{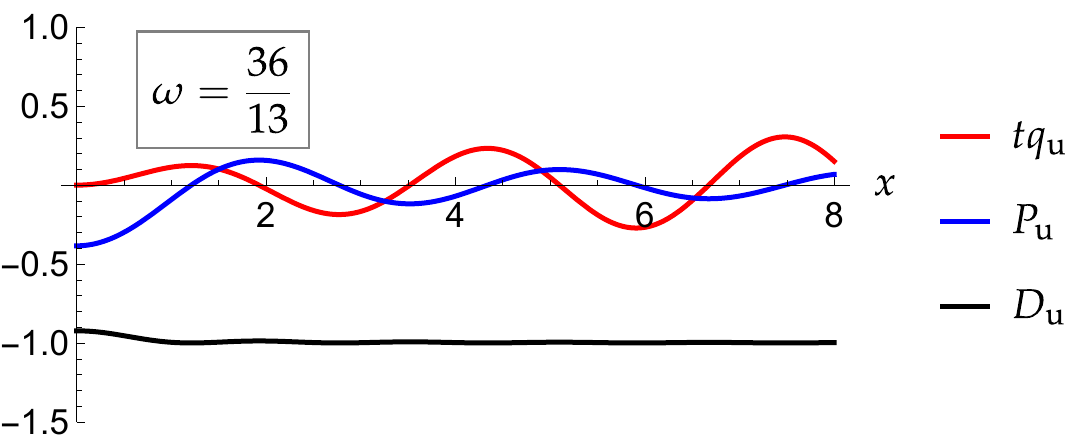}
\caption{
The PIII functions $y(x;\omega)$, $s(x;\omega)$ and $U(x;\omega)$
from Definition~\ref{def:self-similar} evaluated for $x>0$ (left column) and the corresponding self-similar solutions $tq_\mathrm{u}$, $P_\mathrm{u}$, and $D_\mathrm{u}$ plotted as functions of $x$ for $\xi=1$ (right column).  The parameter $\omega$ is $\omega=0$ (top row) and $\omega=\tfrac{36}{13}$ (bottom row), corresponding to $r_0=1$ and $r_0=5$ respectively.
}
\label{f:unstable-self-similar}
\end{figure}
\begin{figure}[h]
\centering
\includegraphics[width = 0.52\textwidth]{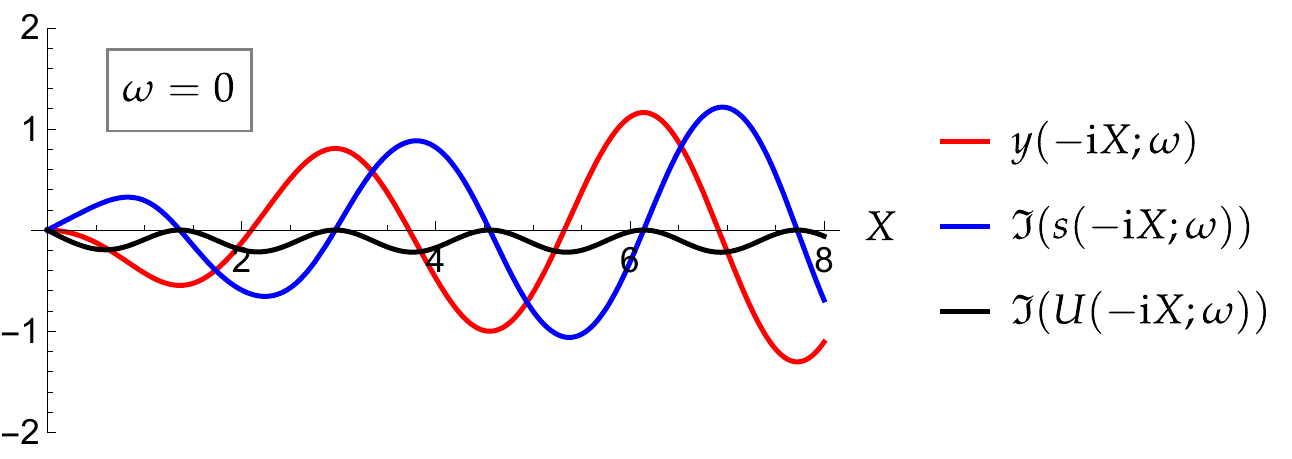}
\includegraphics[width = 0.46\textwidth]{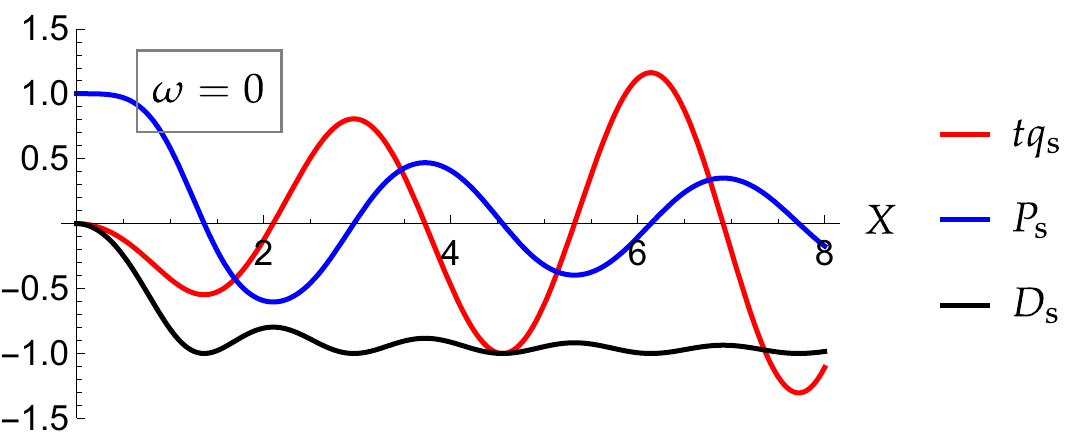}\\
\includegraphics[width = 0.52\textwidth]{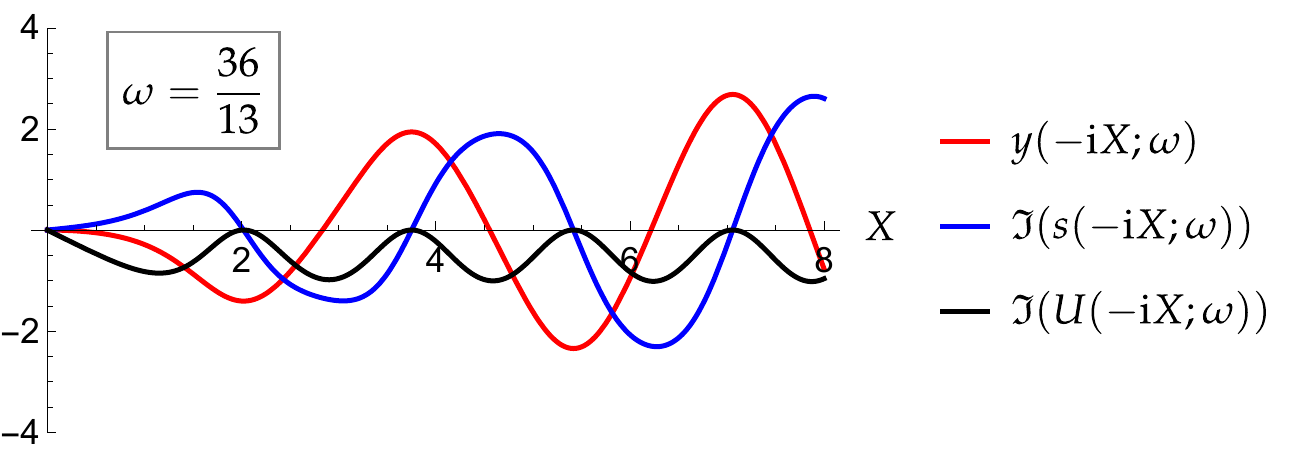}
\includegraphics[width = 0.46\textwidth]{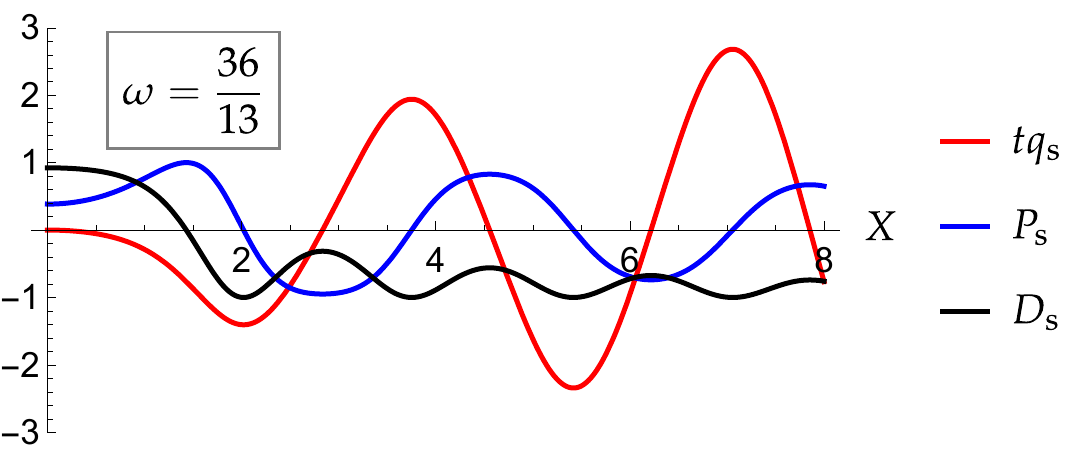}
\caption{
The ``rotated'' PIII functions $y(-\ii x;\omega)$, $s(-\ii x;\omega)$ and $U(-\ii x;\omega)$ from
Definition~\ref{def:self-similar} evaluated for $x>0$ (left column) and the corresponding self-similar solutions $tq_\mathrm{s}$, $P_\mathrm{s}$, and $D_\mathrm{s}$ plotted as functions of $x$ for $\xi=1$ (right column).
The parameter $\omega$ is $\omega=0$ (top row) and $\omega=\tfrac{36}{13}$ (bottom row), corresponding to $r_0=1$ and $r_0=5$ respectively.  
}
\label{f:stable-self-similar}
\end{figure}
\subsection{Asymptotic regimes within the light cone}
Finally, we describe the portion of the light cone $z\ge 0$, $t\ge 0$ in which our asymptotic results are valid (by causality asserted in Theorem~\ref{thm:reconstruction},
all solutions to the Cauchy problem~\eqref{e:mbe} considered in this paper are trivial outside of the light cone).
\begin{definition}[asymptotic regimes within the light cone]
\label{def:regions}
For $\alpha<1$, consider the relation between the coordinates $(t,z)$ given by
\begin{equation}
\label{e:asymptotics-curve}
z = C t^{\alpha}\,,\qquad C > 0 \mbox{ fixed.}
\end{equation}
Three asymptotic regimes within the light cone $z\ge 0$, $t\ge 0$ are defined as follows:
\begin{itemize}
\item the \emph{medium-edge regime} corresponds to $t\to+\infty$ subject to \eqref{e:asymptotics-curve} with $\alpha < -1$;
\item the \emph{transition regime} corresponds to $t\to+\infty$ subject to \eqref{e:asymptotics-curve} with $\alpha = -1$; 
\item the \emph{medium-bulk regime} corresponds to $t\to+\infty$ subject to \eqref{e:asymptotics-curve} with $-1 < \alpha < 1$.
\end{itemize}
\end{definition}
Since $\alpha<1$, the condition $z =o(t)$ as $t\to+\infty$ is met in all three regimes; this is the principal condition under which our analysis is valid.
Note that in the medium-edge regime $tz \to 0$ as $t\to+\infty$, in the transition regime $tz$ is fixed, and in the medium-bulk regime $tz\to+\infty$ as $t\to+\infty$.  The three asymptotic regimes within the light cone are illustrated in Figure~\ref{f:asymptotics}.
\begin{figure}[h]
\includegraphics[width = 0.56\textwidth]{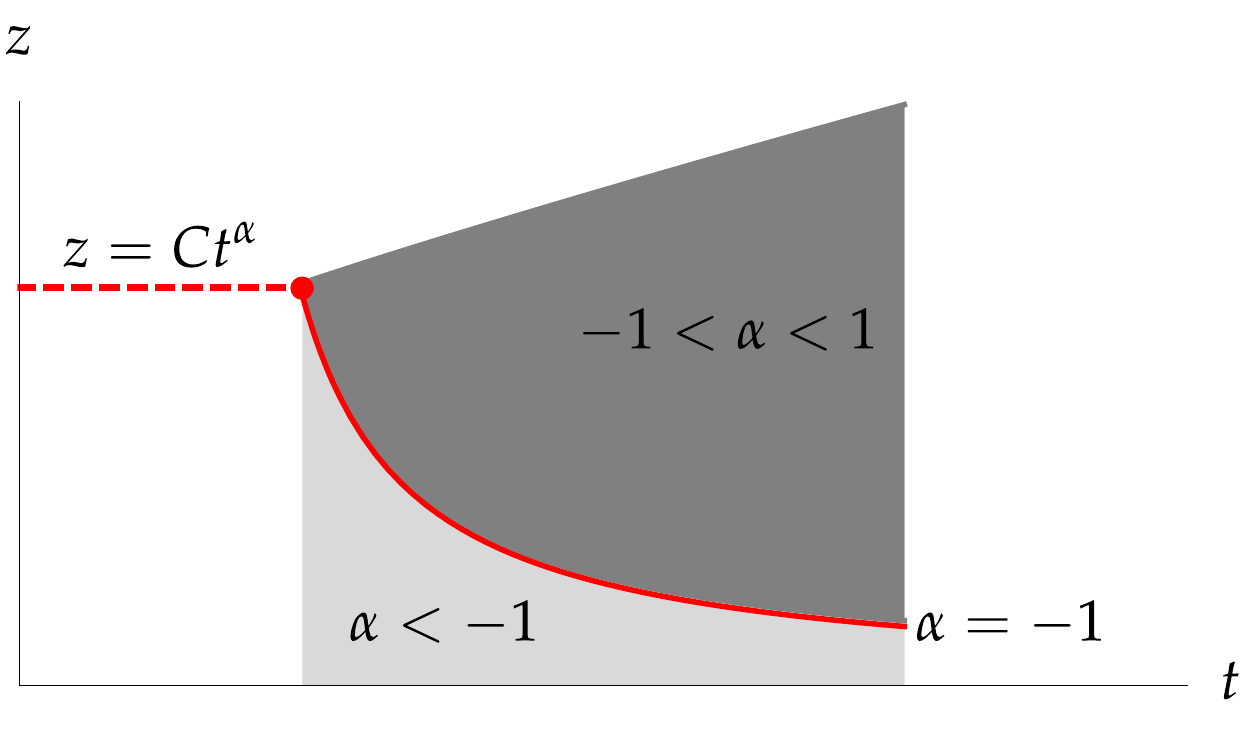}
\caption{
With $(t,z)$ related by \eqref{e:asymptotics-curve},
the asymptotic regimes within the light cone are indicated as follows:
(i) light gray shading denotes the medium-edge regime with $\alpha < -1$;
(ii) the red solid curve denotes the transition regime with $\alpha = -1$;
(iii) dark gray shading denotes the medium-bulk regime with $-1 < \alpha < 1$.
}
\label{f:asymptotics}
\end{figure}

\subsection{Results}
\label{s:results}
Our main result is the following.
\begin{theorem}[Global asymptotics --- generic case]
Suppose that the incident pulse $q_0$ satisfies the hypotheses of Theorem~\ref{thm:reconstruction}, that $M=0$, i.e., $r_0\neq 0$, and that $\omega$ is defined by
\begin{equation}
\omega\coloneq 3\frac{|r_0|^2-1}{|r_0|^2+1}\in (-3,3).
\label{e:omega-r0}
\end{equation}
Then, for every $N\in\mathbb{Z}_{\ge 2}$ the causal solution of the Cauchy problem \eqref{e:mbe} in a stable medium ($D_-=-1$) satisfies
\begin{equation}
\begin{split}
q(t,z)&=q_\mathrm{s}(t,z;\omega,\ee^{-\ii(\aleph+\arg(r_0))})+\O(z/t)+\O(t^{1-N}),\\
P(t,z)&=P_\mathrm{s}(t,z;\omega,\ee^{-\ii(\aleph+\arg(r_0))}) + \O((z/t)^\frac{1}{2})+\O(t^{\frac{3}{2}-N}),\\
D(t,z)&=D_\mathrm{s}(t,z;\omega)+\O((z/t)^\frac{1}{2}) + \O(t^{\frac{3}{2}-N}),
\end{split}
\label{e:qPD-generic-global-stable}
\end{equation}
as $t\to+\infty$ for $z\ge 0$ with $z=o(t)$.  In the same limit, the causal solution of the Cauchy problem \eqref{e:mbe} in an unstable medium ($D_-=1$) satisfies
\begin{equation}
\begin{split}
q(t,z)&=q_\mathrm{u}(t,z;\omega,\ee^{-\ii(\aleph+\arg(r_0))})+\O(z/t)+\O(t^{1-N}),\\
P(t,z)&=P_\mathrm{u}(t,z;\omega,\ee^{-\ii(\aleph+\arg(r_0))}) + \O((z/t)^\frac{1}{2})+\O(t^{\frac{3}{2}-N}),\\
D(t,z)&=D_\mathrm{u}(t,z;\omega)+\O((z/t)^\frac{1}{2}) + \O(t^{\frac{3}{2}-N}).
\end{split}
\label{e:qPD-generic-global-unstable}
\end{equation}
In these formul\ae, the phase $\aleph$ is given in Definition~\ref{def:aleph} and the explicit terms are the self-similar solutions from Definition~\ref{def:self-similar}.
\label{thm:global-M0-stable-and-unstable}
\end{theorem}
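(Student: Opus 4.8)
By Theorem~\ref{thm:reconstruction} the causal solution of \eqref{e:mbe} is reconstructed through \eqref{e:reconstruction} from the unique solution $\M(\lambda;t,z)$ of RHP~\ref{rhp:M} with $r$ the reflection coefficient of $q_0$ --- Schwartz-class and analytic in the upper half-plane by Lemma~\ref{lemma:reflection} --- so the task is a Deift--Zhou/$\dbar$ steepest-descent analysis of $\M$ as $t\to+\infty$ with $z=o(t)$. The controlling phase is $\theta(\lambda;t,z)=\lambda t-\tfrac12 D_-z/\lambda$, for which $\Im\theta=\Im(\lambda)\bigl(t-\tfrac12 D_-z/|\lambda|^2\bigr)$; thus $\ee^{-2\ii\theta}$ decays into a fixed half-plane when $D_-=-1$, but switches its regime of decay across the circle $|\lambda|=\sqrt{z/(2t)}$ when $D_-=+1$. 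In either case the two critical points $\lambda=\pm\sqrt{-D_-z/(2t)}$ of $\theta$ collapse onto the essential singularity at $\lambda=0$ as $z/t\to0$, so all nontrivial structure concentrates near the origin. The natural rescaling is $\lambda=\sqrt{z/(2t)}\,\mu$, under which $2\theta=x\bigl(\mu-D_-/\mu\bigr)$ with $x=\sqrt{2tz}$, the fixed circle $|\lambda|=\gamma$ recedes to $|\mu|=\infty$, and (for $D_-=+1$) the sign-switch locus becomes the unit circle $|\mu|=1$.

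I would first carry out the standard preparatory reductions, localized at the origin. (i) A $\dbar$-extension: replace $r(\lambda)$ off $\Real$ by a function $R(\lambda)$ that agrees with $r$ on $\Real$, reproduces as many moments $r_0^{(m)}$ of $r$ at $\lambda=0$ (Definition~\ref{def:moments}) as are needed to reach the error order $\O(t^{1-N})$, and has $\dbar R$ small near the origin with the remainder supported where $\ee^{\mp2\ii\theta}$ is exponentially small. (ii) A scalar conjugation $\M\mapsto\M\,\delta(\lambda)^{-\sigma_3}$ with $\delta$ solving the scalar RHP with jump $1+|r|^2$ on $\Real$, which strips off the diagonal middle factor of the factorization of $\W^\dagger\W$; the identity $\delta^+(0)\delta^-(0)=\ee^{\ii\aleph}$ coming from \eqref{e:f-lambda}--\eqref{e:final-state-z-zero} is precisely how the phase $\aleph$ of Definition~\ref{def:aleph} enters, while the local $\mu^{\pm\ii\nu}$-behavior of $\delta$, $\nu=\tfrac1{2\pi}\ln(1+|r_0|^2)$, feeds the monodromy of the model at $\mu=0$. (iii) Lens opening: using analyticity of $R$ in sectors off $\Real$, deform the triangular factors into thin lenses pinched at the origin, so that after rescaling all jumps on $|\lambda|=\gamma$, on $\Real$ away from $0$, and on the lens sides away from $\mu=\O(1)$ are $\I$ up to exponentially small errors.

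The crux is the local model. After these reductions the rescaled unknown $\tilde\M(\mu)$ converges, uniformly for $x$ in compact subsets of $(0,+\infty)$, to the solution $\dot\M(\mu;x)$ of a limiting RHP posed on $|\mu|=1$ with jumps built from the constant $r_0=R(0)$, the factor $\ee^{\pm\ii x(\mu-D_-/\mu)}$, and the $\mu^{\pm\ii\nu}$-behavior inherited from $\delta$. I would then identify this circle model with the Riemann-Hilbert representation of the self-similar functions $y(X;\omega)$, $s(X;\omega)$, $U(X;\omega)$ constructed in Section~\ref{s:precise-self-similar} and Section~\ref{s:Properties-of-Y}: the reconstruction \eqref{e:reconstruction} gives $q(t,z)=-2\ii\sqrt{z/(2t)}\,\lim_{\mu\to\infty}\mu[\dot\M(\mu;x)]_{1,2}+(\text{errors})$ and $\brho(t,z)=D_-\dot\M(0;x)\sigma_3\dot\M(0;x)^{-1}+(\text{errors})$, and matching these against the coupled self-similar system \eqref{e:coupled-self-similar} --- equivalently the $J=-1$ reduction \eqref{e:coupled-PIII-0-0}--\eqref{e:PIII}, which forces $y_2^2=1-(\omega/3)^2$ --- both pins down $\omega=3(|r_0|^2-1)/(|r_0|^2+1)\in(-3,3)$ and $\xi=\ee^{-\ii(\aleph+\arg r_0)}$ and reproduces \eqref{e:unstable-self-similar} (for $D_-=+1$) or \eqref{e:stable-self-similar} (for $D_-=-1$). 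The two cases differ only because, for $D_-=+1$, the essential singularity $\ee^{\pm\ii x(\mu-1/\mu)}$ effectively samples the PIII family along the real $X$-axis, whereas for $D_-=-1$ the singularity $\ee^{\pm\ii x(\mu+1/\mu)}$ samples it along the imaginary axis, as encoded by the rotation identities \eqref{e:PIII-rotate-X-identities}; as a check, the $x\to0^+$ limit of the explicit terms reproduces the edge data \eqref{e:final-state-z-zero}. Finally a small-norm/Cauchy-integral estimate produces the stated error terms: the $\O(z/t)$ and $\O((z/t)^{1/2})$ pieces from the deviation of $\tilde\M$ from $\dot\M$ (the residue-at-$\infty$ formula for $q$ absorbing one extra power of $\sqrt{z/t}$ compared with the values of $P,D$ read off at $\lambda=0$), and the $\O(t^{1-N})$, $\O(t^{3/2-N})$ pieces from the $\dbar$-correction and the exponentially small jumps; all estimates are uniform for $z=o(t)$, so the single statement subsumes the medium-edge, transition, and medium-bulk regimes of Definition~\ref{def:regions} simultaneously.

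\textbf{Main obstacle.} The genuinely non-standard step is the construction and control of the circle model $\dot\M(\mu;x)$. Here the coalescence of the two critical points of $\theta$ happens \emph{at} the essential singularity $\lambda=0$, so the textbook Painlev\'e parametrices (assembled from isolated or merging regular turning points) do not apply; one must instead show directly that the RHP on $|\mu|=1$ is solvable for every $x>0$ --- presumably via a symmetry reduction of its jump followed by Zhou's vanishing lemma, in the spirit of the remark after RHP~\ref{rhp:M} --- and that its solution is precisely the self-similar family of Definition~\ref{def:self-similar}, with enough uniformity as $x\to0$ and $x\to+\infty$ to reach the medium-edge and medium-bulk regimes. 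A subsidiary difficulty is bookkeeping the essential singularity at the origin cleanly through the $\dbar$-extension, the $\delta$-conjugation, and the lens opening.
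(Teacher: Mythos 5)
Your overall strategy coincides with the paper's: rescale by $\lambda=\lambda_\circ\mu$ to a model Riemann--Hilbert problem on the unit circle after a $\dbar$-extension and a scalar conjugation, identify the model with the isomonodromy representation of the self-similar PIII family of Definition~\ref{def:self-similar} (fixing $\omega$ and $\xi$ by matching Taylor data at $X=0$ and the first integral $J=-1$), and close with a $\dbar$-estimate plus a small-norm RHP. You also correctly isolate the genuinely hard part: solvability of the circle model, its identification with the self-similar family, and its uniform boundedness as $x\to+\infty$ (which the paper obtains by a second steepest-descent analysis with parabolic-cylinder local parametrices). Two points in your sketch, however, would fail or need repair if executed literally.

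First, the role of $\delta$. With the jump $1+|r|^2$ imposed on all of $\Real$ (as in \eqref{e:f-lambda}), $\delta$ has finite nonzero boundary values $\delta_0^\pm$ at the origin and \emph{no} power-law behavior there; the exponent $\nu=\tfrac{1}{2\pi}\ln(1+|r_0|^2)$ does appear, but only at the rescaled saddle points $k=\pm 1$ in the large-$x$ analysis of the model (the outer parametrix $\dot{\mathbf{Z}}_\mathrm{out}$ in \eqref{e:Z-dot-out}), not as monodromy of the model at $\mu=0$. This is not cosmetic: $P$ and $D$ are read off from the solution exactly at $\lambda=0$ via $\brho=D_-\M(0)\sigma_3\M(0)^{-1}$, so a $\mu^{\pm\ii\nu}$ singularity there would wreck that evaluation. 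The paper instead keeps the residual diagonal jump $(1+|r|^2)^{-\sigma_3}$ on $(-\lambda_\circ,\lambda_\circ)$, approximates it by $\Delta_0^{-\sigma_3}$, and removes it with the analytic conjugation $\Delta_0^{\pm\frac{1}{2}\sigma_3}$ in \eqref{e:dotKs-ddotKs}, so that the model RHP~\ref{rhp:PIII} has the constant jump $\mathbf{E}(\eta)$ conjugated by $\ee^{\ii\Theta\sigma_3}$ and a solution analytic at $\Lambda=0$. Second, for $D_-=+1$ the rescaled factors $\ee^{\pm\ii x(\mu-1/\mu)}$ are exponentially large/small on $|\mu|=1$ rather than oscillatory, so the stable-case construction does not transfer directly; the paper inserts a $g$-function (equivalently, the conjugations defining $\widetilde{\mathbf{Y}}$ in \eqref{e:tilde-Y}) to move these exponentials onto the diagonal and then imports boundedness from the stable model through the symmetry \eqref{e:symmetry-rotate-X}. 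Your remark that the unstable case "samples the PIII family along the real $X$-axis" points at the right answer but omits the device that makes the approximation and error analysis go through.
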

\begin{remark}
The index $N\ge 2$ is an artifact of our method of proof, which, roughly speaking, exploits a finite level of smoothness and decay of $q_0$ via the reflection coefficient $r(\lambda)$.  We leave this index in the statements of our results for readers interested to see what can be proved if weaker assumptions are taken on $q_0$.
\end{remark}
The explicit terms in the asymptotic formul\ae\ of Theorem~\ref{thm:global-M0-stable-and-unstable} are, aside from a factor of $t^{-1}$ in the optical field $q(t,z)$, functions of the similarity variable $x=\sqrt{2tz}$.  This variable becomes fixed exactly in the transition regime of Definition~\ref{def:regions}, and when $z=Ct^{-1}$ the error terms simplify as follows:
\begin{itemize}
\item in the asymptotic formul\ae\ for $q(t,z)$, the error terms take the form $\O(t^{-2}) +\O(t^{1-N})$;
\item in the asymptotic formul\ae\ for $P(t,z)$ and $D(t,z)$, the error terms are $\O(t^{-1}) + \O(t^{\frac{3}{2}-N})$.
\end{itemize}
Hence in this regime there is barely any dependence in the size of the error terms on the  index $N$, being of a different form for $N=2$ than for $N\ge 3$.

The self-similar solutions in turn simplify when $x\to 0$ and when $x\to+\infty$, which correspond to the medium-edge and medium-bulk regimes respectively.  We have the following corollaries of Theorem~\ref{thm:global-M0-stable-and-unstable}.

\begin{corollary}[Medium-edge asymptotics --- generic case]
Under the assumptions of Theorem~\ref{thm:global-M0-stable-and-unstable}, the causal solution of the Cauchy problem \eqref{e:mbe} in a stable ($D_-=-1$) or unstable ($D_-=1$) medium satisfies, for every $N\in\mathbb{Z}_{\ge 2}$,
\begin{equation}
\begin{split}
q(t,z)&=D_-\frac{2\b{r_0}\ee^{-\ii\aleph}}{1+|r_0|^2}z + \O(t^{2\alpha+1})+\O(t^{\alpha-1})+\O(t^{1-N}),\\
P(t,z)&=-D_-\frac{2\b{r_0}\ee^{-\ii\aleph}}{1+|r_0|^2} +\O(t^{\alpha+1})+\O(t^{\frac{1}{2}(\alpha-1)})+\O(t^{\frac{3}{2}-N}),\\
D(t,z)&=D_-\frac{1-|r_0|^2}{1+|r_0|^2}+\O(t^{\alpha+1}) + \O(t^{\frac{1}{2}(\alpha-1)})+\O(t^{\frac{3}{2}-N}),
\end{split}
\end{equation}
as $t\to+\infty$ with $z$ related to $t$ by \eqref{e:asymptotics-curve} with $\alpha<-1$.  \label{cor:medium-edge-M0-stable-and-unstable}
\end{corollary}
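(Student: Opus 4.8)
The plan is to feed the relation \eqref{e:asymptotics-curve} with $\alpha<-1$ into the global asymptotic formul\ae\ of Theorem~\ref{thm:global-M0-stable-and-unstable} and then expand the self-similar profiles of Definition~\ref{def:self-similar} for small argument. First I would record that along \eqref{e:asymptotics-curve} one has $z/t=Ct^{\alpha-1}$, so the errors $\O(z/t)$ and $\O((z/t)^{1/2})$ coming from Theorem~\ref{thm:global-M0-stable-and-unstable} become $\O(t^{\alpha-1})$ and $\O(t^{\frac12(\alpha-1)})$, which are two of the error contributions claimed in the corollary, while the remaining error $\O(t^{1-N})$ (resp.\ $\O(t^{\frac32-N})$) is inherited verbatim. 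Moreover $x=\sqrt{2tz}=\sqrt{2C}\,t^{\frac12(1+\alpha)}$, and since $\alpha<-1$ we have $x\to 0^+$ as $t\to+\infty$, so we are exactly in the regime where the Taylor expansions of the self-similar solutions at the origin apply.

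Next I would substitute those Taylor series. By the analyticity at $X=0$ of $y(X;\omega)$, $s(X;\omega)$, $U(X;\omega)$ established in Section~\ref{s:Properties-of-Y}, together with the series \eqref{e:y-series}, \eqref{e:s-series}, \eqref{e:U-series} and the normalizations \eqref{e:y-Taylor}--\eqref{e:U-Taylor}, one has, as $X\to 0$, $y(X;\omega)=\tfrac12 y_2 X^2+\O(X^4)$, $s(X;\omega)=-\tfrac12 y_2 X+\O(X^3)$, and $U(X;\omega)=(\tfrac16\omega+\tfrac12)X+\O(X^3)$ with $y_2:=\sqrt{1-(\omega/3)^2}$. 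Inserting $X=x$ into \eqref{e:unstable-self-similar} and $X=-\ii x$ into \eqref{e:stable-self-similar}, and using $x^2=2tz$, the unstable case gives $q_\mathrm{u}=\xi y_2 z+\O(t^{-1}x^4)$, $P_\mathrm{u}=-\xi y_2+\O(x^2)$, and $D_\mathrm{u}=-\tfrac13\omega+\O(x^2)$, while the stable case gives $q_\mathrm{s}=-\xi y_2 z+\O(t^{-1}x^4)$, $P_\mathrm{s}=\xi y_2+\O(x^2)$, and $D_\mathrm{s}=\tfrac13\omega+\O(x^2)$ (the extra signs relative to the unstable case being exactly those built into \eqref{e:stable-self-similar} once the factor $-\ii$ is propagated through the even/odd Taylor series and combined with the prefactors there). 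The remainders convert via $t^{-1}x^4=\O(tz^2)=\O(t^{2\alpha+1})$ and $x^2=\O(tz)=\O(t^{\alpha+1})$, producing the last error exponents in the statement.

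It then remains only to identify the constants in terms of $r_0$. From \eqref{e:omega-r0} one gets $1-(\omega/3)^2=4|r_0|^2/(1+|r_0|^2)^2$, hence $y_2=2|r_0|/(1+|r_0|^2)$, and $\tfrac13\omega=(|r_0|^2-1)/(|r_0|^2+1)$. Since $\xi=\ee^{-\ii(\aleph+\arg(r_0))}$ and $|r_0|\ee^{-\ii\arg(r_0)}=\b{r_0}$, we have $\xi y_2=2\b{r_0}\ee^{-\ii\aleph}/(1+|r_0|^2)$. Substituting these, and using $D_-=-1$ in the stable case and $D_-=1$ in the unstable case to absorb the signs found above, produces precisely the leading terms in the corollary for both media. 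As a sanity check, the leading term of $q$ equals $-z$ times the limiting polarization $\lim_{t\to+\infty}P(t,0)$ of \eqref{e:final-state-z-zero}, consistent with the Maxwell equation $q_z=-P$ and with $q(t,0)=q_0(t)=o(1)$ as $t\to+\infty$, and the leading terms of $P$ and $D$ coincide with the $z=0$ final state in \eqref{e:final-state-z-zero}.

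The argument has no genuine obstacle: it is entirely a matter of composing one asymptotic relation with another and bookkeeping powers of $t$. The only points needing a little care are (i) ensuring the Taylor remainders are $\O(X^k)$ uniformly as $X\to0$ along both the real and the imaginary axes — which is guaranteed by the analyticity of $y$, $s$, $U$ at the origin from Section~\ref{s:Properties-of-Y} — and (ii) the sign and phase algebra relating $(\omega,\xi,D_-)$ to $r_0$, where one must track that the stable and unstable families encode the $D_-=\pm1$ dependence through different mechanisms (a $-\ii$ rotation of the argument plus overall sign changes in \eqref{e:stable-self-similar}, versus none in \eqref{e:unstable-self-similar}).
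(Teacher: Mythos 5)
Your proposal is correct and follows essentially the same route as the paper's (very brief) proof: expand the leading self-similar terms using the parity and Taylor data \eqref{e:y-Taylor}--\eqref{e:U-Taylor} of $y$, $s$, $U$ at $X=0$, and convert the resulting powers of $x$ and the error terms of Theorem~\ref{thm:global-M0-stable-and-unstable} into powers of $t$ via \eqref{e:asymptotics-curve}. The sign/phase bookkeeping for the stable case (propagating the $-\ii$ rotation through the even/odd series) and the identification $\xi y_2=2\b{r_0}\ee^{-\ii\aleph}/(1+|r_0|^2)$ are all carried out correctly.
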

\begin{proof}
Since $y(X;\omega)$ is even while $s(X;\omega)$ and $U(X;\omega)$ are odd analytic functions of $X$, using \eqref{e:y-Taylor}--\eqref{e:U-Taylor} to expand the leading terms in \eqref{e:qPD-generic-global-stable}--\eqref{e:qPD-generic-global-unstable} as given in Definition~\ref{def:self-similar} and using \eqref{e:asymptotics-curve} in the error terms proves this result.
\end{proof}

Comparing with \eqref{e:final-state-z-zero} and taking into account the Maxwell equation $q_z=-P$, this result is satisfying because it is consistent with the state of the active medium for large $t>0$ exactly at the edge $z=0$, as computed directly from the given incident optical pulse $q_0(\cdot)$.

\begin{corollary}[Medium-bulk asymptotics --- generic case]
Under the assumptions of Theorem~\ref{thm:global-M0-stable-and-unstable}, the causal solution of the Cauchy problem \eqref{e:mbe} in a stable ($D_-=-1$) or unstable ($D_-=1$) medium satisfies
\begin{equation}
\begin{split}
q(t,z)&=D_-\ee^{-\ii(\aleph+\arg(r_0))}\frac{1}{t}\left(\frac{tz}{2}\right)^\frac{1}{4}
A\sin(\varphi(\sqrt{2tz}))+ \O(t^{-\frac{1}{4}(\alpha+5)}) + \O(t^{\alpha-1}) + \O(t^{1-N}),\\
P(t,z)&=-D_-\ee^{-\ii(\aleph+\arg(r_0))}\left(\frac{tz}{2}\right)^{-\frac{1}{4}}A\cos(\varphi(\sqrt{2tz}))
+\O(t^{-\frac{3}{4}(\alpha+1)}) + \O(t^{\frac{1}{2}(\alpha-1)})+ \O(t^{\frac{3}{2}-N}),\\
D(t,z)&=-1+\frac{1}{2}\left(\frac{tz}{2}\right)^{-\frac{1}{2}}A^2\cos^2(\varphi(\sqrt{2tz}))
+\O(t^{-(\alpha+1)}) + \O(t^{\frac{1}{2}(\alpha-1)}) + \O(t^{\frac{3}{2}-N}),
\end{split}
\label{e:qPD-medium-bulk-M0-stable-and-unstable}
\end{equation}
as $t\to+\infty$ with $z$ related to $t$ by \eqref{e:asymptotics-curve} with $\alpha\in (-1,1)$, where $\aleph$ is given in Definition~\ref{def:aleph}, and where we define
\begin{equation}
\begin{split}
\nu&\coloneq\frac{1}{2\pi}\ln(1+|r_0|^{-2D_-})>0,\\
A&\coloneq \sqrt{\frac{2}{\pi}}\frac{|\Gamma(1+\ii\nu)|}{|r_0|^{\frac{1}{2}D_-}(1+|r_0|^{2D_-})^\frac{1}{4}}>0,\\
\varphi(x)&\coloneq 2x-\nu\ln(x)-\frac{\pi}{4}+\arg(\Gamma(1+\ii\nu))-3\nu\ln(2).
\end{split}
\label{e:parameters-medium-bulk-M0-stable-and-unstable}
\end{equation}
\label{cor:medium-bulk-M0-stable-and-unstable}
\end{corollary}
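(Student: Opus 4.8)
\textbf{Proof proposal for Corollary~\ref{cor:medium-bulk-M0-stable-and-unstable}.} The plan is to derive the medium-bulk asymptotics by combining Theorem~\ref{thm:global-M0-stable-and-unstable} with the known large-$X$ asymptotics of the PIII-family functions $y(X;\omega)$, $s(X;\omega)$, and $U(X;\omega)$ introduced in Definition~\ref{def:self-similar}. The key point is that in the medium-bulk regime $-1<\alpha<1$ the similarity variable $x=\sqrt{2tz}=C^{1/2}\sqrt{2}\,t^{(\alpha+1)/2}\to+\infty$, so the explicit self-similar terms in \eqref{e:qPD-generic-global-stable}--\eqref{e:qPD-generic-global-unstable} must be replaced by their large-argument expansions. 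First I would establish (or invoke from Section~\ref{s:Properties-of-Y}, where the Riemann-Hilbert representation of these functions on the unit circle is set up) that as $X\to+\infty$ along the real axis,
\begin{equation}
s(X;\omega)=\tfrac12 A X^{1/2}\cos(\varphi_0(X))+\O(X^{-1/2}),\qquad
U(X;\omega)=\tfrac12 A^2\cos^2(\varphi_0(X))+\O(X^{-1/2}),
\nonumber
\end{equation}
with $y(X;\omega)=-2\int s$ giving the corresponding oscillatory term for $y$, where $A$ and the phase $\varphi_0$ depend on the monodromy data of the PIII solution, i.e., ultimately on $\omega$ (equivalently on $|r_0|$ via \eqref{e:omega-r0}). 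Matching this to the stable case $D_-=-1$ via \eqref{e:stable-self-similar} and to the unstable case $D_-=1$ via \eqref{e:unstable-self-similar} — using the rotation identities \eqref{e:PIII-rotate-X-identities} to handle the argument $-\ii x$ in the stable formulas — should produce the claimed leading terms, and the definitions \eqref{e:parameters-medium-bulk-M0-stable-and-unstable} of $\nu$, $A$, $\varphi$ are read off from this computation; note the exponent $-2D_-$ in $\nu$ is precisely what encodes the two cases through a single formula.

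Second, I would carefully propagate the error terms. The self-similar leading term carries overall $t^{-1}$ (for $q$) or $\O(1)$ prefactors times functions of $x\sim t^{(\alpha+1)/2}$, so the $\O(X^{-1/2})$ corrections to the PIII asymptotics contribute relative errors of size $x^{-1}\sim t^{-(\alpha+1)}$; combined with the prefactor powers of $x$ this yields the $\O(t^{-\frac14(\alpha+5)})$, $\O(t^{-\frac34(\alpha+1)})$, $\O(t^{-(\alpha+1)})$ terms displayed. The $\O(z/t)$ and $\O((z/t)^{1/2})$ errors from Theorem~\ref{thm:global-M0-stable-and-unstable} become $\O(t^{\alpha-1})$ and $\O(t^{\frac12(\alpha-1)})$ after substituting $z=Ct^\alpha$, and the $\O(t^{1-N})$, $\O(t^{\frac32-N})$ errors are unchanged. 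A brief check that every error term is genuinely $o(1)$ relative to the leading term throughout $-1<\alpha<1$ (using $N\ge2$) completes the bookkeeping; the phase $\varphi(\sqrt{2tz})$ in the statement differs from $\varphi_0(x)$ only by the explicit constant shift $-3\nu\ln 2$ coming from $x=\sqrt{2tz}$ versus a normalization $2x$ inside the PIII phase, which I would track explicitly.

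The genuinely substantive step — and the main obstacle — is establishing the large-$X$ asymptotics of $y(X;\omega)$, $s(X;\omega)$, $U(X;\omega)$ with the precise constants $A$ and $\varphi_0$, rather than merely their oscillatory form. This requires a Deift-Zhou steepest-descent analysis of the Riemann-Hilbert problem on the unit circle that represents these functions (the one alluded to in the introduction as having trivial Stokes phenomenon and nontrivial connection between two irregular singular points), extracting the connection formula that links the behavior near $X=0$ (hence the parameters $y_2$, $\omega$, equivalently $|r_0|$) to the behavior near $X=\infty$. I expect this to reduce to a parabolic-cylinder (or confluent hypergeometric) parametrix at the two stationary points on the circle, with the $\Gamma(1+\ii\nu)$ factors in $A$ and $\varphi$ emerging from that local model exactly as in classical long-time asymptotics for integrable PDEs; the identification $\nu=\frac{1}{2\pi}\ln(1+|r_0|^{-2D_-})$ is the fingerprint of that parametrix. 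If this large-$X$ analysis is carried out in full in Section~\ref{s:Properties-of-Y}, then the corollary is essentially immediate; if only partial information is available there, the self-contained route is to note that the explicit self-similar term is itself an exact MBE solution and apply the Deift-Zhou method directly to RHP~\ref{rhp:M} in the medium-bulk scaling — but the cleaner path, which I would follow, is to do the PIII asymptotics once and quote them here.
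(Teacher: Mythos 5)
Your proposal follows the paper's proof exactly: Corollary~\ref{cor:medium-bulk-M0-stable-and-unstable} is obtained by substituting into the self-similar leading terms of Theorem~\ref{thm:global-M0-stable-and-unstable} the large-$X$ asymptotics of $y(X;\omega)$, $s(X;\omega)$, $U(X;\omega)$ derived in Section~\ref{s:Boundedness-of-RHP-PIII} by Deift--Zhou steepest descent on RHP~\ref{rhp:PIII}, with outer parametrix $\left(\tfrac{k-1}{k+1}\right)^{\ii\nu\sigma_3}$ and parabolic-cylinder local parametrices at the saddles $k=\pm 1$ (which is precisely where $\Gamma(1+\ii\nu)$ and $\nu=-\ln(\cos\eta)/\pi$, equivalently $\tfrac{1}{2\pi}\ln(1+|r_0|^{-2D_-})$, arise), together with the rotation identities \eqref{e:PIII-rotate-X-identities} and the error bookkeeping you describe. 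One caution: on the real axis the correct expansion is $U(X;\omega)=X-|\varrho'|^2\cos^2(\cdots)+\O(X^{-1})$ (see \eqref{e:U-re-asymp}); the linear term $X$ missing from your sketched formula is essential, since it is exactly what turns $D_\mathrm{u}=1-2x^{-1}U(x;\omega)$ into $-1+\O(x^{-1})$ and hence produces the stimulated decay of the unstable medium to the stable state.
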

Although it follows from Theorem~\ref{thm:global-M0-stable-and-unstable}, the proof will be given later after large-$x$ asymptotic formul\ae\ for the PIII functions appearing in the leading terms are derived.
In particular, Corollary~\ref{cor:medium-bulk-M0-stable-and-unstable} applies to the limit $t\to+\infty$ with $z>0$ fixed, which corresponds to $\alpha=0$ in \eqref{e:asymptotics-curve}.  In this case, the error terms simplify as follows, taking also into account that the index $N$ satisfies $N\ge 2$:
\begin{itemize}
\item in the asymptotic formula for $q(t,z)$, the error terms simplify to $\O(t^{-1})$;
\item in the asymptotic formul\ae\ for $P(t,z)$ and $D(t,z)$, the error terms simplify to $\O(t^{-\frac{1}{2}})$, and the explicit term in $D(t,z)+1$ is also of this order.
\end{itemize}
This result therefore shows that, unlike the situation near the edge of the active medium $z=0$, \emph{for every fixed $z>0$, the active medium decays as $t\to+\infty$ to the stable pure state ($P=0$ and $D=-1$), regardless of whether the initial state was stable or unstable}.  In the unstable case, this may be regarded as a decay process stimulated by the incident optical pulse.  In the stable case it instead provides mathematical justification for the heuristic terminology of ``stability'' for the active medium with $D_-=-1$.  The decay to the stable pure state is quite slow, with explicit leading terms, a fact that leads to two insights that are important enough to state explicitly as corollaries.
\begin{corollary}
Under the assumptions of Theorem~\ref{thm:global-M0-stable-and-unstable}, for every $z>0$ the optical pulse function $t\mapsto q(t,z)$ does not lie in $L^1(\Real)$.  However, the limit (improper integral)
\begin{equation}
\lim_{T\to+\infty}\int_0^Tq(t,z)\dd t
\label{e:q-improperly-integrable}
\end{equation}
exists.
\label{cor:not-L1}
\end{corollary}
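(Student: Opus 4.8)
The plan is to read off both claims from the medium-bulk asymptotics of Corollary~\ref{cor:medium-bulk-M0-stable-and-unstable} specialized to fixed $z>0$, i.e., the case $\alpha=0$ of~\eqref{e:asymptotics-curve}. In that regime the leading term of $q(t,z)$ is, up to a unimodular constant, $(z/2)^{1/4}A\,t^{-3/4}\sin(\varphi(\sqrt{2tz}))$ with $A>0$ and $\varphi$ as in~\eqref{e:parameters-medium-bulk-M0-stable-and-unstable}, and the error is $\O(t^{-1})$; likewise $P(t,z)=\O(t^{-1/4})$ and $D(t,z)=-1+\O(t^{-1/2})$. Non-integrability will follow because the amplitude $t^{-3/4}$ is not absolutely integrable while the oscillation cannot save it; for the existence of the improper integral the point will be that one must \emph{not} integrate the asymptotic expansion directly (its $\O(t^{-1})$ error is not integrable) but instead use the Maxwell--Bloch structure together with the faster decay of $P$.

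For the first claim I would fix $z>0$, choose $T_0=T_0(z)$ beyond which the asymptotics hold, and bound $|q(t,z)|\ge (z/2)^{1/4}A\,t^{-3/4}|\sin(\varphi(\sqrt{2tz}))|-C_1 t^{-1}$ there. Using $|\sin|\ge\sin^2$ and $\sin^2\phi=\tfrac12(1-\cos(2\phi))$ gives $\int_{T_0}^{T}t^{-3/4}|\sin(\varphi(\sqrt{2tz}))|\dd t\ge\tfrac12\int_{T_0}^{T}t^{-3/4}\dd t-\tfrac12\big|\int_{T_0}^{T}t^{-3/4}\cos(2\varphi(\sqrt{2tz}))\dd t\big|$; the first integral grows like $T^{1/4}$, while the second is bounded uniformly in $T$ by a single integration by parts, since the phase $t\mapsto 2\varphi(\sqrt{2tz})$ has derivative $\asymp t^{-1/2}$ for large $t$ and the ratio of amplitude to phase derivative, $\asymp t^{-1/4}$, is eventually monotone. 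Hence $\int_{T_0}^{T}|q(t,z)|\dd t\ge c\,T^{1/4}-C_1\log(T/T_0)-C\to+\infty$, so $q(\cdot,z)\notin L^1(\Real)$.

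For the second claim, since $D(t,z)\to-1$ I can enlarge $T_0$ so that $D(t,z)\le-\tfrac12$ for $t\ge T_0$, and there the Bloch equation $P_t=-2qD$ in~\eqref{e:mbe} lets me write $q=-P_t/(2D)$. Integration by parts gives
\[
\int_{T_0}^{T}q(t,z)\dd t=\left[-\frac{P(t,z)}{2D(t,z)}\right]_{t=T_0}^{t=T}-\int_{T_0}^{T}\frac{P(t,z)D_t(t,z)}{2D(t,z)^2}\dd t,
\]
and both terms converge as $T\to+\infty$: the boundary term because $P(t,z)=\O(t^{-1/4})\to0$ and $D(t,z)\to-1$, while for the remaining integrand $D_t=2\Re(\b{q}P)$ yields $|PD_t/(2D^2)|\le|q|\,|P|^2/D^2=\O(t^{-3/4})\cdot\O(t^{-1/2})=\O(t^{-5/4})$, which is absolutely integrable on $[T_0,\infty)$. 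Adding the finite quantity $\int_0^{T_0}q(t,z)\dd t$ (well defined since $q(\cdot,z)$ is continuous and vanishes for $t<0$) then shows that~\eqref{e:q-improperly-integrable} exists.

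The routine part is the first claim; the hard part is the second, where the obstacle is that Corollary~\ref{cor:medium-bulk-M0-stable-and-unstable} controls the error in $q$ only at order $t^{-1}$, which is not integrable, so conditional convergence of the improper integral must be obtained without knowing the fine structure of the error. The device of substituting $q=-P_t/(2D)$ and integrating by parts is what circumvents this, since it trades the slowly decaying $q$ for the much faster-decaying combination $|q|\,|P|^2$; everything else is a direct application of the asymptotics already established.
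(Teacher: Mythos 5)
Your proposal is correct and follows essentially the same route as the paper: the non-integrability is read off from the $t^{-3/4}$-amplitude oscillation in Corollary~\ref{cor:medium-bulk-M0-stable-and-unstable} (you supply the $|\sin|\ge\sin^2$ and oscillatory-integral details that the paper treats as obvious), and the improper integral is obtained from the Bloch equation $P_t=-2qD$ together with $P(T,z)\to 0$ and the $\O(t^{-5/4})$ decay of the leftover integrand. The only cosmetic difference is that you write $q=-P_t/(2D)$ and integrate by parts, arriving at the absolutely convergent integrand $PD_t/(2D^2)=\O(|q|\,|P|^2)$, whereas the paper splits $-2qD=2q-2q(D+1)$ and lands on the equally convergent $q(D+1)$; both hinge on exactly the same decay estimates.
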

\begin{proof}
The lack of absolute integrability of $t\mapsto q(t,z)$ is obvious because the leading term in \eqref{e:qPD-medium-bulk-M0-stable-and-unstable} is a sinusoidal oscillation of frequency proportional to $t^{-\frac{1}{2}}$ and amplitude proportional to $t^{-\frac{3}{4}}$ (so in fact the optical pulse is in $L^2(\Real)$).

The existence of the improper integral \eqref{e:q-improperly-integrable} is proved by applying the Fundamental Theorem of Calculus to the differential equation $P_t=-2qD$.  Fixing $z>0$, we have
\begin{equation}
\begin{split}
P(T,z) - P(0,z)
& = \int_0^T P_t(t,z)\dd t\\
& = -2\int_0^T q(t,z)D(t,z)\dd t\\
& = 2\int_0^Tq(t,z)\dd t -2\int_0^Tq(t,z)[D(t,z)+1]\dd t\,.
\end{split}
\end{equation}
By causality, $P(0,z)=0$.  Applying Corollary~\ref{cor:medium-bulk-M0-stable-and-unstable} with $z>0$ fixed then implies that $P(T,z)\to 0$ as $T\to+\infty$, and that $q(t,z)=\O(t^{-\frac{3}{4}})$ and $D(t,z)+1=\O(t^{-\frac{1}{2}})$ as $t\to+\infty$. Hence $q(\cdot,z)[D(\cdot,z)+1]\in L^1(\Real_+)$, so we deduce that
\begin{equation}
\lim_{T\to+\infty}\int_0^Tq(t,z)\dd t = \int_0^{+\infty} q(t,z)[D(t,z)+1]\dd t\,,
\end{equation}
where the integral on the right-hand side is absolutely convergent.
\end{proof}
This result is important because it proves that the most important assumption in IST theory is violated under the evolution in $z$, even if it is assumed to hold at $z=0$ (or, for that matter, even if $q_0$ has compact support); however using the existence of the improper integral \eqref{e:q-improperly-integrable} or other related interpretations of divergent integrals it may indeed be possible to recover the existence of Jost solutions for almost all $\lambda\in\Real$ through rigorous analysis.  The next result proves the ill-posedness of the Cauchy problem \eqref{e:mbe} in the initially-unstable case if causality is not imposed.
\begin{corollary}
There exist incident pulses $q_0$ satisfying the hypotheses of Theorem~\ref{thm:reconstruction} for which the Cauchy problem \eqref{e:mbe} for the Maxwell-Bloch equations with an initially-unstable medium $D_-=1$ has (other) solutions that are not causal and that decay to both stable and unstable pure states as $t\to+\infty$.
\label{cor:no-uniqueness-without-causality}
\end{corollary}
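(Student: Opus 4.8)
The plan is to show first that the non-uniqueness of~\eqref{e:mbe} is \emph{beyond all orders} in the propagation distance $z$, so that it can only be manufactured from the essential singularity of $\ee^{-2\ii\theta(\lambda;t,z)}$ at $\lambda=0$, and then to realize the extra solutions by deforming RHP~\ref{rhp:M} inside the shielding circle $|\lambda|=\gamma$ and re-running the steepest-descent analysis.

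First I would fix the data: take a real-valued $q_0\in\mathscr{S}(\Real)$ supported on $[0,\infty)$ whose associated Zakharov--Shabat problem has no discrete eigenvalues (e.g.\ $\|q_0\|_{L^1(\Real)}$ sufficiently small) and with $\int_0^\infty q_0(t)\,\dd t\notin\frac{\pi}{2}\mathbb{Z}$, so that by~\eqref{e:r0-real} the moment $r_0=\tan(\int_0^\infty q_0)$ is finite and nonzero; then $q_0$ satisfies the hypotheses of Theorem~\ref{thm:reconstruction} and $M=0$, so by Corollary~\ref{cor:medium-bulk-M0-stable-and-unstable} the unique causal solution of~\eqref{e:mbe} with $D_-=1$ decays to the stable pure state as $t\to+\infty$ for each fixed $z>0$. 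A preliminary observation motivating the construction is that the $z$-Taylor coefficients at $z=0$ of \emph{any} solution of~\eqref{e:mbe} with incident pulse $q_0$ are uniquely determined by $q_0$: differentiating $\brho_t=[\Q,\brho]$, $\Q_z=-\tfrac12[\sigma_3,\brho]$ repeatedly in $z$ gives, at each order $k$, a linear ODE in $t$ for $\partial_z^k\brho(\cdot,0)$ whose inhomogeneity is built from lower-order terms and whose data at $t=-\infty$ is fixed to zero by the conditions $q,P\to0$, $D\to D_-$; hence any two solutions with the same $q_0$ agree to all orders at $z=0$, and a non-causal solution must differ from the causal one only through the essential singularity at $\lambda=0$.

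Next I would construct the non-causal solutions by a dressing argument applied to a deformed Riemann--Hilbert problem. The point is that the $z$-evolution of the scattering data through the factor $\ee^{\ii D_- z/\lambda}$ is essentially singular at $\lambda=0$, so there is more than one sectionally-analytic $\M$ that has the jumps of RHP~\ref{rhp:M} on the real intervals $|\lambda|>\gamma$, reduces to the solution of RHP~\ref{rhp:M} at $z=0$, and solves the Lax pair~\eqref{e:LP} (hence yields a solution of~\eqref{e:mbe} via~\eqref{e:reconstruction}). Equivalently, one prescribes a different admissible history $\brho_+(z):=\lim_{t\to+\infty}\brho(t,z)$ --- at $z=0$ this is the mixed state computed in~\eqref{e:final-state-z-zero}, but for $z>0$ it is not determined by $q_0$ --- subject only to $\brho_+(z)^2=\I$, $\tr\brho_+(z)=0$, and the small-$\lambda$ compatibility of the $z$-equation. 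One must then verify that the deformed problem still meets the hypotheses of Zhou's vanishing lemma (the jump data on $\Real$ is untouched and the deformation is a controlled modification localized at $\lambda=0$), so it is uniquely solvable and the reconstructed fields genuinely solve~\eqref{e:mbe} with $D_-=1$ and incident pulse $q_0$; and that for $t<0$ the deformation destroys the trivial solvability used to prove causality in Theorem~\ref{thm:reconstruction}, so the resulting solution is non-causal.

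Finally I would re-run the Deift--Zhou steepest-descent/$\dbar$ analysis of the paper on the deformed problem. Away from $\lambda=0$ nothing changes, so the deformation only alters the local parametrix at $\lambda=0$, and this is exactly what controls $\lim_{t\to+\infty}\brho(t,z)=\lim_{t\to+\infty}D_-\M(0;t,z)\sigma_3\M(0;t,z)^{-1}$. Taking the prescribed $\brho_+(z)$ equal to $-\sigma_3$ for $z>0$ produces a (genuinely non-causal) solution still decaying to the stable pure state, while taking it equal to $+\sigma_3$ for $z>0$ produces a non-causal solution in which the medium returns to the unstable pure state. The step I expect to be the main obstacle is exactly this third and fourth part: identifying which deformations at $\lambda=0$ are \emph{admissible}, i.e.\ compatible with the dressing construction and preserving unique solvability, and then reading off the modified $t\to+\infty$ limit of the density matrix from the deformed local parametrix. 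A possibly cleaner route for the last step is to pass to the characteristic sine-Gordon form $\Theta_{tz}=2\sin\Theta$ with $P=\sin\Theta$, $D=\cos\Theta$, $q=-\tfrac12\Theta_t$: the admissible deformations correspond to changing the value of $\Theta(+\infty,z)\ (\mathrm{mod}\ 2\pi)$ for $z>0$, an even (resp.\ odd) multiple of $\pi$ giving decay to the unstable (resp.\ stable) pure state, with the causal solution selecting $\pi$.
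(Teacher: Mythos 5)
There is a genuine gap: the heart of the corollary is the \emph{existence} of the non-causal solutions, and your proposal defers exactly that step. You acknowledge that identifying which deformations of RHP~\ref{rhp:M} at $\lambda=0$ are ``admissible,'' proving that the deformed problem remains uniquely solvable, and reading off the modified $t\to+\infty$ limit are ``the main obstacle,'' but without these the argument produces nothing. Worse, a non-causal solution of the Cauchy problem~\eqref{e:mbe} must \emph{still} satisfy the boundary conditions $q\to 0$, $P\to 0$, $D\to D_-=1$ as $t\to-\infty$ for every $z\ge 0$; for the causal solution these follow from the trivial solvability of the RHP for $t<0$, which your deformation is designed to destroy, so you would additionally need a full $t\to-\infty$ asymptotic analysis of the deformed problem to verify that the candidate is a solution of the stated Cauchy problem at all. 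The preliminary observation about $z$-Taylor coefficients, and the sine-Gordon heuristic about $\Theta(+\infty,z)\pmod{2\pi}$, are plausible motivation but do not substitute for the construction.

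The paper's actual proof is far more elementary and completely avoids any new Riemann--Hilbert analysis. One chooses $q_0$ compactly supported on $[0,T]$, generic ($r_0\neq 0$), and palindromic, $q_0(t)=q_0(T-t)$, and exploits the exact symmetry $\mathcal{S}:(q,P,D)(t,z)\mapsto (q,P,-D)(T-t,z)$ of the MBE system. Applying $\mathcal{S}$ to the \emph{causal} solutions for this $q_0$ in an initially-unstable and an initially-stable medium (whose $t\to\pm\infty$ behavior is already known from Corollary~\ref{cor:medium-bulk-M0-stable-and-unstable} and causality) immediately yields two solutions of the unstable Cauchy problem with the same incident pulse: both are non-causal precisely because the slow $t^{-3/4}$ decay established in Corollary~\ref{cor:medium-bulk-M0-stable-and-unstable} prevents $q(t,z)$ from being supported in $t\le T$, and one decays to the stable state while the other returns to the unstable state. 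If you want to salvage your approach, the symmetry route is the way to make the ``extra solutions'' concrete; the essential-singularity/deformation picture may explain \emph{why} non-uniqueness is invisible to the $z$-Taylor expansion, but it is not needed to prove the corollary.
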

\begin{proof}
Let $q_0$ be an incident pulse satisfying the hypotheses and the following additional properties:  $\mathrm{supp}(q_0)=[0,T]$ for some $T>0$, $r_0\neq 0$, and $q_0(t)=q_0(T-t)$ for all $t\in\mathbb{R}$.  First consider propagation in an initially-unstable medium, $D_-=1$, and let $q(t,z)$ denote the causal optical pulse for $t\in\mathbb{R}$ and $z\ge 0$ corresponding to the incident pulse $q_0$.
We apply an elementary symmetry of the MBE system to generate another solution, namely $\mathcal{S}: (q(t,z),P(t,z),D(t,z))\mapsto (\mathcal{S}q(t,z),\mathcal{S}P(t,z),\mathcal{S}D(t,z))\coloneq(q(T-t,z),P(T-t,z),-D(T-t,z))$.  Then $\mathcal{S}q(t,z)$ is an optical pulse for a noncausal solution with the same incident pulse $q_0$ in an initially-unstable medium. Indeed, according to Corollary~\ref{cor:medium-bulk-M0-stable-and-unstable} we have $D(t,z)\to -1$ as $t\to+\infty$ and hence also $\mathcal{S}D(t,z)\to 1$ as $t\to -\infty$ for all $z\ge 0$, so it is also a solution of the same Cauchy problem.  However by the same result, $q(t,z)$ is definitely not supported on the half-line $t\le T$, so $\mathcal{S}q(t,z)$ is not supported on $t\ge 0$, proving that the solution is not causal.  This noncausal solution also has the property that $\mathcal{S}D(t,z)=-1$ holds for all $t\ge T$, so like the causal solution it exhibits decay to the stable state as $t\to+\infty$.  Now let $(\widetilde{q},\widetilde{P},\widetilde{D})$ denote the causal solution for the same $q_0$, now incident on an initially-stable medium with $\widetilde{D}_-=-1$.
Again applying the symmetry $\mathcal{S}$, we see that $\mathcal{S}\widetilde{q}(t,z)$ is an optical pulse for a solution with the same incident pulse $q_0$ in an unstable medium because Corollary~\ref{cor:medium-bulk-M0-stable-and-unstable} gives $\widetilde{D}(t,z)\to -1$ as $t\to+\infty$ and so also $\mathcal{S}\widetilde{D}(t,z)\to 1$ as $t\to -\infty$ for all $z\ge 0$.  However since $\widetilde{q}(t,z)$ is not supported on the half-line $t\le T$, $\mathcal{S}\tilde{q}(t,z)$ is not supported on $t\ge 0$, so the solution is again noncausal.  Unlike the previously constructed noncausal solution, this one satisfies $\mathcal{S}\widetilde{D}(t,z)=1$ for all $t\ge T$, so it exhibits decay to the \emph{unstable} state as $t\to+\infty$.
\end{proof}

\begin{remark}[On time translation symmetry]
If $q_0(\cdot)$ satisfies the assumptions of Theorem~\ref{thm:global-M0-stable-and-unstable}, then so does the time translate $\widetilde{q}_0(\cdot)\coloneq q_0(\cdot-\Delta t)$ for every $\Delta t>0$.  If $\bphi_\pm(t;\lambda)$ are the Jost solution matrices for $q_0(\cdot)$, then $\widetilde{\bphi}_\pm(t;\lambda)=\bphi_\pm(t;\lambda)\ee^{\ii\lambda\Delta t\sigma_3}$ are those corresponding to $\widetilde{q}_0(\cdot)$, from which it follows that the reflection coefficients are related by $\widetilde{r}(\lambda)=\ee^{2\ii\lambda\Delta t}r(\lambda)$.  Hence, both $r_0$ and $\aleph$ are completely insensitive to time translation, and these are the only quantities on which Theorem~\ref{thm:global-M0-stable-and-unstable} and its corollaries depend.  We conclude that exactly the same asymptotic formul\ae\ describe the causal solutions for both incident pulses $q_0(\cdot)$ and $\widetilde{q}_0(\cdot)$.  The apparent paradox is resolved upon noting that the results all require the limit $t\to+\infty$, in which case $t-\Delta t=t(1+\O(t^{-1}))$ and, for fixed $z>0$, $\varphi(\sqrt{2(t-\Delta t)z})=\varphi(\sqrt{2tz})+\O(t^{-\frac{1}{2}})$, so time-translation of the leading terms can always be absorbed into the error terms.
\end{remark}

Our final results concern incident pulses that are not generic in that the first moment $r_0$ vanishes.  The first result applies to the case of propagation in an initially-stable medium ($D_-=-1$), and it displays an interesting dependence at the leading order on the index $M\ge 1$ of the first nonzero moment of the reflection coefficient.
\begin{theorem}[Global asymptotics --- nongeneric case for a stable medium]
Suppose that the incident pulse $q_0$ satisfies the hypotheses of Theorem~\ref{thm:reconstruction} and that $r_0=0$, so that the index $M$ of the first nonzero moment of the reflection coefficient is strictly positive.  For every integer $N\ge M+2$, the causal solution of the Cauchy problem \eqref{e:mbe} in a
stable medium ($D_-=-1$) satisfies
\begin{equation}
\begin{split}
q(t,z)&=-\ii^M\frac{\b{r_0^{(M)}}\ee^{-\ii\aleph}}{M!}2^{\frac{1}{2}(1-M)}\left(\frac{z}{t}\right)^{\frac{1}{2}(M+1)}J_{M+1}(2\sqrt{2tz})+ \O((z/t)^{\frac{1}{2}(M+2)}) + \O(t^{1-N}),\\
P(t,z)&=\ii^M\frac{\b{r_0^{(M)}}\ee^{-\ii\aleph}}{M!}2^{1-\frac{1}{2}M}\left(\frac{z}{t}\right)^{\frac{1}{2}M}J_M(2\sqrt{2tz}) + \O((z/t)^{\frac{1}{2}(M+1)})+\O(t^{\frac{3}{2}-N}),\\
D(t,z)&=-1+\left(\frac{|r_0^{(M)}|}{M!}\right)^22^{1-M}\left(\frac{z}{t}\right)^MJ_M(2\sqrt{2tz})^2+\O((z/t)^{\frac{1}{2}(M+1)})+\O(t^{\frac{3}{2}-N}),
\end{split}
\label{e:qPD-global-Mpos-stable}
\end{equation}
as $t\to+\infty$ for $z\ge 0$ with $z=o(t)$.  Here, the moment $r_0^{(M)}$ is given in Definition~\ref{def:moments}, $\aleph$ is given in Definition~\ref{def:aleph}, and $J_n(\cdot)$ denotes the Bessel function of the first kind of order $n$ \cite[Section 10.2]{dlmf}.
\label{thm:global-Mpos-stable}
\end{theorem}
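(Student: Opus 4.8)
Our proof runs the Deift--Zhou steepest-descent and $\dbar$ analysis on RHP~\ref{rhp:M} with $D_-=-1$, exploiting that in the stable case the radius $\gamma$ may be contracted to $0$ (Remark after RHP~\ref{rhp:M}), so that $\Real$ is the only jump contour, with jump $\M^+=\M^-\W^\dagger\W$. Because $z=o(t)$, the two stationary points of $\theta(\lambda;t,z)=\lambda t+z/(2\lambda)$, namely $\lambda=\pm\ell$ with $\ell:=\sqrt{z/(2t)}=\sqrt{2tz}/(2t)$, coalesce at the origin as $t\to+\infty$. I begin with the scalar conjugation $\M(\lambda;t,z)=\widehat\M(\lambda;t,z)\,\delta(\lambda)^{\sigma_3}$, where $\delta^+=\delta^-(1+|r|^2)$ on $\Real$ and $\delta(\lambda)\to1$ as $\lambda\to\infty$; this removes the non-oscillatory diagonal factor $\diag(1+|r|^2,(1+|r|^2)^{-1})$ of the LDU factorization of the jump. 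Since $r_0=0$, the weight $\ln(1+|r(s)|^2)$ vanishes at $s=0$, the principal value in Definition~\ref{def:aleph} is an ordinary integral, and $\delta(0)=\ee^{-\ii\aleph/2}$; this is the only place the phase $\aleph$ enters the formul\ae~\eqref{e:qPD-global-Mpos-stable}.

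Next I open lenses, peeling the triangular factors of the jump onto contours that pass through small neighborhoods of $\pm\ell$ and bend into the half-plane where the relevant exponential $\ee^{\pm2\ii\theta}$ decays (for the factor carrying $\ee^{2\ii\theta}$, the upper half-plane for $|\lambda|>\ell$ and the lower for $|\lambda|<\ell$, and conjugately for the other). Because $r(\lambda)$ is analytic only on one side of $\Real$, this deformation is carried out in the $\dbar$ sense: near the origin $r$ is replaced by its degree-$(N-1)$ Taylor polynomial $\sum_{k=M}^{N-1}r_0^{(k)}\lambda^k/k!$ (entire, with the terms of degree less than $M$ absent because the first $M$ moments vanish), and away from the origin by a smooth cutoff extension; the residual $\dbar$-derivative lives off the contour and is estimated using $N$ derivatives of $r$, producing the $\O(t^{1-N})$ errors and forcing $N\ge M+2$ (so that the polynomial reaches degree $M+1$). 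The crucial feature of the nongeneric case is that on the scale $|\lambda|\sim\ell$ the off-diagonal jump entries are $O(\ell^M)=O((z/t)^{M/2})\to0$, so the local parametrix at the coalesced stationary point $\lambda=0$ is simply $\I$ to leading order; consequently the error matrix $\cE$ (the transformed unknown divided by the global parametrix) solves a small-norm mixed RHP-$\dbar$ problem and equals $\I+O((z/t)^{(M+1)/2})+\O(t^{1-N})$, with the leading correction supplied by the Cauchy transform of the local jump.

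The leading term is extracted as follows. From $q(t,z)=-2\ii\lim_{\lambda\to\infty}\lambda M_{1,2}$ and $\M=\widehat\M\delta^{\sigma_3}$ with $\delta(\lambda)\to1$, one has $q(t,z)=-2\ii\lim_{\lambda\to\infty}\lambda\,\cE_{1,2}(\lambda)$, and tracing the transformations this limit equals $-(2\pi\ii M!)^{-1}\delta(0)^2\b{r_0^{(M)}}\int u^M\ee^{2\ii\theta(u;t,z)}\dd u$ over the local lens arc near $\lambda=0$; since $\ee^{2\ii\theta}$ decays on the outer boundary of the local disk, this arc closes into a loop encircling $\lambda=0$, so the integral is $2\pi\ii$ times the residue at the essential singularity there. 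Expanding both exponentials and matching the Bessel series gives, for every integer $m\ge0$,
\begin{equation}
\Res_{s=0}\bigl(s^{m}\ee^{2\ii st+\ii z/s}\bigr)=\ii^{m+1}\Bigl(\frac{z}{2t}\Bigr)^{\frac{1}{2}(m+1)}J_{m+1}\bigl(2\sqrt{2tz}\bigr).
\label{e:proposal-residue}
\end{equation}
Using \eqref{e:proposal-residue} with $m=M$ and $\delta(0)^2=\ee^{-\ii\aleph}$ gives the stated formula for $q(t,z)$. For the medium, $\brho(t,z)=D_-\M(0;t,z)\sigma_3\M(0;t,z)^{-1}$; writing $\M(0;t,z)=\I+E$, one has $\M(0)\sigma_3\M(0)^{-1}=\sigma_3+[E,\sigma_3]+\O(E^2)$, where $E$ is determined --- via the local parametrix at $\lambda=0$ --- by the same residue computation but with an extra Cauchy-kernel factor $s^{-1}$, i.e. by \eqref{e:proposal-residue} with $m=M-1$, producing $J_M$. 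The linear term $[E,\sigma_3]$ then gives $P(t,z)$ and the quadratic term $\O(E^2)$ gives the $J_M^2$ and $|r_0^{(M)}|^2$ in $D(t,z)$, the $\delta(0)$-phases cancelling there by unimodularity. The subleading Taylor coefficient $r_0^{(M+1)}$ and the quadratic (Born) correction supply the remaining $\O((z/t)^{(M+2)/2})$ and $\O((z/t)^{(M+1)/2})$ errors.

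The main obstacle is the second step: proving, uniformly for $z=o(t)$ as $t\to+\infty$, that the steepest-descent contours can be chosen consistently as the saddle points $\pm\ell$ merge at $\lambda=0$ --- which is simultaneously the essential singularity of $\theta$, where the usual local model analysis is unavailable --- and that the $\dbar$-contribution together with the lens remainders are genuinely negligible there. One must balance the smallness $r(\lambda)=O(\lambda^M)$ against the possible growth of the relevant Cauchy-operator norms as $\ell\to0$; once the error RHP-$\dbar$ problem is shown to be small-norm of the stated order, what remains is the algebraic residue computation above.
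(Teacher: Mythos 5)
Your leading-order algebra is correct and is in substance the same computation as the paper's: the residue identity \eqref{e:proposal-residue} checks out against the Bessel series, and it is exactly equivalent to the unit-circle integral representation $\oint_{|s|=1}s^{M}\ee^{\ii\sqrt{2tz}(s+s^{-1})}\dd s$ that the paper uses in \eqref{e:ddotKs-moment-approx}--\eqref{e:ddotKs-origin-approx} after the rescaling $\lambda=\lambda_\circ k$; likewise $\delta(0)^2=\ee^{-\ii\aleph}$ (valid because $r_0=0$ makes $\delta$ continuous at the origin) and the extraction of $P$ and $D$ from $\M(0)\sigma_3\M(0)^{-1}$ via the $s^{-1}$-weighted residue both match the paper's Section on expansions for $M>0$. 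The key structural insight --- that for $M>0$ the off-diagonal data is $\O(\lambda_\circ^M)$ on the scale of the merged saddles, so the local model is trivial and the problem is directly small-norm --- is also the paper's.

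The genuine gap is precisely the one you flag as your ``main obstacle,'' and the paper's resolution of it is a device your setup forgoes. By contracting $\gamma$ to $0$ and opening lenses through the coalescing saddles $\pm\ell$ on the real line, you must prove uniform boundedness of Cauchy operators on a contour that shrinks into the essential singularity of $\theta$ --- which you correctly observe you cannot do with a standard local model. The paper instead \emph{keeps} the circle $|\lambda|=\lambda_\circ$ as part of the jump contour from the outset (choosing $\gamma=\lambda_\circ$ in RHP~\ref{rhp:M}), so the origin is never approached; the substitution $\lambda=\lambda_\circ k$ then maps the would-be local problem to a \emph{fixed} RHP on the unit circle (RHP~\ref{rhp:ddotKs}) whose Cauchy-operator norms are $(t,z)$-independent and whose jump is manifestly $\I+\O(\lambda_\circ^M)$. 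Without this (or an equivalent) rescaling, your small-norm claim is unsupported. Separately, your error bookkeeping asserts a blanket $\O(t^{1-N})$ from the $\dbar$ contribution; the paper's Lemma~\ref{lemma:S-bound} and the subsequent $L^1$ estimate of $\mathbf{D}_\mathrm{s}$ show that the first moment of the $\dbar$ solution (entering $q$) is $\O(t^{1-N})$ while its $L^\infty$ deviation from $\I$ (entering $P$ and $D$ through evaluation at $\lambda=0$) is only $\O(t^{\frac{3}{2}-N})$ --- which is why the theorem's error terms differ between $q$ and the medium variables. These estimates require the region decomposition $\Omega_1,\Omega_2,\Omega_3$ and the Cauchy--Schwarz arguments of the paper's Section~\ref{s:dbar-s}; they do not follow merely from ``$N$ derivatives of $r$.''
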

The leading terms are easily seen to be consistent with the conservation law $|P|^2+D^2=1$ and, via the identity \cite[Eqn.\@ 10.6.2]{dlmf}, the Maxwell equation $q_z=-P$.
Analogues of Corollaries~\ref{cor:medium-edge-M0-stable-and-unstable} and \ref{cor:medium-bulk-M0-stable-and-unstable} are easily extracted from this result by expansion of the Bessel functions for small and large positive $x$, respectively.  Indeed, from \cite[Eqn.\@ 10.2.2]{dlmf} we get
\begin{equation}
J_{n}(2x)=\frac{x^{n}}{n!}(1+\O(x^2)),\quad x\to 0.
\label{e:Bessel-origin}
\end{equation}
This implies that in the regime that $t\to+\infty$ while $z=o(t^{-1})$ the optical field $q(t,z)$ is proportional to $z^{M+1}$, a result consistent with Corollary~\ref{cor:medium-edge-M0-stable-and-unstable} applying for $M=0$.
Likewise, from \cite[Eqn.\@ 10.17.3]{dlmf} we get
\begin{equation}
J_{n}(2x)=\frac{1}{\sqrt{\pi x}}\left(\cos\left(2x-\frac{1}{2}n\pi-\frac{1}{4}\pi\right)+\O(x^{-1})\right),\quad x\to +\infty.
\label{e:Bessel-expand}
\end{equation}
Applying this formula in the situation that $z>0$ is fixed shows that $q(t,z)=\O(t^{-1-\frac{1}{2}M})$ as $t\to+\infty$, so $L^1(\Real)$ integrability of the optical pulse is recovered for each $z\ge 0$ under the nongeneric condition that $r_0=0$.  Therefore, in some sense, a nongeneric incident pulse $q_0$ produces a smaller optical field within the active medium than does a generic pulse; since a generic incident pulse returns an initially-stable medium to its stable state, it is not surprising that the same occurs for the weaker pulse since $D(t,z)\to -1$ as $t\to+\infty$.

Passing now to the case of an initially-unstable medium, it would be very interesting to determine if a nongeneric incident pulse is strong enough to stimulate the decay of an unstable active medium to its stable state.  Indeed, the trivial incident pulse $q_0(t)\equiv 0$ satisfies the hypotheses of Theorem~\ref{thm:reconstruction} and clearly the corresponding unique causal solution yields $D(t,z)\equiv 1$ for all $t\ge 0$ and $z\ge 0$ if $D_-=1$, so at least one (trivial) pulse with $r_0=0$ fails to stimulate the decay of an initially-unstable medium!  Moreover, for an initially-unstable medium a result qualitatively different from that given in Corollary~\ref{cor:medium-bulk-M0-stable-and-unstable} might be expected if $r_0=0$, since one can verify using \cite[Eqn.\@ 5.11.9]{dlmf} that the amplitude $A>0$ defined in \eqref{e:qPD-medium-bulk-M0-stable-and-unstable} is proportional to $\sqrt{\ln(|r_0|^{-1})}$ for small $|r_0|$ when $D_-=1$ and hence blows up as $r_0\to 0$.  We can give a version of Theorem~\ref{thm:global-Mpos-stable} applicable to an initially-unstable medium but we have to restrict to the medium-edge and transition regimes.
\begin{theorem}[Medium-edge and transition regime asymptotics --- nongeneric case for an unstable medium]
Suppose that the incident pulse $q_0$ satisfies the hypotheses of Theorem~\ref{thm:reconstruction} and that $r_0=0$, so that the index $M$ of the first nonzero moment of the reflection coefficient is strictly positive.  For every integer $N\ge M+2$, the causal solution of the Cauchy problem \eqref{e:mbe} in a
unstable medium ($D_-=1$) satisfies
\begin{equation}
\begin{split}
q(t,z)&=\ii (-1)^{M+1}\frac{\b{r_0^{(M)}}\ee^{-\ii\aleph}}{M!}2^{\frac{1}{2}(1-M)}\left(\frac{z}{t}\right)^{\frac{1}{2}(M+1)}J_{M+1}(2\ii\sqrt{2tz})+\O(t^{-\frac{1}{2}(M+2)(1-\alpha)})+\O(t^{1-N}),\\
P(t,z)&=(-1)^{M+1}\frac{\b{r_0^{(M)}}\ee^{-\ii\aleph}}{M!}2^{1-\frac{1}{2}M}\left(\frac{z}{t}\right)^{\frac{1}{2}M}J_M(2\ii\sqrt{2tz}) + \O(t^{-\frac{1}{2}(M+1)(1-\alpha)}) + \O(t^{\frac{3}{2}-N}),\\
D(t,z)&=1+(-1)^{M+1}\left(\frac{|r_0^{(M)}|}{M!}\right)^22^{1-M}\left(\frac{z}{t}\right)^MJ_M(2\ii\sqrt{2tz})^2 + \O(t^{-\frac{1}{2}(M+1)(1-\alpha)})+\O(t^{\frac{3}{2}-N}),
\end{split}
\label{e:qPD-edge-Mpos-unstable}
\end{equation}
as $t\to+\infty$ with $z$ related to $t$ by \eqref{e:asymptotics-curve} with $\alpha\le -1$.
\label{thm:edge-Mpos-unstable}
\end{theorem}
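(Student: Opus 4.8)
The plan is to obtain Theorem~\ref{thm:edge-Mpos-unstable} from Theorem~\ref{thm:global-Mpos-stable} by means of an exact symmetry of RHP~\ref{rhp:M}, checking along the way that the analysis underlying Theorem~\ref{thm:global-Mpos-stable} is preserved by that symmetry precisely in the regime $\alpha\le-1$. The point of departure is that in RHP~\ref{rhp:M} the sign $D_-$ and the coordinate $z$ enter only through the phase $\theta(\lambda;t,z)=\lambda t-D_-z/(2\lambda)$ of \eqref{e:phase-def-general}, hence only through the product $D_-z$. Keeping the same (arbitrary) radius $\gamma>0$ in both problems, the matrix solving RHP~\ref{rhp:M} with $D_-=+1$ at $(t,z)$ therefore coincides with the matrix solving RHP~\ref{rhp:M} with $D_-=-1$ at $(t,-z)$ (each uniquely solvable by Zhou's vanishing lemma, cf.\ the remark after RHP~\ref{rhp:M}). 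Substituting into \eqref{e:reconstruction}, and noting that the formula for $q$ carries no explicit $D_-$ while that for $\brho$ carries a single factor of $D_-$, we get, for all $(t,z)\in\Real^2$,
\[
q^{[D_-=1]}(t,z)=q^{[D_-=-1]}(t,-z),\qquad \brho^{[D_-=1]}(t,z)=-\brho^{[D_-=-1]}(t,-z),
\]
where the right-hand sides denote the (well-defined for every real $z$) reconstructions from RHP~\ref{rhp:M} with $D_-=-1$. By Theorem~\ref{thm:reconstruction} the left-hand sides are the causal solution of the initially-unstable Cauchy problem, so it suffices to compute the $t\to+\infty$ behavior of the $D_-=-1$ reconstruction along the curve $z=-Ct^{\alpha}$ with $\alpha\le-1$, i.e.\ the curve \eqref{e:asymptotics-curve} reflected to the side $z<0$ of the origin.

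I would carry out that computation with the same Deift--Zhou/$\dbar$ steepest-descent analysis used to prove Theorem~\ref{thm:global-Mpos-stable}, specialized to the medium-edge and transition regimes of Definition~\ref{def:regions}. In those regimes $x:=\sqrt{2t|z|}$ is bounded (and $x\to0$ when $\alpha<-1$); after the rescaling $\lambda=\sqrt{|z|/(2t)}\,\zeta$ the phase becomes $2\theta=x(\zeta\mp\zeta^{-1})$, the sign of the $\zeta^{-1}$ term being that of $z$. Since $r_0=0$, the expansion $r(\lambda)=r_0^{(M)}\lambda^M/M!+\O(\lambda^{M+1})$ (Definition~\ref{def:moments}) makes every reflection-coefficient-dependent jump small, of size $\O((|z|/t)^{M/2})$ on the rescaled circle $|\zeta|=1$; the surviving oscillatory jump on the real axis and the $\dbar$-contribution of a smooth off-line extension of $r$ are controlled, using $N\ge M+2$ derivatives of $r$, at the orders $\O(t^{1-N})$ and $\O(t^{3/2-N})$ — just as in the stable case. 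The one effect of replacing $z$ by $-z$ is to flip the sign of the $\zeta^{-1}$ term; on $|\zeta|=1$ the exponential $\ee^{-2\ii\theta}$ then has modulus $\le\ee^{2x}$, so it is uniformly bounded precisely when $x$ is, i.e.\ when $\alpha\le-1$. (When $\alpha>-1$ one has $x\to+\infty$; on the $z<0$ branch $\ee^{-2\ii\theta}$ grows like $\ee^{2x|\sin\arg\zeta|}$ on the circle and the analysis breaks down, which is exactly why Theorem~\ref{thm:edge-Mpos-unstable} cannot reach the medium-bulk regime.)

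Once the problem is reduced to small norm, its solution is $\I$ plus the Cauchy transform of the leading jump, and the entries relevant to \eqref{e:reconstruction} are contour integrals of $s^{\pm M\mp1}\ee^{\pm\ii x(s-s^{-1})}\,\dd s$ over $|s|=1$; by Schl\"afli's representation $J_n(w)=(2\pi\ii)^{-1}\oint_{|s|=1}s^{-n-1}\ee^{\frac{w}{2}(s-s^{-1})}\,\dd s$ these are Bessel functions of the imaginary argument $2\ii x=2\ii\sqrt{2t|z|}$. The factor $\ee^{-\ii\aleph}$ (Definition~\ref{def:aleph}) comes, exactly as in the stable case, from the scalar ($\delta$-function) conjugation that removes the diagonal part of the real-axis jump, and $\aleph$ is unchanged under $z\mapsto-z$. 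Reading off $q$, $P$, $D$ from \eqref{e:reconstruction}, restoring the rescaling (which supplies the prefactors $(z/t)^{(M+1)/2}$ and $(z/t)^{M/2}$), and finally undoing the symmetry $z\mapsto-z$ using the elementary identity
\[
(z/t)^{n/2}J_n(2\sqrt{2tz})\Big|_{z\mapsto-z}=\ii^{\,n}\,(z/t)^{n/2}J_n(2\ii\sqrt{2tz})
\]
together with the extra sign $\brho\mapsto-\brho$, and matching powers of $\ii$ against those already displayed in \eqref{e:qPD-global-Mpos-stable}, reproduces \eqref{e:qPD-edge-Mpos-unstable} verbatim — including the translated error orders, since $\O((|z|/t)^{(M+2)/2})=\O(t^{-(M+2)(1-\alpha)/2})$ and $\O((|z|/t)^{(M+1)/2})=\O(t^{-(M+1)(1-\alpha)/2})$ along $z=Ct^\alpha$.

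The substantive part of the argument is the second step: one must revisit the proof of Theorem~\ref{thm:global-Mpos-stable} and confirm that no estimate there secretly uses $z>0$ in a way spoiled by the reflection. The two places to watch are (i) the bounds on the oscillatory real-axis jump and on the $\dbar$-remainder, in which $z$ enters only through the bounded factor $\ee^{\ii D_-z/\lambda}$ and is harmless of either sign because $|z|=o(1)$; and (ii) the geometry of the model problem, where for $z<0$ the stationary points of $\theta$ move off the real axis to $\zeta=\pm\ii$ on the rescaled unit circle — one verifies that this neither enlarges the jump nor obstructs small-norm invertibility, again because the only exponential that could grow, $\ee^{2x|\sin\arg\zeta|}$, is controlled precisely by the hypothesis $\alpha\le-1$. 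Everything else is the bookkeeping indicated above.
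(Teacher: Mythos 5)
Your proposal is correct and, despite the symmetry framing, is essentially the paper's own argument: recasting the unstable problem as the stable one at $-z$ is exactly the content of Definition~\ref{def:theta-u} ($\theta_\mathrm{u}(\lambda;t,z)=\theta_\mathrm{s}(\lambda;t,-z)$), and the substantive steps --- the non-oscillatory factor $\ee^{\pm\ii x(k-k^{-1})}$ on the rescaled circle bounded by $\ee^{2x}$, hence a small-norm model problem precisely when $\lambda_\circ^{M}\ee^{2x}$ is small, i.e.\ $x=\O(1)$, i.e.\ $\alpha\le -1$, followed by the Schl\"afli/Bessel evaluation of $\oint_{|s|=1}s^{M}\ee^{\pm\ii x(s-s^{-1})}\dd s$ and the $\dbar$-remainder bounds of orders $\O(t^{1-N})$ and $\O(t^{\frac{3}{2}-N})$ --- coincide with those of Section~\ref{s:unstable-positive}. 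The only organizational difference is that the paper interposes a $g$-function conjugation so that the $\dbar$-data and the circle jump involve only the stable phase before the dangerous exponentials are reinstated on the modified parametrix $\dddot{\K}_\mathrm{u}$; your direct treatment arrives at the same estimates.
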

Again, the leading terms are consistent with $|P|^2+D^2=1$ (here we may use the identity $J_n(2\ii x)^2=(-1)^n|J_n(2\ii x)|^2$ for $x>0$) and $q_z=-P$.  An analogue of Corollary~\ref{cor:medium-edge-M0-stable-and-unstable} is available for the medium-edge regime, by means of the formula \eqref{e:Bessel-origin} which is valid for complex $x$.  However unlike for the stable case, there is no analogue of Corollary~\ref{cor:medium-bulk-M0-stable-and-unstable} since Theorem~\ref{thm:edge-Mpos-unstable} is not valid in the medium-bulk regime.  This is more than a mere technical difficulty, since the Bessel functions grow exponentially along the imaginary axis, implying that the formul\ae\ \eqref{e:qPD-edge-Mpos-unstable} must become invalid as the similarity variable $x=\sqrt{2tz}$ becomes large.  The dynamics in the latter regime would resolve the interesting question of whether the medium decays to the stable state as $t\to+\infty$ for fixed $z>0$, but their description remains out of reach by the methods used in this paper.

The proofs of these results are somewhat different for an initially-unstable medium than for an initially-stable one.  The results concerning an initially-stable medium will be proved in Section~\ref{s:stable-positive}, and the modifications necessary to handle the initially-unstable case will be described in Section~\ref{s:unstable-positive}.

\subsection{Numerical verification}
\label{s:numerical-verification}
We compared the numerical solution of the Cauchy problem \eqref{e:mbe} with the explicit leading terms in the approximate formul\ae\  in order to verify and illustrate our analytic results.  We used a numerical method that enforces the causality of the solution, which is briefly described along with the numerical method used to construct the Painlev\'e-III solutions in
Appendix~\ref{s:numerics}.  We show the results for several choices of the incident pulse $q_0(\cdot)$ as given along with the auxiliary data $M$, $r_0^{(M)}$, $\omega$ (for $M=0$ only), and $\aleph$ in Table~\ref{tab:ic-numerics}.
\begin{table}[h]
\caption{Three incident pulses for numerical experiments and associated data.}
\begin{tabular}{|l|l|l|l|l|l|}
\hline
Pulse & $\vphantom{\Big|}q_0(t)$ & $M$ & $r_0^{(M)}$ & $\omega$ & $\aleph$\\
\hline\hline
(a) & $\vphantom{\Bigg|}\displaystyle 0.5\ee^{-\frac{1}{10t}-\frac{1}{10(3.5-t)}}\chi_{[0,3.5]}(t)$  & $0$ & $4.7157$ & $2.7418$ & $0$\\
\hline
(b) & $\vphantom{\Bigg|}\displaystyle 0.5\ee^{\ii t-\frac{1}{10t}-\frac{1}{10(3.5-t)}}\chi_{[0,3.5]}(t)$  & $0$ &
$-0.50723 - 0.47903\ii$ & $-1.03564$ & $1.26854$\\
\hline
(c) & $\vphantom{\Bigg|}\displaystyle 0.5\ee^{-\frac{1}{10t}-\frac{1}{10(6-t)}}\tanh(t-3)\chi_{[0,6]}(t)$ &
$1$ & $4.26238\ii$ & N/A & $0$\\
\hline
(d) & $\vphantom{\Bigg|}\displaystyle \frac{0.7\ee^{3\ii t - \frac{1}{10t}}}{(t - 10)^4 + \ii}\chi_{\{t\ge 0\}}(t)$ &  $0$ & $-0.076833 - 0.269224\ii$ & $-2.56388$ & $0.691048$\\
\hline
\end{tabular}
\label{tab:ic-numerics}
\end{table}
For making a strong comparison with our analytical results, an important property of the incident pulses  that is clear from Table~\ref{tab:ic-numerics} is that the value of $r_0^{(M)}$ is not too small.
The four pulses are plotted in Figure~\ref{fig:ic-numerics}.
\begin{figure}[h]
\begin{center}
\includegraphics[width=0.45\linewidth]{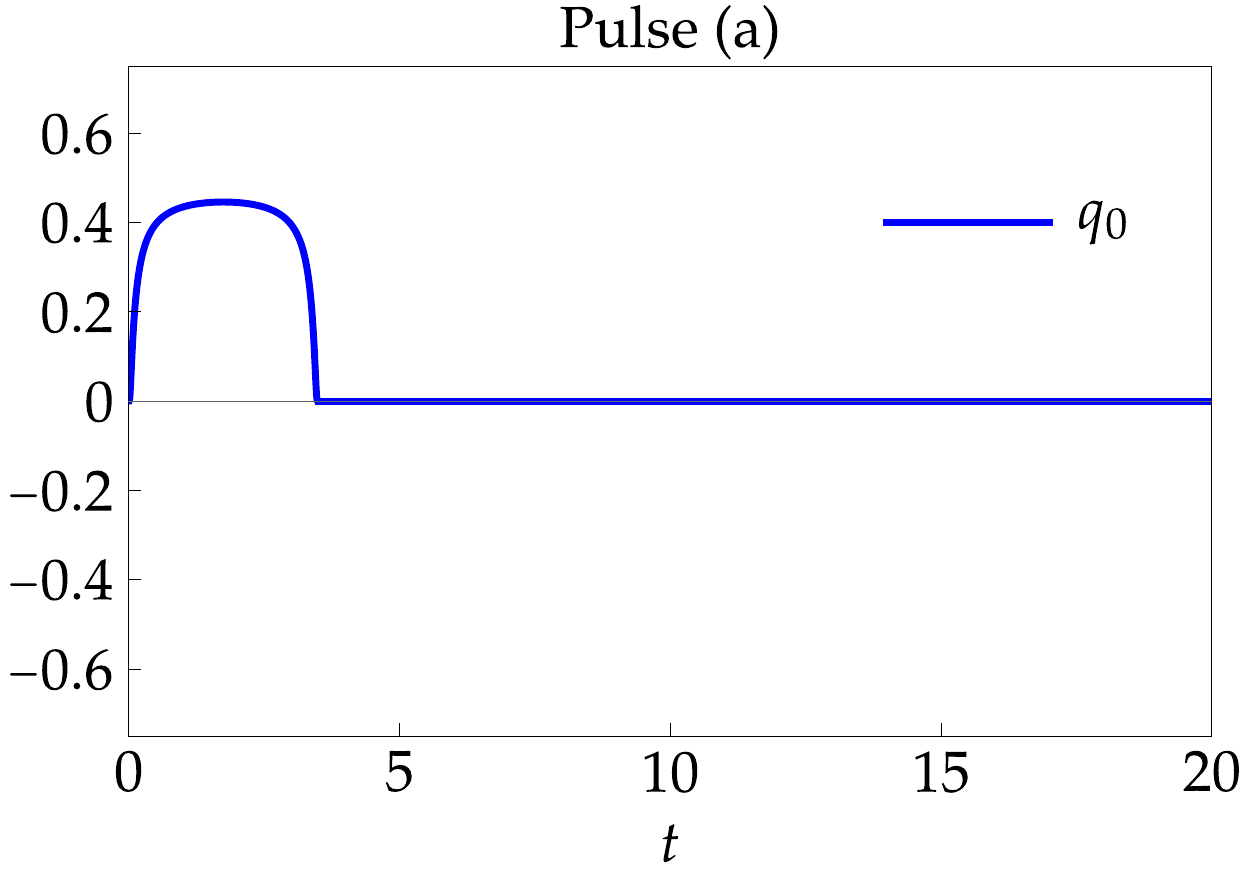}\hfill%
\includegraphics[width=0.45\linewidth]{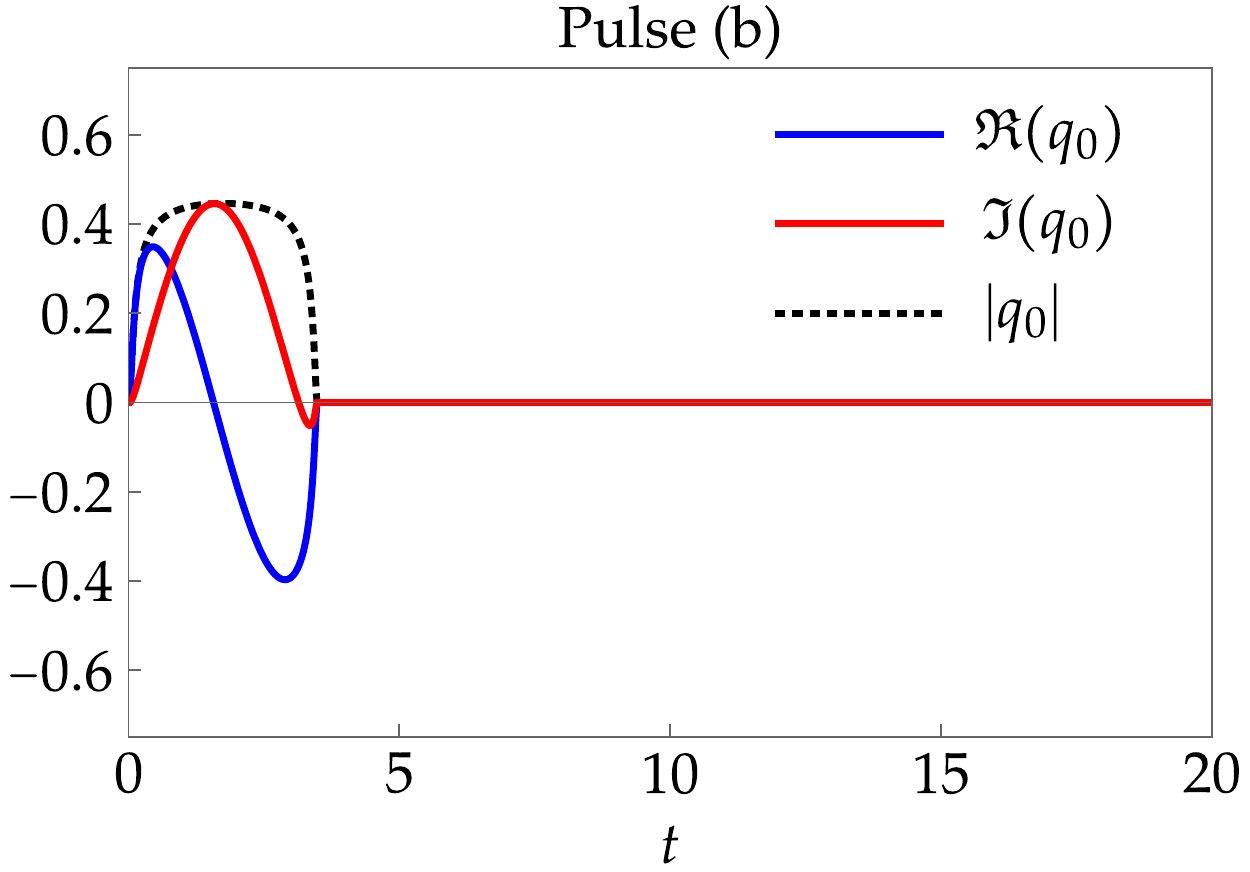}\\
\includegraphics[width=0.45\linewidth]{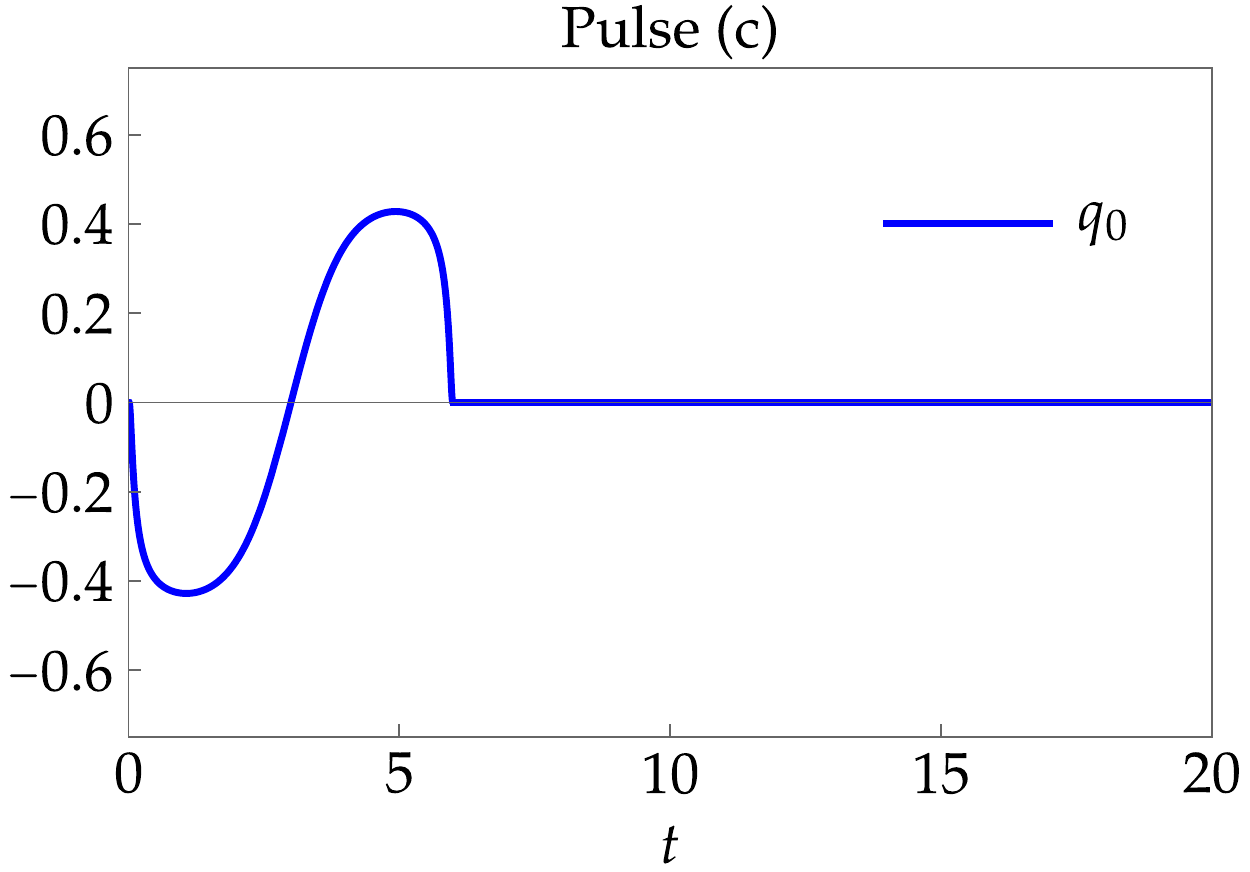}\hfill
\includegraphics[width=0.45\linewidth]{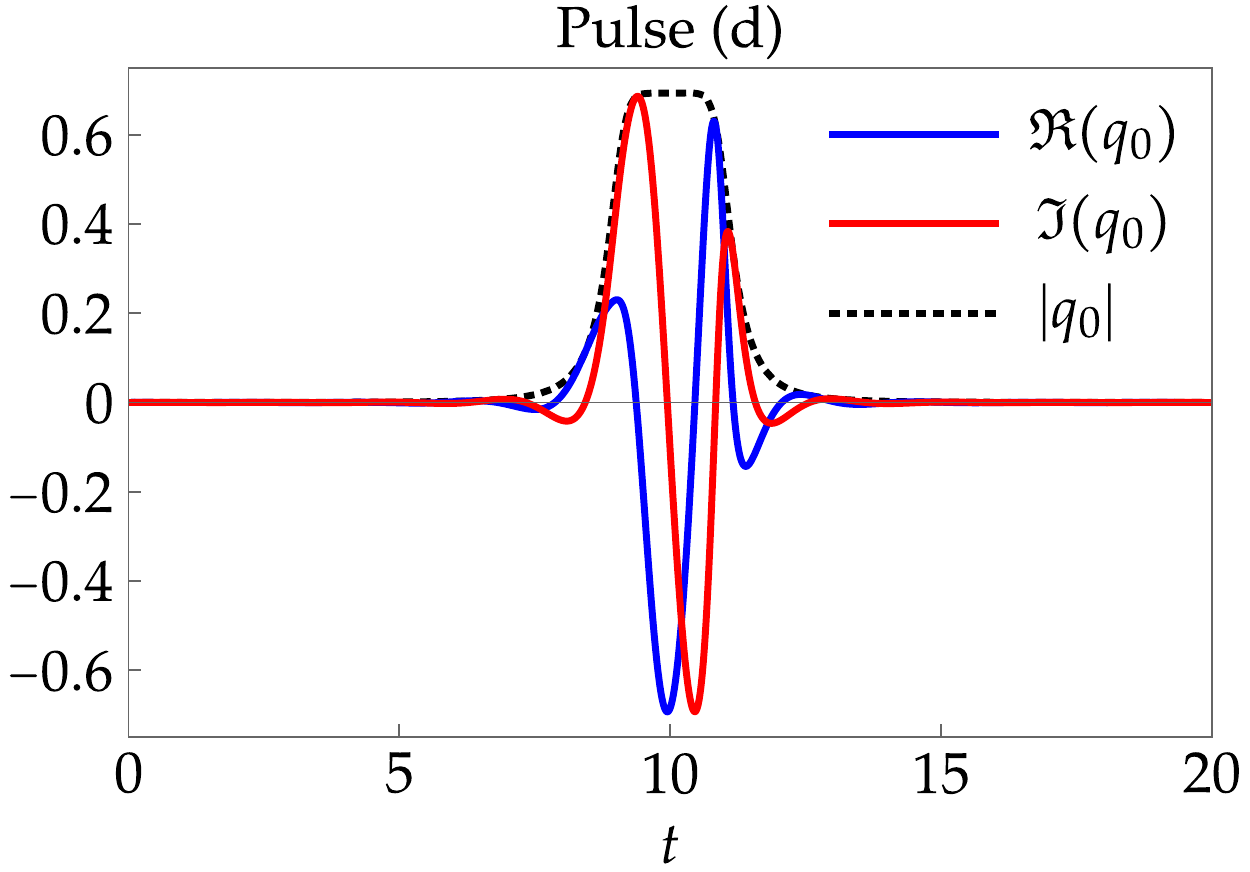}
\end{center}
\caption{The four incident pulses from Table~\ref{tab:ic-numerics}.}
\label{fig:ic-numerics}
\end{figure}

\subsubsection{Generic pulses}
Pulses (a) and (b) are consistent with the assumptions of Theorem~\ref{thm:reconstruction}, and they are generic, i.e., $r_0\neq 0$ and hence the index of the first nonzero moment is $M=0$.
Since pulse (a) is real-valued, the explicit formula \eqref{e:r0-real} can be used to compute the nonzero value of $r_0$ indicated in Table~\ref{tab:ic-numerics}.  For the same reason we obtain $\aleph=0$ for this pulse (see Remark~\ref{rem:aleph}).  Numerical integration of the Zakharov-Shabat problem was used to compute the nonzero value of $r_0$ indicated in Table~\ref{tab:ic-numerics} for the complex pulse (b).
Both pulses (a) and (b) are actually infinitely continuously differentiable for all $t\in\Real$ (the apparent sharp corners on the respective plots in Figure~\ref{fig:ic-numerics} actually disappear upon closer scrutiny).  To verify the hypothesis that pulse (a) does not generate any discrete spectrum or spectral singularities for the Zakharov-Shabat problem, note that this pulse satisfies the criteria of the Klaus-Shaw theory \cite{KlausS02}, allowing us to simply compute the $L^1(\mathbb{R})$-norm of $q_0$ and confirm that it lies below the threshold value of $\tfrac{1}{2}\pi$.  To do the same for pulse (b), we relied on numerical computations.

As pulses (a) and (b) are generic and satisfy the hypotheses of Theorem~\ref{thm:reconstruction}, Theorem~\ref{thm:global-M0-stable-and-unstable} applies.  We first illustrate the accuracy of this result by examining the numerical causal solutions of the Cauchy problem \eqref{e:mbe} for each pulse in the transition regime that $z$ is inversely proportional to $t$, where self-similar behavior is predicted.
In Figures~\ref{f:ica_PIII} and \ref{f:icb_PIII} the results are shown for pulses (a) and (b) respectively.
\begin{figure}[h]
    \centering
    \begin{minipage}[b]{.49\textwidth}
    \includegraphics[width = 1\textwidth]{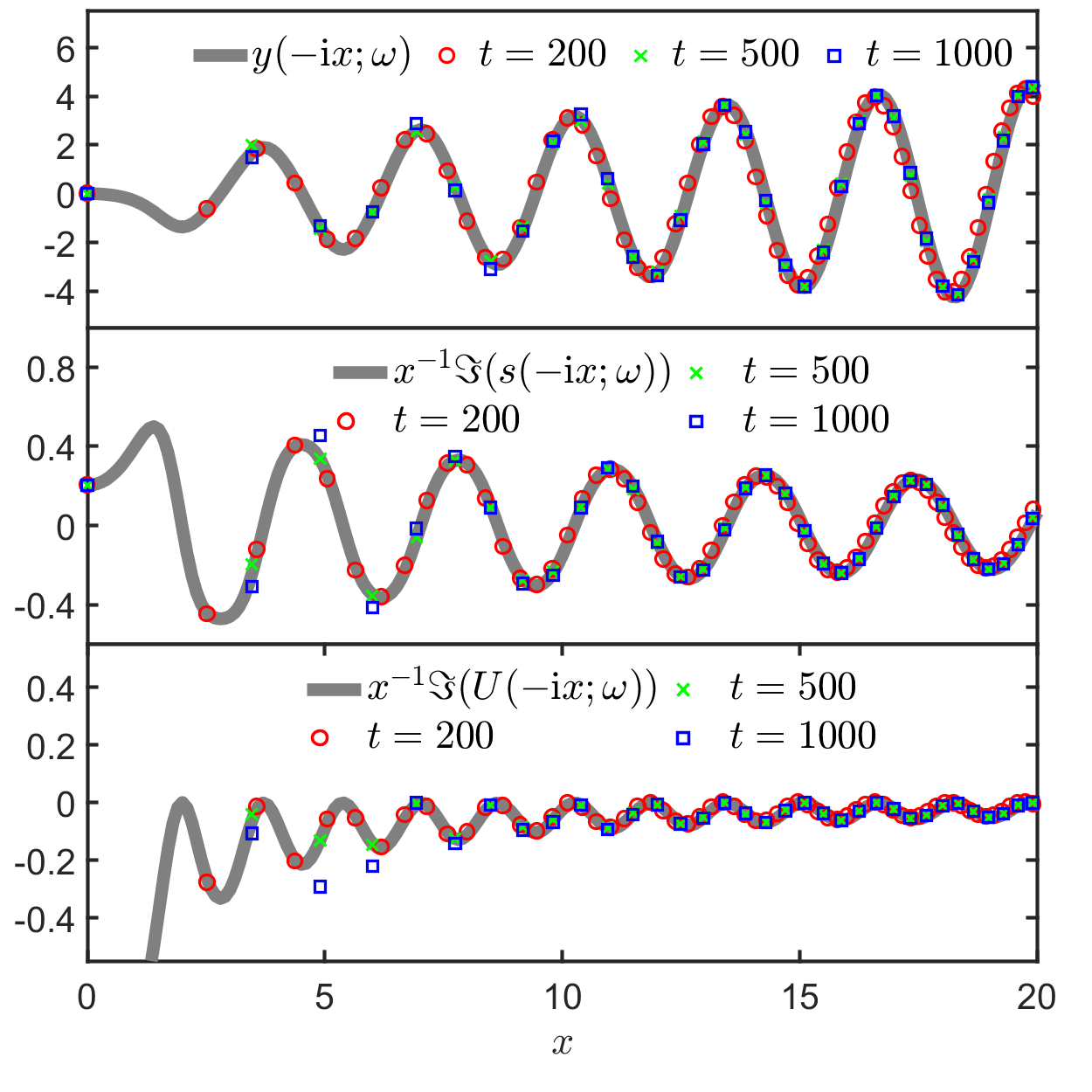}\\
    \includegraphics[width = 1\textwidth]{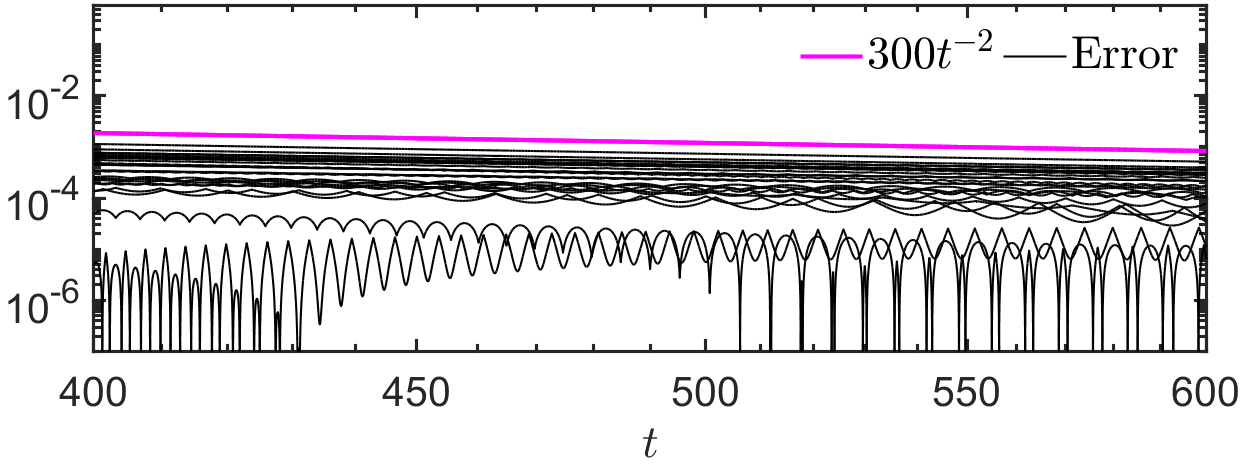}
    \end{minipage}
    \begin{minipage}[b]{.49\textwidth}
    \includegraphics[width = 1\textwidth]{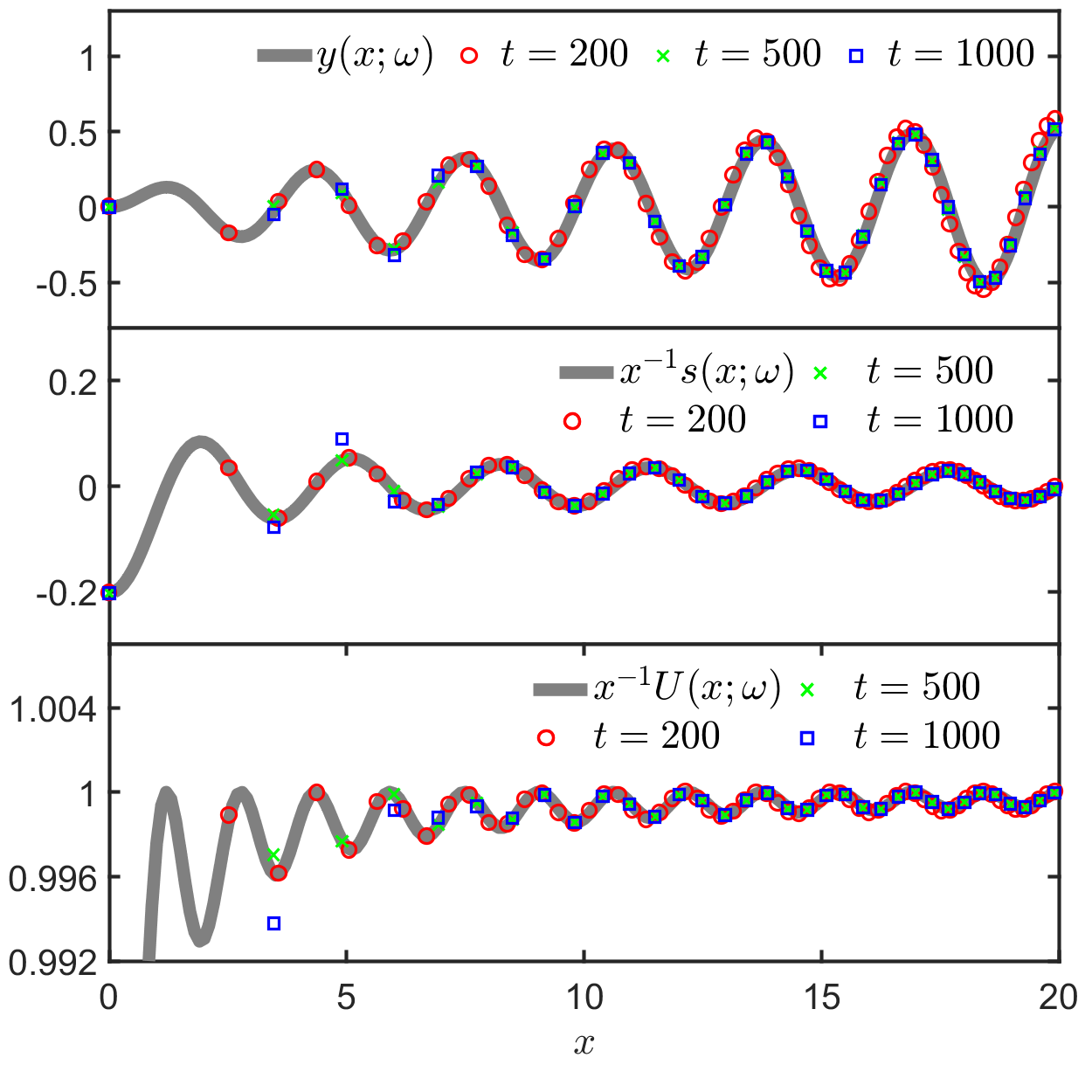}\\
    \includegraphics[width = 1\textwidth]{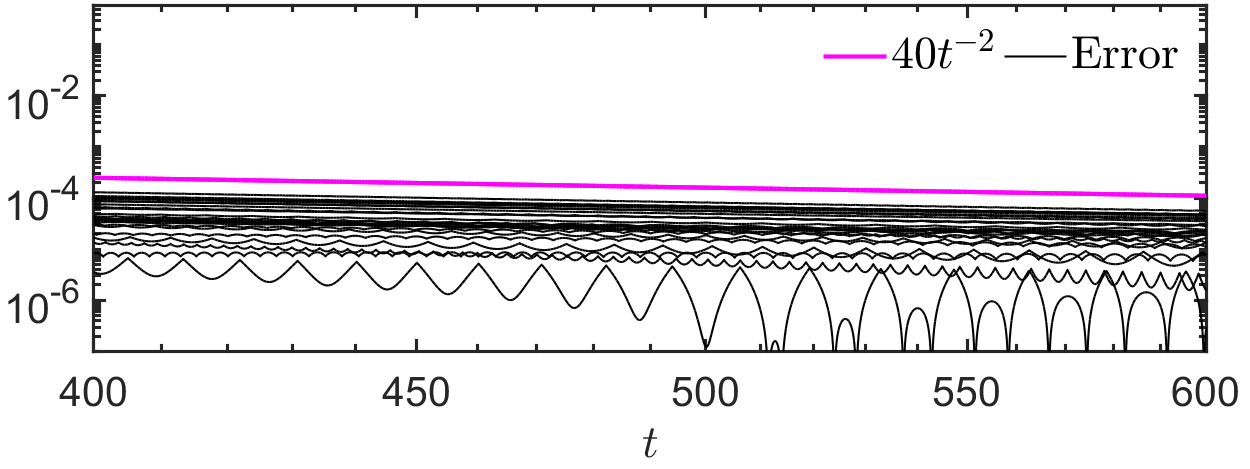}
    \end{minipage}
    \caption{
    Numerical study of incident pulse (a) in the transition regime for propagation in an initially-stable medium $D_-=-1$ (left) and an initially-unstable medium $D_-=1$ (right).
        See the main text for a full explanation.
    }
    \label{f:ica_PIII}
\end{figure}
\begin{figure}[h]
    \centering
    \begin{minipage}[b]{.49\textwidth}
    \includegraphics[width = 1\textwidth]{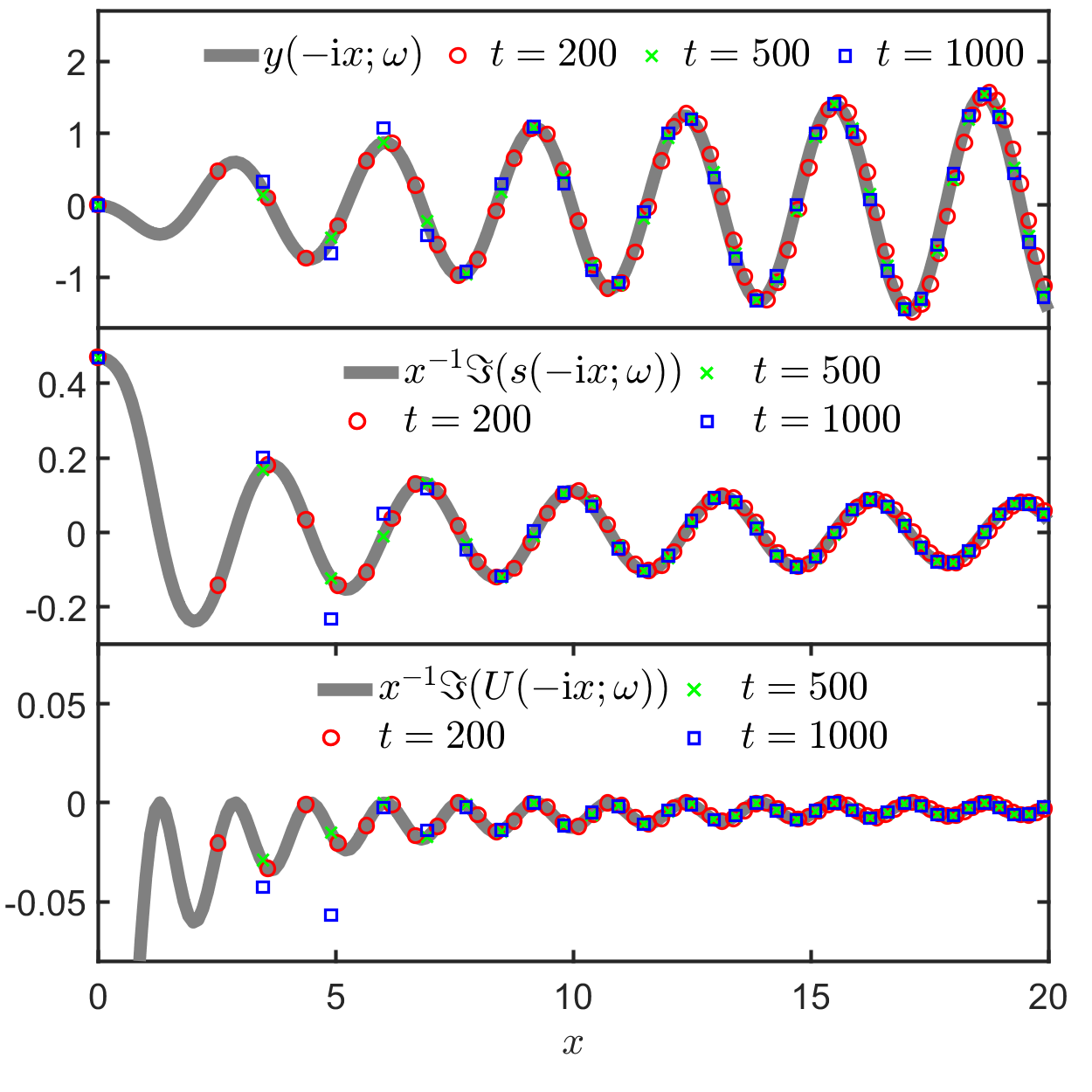}\\
        \includegraphics[width = 1\textwidth]{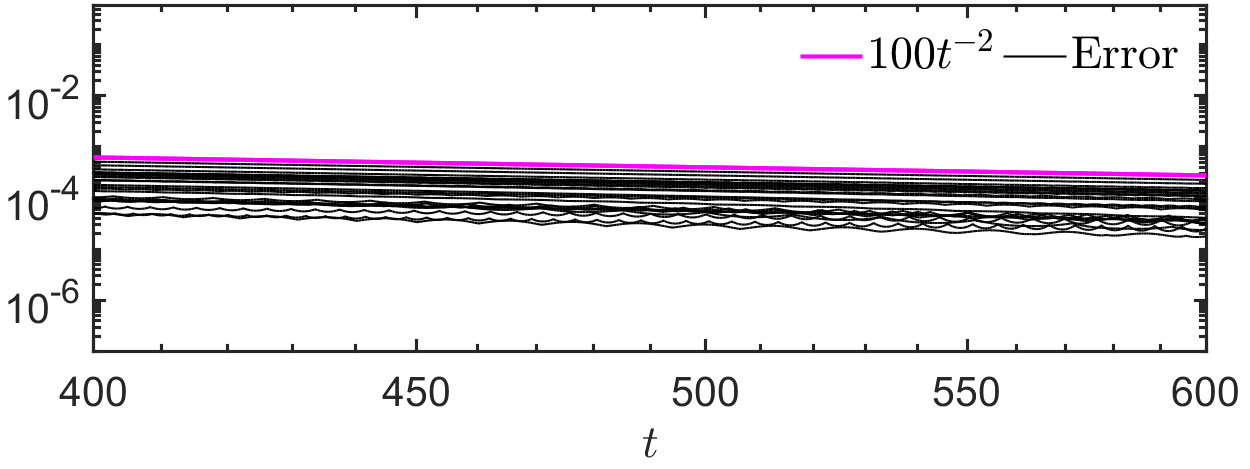}
    \end{minipage}
    \begin{minipage}[b]{.49\textwidth}
    \includegraphics[width = 1\textwidth]{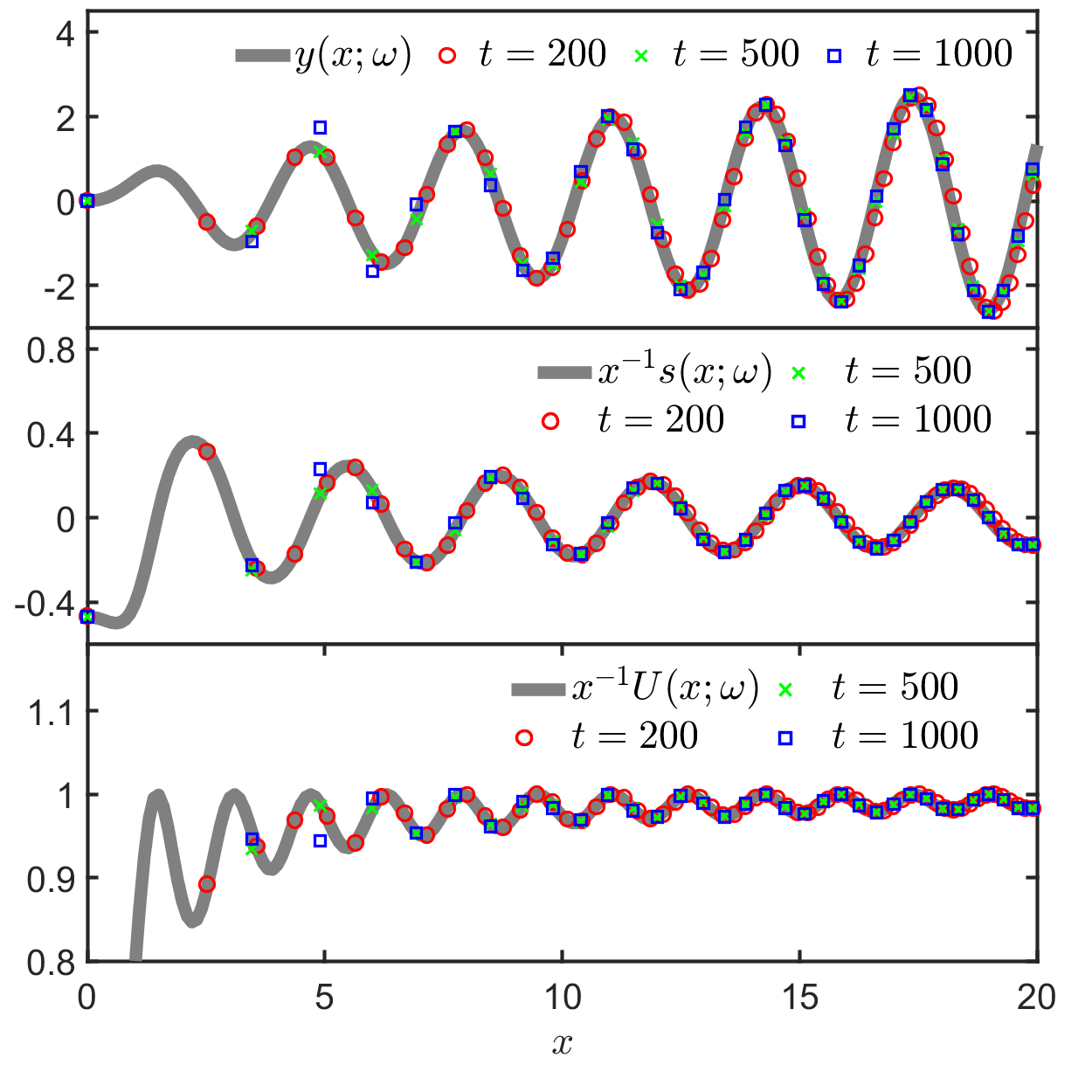}\\
        \includegraphics[width = 1\textwidth]{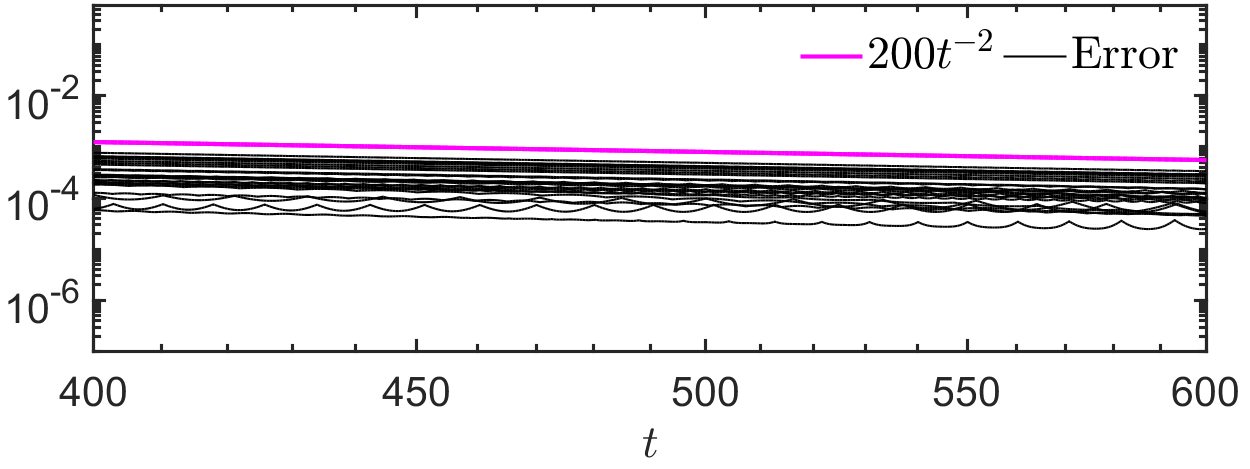}
    \end{minipage}
    \caption{As in Figure~\ref{f:ica_PIII} but for pulse (b).
    }
    \label{f:icb_PIII}
\end{figure}
In each of these figures, the left-hand (resp., right-hand) column corresponds to the case that the pulse is incident on an initially-stable (resp., initially-unstable) active medium.  In each column there are two panels as follows.
\begin{itemize}
\item
The upper panel compares the numerical results with theoretical predictions for three fixed values of $t=200, 500, 1000$ with the independent variable $z$ expressed in terms of the similarity variable $x$ by $z=z(x;t)=x^2/(2t)$, plotted as functions of $x\in [0,20]$.  Here we expect convergence of suitably renormalized versions of $q(t,z)$, $P(t,z)$, and $D(t,z)$ to limiting PIII functions whose graphs are shown with thick gray curves.  There are therefore three subplots, from top to bottom:
\begin{itemize}
\item a plot comparing numerical data for $\Re(t\ee^{\ii(\aleph+\arg(r_0))}q(t,z(x;t)))$ with its limiting function $y(-\ii x;\omega)$ in the stable-medium case or $y(x;\omega)$ in the unstable-medium case;
\item a plot comparing numerical data for $\Re(\tfrac{1}{2}\ee^{\ii(\aleph+\arg(r_0))}P(t,z(x;t)))$ with its limiting function $x^{-1}\Im(s(-\ii x;\omega))$ in the stable-medium case or $x^{-1}s(x;\omega)$ in the unstable-medium case;
\item a plot comparing numerical data for $\tfrac{1}{2}(D_--D(t,z(x;t)))$ with its limiting function $x^{-1}\Im(U(-\ii x;\omega))$ in the stable-medium case or $x^{-1}U(x;\omega)$ in the unstable-medium case.
\end{itemize}
\item The lower panel illustrates the accuracy of Theorem~\ref{thm:global-M0-stable-and-unstable} in the transition regime of fixed $tz$ and large $t>0$, in a more quantitative fashion than in the upper panel.  Here the absolute value of the difference between the numerical solution $q(t,z)$ and the relevant leading term given in Theorem~\ref{thm:global-M0-stable-and-unstable} is plotted as a function of $t$ for 25 different fixed values of $x=\sqrt{2tz}=8+\tfrac{1}{2}n$, $n=0,2,\dots,24$ on the same log-log axes.  The magenta line is a trend line for these errors and its slope indicates a decay rate proportional to $t^{-2}$ as is consistent with the prediction $\O(t^{-2})+\O(t^{1-N})$ valid for $z=\O(t^{-1})$, given that $N$ is arbitrarily large.
\end{itemize}
The accuracy on display in the upper panels of Figures~\ref{f:ica_PIII} and \ref{f:icb_PIII} is remarkable even for $t=200$, and it is clear that the accuracy improves as $t$ increases.  It might be observed that in the upper panel there is some deviation from the limiting curves for the largest value of $t=1000$; however this is occurring for smaller values of $x$ where for large $t$ there is simply insufficient numerical resolution of the self-similar solution for any accuracy to be expected.  In other words, this is a shortcoming of the numerical method, not of the asymptotic result.

We also investigated pulses (a) and (b) in the medium-bulk regime to make a comparison with Corollary~\ref{cor:medium-bulk-M0-stable-and-unstable}.  The medium-bulk regime in particular corresponds to bounded $z$ independent of $t$, so in the left-hand panel of each of Figures~\ref{f:real-stable}--\ref{f:complex-unstable} we first show a grayscale density plot of $\ln(|q(t,z)|)$ over a fixed portion of the first quadrant in the $(t,z)$-plane.
\begin{figure}[h]
    \centering
    \begin{minipage}[b]{.52\textwidth}
    \includegraphics[width = 1\textwidth]{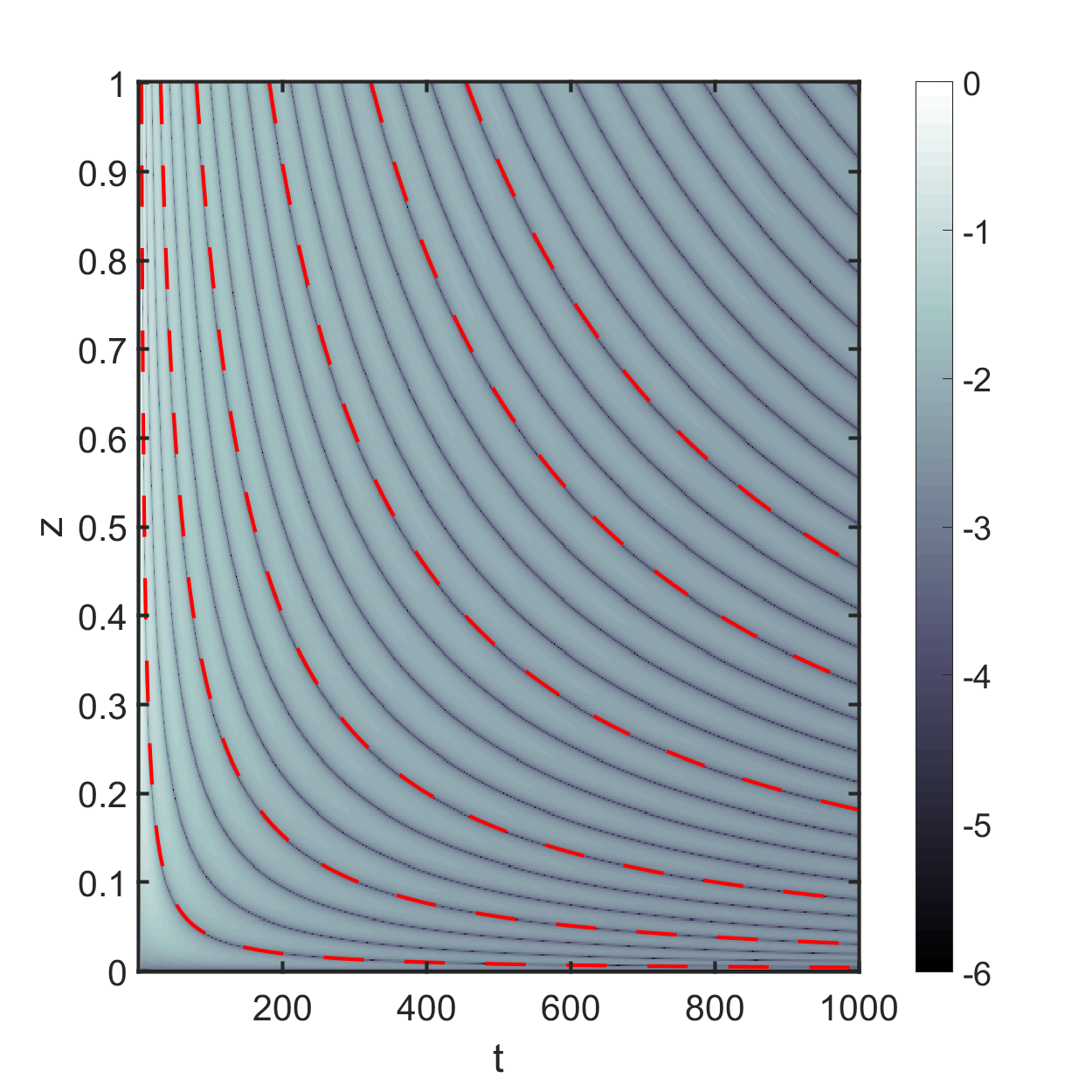}
    \end{minipage}\hspace{-1.5em}
    \begin{minipage}[b]{.5\textwidth}
    \includegraphics[width = 1\textwidth]{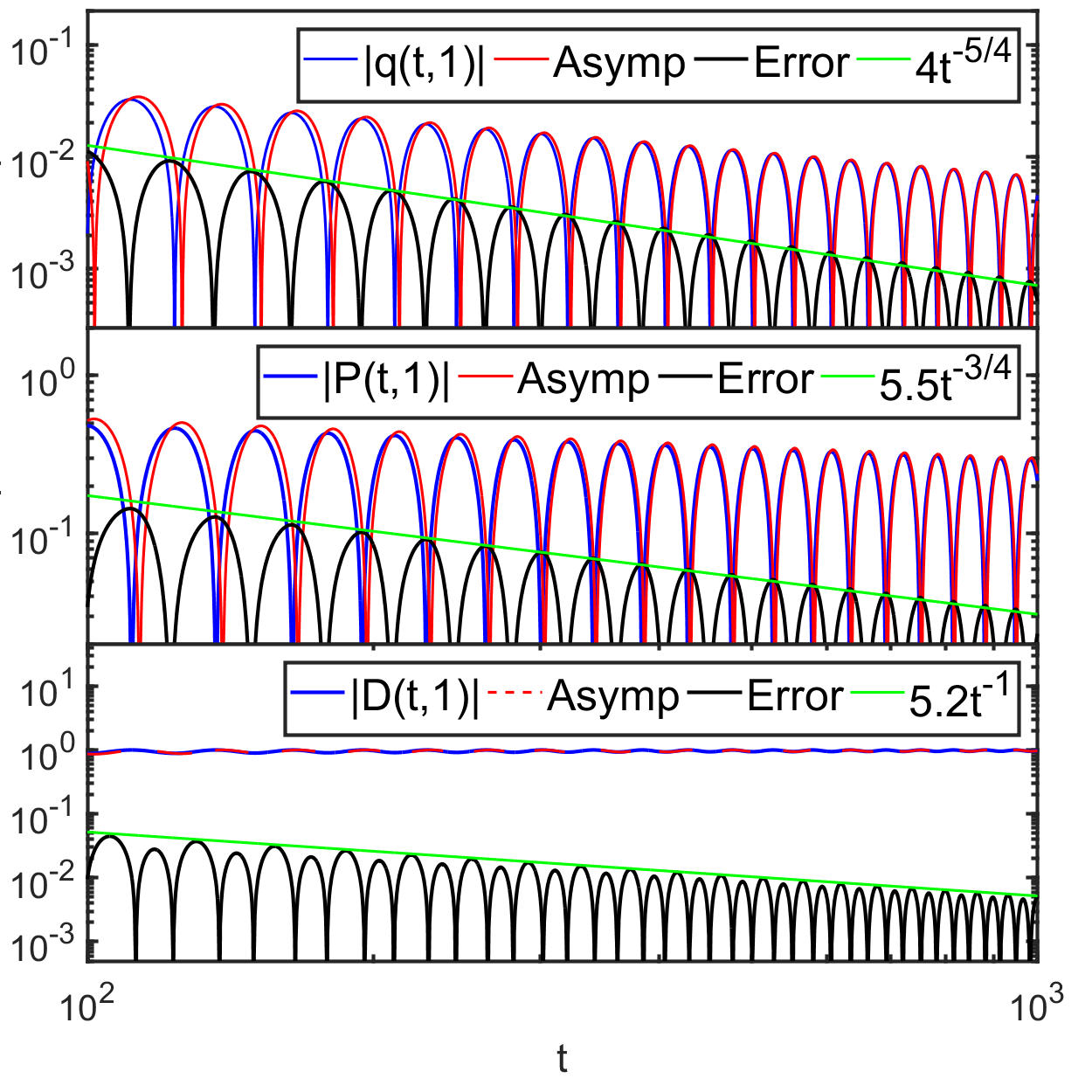}
    \end{minipage}
    \caption{
    Numerical study of incident pulse (a) in the medium-bulk regime for propagation in an initially-stable medium ($D_-=-1$).
    See the main text for a full explanation.
%
    }
    \label{f:real-stable}
\end{figure}
\begin{figure}[h]
    \centering
    \begin{minipage}[b]{.52\textwidth}
    \includegraphics[width = 1\textwidth]{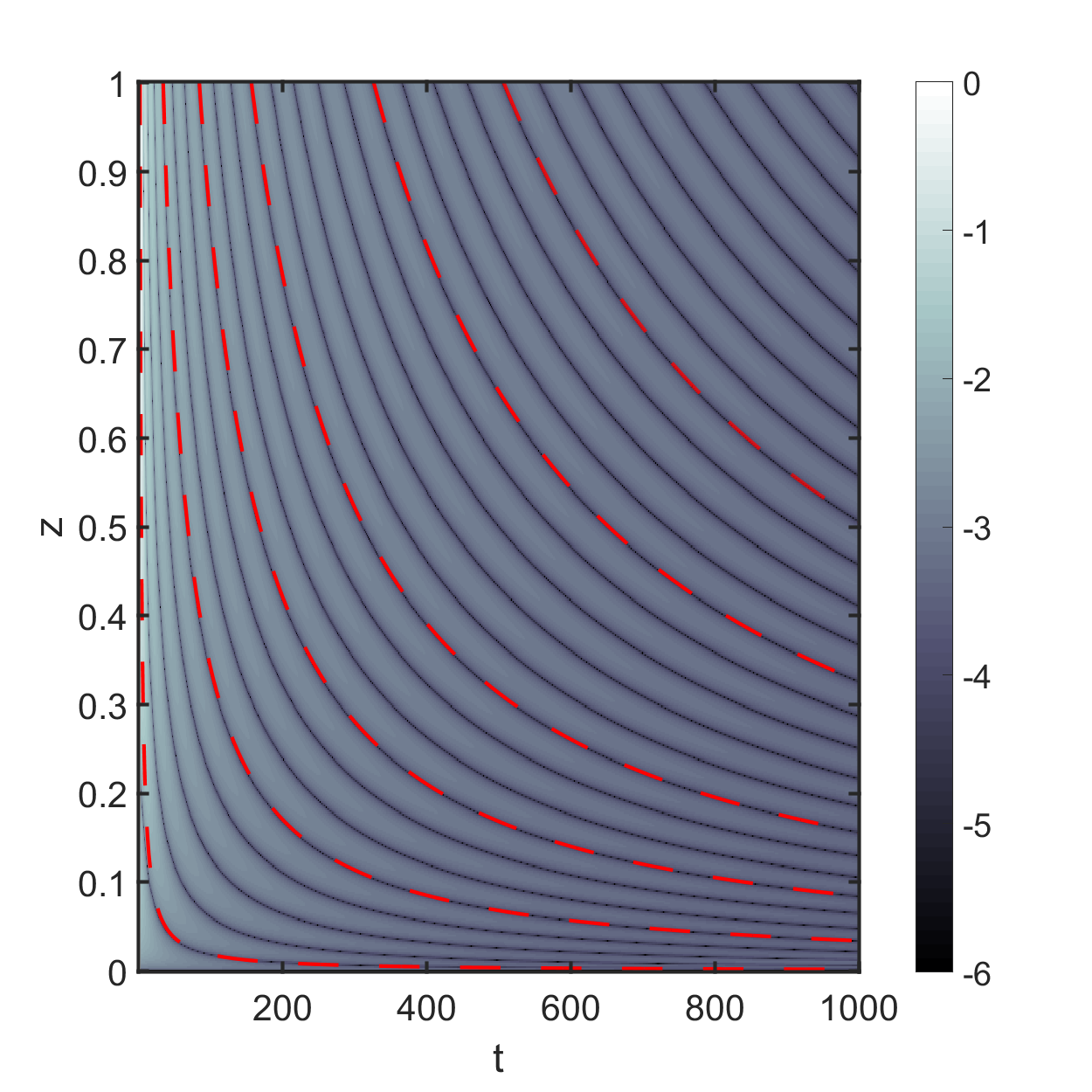}
    \end{minipage}\hspace{-1.5em}
    \begin{minipage}[b]{.5\textwidth}
    \includegraphics[width = 1\textwidth]{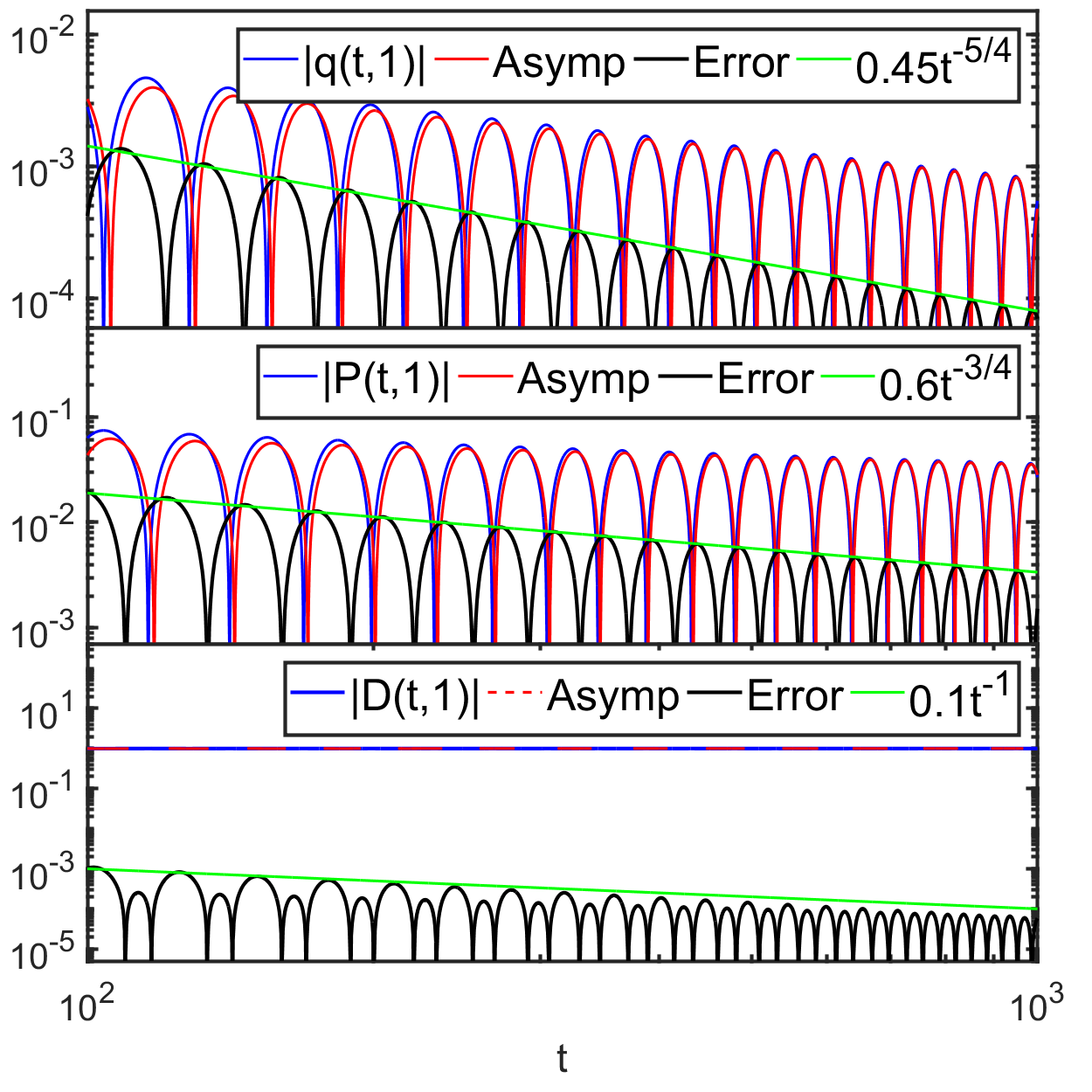}
    \end{minipage}
    \caption{
        As in Figure~\ref{f:real-stable} but for propagation in an initially-unstable medium ($D_-=1$).
%
    }
    \label{f:real-unstable}
\end{figure}
\begin{figure}[h]
    \centering
    \begin{minipage}[b]{.52\textwidth}
    \includegraphics[width = 1\textwidth]{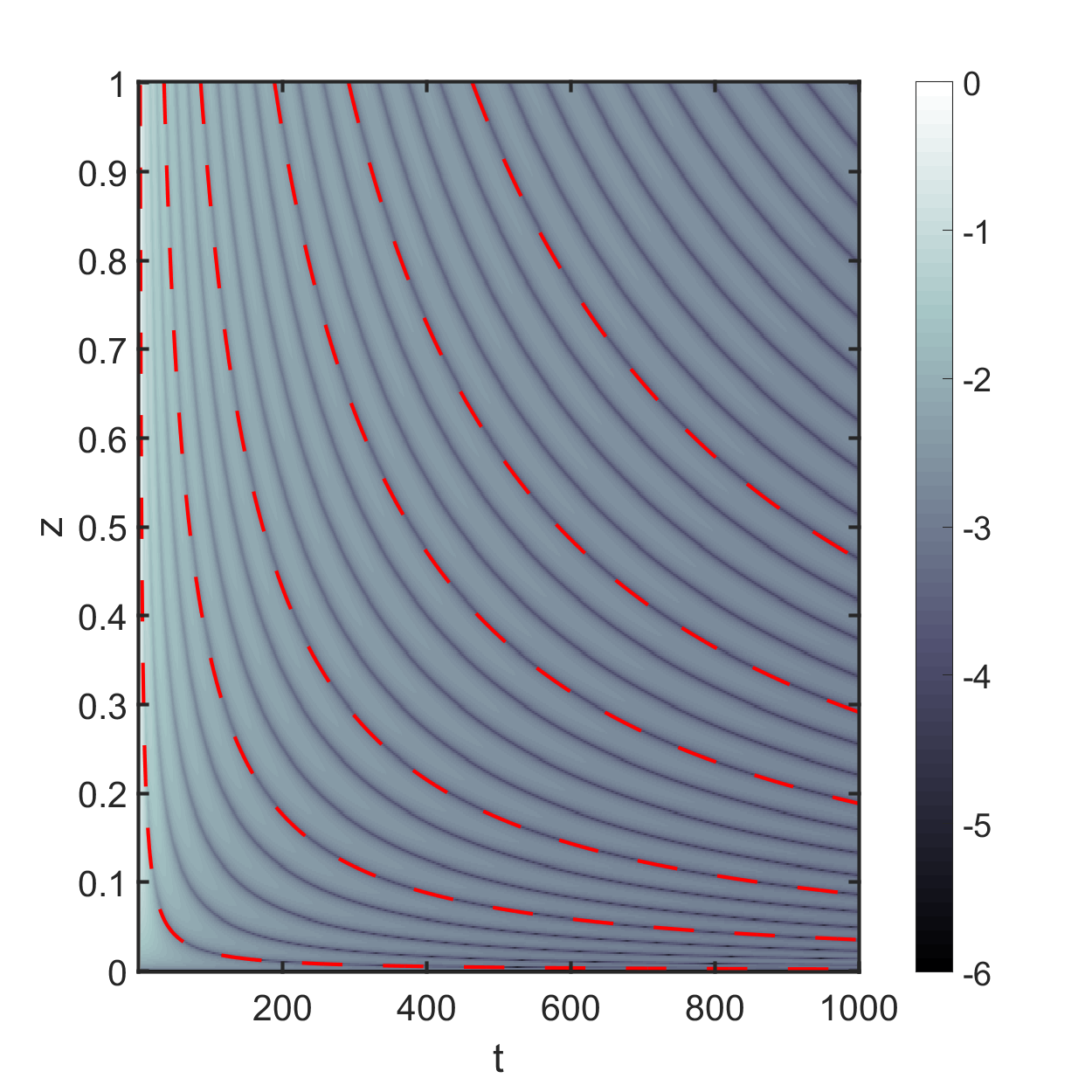}
    \end{minipage}\hspace{-1.5em}
    \begin{minipage}[b]{.5\textwidth}
    \includegraphics[width = 1\textwidth]{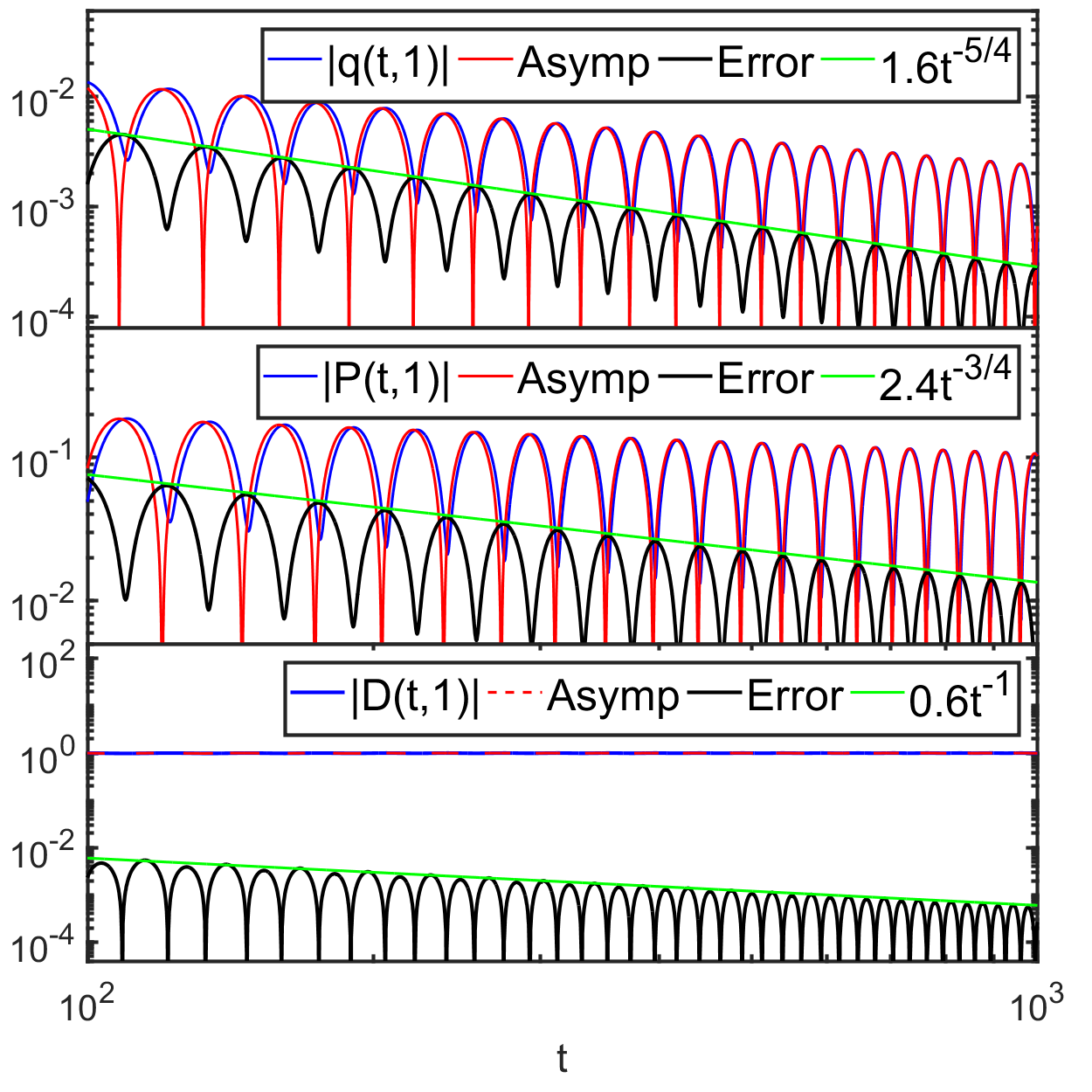}
    \end{minipage}
    \caption{
        Numerical study of incident pulse (b) in the medium-bulk regime for propagation in an initially-stable medium ($D_-=-1$).
            See the main text for a full explanation.
    }
    \label{f:complex-stable}
\end{figure}
\begin{figure}[h]
    \centering
    \begin{minipage}[b]{.52\textwidth}
    \includegraphics[width = 1\textwidth]{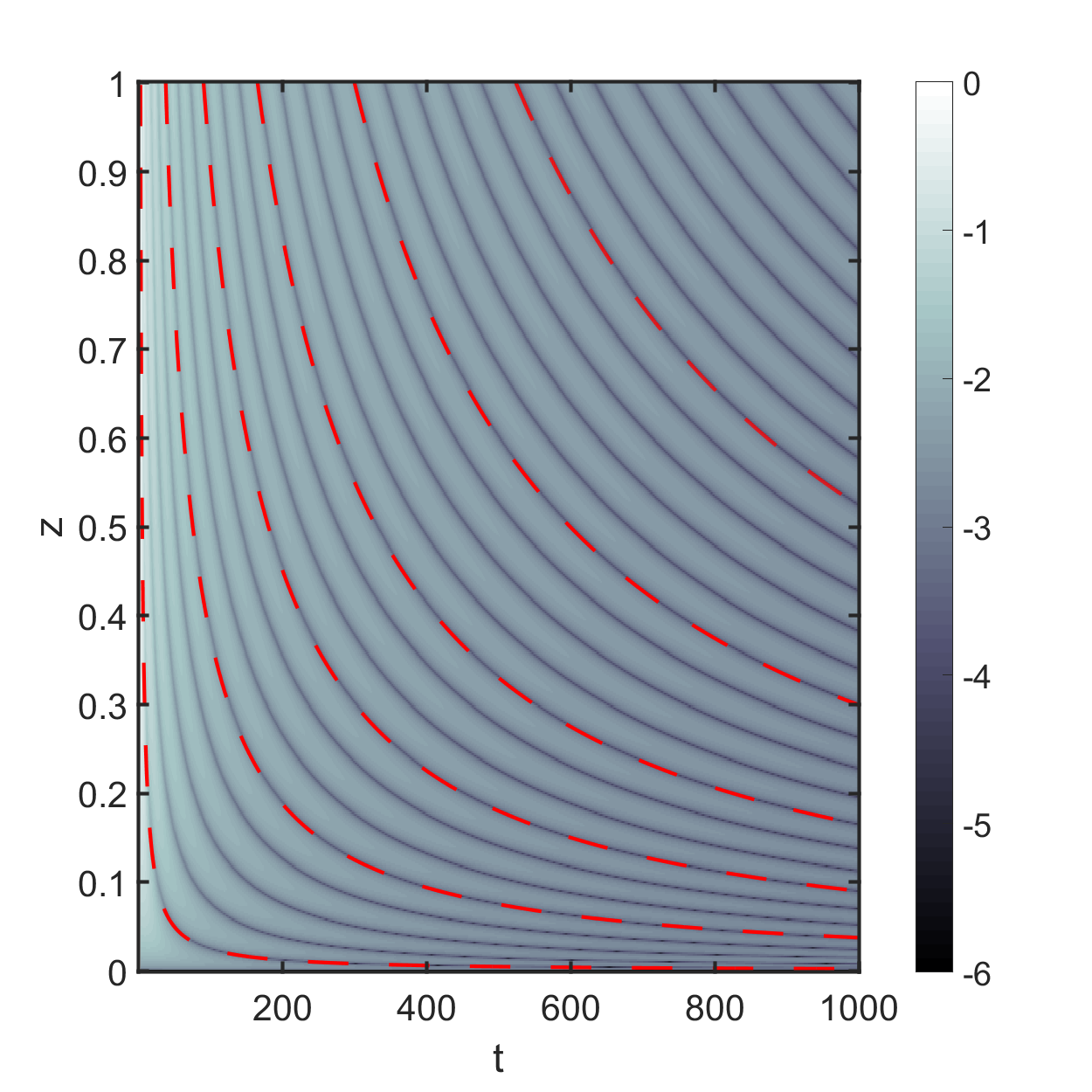}
    \end{minipage}\hspace{-1.5em}
    \begin{minipage}[b]{.5\textwidth}
    \includegraphics[width = 1\textwidth]{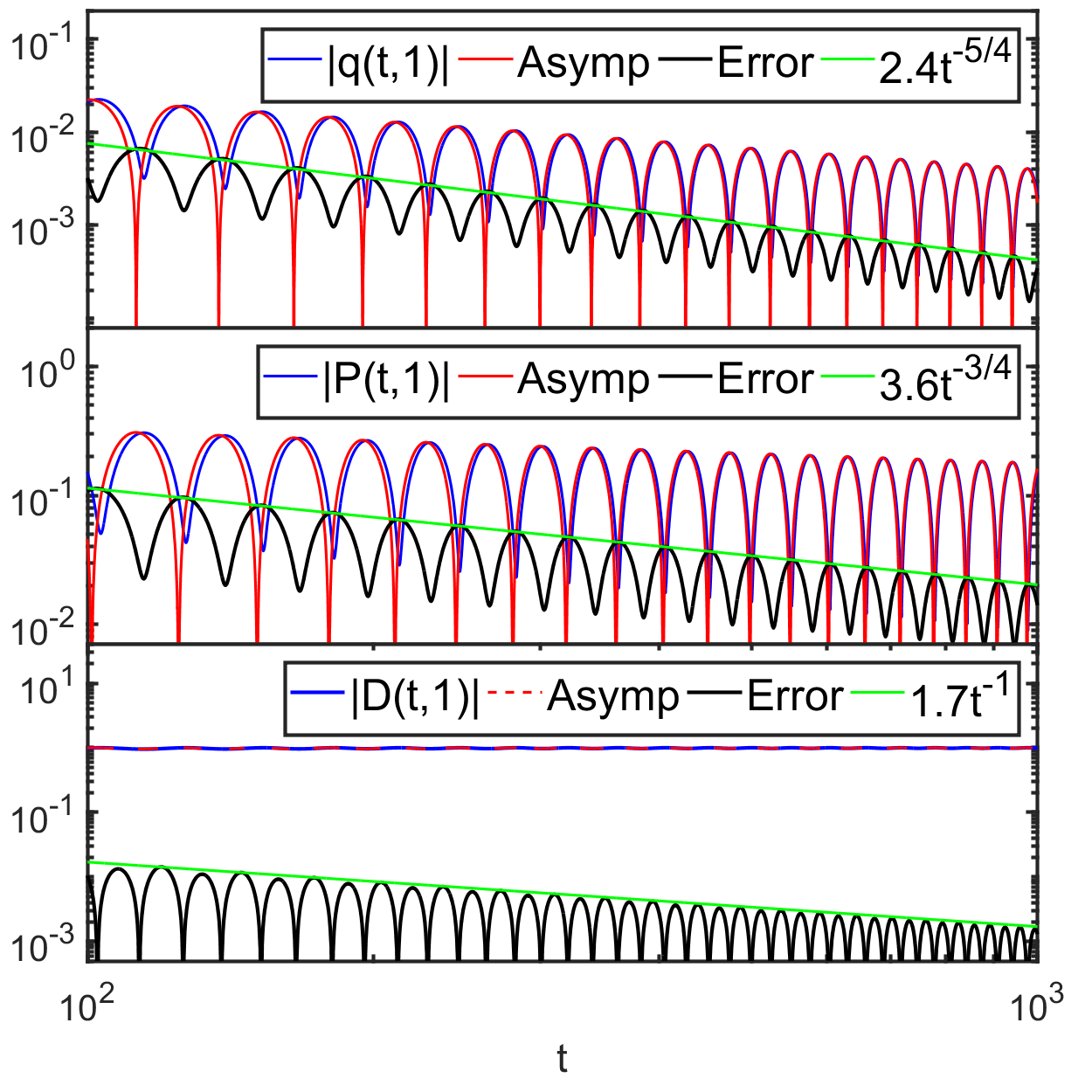}
    \end{minipage}
    \caption{
    As in Figure~\ref{f:complex-stable}, but
        for propagation in an initially-unstable medium ($D_-=1$).
    }
    \label{f:complex-unstable}
\end{figure}
Near the dark curves, the numerical value of $|q(t,z)|$ is very small (note that for incident pulse (a) the optical field $q(t,z)$ is real-valued and hence vanishes along curves while for incident pulse (b) the optical field is complex-valued and has only approximate zeros).  Superimposed with red dashed curves are selected hyperbol\ae\ $\sqrt{2tz}=x$ corresponding to exact roots of the relevant approximating function from Theorem~\ref{thm:global-M0-stable-and-unstable}.  These curves would be expected to predict approximate zeros of $|q(t,z)|$ when $t>0$ is large, but remarkably the agreement is also quite good for small $t>0$ and large $z>0$.  One interesting feature revealed by these plots is that (comparing Figure~\ref{f:real-stable} with Figure~\ref{f:real-unstable}, or comparing Figure~\ref{f:complex-stable} with Figure~\ref{f:complex-unstable}) the same incident pulse can produce an optical field $q(t,z)$ within the medium ($z>0$) of dramatically different amplitude.  Indeed, it appears that for pulse (a), $|q(t,z)|$ is an order of magnitude larger for $z>0$ in the stable medium than in the unstable medium.  On the other hand, for pulse (b) the effect is reversed and it is less dramatic.  This phenomenon can be explained by the asymptotic formul\ae\ in Corollary~\ref{cor:medium-bulk-M0-stable-and-unstable} which involve an amplitude factor $A$ (see \eqref{e:parameters-medium-bulk-M0-stable-and-unstable}) that for the same incident pulse takes different values in the stable and unstable cases.

The right-hand panel in each of Figures~\ref{f:real-stable}--\ref{f:complex-unstable} is a quantitative study of the accuracy of Corollary~\ref{cor:medium-bulk-M0-stable-and-unstable}.  We compare
numerical data for $q(t,z)$, $P(t,z)$, and $D(t,z)$ with the corresponding approximations from Corollary~\ref{cor:medium-bulk-M0-stable-and-unstable} for fixed $z=1$ and increasing $t>0$ in three similar log-log plots.  In this situation (fixed $z>0$), the error estimates for $q$, $P$, and $D$ in Corollary~\ref{cor:medium-bulk-M0-stable-and-unstable} simplify to $\O(t^{-1})$, $\O(t^{-\frac{1}{2}})$ and $\O(t^{-\frac{1}{2}})$ respectively.  The magenta line in each plot is a bound for the numerically calculated difference between the solution and the leading terms in Corollary~\ref{cor:medium-bulk-M0-stable-and-unstable}.  The slope of this line suggests that for $q(t,z)$, $P(t,z)$, the estimates in Corollary~\ref{cor:medium-bulk-M0-stable-and-unstable} may be improved by an additional factor of $t^{-\frac{1}{4}}$, while for $D(t,z)$, an additional factor of $t^{-\frac12}$ may be expected.  Looking in more detail at the error terms in \eqref{e:qPD-medium-bulk-M0-stable-and-unstable} and taking $\alpha=0$ as would be correct for the plots under consideration, we see that the first error term on the right-hand side in each cases matches the numerically-observed rate of decay, but it is dominated in each case by the second error term (and the third term may be regarded as beyond-all-orders).  This suggests that the apparently-dominant error term, which originates from the first error term on the right-hand sides of \eqref{e:qPD-generic-global-stable} and \eqref{e:qPD-generic-global-unstable} in Theorem~\ref{thm:global-M0-stable-and-unstable}, is not sharp, at least when $z>0$ is fixed.  This term originates from the very last step of our analysis, the solution of a small-norm RHP (see Section~\ref{s:small-norm-RHP-s} below).

\subsubsection{A nongeneric pulse}
Pulse (c) is also consistent with the assumptions of Theorem~\ref{thm:reconstruction}, but it is nongeneric.  Being a real-valued pulse that is odd about $t=3$, it follows from \eqref{e:r0-real} that
$r_0=0$, and by Remark~\ref{rem:aleph} we also have $\aleph=0$.  Similarly, from \eqref{e:rprime-zero} we obtain the nonzero value of $r_0^{(1)}=r'(0)$ indicated in Table~\ref{tab:ic-numerics}, and hence the index of the first nonzero moment is $M=1$.  We note that pulse (c) is Schwartz-class (again the apparent corners on the plot in Figure~\ref{fig:ic-numerics} are artifacts of insufficient resolution), and the fact that it generates no eigenvalues or spectral singularities was confirmed numerically.

Since it is nongeneric, for pulse (c) it is Theorems~\ref{thm:global-Mpos-stable} and \ref{thm:edge-Mpos-unstable} that are applicable (here in the case $M=1$), for propagation in an initially-stable medium and an initially-unstable medium respectively.  Both of these theorems characterize the solution in the transition regime, so we may begin by presenting an analogue of Figures~\ref{f:ica_PIII}--\ref{f:icb_PIII} in Figure~\ref{f:icc_PIII}.
\begin{figure}[h]
    \centering
    \begin{minipage}[b]{.49\textwidth}
    \includegraphics[width = 1\textwidth]{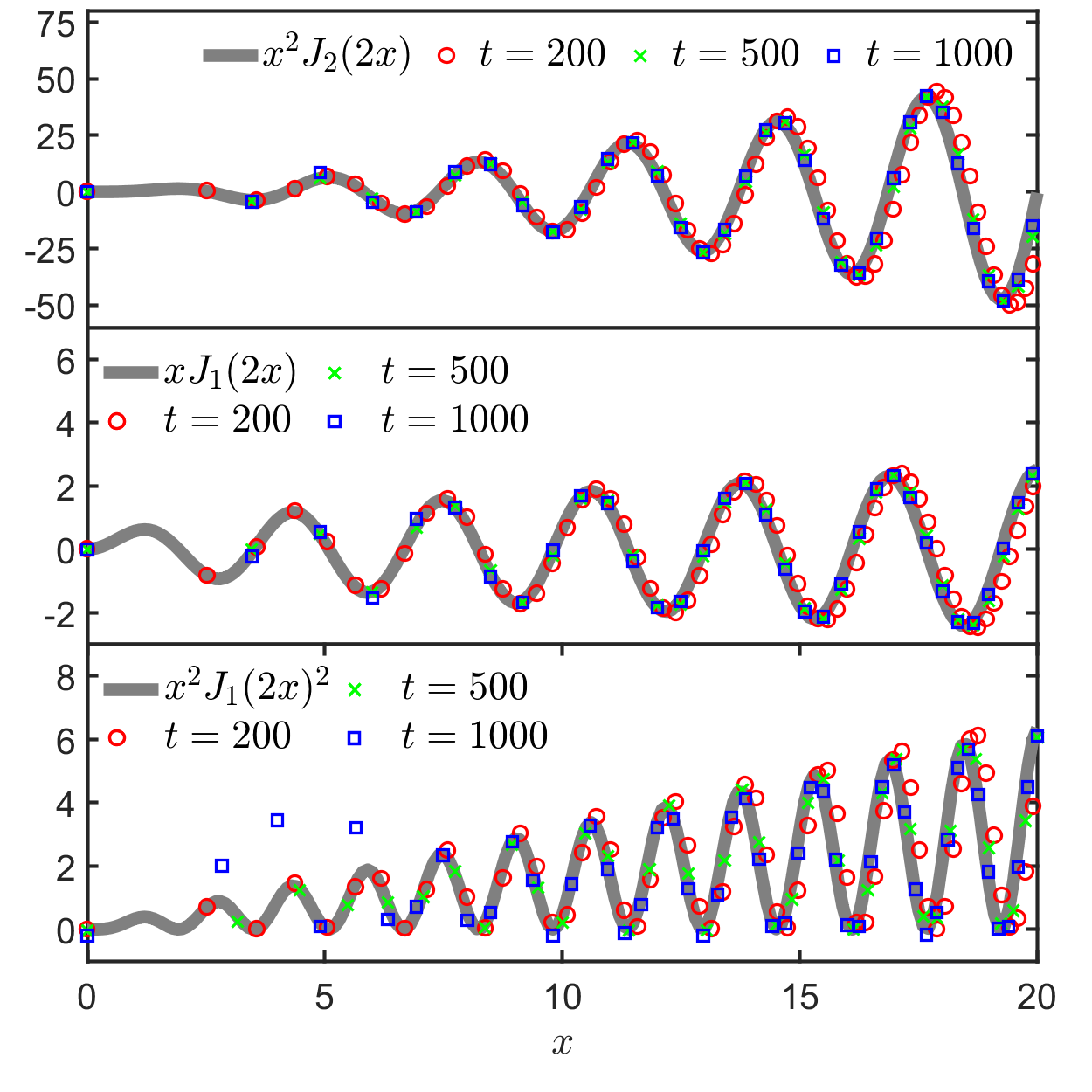}\\
            \includegraphics[width = 1\textwidth]{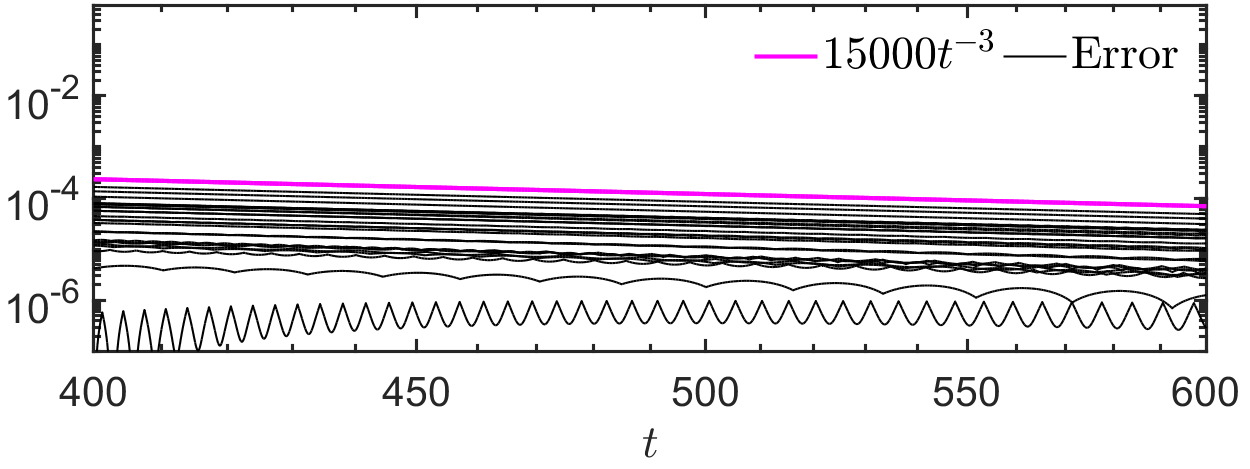}
    \end{minipage}
    \begin{minipage}[b]{.49\textwidth}
    \includegraphics[width = 1\textwidth]{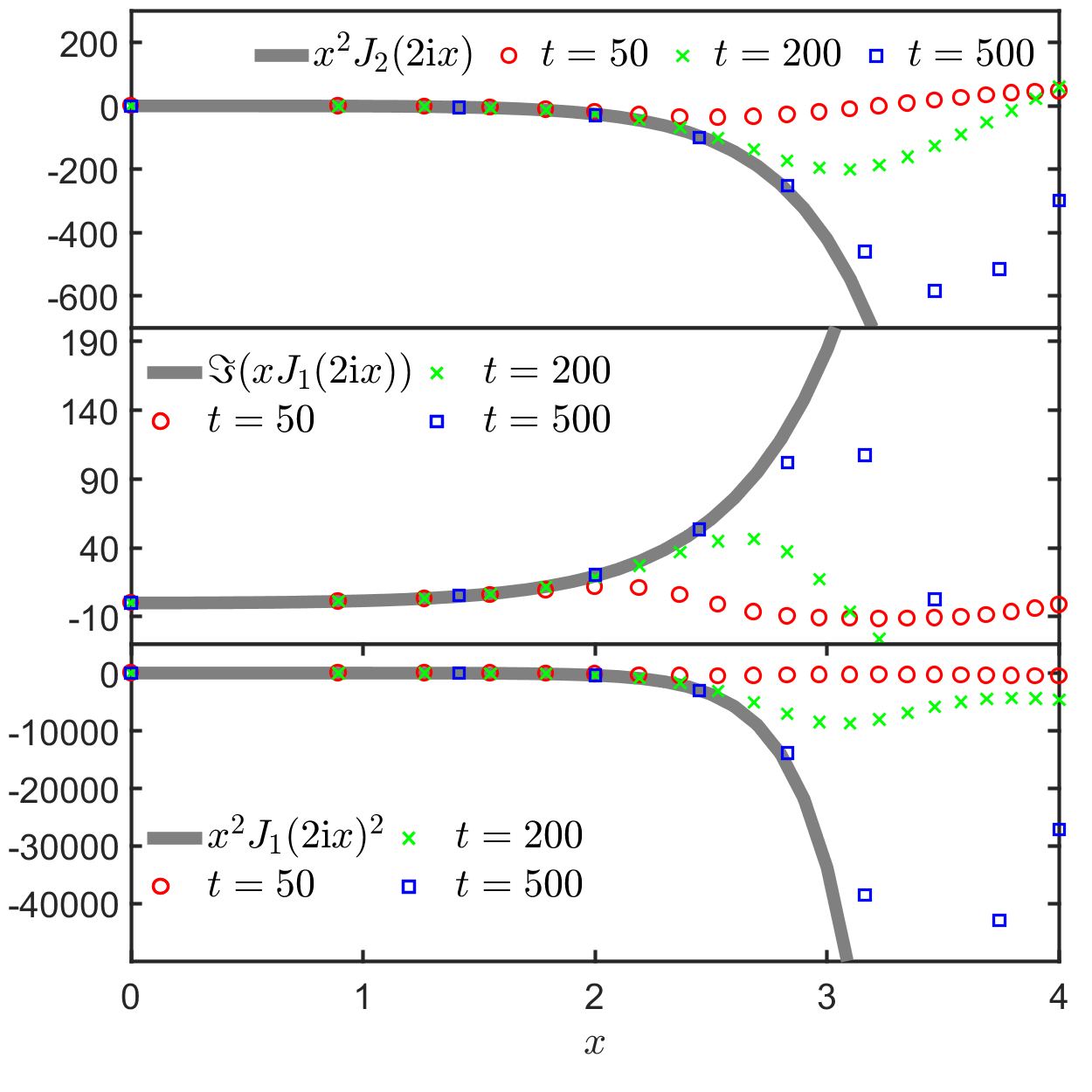}\\
            \includegraphics[width = 1\textwidth]{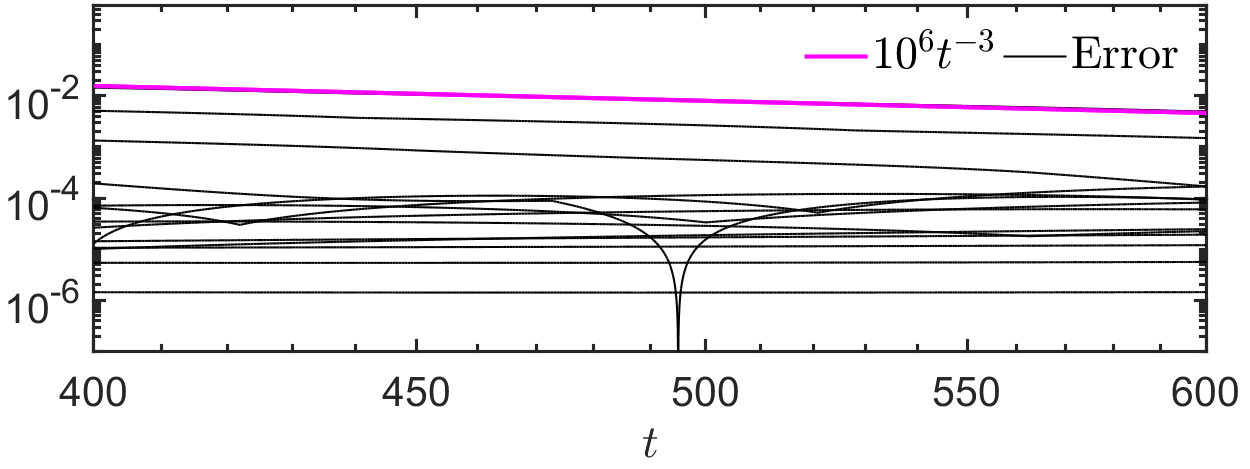}
    \end{minipage}
    \caption{
        Numerical study of incident pulse (c) in the transition regime for propagation in an initially-stable medium $D_-=-1$ (left) and an initially-unstable medium $D_-=1$ (right).
            See the main text for a full explanation.
    }
    \label{f:icc_PIII}
\end{figure}
Once again, the left-hand (respectively, right-hand) column corresponds to propagation in an initially-stable (respectively, initially-unstable) medium.  The three sub-plots of the upper panel in each column show numerical data at the indicated fixed values of $t$ as functions of the similarity variable $x=\sqrt{2tz}$ compared with the predicted limiting function plotted with a thick gray line:
\begin{itemize}
\item the top sub-plot compares $\Re(-2\ii D_- t^2 \ee^{\ii\aleph}q(t,z(x;t))/\b{r_0^{(1)}})$ with the limiting function $x^2J_2(2x)$ ($x^2J_2(2\ii x)$) in the stable (unstable) case;
\item the center sub-plot compares $\Re(-\ii t\ee^{\ii\aleph}P(t,z(x;t))/\b{r_0^{(1)}})$ with the limiting function $xJ_1(2x)$ ($\Im(xJ_1(2\ii x))$) in the stable (unstable) case;
\item the bottom sub-plot compares the quantity $2t^2(D(t,z(x;t))-D_-)/|r_0^{(1)}|^2$ with the limiting function $x^2J_1(2x)^2$ ($x^2J_1(2\ii x)^2$) in the stable (unstable) case.
\end{itemize}
At the bottom of each column is a plot of the absolute error between $q(t,z)$ and the leading term on the right-hand side in \eqref{e:qPD-global-Mpos-stable} (stable case) or in \eqref{e:qPD-edge-Mpos-unstable} (unstable case) for numerous fixed values of $x=\sqrt{2tz}$ plotted as functions of $t$.  The magenta trend line in each plot is consistent with a rate of decay of $\O(t^{-3})$ exactly as predicted in Theorems~\ref{thm:global-Mpos-stable} and \ref{thm:edge-Mpos-unstable} for the transition region corresponding to choosing $\alpha=-1$ in the exponent.

The sub-plots in the upper plot of the right-hand column suggest a nonuniform rate of convergence to the limiting functions, which in this (unstable) case exhibit exponential growth as $x\to+\infty$.  This in turn suggests that pulse (c) produces a strong response in the unstable medium that drives it away from the self-similar behavior that occurs near the edge $z=0$ for large $t>0$.  To understand the solutions away from the edge, we make plots similar to Figures~\ref{f:real-stable}--\ref{f:complex-unstable} for the nongeneric pulse (c).  In Figures~\ref{f:Bessel-stable} and \ref{f:Bessel-unstable} (for propagation in an initially-stable and unstable medium respectively), we show in the left-hand panel a density plot of $\ln(|q(t,z)|)$.  In the stable case, the leading approximation from Theorem~\ref{thm:global-Mpos-stable} has zeros corresponding to fixed values of $x=\sqrt{2tz}$ and some of these curves are overlaid; however for the unstable case the leading approximation has no zeros at all, even though the density plot shows some curves along which the real-valued solution $q(t,z)$ evidently vanishes.  We do not have an explanation for this phenomenon because it occurs in the medium-bulk regime where Theorem~\ref{thm:edge-Mpos-unstable} does not apply; it is related to the nonuniformity of convergence apparent in the upper right-hand panel of Figure~\ref{f:icc_PIII}.
\begin{figure}[h]
    \centering
    \begin{minipage}[b]{.52\textwidth}
    \includegraphics[width = 1\textwidth]{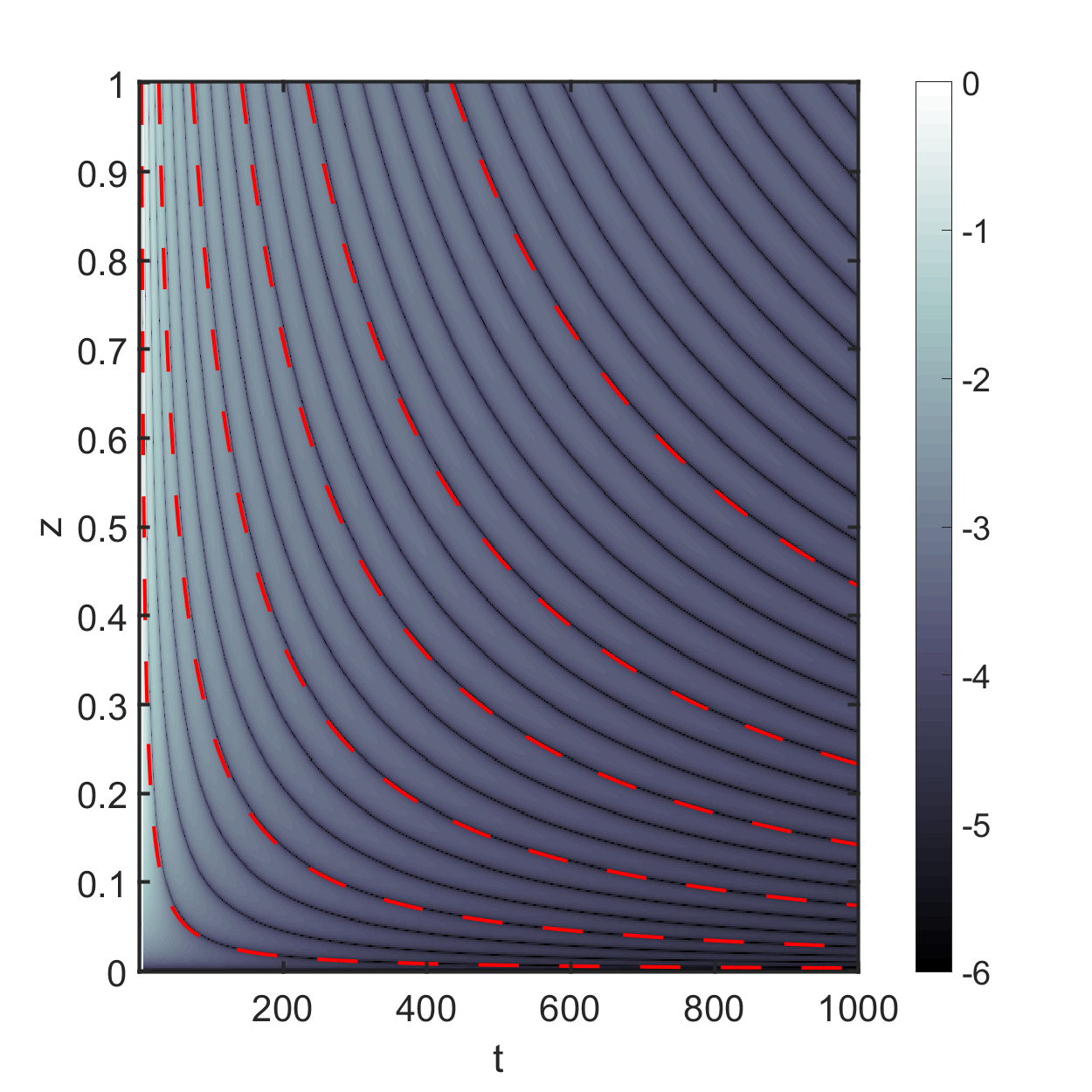}
    \end{minipage}\hspace{-1.5em}
    \begin{minipage}[b]{.5\textwidth}
    \includegraphics[width = 1\textwidth]{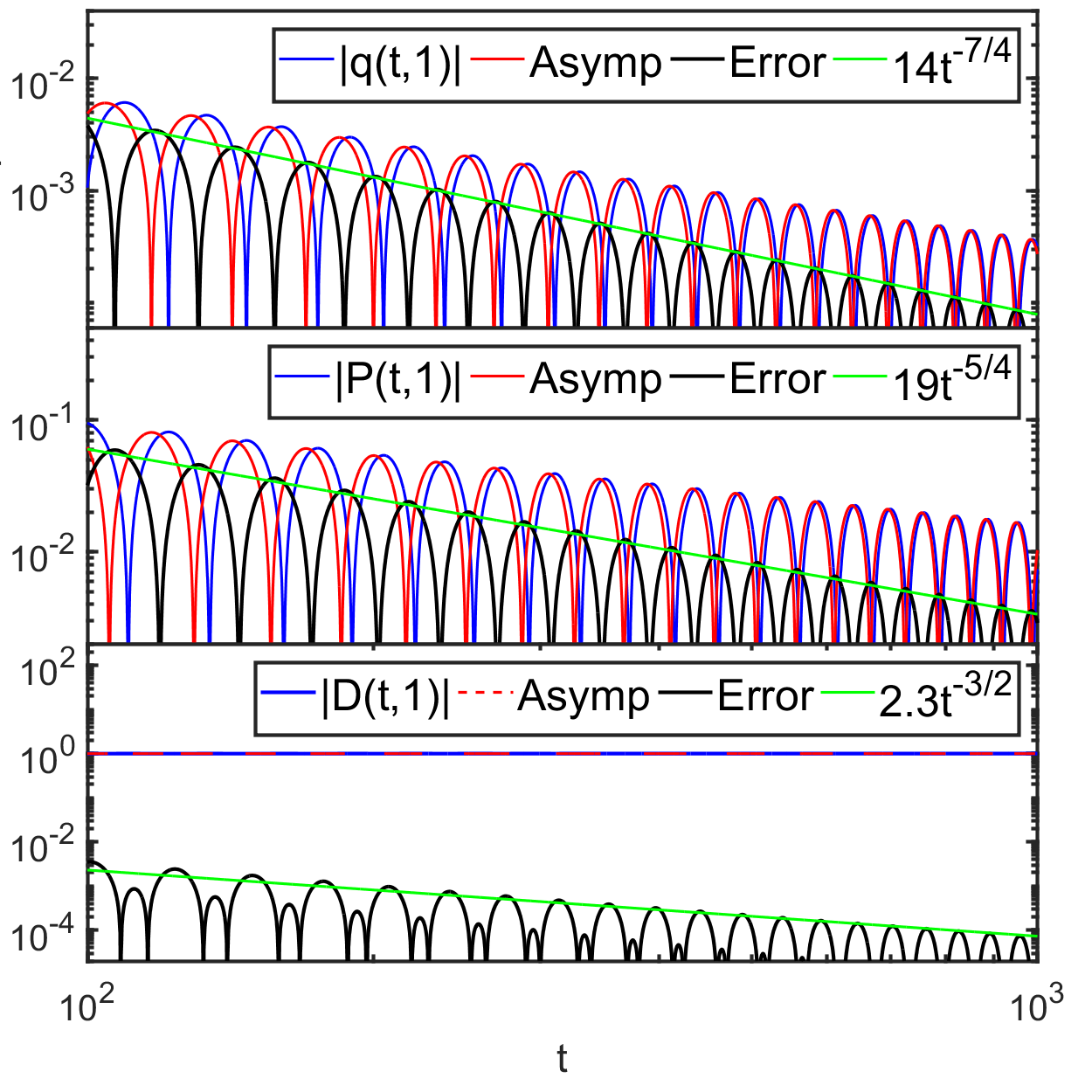}
    \end{minipage}
    \caption{
    Numerical study of nongeneric incident pulse (c) in the medium-bulk regime for propagation in an initially-stable medium ($D_-=-1$).  See the main text for a full explanation.
    }
    \label{f:Bessel-stable}
\end{figure}
\begin{figure}[h]
    \centering
    \begin{minipage}[b]{.52\textwidth}
    \includegraphics[width = 1\textwidth]{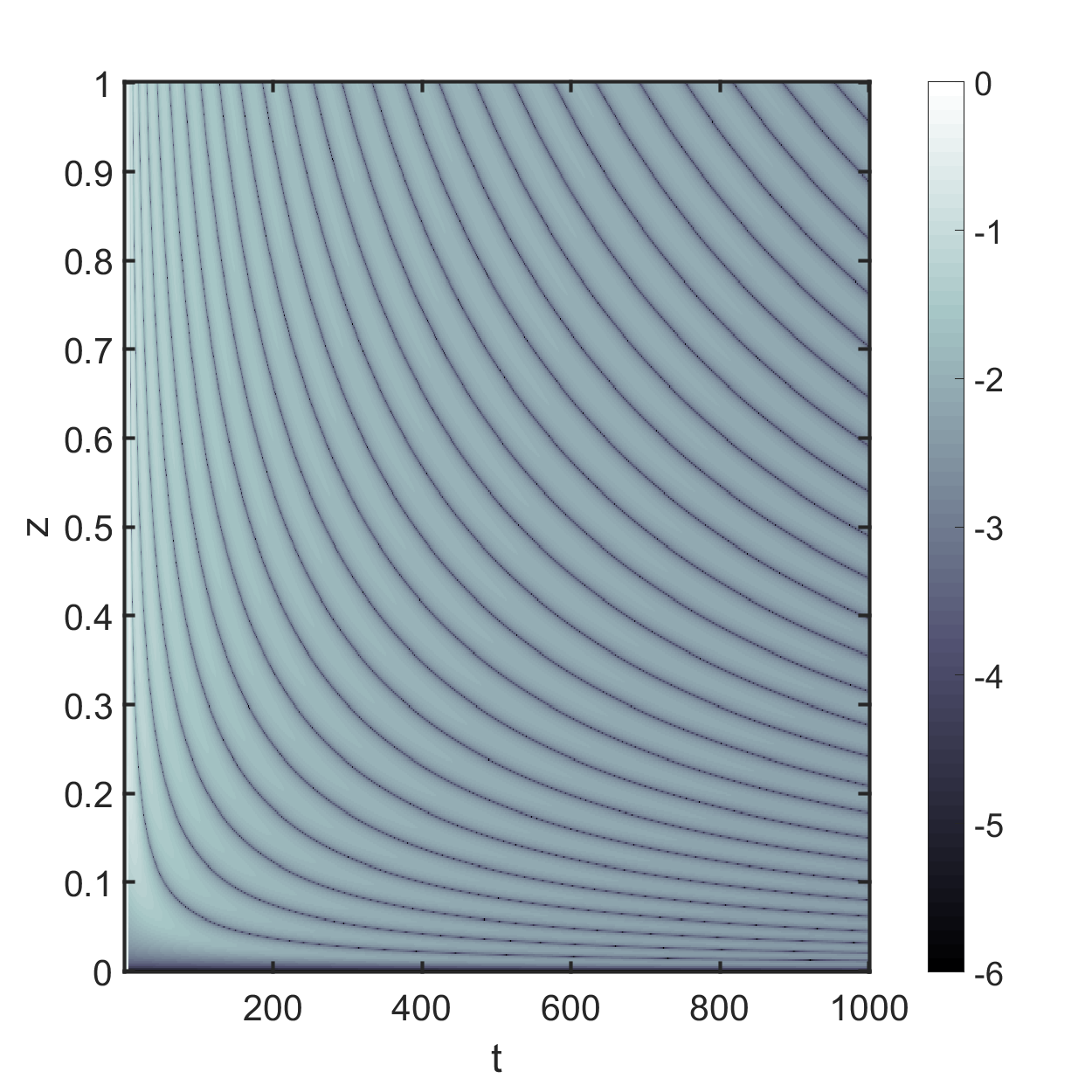}
    \end{minipage}\hspace{-1.5em}
    \begin{minipage}[b]{.5\textwidth}
    \includegraphics[width = 1\textwidth]{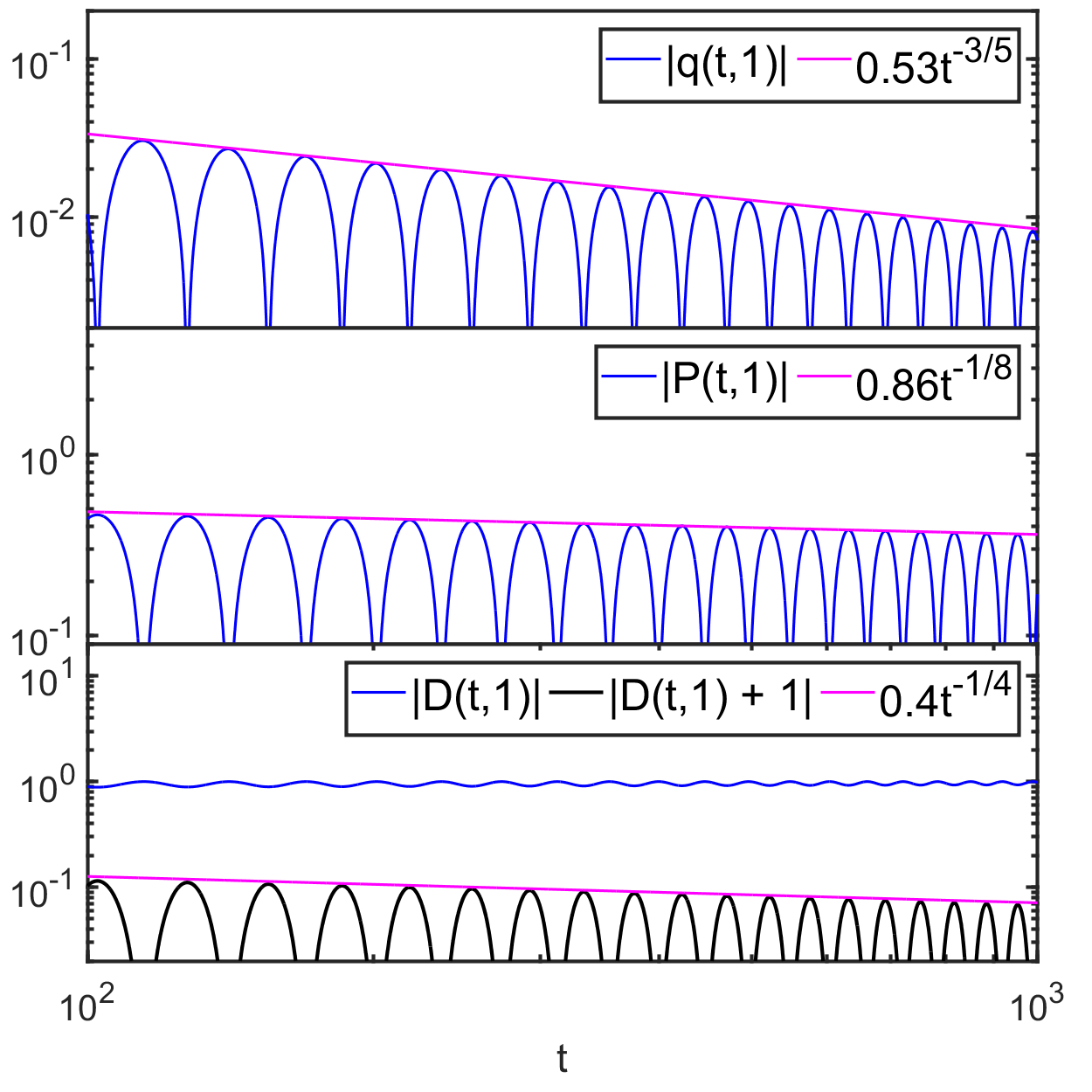}
    \end{minipage}
    \caption{
    As in Figure~\ref{f:Bessel-stable}, but
            for propagation in an initially-unstable medium ($D_-=1$).
    }
    \label{f:Bessel-unstable}
\end{figure}

The right-hand panel of Figures~\ref{f:Bessel-stable} and \ref{f:Bessel-unstable} is a quantitative study of the behavior of the solution generated by pulse (c) in the medium-bulk regime where $z=1$ is fixed for stable and unstable media respectively.   In the stable case illustrated in Figure~\ref{f:Bessel-stable}, we compare with the leading terms in Theorem~\ref{thm:global-Mpos-stable} (we could have expanded the Bessel functions for large arguments using \eqref{e:Bessel-expand} but since it is just as easy to evaluate the Bessel functions numerically, we did not do so here) and observe that for all three fields the measured rates of decay of the error seem to be faster than predicted:  $\O(t^{-\frac{7}{4}})$ versus $\O(t^{-\frac{3}{2}})$, $\O(t^{-\frac{5}{4}})$ versus $\O(t^{-1})$, and $\O(t^{-\frac{3}{2}})$ versus $\O(t^{-1})$ for $q(t,z)$, $P(t,z)$, and $D(t,z)$ respectively.  For the unstable case we have no result to compare with in the medium-bulk regime of fixed $z=1$, so we simply plot the numerical data against $t$ and hence provide evidence that the nongeneric pulse (c) indeed switches the unstable medium back to the stable state as $t\to+\infty$, although the decay is very gradual with $D(t,z)+1$ proportional to $t^{-\frac{1}{4}}$ only.

%

\subsubsection{A non-Schwartz class pulse}
We include pulse (d) as an example to illustrate the applicability of Theorem~\ref{thm:global-M0-stable-and-unstable} and its corollaries beyond the technically-convenient assumption that $q_0\in\mathscr{S}(\Real)$.  This pulse is in $L^1(\Real)$, allowing the numerical computation of the scattering matrix for all $\lambda\in\Real$ which shows that there are no spectral singularities or eigenvalues, and allows the (numerical) calculation of the value of $r_0\neq 0$ given in Table~\ref{tab:ic-numerics}.  In particular, this is a generic ($M=0$) pulse.  Although it is not in the Schwartz space, one can check that pulse (d) lies in the weighted Sobolev space $H^{1,N}(\Real)$ for all $N<3.5$, and by the weighted Sobolev bijection established in \cite{z1998}, as there are no spectral singularities, this implies that the reflection coefficient lies in $H^{N,1}(\Real)$.  As suggested in Remark~\ref{rem:Schwartz-too-strong}, this is almost enough for our proofs to go through with $N=3$; indeed the only additional requirements would be that $r'(\lambda)$ and $r''(\lambda)$ be absolutely integrable on $\Real$; we made no attempt to confirm numerically whether this is the case for pulse (d).

Since $r_0\neq 0$ for pulse (d), it would make sense to compare it with Theorem~\ref{thm:global-M0-stable-and-unstable} and its corollaries.  Figure~\ref{f:icd_PIII} is the analogue for pulse (d) of Figures~\ref{f:ica_PIII}--\ref{f:icb_PIII}.
\begin{figure}[h]
    \centering
    \begin{minipage}[b]{.49\textwidth}
    \includegraphics[width = 1\textwidth]{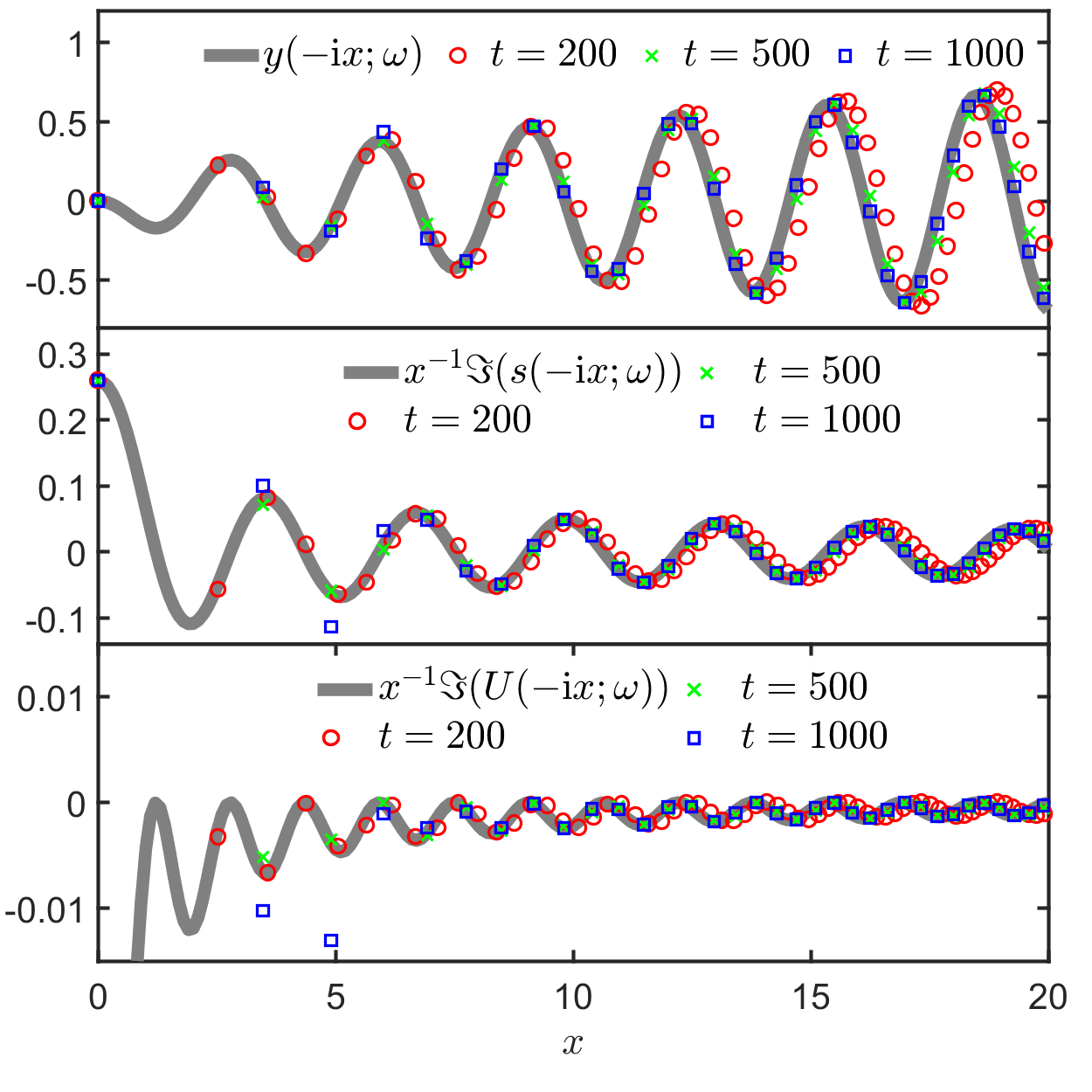}\\
                \includegraphics[width = 1\textwidth]{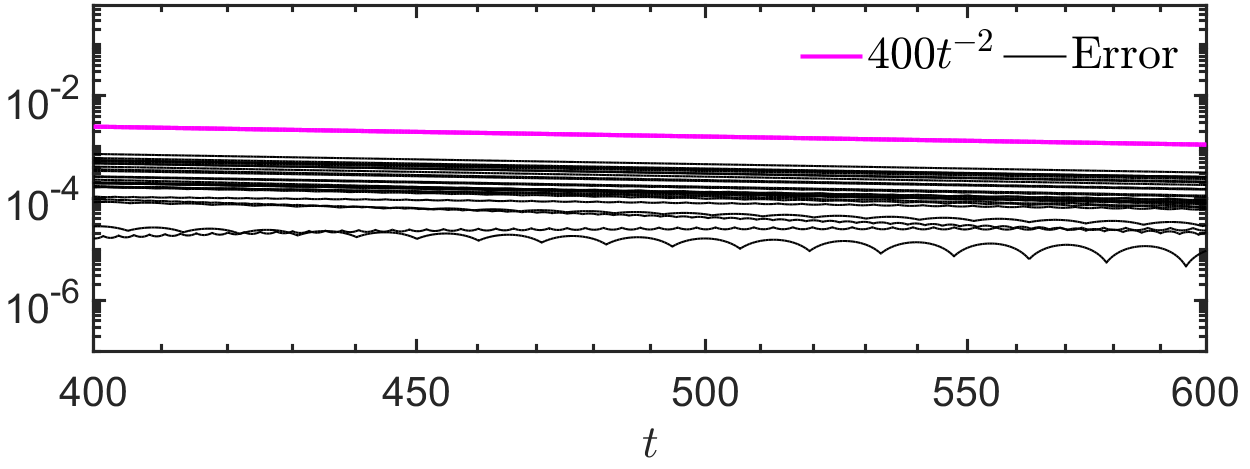}
    \end{minipage}
    \begin{minipage}[b]{.49\textwidth}
    \includegraphics[width = 1\textwidth]{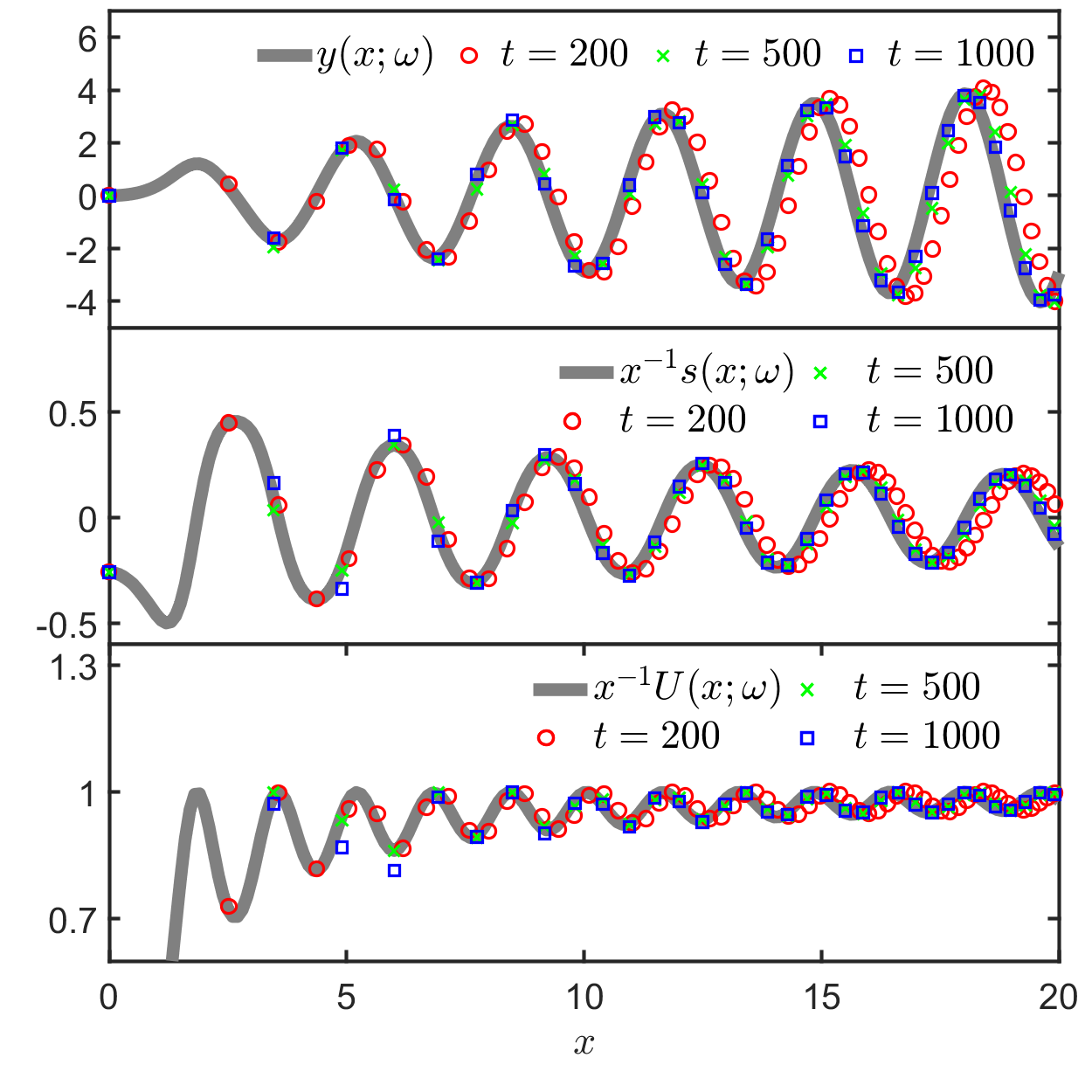}\\
                \includegraphics[width = 1\textwidth]{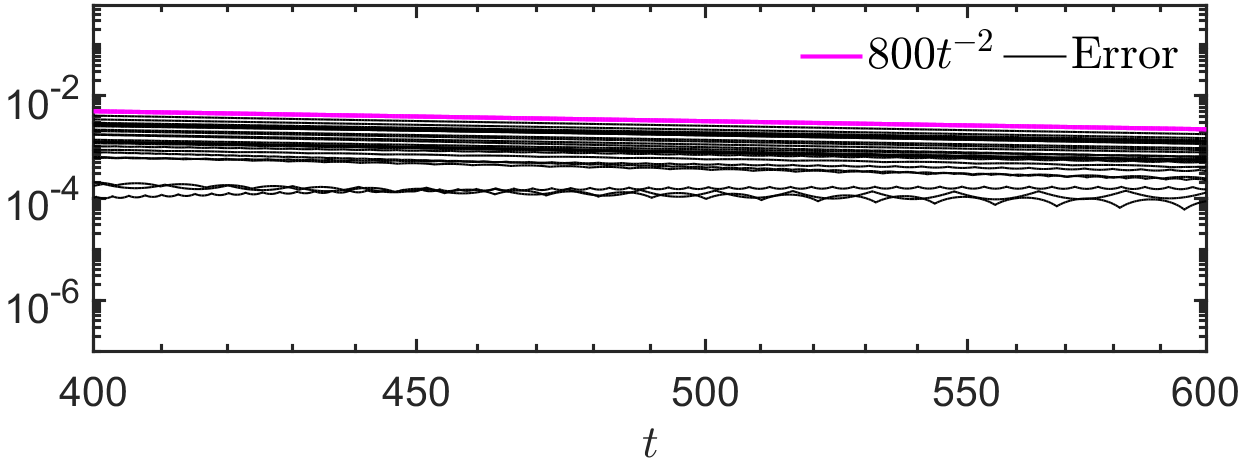}
    \end{minipage}
    \caption{
        Numerical study of incident pulse (d) in the transition regime for propagation in an initially-stable medium $D_-=-1$ (left) and an initially-unstable medium $D_-=1$ (right).
            See the main text for a full explanation.
    }
    \label{f:icd_PIII}
\end{figure}
Figures~\ref{f:rational-stable} and \ref{f:rational-unstable} are analogues for pulse (d) of Figures~\ref{f:real-stable} and \ref{f:complex-stable}, showing propagation in the medium-bulk regime of a stable medium, and of Figures~\ref{f:real-unstable} and \ref{f:complex-unstable}, showing propagation in the same regime of an unstable medium, respectively.
This experiment shows that our results indeed hold for some pulses that do not decay rapidly enough to lie in the Schwartz space.
\begin{figure}[h]
    \centering
    \begin{minipage}[b]{.52\textwidth}
    \includegraphics[width = 1\textwidth]{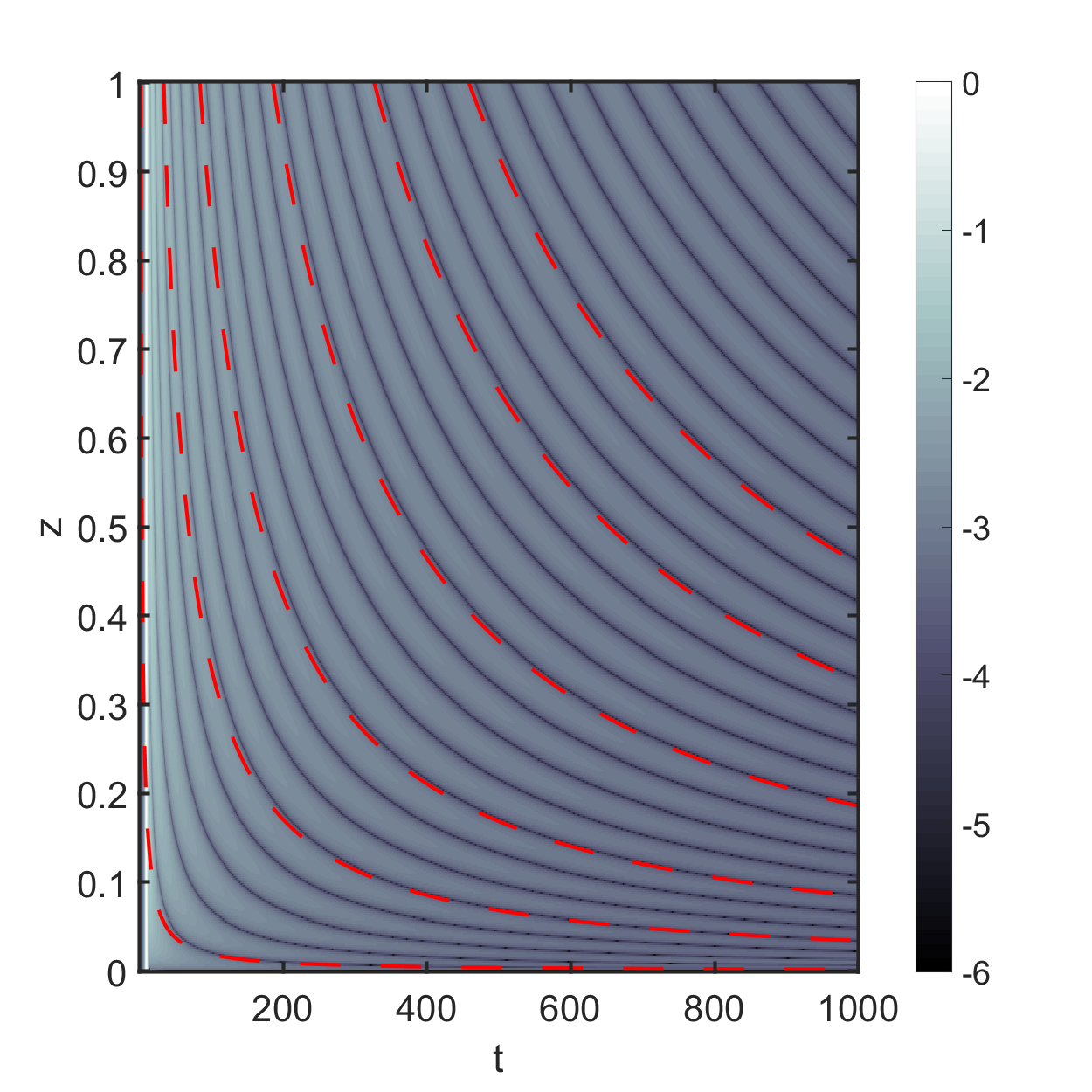}
    \end{minipage}\hspace{-1.5em}
    \begin{minipage}[b]{.5\textwidth}
    \includegraphics[width = 1\textwidth]{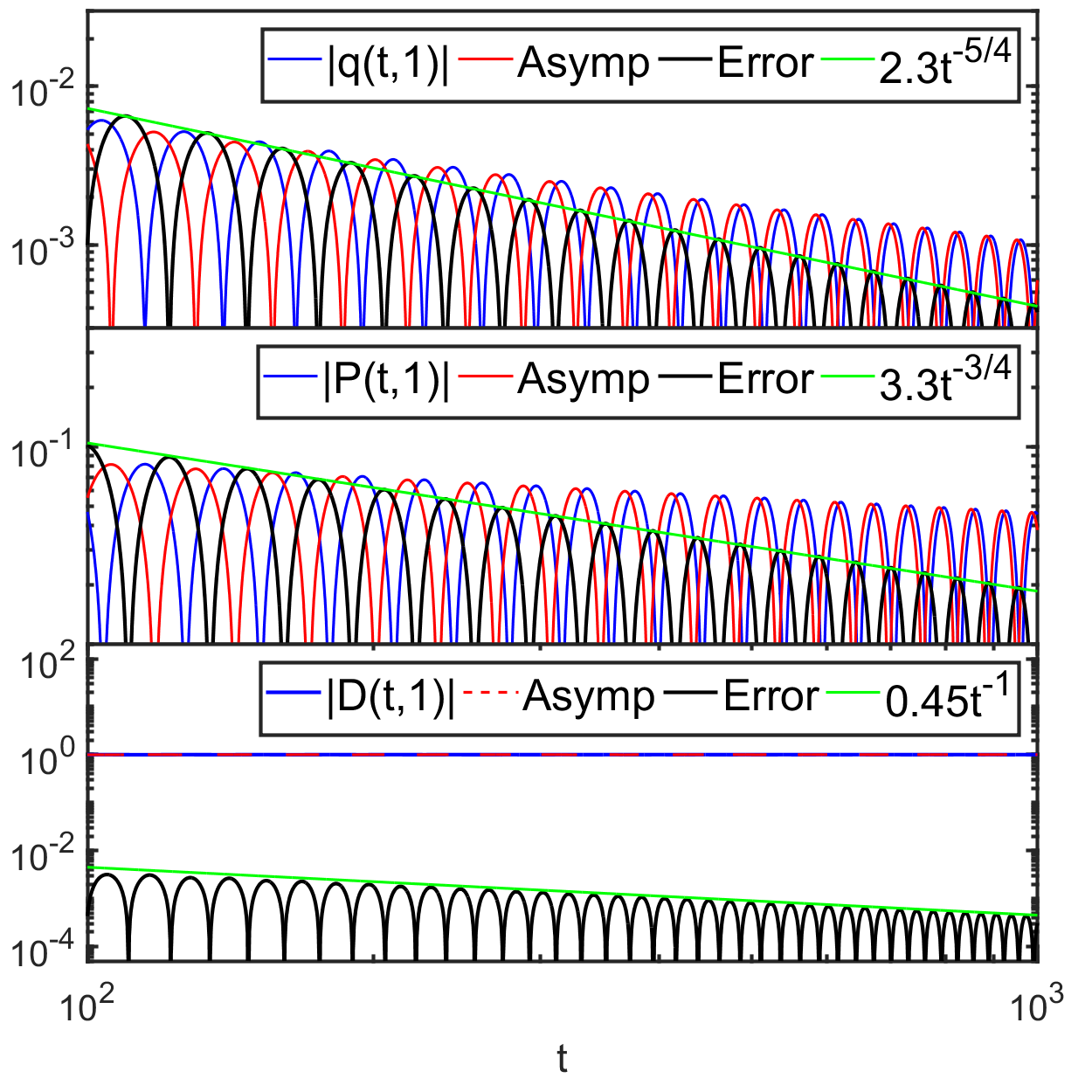}
    \end{minipage}
    \caption{
        Numerical study of incident pulse (d) in the medium-bulk regime for propagation in an initially-stable medium ($D_-=-1$).
            See the main text for a full explanation.
    }
    \label{f:rational-stable}
\end{figure}
\begin{figure}[h]
    \centering
    \begin{minipage}[b]{.52\textwidth}
    \includegraphics[width = 1\textwidth]{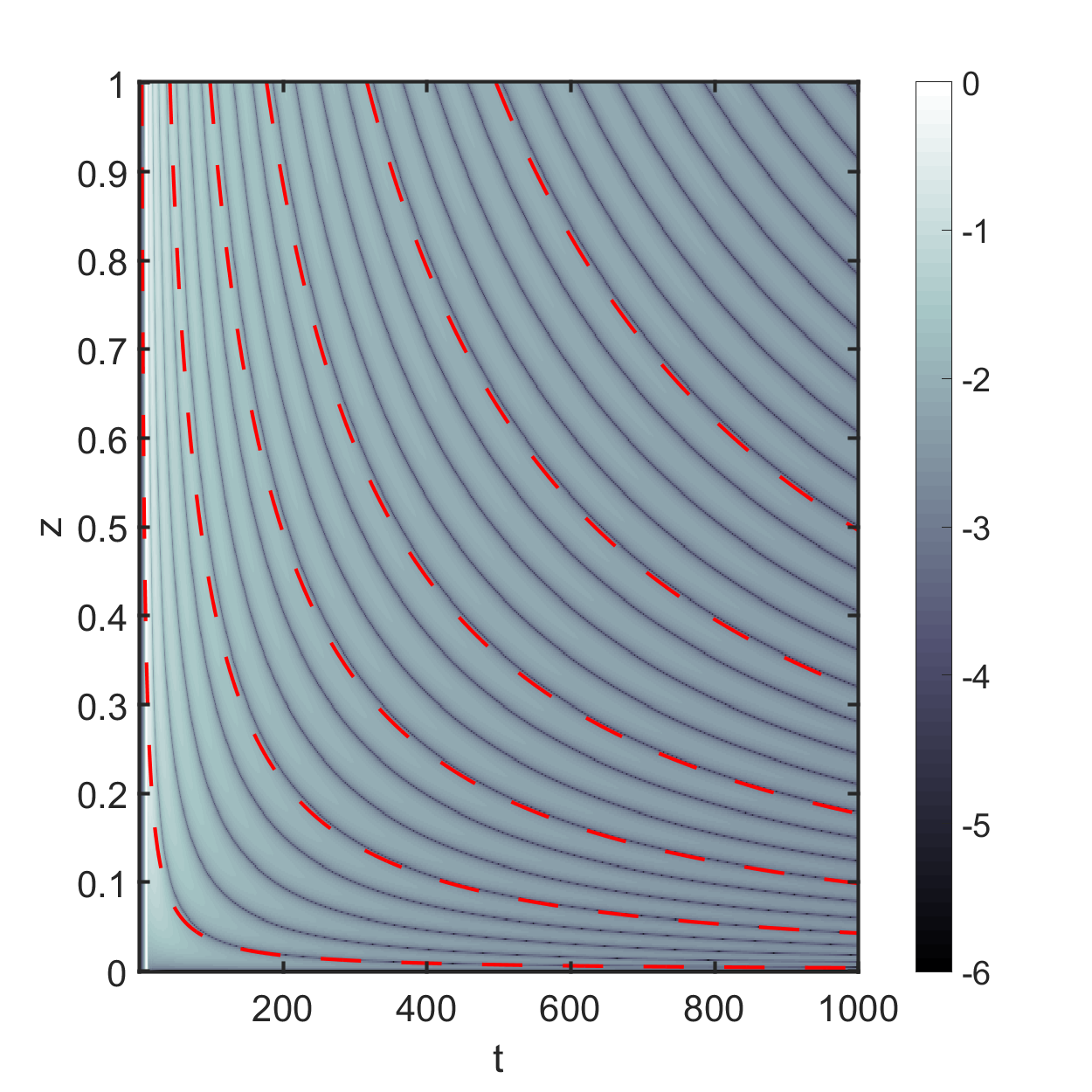}
    \end{minipage}\hspace{-1.5em}
    \begin{minipage}[b]{.5\textwidth}
    \includegraphics[width = 1\textwidth]{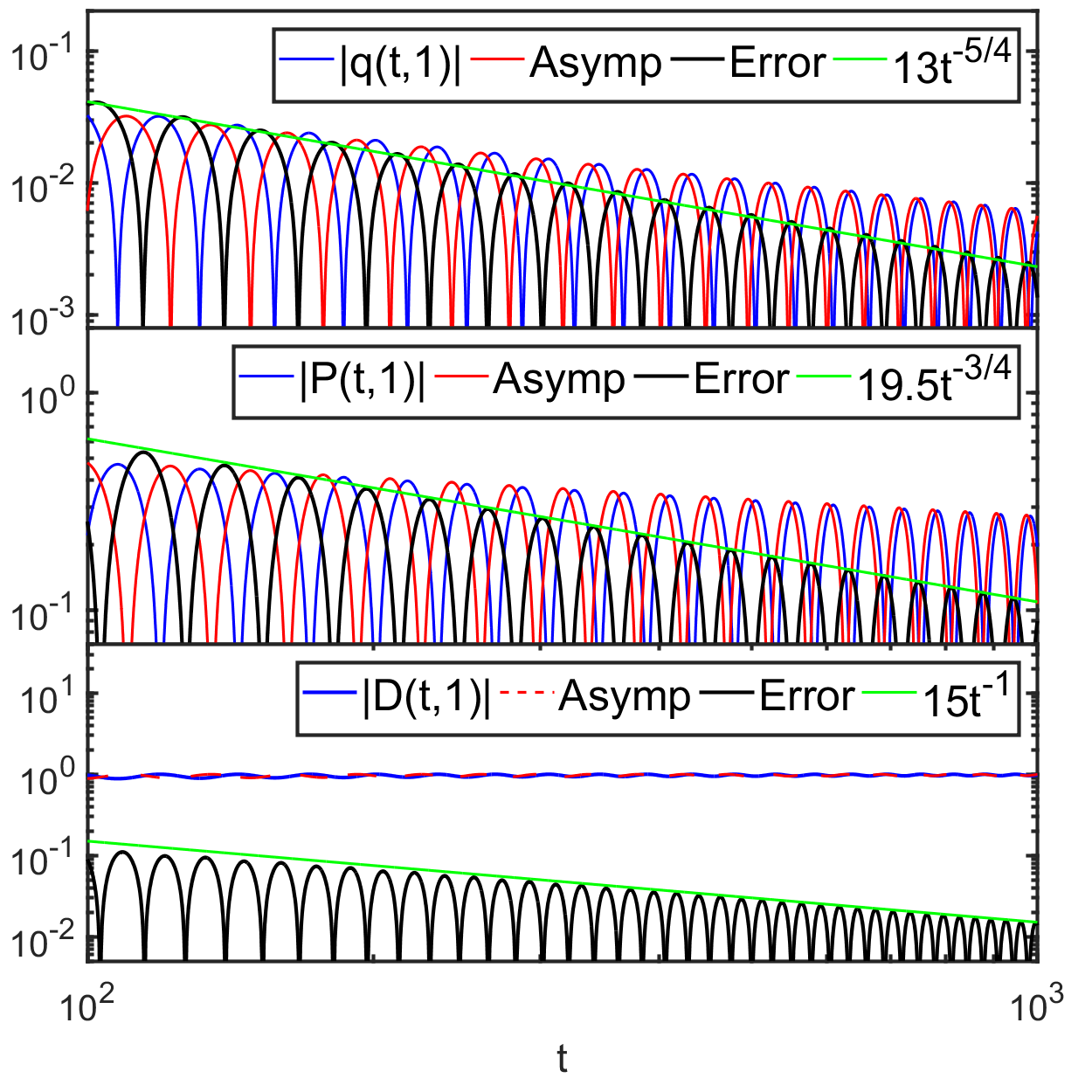}
    \end{minipage}
    \caption{
    As in Figure~\ref{f:rational-stable}, but
        for propagation in an initially-unstable medium ($D_-=1$).
    }
    \label{f:rational-unstable}
\end{figure}

\section{Analysis for propagation in an initially-stable medium}
\label{s:stable-positive}
This section concerns the analysis of RHP~\ref{rhp:M} in the case of an initially-stable medium with $D_-=-1$, in the limit that $t\to+\infty$ with $0\le z=o(t)$.
The whole analysis is driven by the sign structure of the real part of the exponent $\ii\theta(\lambda;t,z)$, for which we have the following specialized notation in the case $D_-=-1$.
\begin{definition}[the phase for $D_-=-1$]
In the stable case, we denote the phase $\theta(\lambda;t,z)$ appearing in \eqref{e:phase-def-general} as
$\theta(\lambda;t,z) = \theta_\mathrm{s}(\lambda;t,z)$, where
\begin{equation}
\label{e:thetas-def}
\theta_\mathrm{s}(\lambda;t,z) \coloneq \lambda t + \frac{z}{2\lambda}\,.
\end{equation}
\label{def:theta-s}
\end{definition}
The sign chart of $\Re(\ii\theta_\mathrm{s}(\lambda;t,z))$ is shown for $t>0$ and $z>0$ in the left-hand panel of Figure~\ref{f:inside-itheta}.
\begin{figure}[h]
\includegraphics[width = 0.4\textwidth]{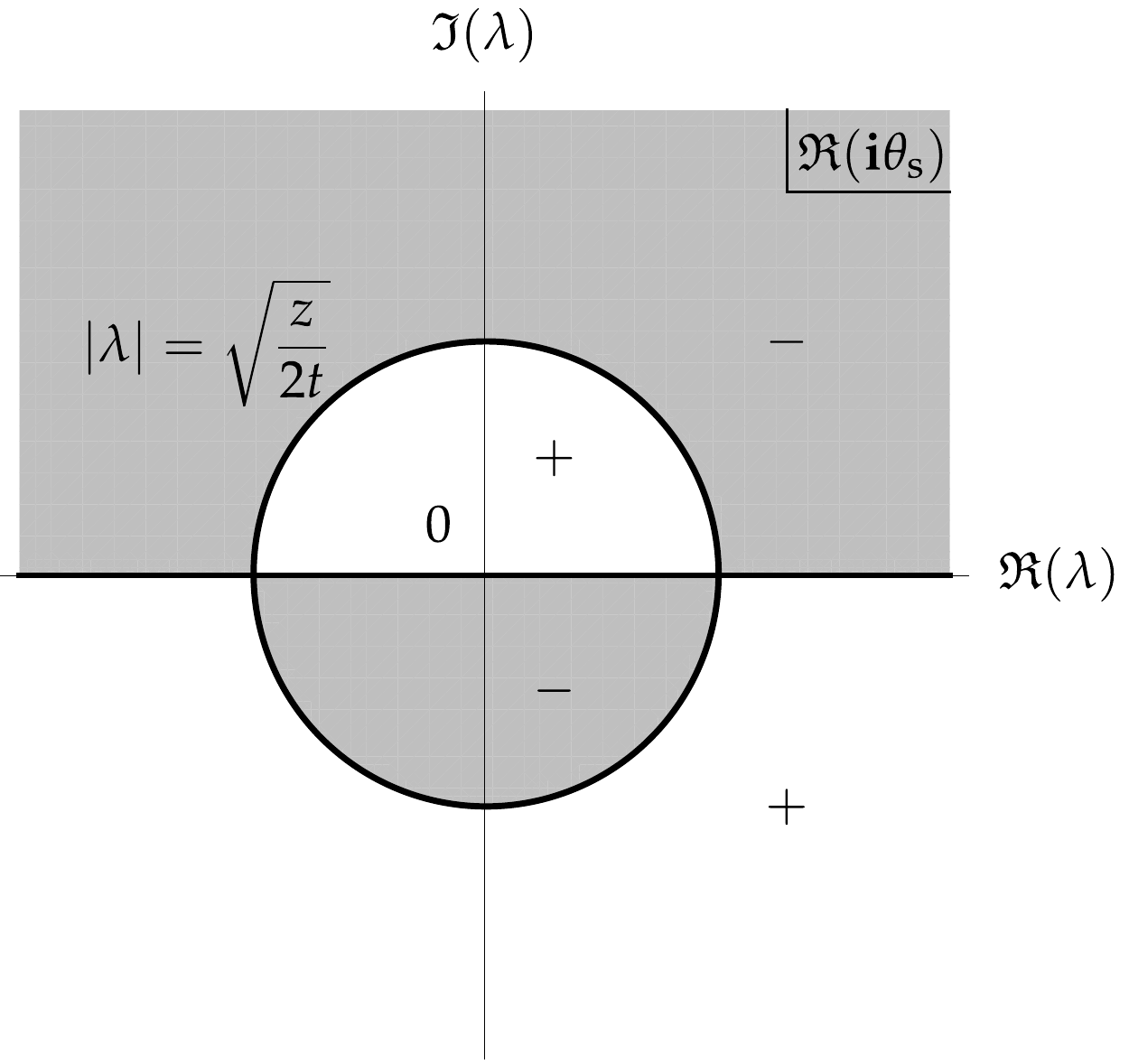}\qquad
\includegraphics[width = 0.4\textwidth]{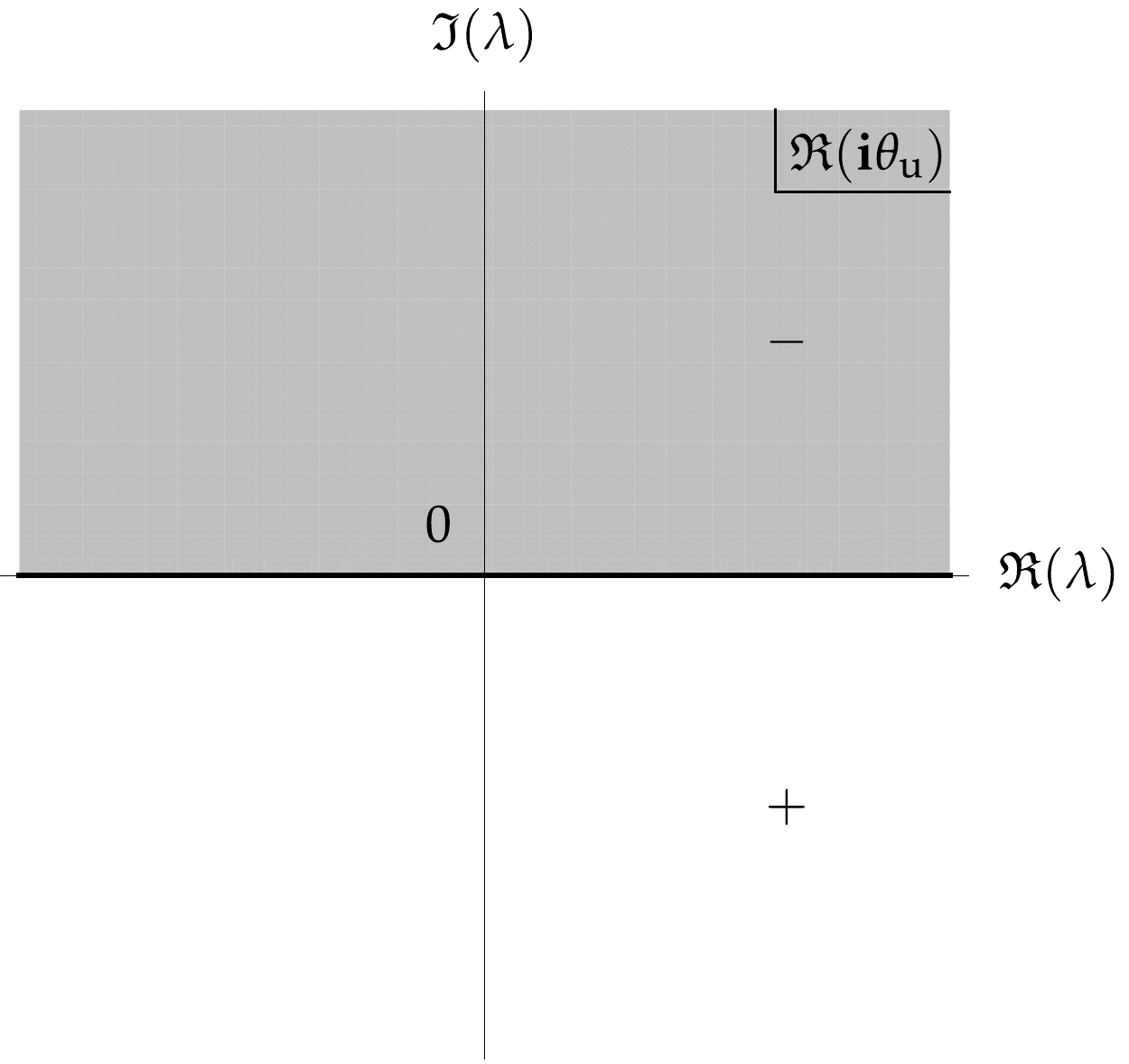}
\caption{For $t>0$ and $z>0$, the sign structure of $\Re(\ii\theta(\lambda;t,z))$ in the complex plane for an initially-stable medium $D_- = -1$ (left),
and for an initially-unstable medium $D_- = 1$ (right).
White (gray) shading corresponds to positive (negative) values of $\Re(\ii\theta(\lambda;t,z))$.
}
\label{f:inside-itheta}
\end{figure}
Note that the radius of the circle shown in that plot is
\begin{equation}
\label{e:lambdao-def}
\lambda_\circ \coloneq \sqrt{\frac{z}{2t}}\,.
\end{equation}
A key observation going forward is that under the assumption $z=o(t)$, $\lambda_\circ\to 0$ in the limit $t\to+\infty$.  This is why the moments and Taylor expansion of $r(\lambda)$ about $\lambda=0$ are of primary importance in our analysis.

\subsection{Setting up a Riemann-Hilbert-$\b{\partial}$ problem}
\label{s:stable-positive-1}
We begin by taking the arbitrary radius $\gamma>0$ in RHP~\ref{rhp:M} to coincide with $\lambda_\circ$. By the sign chart in the left-hand panel of Figure~\ref{f:inside-itheta}, this has the effect that the exponential factors in the jump matrix satisfy $|\ee^{\pm 2\ii\theta_\mathrm{s}(\lambda;t,z)}|=1$ on the jump contour $\Sigma_\M$.

Next, we remove the jumps across the real intervals $(-\infty,-\lambda_\circ)\cup (\lambda_\circ,+\infty)$ as follows.  The exponential factors $\ee^{\pm2\ii\theta_\mathrm{s}(\lambda;t,z)}$ can be algebraically separated by the jump matrix factorization:
\begin{multline}
\W^\dagger(\lambda;t,z)\W(\lambda;t,z)=\bpm 1 & 0\\R(\lambda)\ee^{-2\ii\theta_\mathrm{s}(\lambda;t,z)} & 1\epm (1+|r(\lambda)|^2)^{\sigma_3}\bpm 1&\b{R(\lambda)}\ee^{2\ii\theta_\mathrm{s}(\lambda;t,z)}\\0 & 1\epm,\\
\lambda\in (-\infty,-\lambda_\circ)\cup (\lambda_\circ,+\infty),
\label{e:three-factor}
\end{multline}
where
\begin{equation}
R(\lambda)\coloneq\frac{r(\lambda)}{1+|r(\lambda)|^2},\quad\lambda\in\Real.
\label{e:R-def}
\end{equation}
\begin{lemma}
\label{thm:reflection2}
If an incident pulse $q_0(t)$ satisfies Assumption~\ref{ass:q0-assumption} and generates no discrete eigenvalues or spectral singularities,
then
\begin{equation}
R(\lambda)\in \mathscr{S}(\Real)\,, 
\qquad\text{and}\qquad
\ln(1 + |r(\lambda)|^2)\in \mathscr{S}(\Real)\,. 
\end{equation}
\end{lemma}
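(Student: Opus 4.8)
The plan is to reduce both claims to Lemma~\ref{lemma:reflection}, which under exactly these hypotheses already supplies $r(\cdot)\in\mathscr{S}(\Real)$, and then to observe that $R(\lambda)$ and $\ln(1+|r(\lambda)|^2)$ are each obtained from $r(\lambda)$ by composition with a fixed $C^\infty$ map of $\Complex\cong\Real^2$ to $\Complex$ that fixes the origin. Concretely, put $G_1(w):=w/(1+|w|^2)$ and $G_2(w):=\ln(1+|w|^2)$; since $1+|w|^2=1+w\b{w}\ge 1$ never degenerates, both maps are smooth on all of $\Complex$ and satisfy $G_1(0)=G_2(0)=0$, while by definition $R=G_1\circ r$ and $\ln(1+|r|^2)=G_2\circ r$. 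It therefore suffices to prove the general statement: \emph{if $G:\Complex\to\Complex$ is $C^\infty$ with $G(0)=0$ and $f\in\mathscr{S}(\Real)$ is complex-valued, then $G\circ f\in\mathscr{S}(\Real)$.}

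I would first record two elementary closure properties of the Schwartz class. First, $\mathscr{S}(\Real)$ is stable under multiplication by any function in $C^\infty_b(\Real)$ (smooth with all derivatives bounded), which is immediate from the Leibniz rule since a rapidly decreasing function times a bounded one is rapidly decreasing. Second, if $\phi$ is smooth with all partial derivatives bounded on a neighbourhood of the range of some $f\in\mathscr{S}(\Real)$, then $\phi\circ f\in C^\infty_b(\Real)$; this follows from the multivariate chain rule (the Fa\`{a} di Bruno formula), because every derivative of $\phi\circ f$ is a universal polynomial in the derivatives $f^{(k)}$ and $\b{f}^{(k)}$ with coefficients the partial derivatives of $\phi$ evaluated along the range of $f$, and each $f^{(k)}$ is bounded when $f\in\mathscr{S}(\Real)$.

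Next, since any $f\in\mathscr{S}(\Real)$ is bounded, its range lies in a closed disk $\b{D}$ on which $G$ and all its partial derivatives are bounded. By the fundamental theorem of calculus applied to $s\mapsto G(sw)$ on $[0,1]$ together with $G(0)=0$ (Hadamard's lemma), I would write $G(w)=\Re(w)\,a(w)+\Im(w)\,b(w)$ with $a,b\in C^\infty(\Complex)$ having all derivatives bounded on $\b{D}$. Then
\[
G\circ f=\Re(f)\cdot(a\circ f)+\Im(f)\cdot(b\circ f),
\]
in which $\Re(f),\Im(f)\in\mathscr{S}(\Real)$ while $a\circ f,b\circ f\in C^\infty_b(\Real)$ by the second closure property; hence each summand is Schwartz by the first closure property, and so is $G\circ f$. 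Applying this with $G=G_1$ gives $R\in\mathscr{S}(\Real)$, and with $G=G_2$ gives $\ln(1+|r|^2)\in\mathscr{S}(\Real)$, which completes the proof.

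I do not anticipate a genuine obstacle; the argument is a routine exercise in the chain rule and the algebra of $\mathscr{S}(\Real)$. The only points that warrant care are that $1+|r(\lambda)|^2\ge 1$, so that neither the division in $G_1$ nor the logarithm in $G_2$ ever degenerates — the absence of spectral singularities is what rules out poles of $r$ on $\Real$, but this has already been absorbed into Lemma~\ref{lemma:reflection} — and the bookkeeping in the chain rule showing that a smooth function composed with a Schwartz function lands in $C^\infty_b(\Real)$, after which one further multiplication by a Schwartz function returns to $\mathscr{S}(\Real)$.
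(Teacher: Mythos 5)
Your proof is correct and takes essentially the same route as the paper's: both first invoke Lemma~\ref{lemma:reflection} to get $r(\cdot)\in\mathscr{S}(\Real)$ and then exploit the non-degeneracy $1+|r(\lambda)|^2\ge 1$ together with repeated differentiation of $|r|^2=r\b{r}$. The paper compresses this into a single sentence, while you have packaged the same differentiation bookkeeping into a general composition lemma (Hadamard decomposition plus the closure of $\mathscr{S}(\Real)$ under multiplication by $C^\infty_b(\Real)$), which is a more explicit but substantively identical rendering of the argument.
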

\begin{proof}
We already know that
$r(\lambda)\in\mathscr{S}(\Real)$
from Lemma~\ref{lemma:reflection}.
Since
all derivatives of $r(\lambda)$
are continuous and decay rapidly,
by repeated differentiation using $|r(\lambda)|^2=r(\lambda)\b{r(\lambda)}$ and $1+|r(\lambda)|^2\ge 1$ one sees that $R(\lambda)$ and $\ln(1+|r(\lambda)|^2)$ are functions in
$\mathscr{S}(\Real)$.
\end{proof}
%
By the sign chart in the left-hand panel of Figure~\ref{f:inside-itheta}, the factor $\ee^{\pm 2\ii\theta_\mathrm{s}(\lambda;t,z)}$ has modulus less than $1$ in the part of the exterior region $|\lambda|>\lambda_\circ$ with $\pm\Im(\lambda)>0$.  It is therefore desirable to make a substitution to move the triangular factors in \eqref{e:three-factor} into the respective half-planes.  However, since $R(\lambda)$ generally has no analytic continuation from the real line, no substitution that accomplishes the stated goal can be analytic, so we will adapt the $\b{\partial}$ approach
from the works~\cite{mm2006,mm2008,dmm2019}.
We identify the complex plane having coordinate $\lambda\in\Complex$ with $\Real^2$ having real cartesian coordinates
\begin{equation}
\label{e:uv-def}
u \coloneq \Re(\lambda)\,,\qquad
v \coloneq \Im(\lambda)\,.
\end{equation}
Since by Lemma~\ref{thm:reflection2}
$R(\lambda)$ has any number of
continuous derivatives on the real line, for any $N\ge 2$, a continuous extension of $R(\lambda)$ from $\Real$ to $\Real^2$ can be defined by the formula:
\begin{equation}
Q_N(u,v)\coloneq\sum_{n=0}^{N-2}\frac{(\ii v)^n}{n!}\frac{\dd^nR}{\dd u^n}(u),\quad (u,v)\in\Real^2.
\label{e:qN}
\end{equation}
The differentiation here is along the real line $v=0$.
That $Q_N(u,v)$ is an extension of $R$ into the plane $\Real^2$ is easily seen by setting $v=0$ which yields $Q_N(u,0)=R(u)$.  In particular, the extension $Q_1(u,v)$ is just orthogonal projection to $v=0$:  $Q_1(u,v)=Q_1(u,0)=R(u)$.  The Schwarz reflection of $Q_N(u,v)$ is defined by
\begin{equation}
\b{Q_N}(u,v)\coloneq \b{Q_N(u,-v)}=\sum_{n=0}^{N-2}\frac{(\ii v)^n}{n!}\frac{\dd^n\b{R}}{\dd u^n}(u),\quad (u,v)\in\Real^2.
\end{equation}
Here $\b{R}(u)=\b{R(u)}$.
While these extensions are not analytic functions, they are nearly so near the real axis; indeed, recalling the Cauchy-Riemann operator
\begin{equation}
\dbar \coloneq \frac{1}{2}\bigg(\frac{\partial}{\partial u} + \ii\frac{\partial}{\partial v}\bigg)
\end{equation}
(annihilating all analytic functions), one sees by direct computation that $\b{\partial}Q_N(u,v)$ is a continuous function $\Real^2\to\Complex$ that vanishes to order $N-2$ at $v=0$:
\begin{equation}
\b{\partial}Q_N(u,v)=\frac{1}{2}\frac{(\ii v)^{N-2}}{(N-2)!}\frac{\dd^{N-1}R}{\dd u^{N-1}}(u),\quad (u,v)\in\Real^2.
\label{e:dbarQN}
\end{equation}
Likewise,
\begin{equation}
\b{\partial}\,\b{Q_N}(u,v)=\frac{1}{2}\frac{(\ii v)^{N-2}}{(N-2)!}\frac{\dd^{N-1}\b{R}}{\dd u^{N-1}}(u),\quad (u,v)\in\Real^2.
\label{e:dbarQNbar}
\end{equation}
%
%
The extensions $Q_N(u,v)$ and $\b{Q_N}(u,v)$ will be used to remove the triangular factors in \eqref{e:three-factor} from the jump condition on $(-\infty,-\lambda_\circ)\cup(\lambda_\circ,+\infty)$ at the cost of some non-analyticity measured by \eqref{e:dbarQN}--\eqref{e:dbarQNbar}.  The central factor $(1+|r(\lambda)|^2)^{\sigma_3}$ in \eqref{e:three-factor} can be factored into a ratio of functions analytic in the upper and lower half-planes; recalling the function $F(\lambda)$ defined in \eqref{e:f-lambda}, we set
\begin{equation}
\delta(\lambda)\coloneq \ee^{-F(\lambda)},\quad \lambda\in\Complex\setminus\Real.
\label{e:delta-def}
\end{equation}
Then, it is easy to verify that $\delta(\lambda)\to 1$ as $\lambda\to\infty$ and
\begin{equation}
\delta^+(\lambda)\delta^-(\lambda)^{-1}=1+|r(\lambda)|^2,\quad\lambda\in\Real.
\label{e:delta-equation}
\end{equation}
\begin{remark}
Note that, since the diagonal factor $(1+|r(\lambda)|^2)^{\sigma_3}$ only appears in the jump matrix in \eqref{e:three-factor} in the complement of the interval $(-\lambda_\circ,\lambda_\circ)$, one could omit this interval from the integration over $\Real$ in \eqref{e:f-lambda} and obtain another function $\widetilde{F}(\lambda)$ and from \eqref{e:delta-def} a function $\widetilde{\delta}(\lambda)$ that satisfies \eqref{e:delta-equation} exactly where \eqref{e:three-factor} holds.  However, since $\lambda_\circ\to 0$ as $t\to+\infty$, it turns out to be more convenient to keep the interval $(-\lambda_\circ,\lambda_\circ)$ in the integration.
\end{remark}
\begin{lemma}
\label{thm:delta}
Under the conditions of Lemma~\ref{thm:reflection2}, $|\delta(\lambda)|$ and $|\delta(\lambda)|^{-1}$ are uniformly bounded on their domain of definition $\Complex\setminus\Real$.
\end{lemma}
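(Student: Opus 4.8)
The plan is to reduce the statement to the uniform boundedness of a single real-valued harmonic function. Since $\delta(\lambda)=\ee^{-F(\lambda)}$, we have $|\delta(\lambda)|=\ee^{-\Re F(\lambda)}$ and $|\delta(\lambda)|^{-1}=\ee^{\Re F(\lambda)}$, so both are uniformly bounded on $\Complex\setminus\Real$ precisely when $\sup_{\lambda\in\Complex\setminus\Real}|\Re F(\lambda)|<\infty$. Thus the whole lemma reduces to a bound on $\Re F$.

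First I would compute $\Re F(\lambda)$ explicitly from \eqref{e:f-lambda}. Writing $\lambda=u+\ii v$ with $v\neq 0$ and using $(s-\lambda)^{-1}=\bigl((s-u)+\ii v\bigr)\bigl((s-u)^2+v^2\bigr)^{-1}$, one obtains (the interchange of $\Re$ and the integral being justified by absolute convergence, as $\ln(1+|r|^2)$ decays rapidly and the kernel is integrable for $v\neq 0$)
\begin{equation}
\Re F(\lambda) = -\frac{1}{2}\,\sign(v)\cdot\frac{1}{\pi}\int_\Real \ln(1+|r(s)|^2)\,\frac{|v|}{(s-u)^2+v^2}\dd s.
\end{equation}
The integral on the right is the Poisson integral of the function $s\mapsto \ln(1+|r(s)|^2)$ evaluated at the point $u+\ii|v|$ of the upper half-plane.

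Next I would invoke two elementary facts. By Lemma~\ref{thm:reflection2}, $\ln(1+|r(\cdot)|^2)\in\mathscr{S}(\Real)$, hence bounded, and moreover it is nonnegative, so $0\le \ln(1+|r(s)|^2)\le C$ for all $s\in\Real$, where $C\coloneq\|\ln(1+|r|^2)\|_{L^\infty(\Real)}<\infty$. Since the Poisson kernel $\tfrac{1}{\pi}|v|\bigl((s-u)^2+v^2\bigr)^{-1}$ is a probability density in $s$ for each fixed $u$ and $v\neq 0$, the Poisson integral of $\ln(1+|r|^2)$ lies between $0$ and $C$. Therefore $|\Re F(\lambda)|\le \tfrac12 C$ uniformly on $\Complex\setminus\Real$, and exponentiating gives $\ee^{-C/2}\le|\delta(\lambda)|\le\ee^{C/2}$ together with the same bounds for $|\delta(\lambda)|^{-1}$.

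There is essentially no serious obstacle here; the one point that matters is the sign condition $\ln(1+|r|^2)\ge 0$, which is what lets the Poisson average be controlled by the sup norm uniformly as $v\to 0$, rather than by an $L^1$- or $L^2$-type quantity that could degenerate near the real axis.
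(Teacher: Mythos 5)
Your proof is correct and follows essentially the same route as the paper: both reduce the claim to the Poisson integral of the bounded function $\ln(1+|r(\cdot)|^2)$ and use that the Poisson kernel integrates to $1$. The only cosmetic difference is that you bound $|\Re F|$ two-sidedly at once via the nonnegativity of $\ln(1+|r|^2)$, whereas the paper bounds $|\delta(\lambda)|$ first and then obtains the bound on $|\delta(\lambda)|^{-1}$ from the symmetry $\delta(\lambda)\b{\delta}(\lambda)=1$.
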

\begin{proof}
The proof is similar to that in~\cite{dz2003,lps2018,dmm2019}.
By Lemma~\ref{thm:reflection2}, there is a constant $C>0$ such that
$|\ln(1+|r(\lambda)|^2)|\le C$.
We then see that for $\lambda=u+\ii v$,
\begin{equation}
|\delta(\lambda)|
 = \exp\bigg(\frac{|v|}{2\pi}\int_\Real \frac{\ln(1+|r(s)|^2)}{(s - u)^2 + v^2}\dd s\bigg)
 \le \exp\bigg(\frac{C|v|}{2\pi}\int_\Real \frac{\dd s}{(s - u)^2 + v^2}\bigg)
 = 
 \ee^{\frac{1}{2}C}\,,
\end{equation}
so we have proved $\delta(\lambda)\in L^\infty(\Complex\setminus\Real)$.
The definition~\eqref{e:delta-def} directly yields $\delta(\lambda)\b{\delta}(\lambda) = 1$ for all $\lambda\in\Complex$, where $\b{\delta}(\lambda)$ denotes the Schwarz reflection $\b{\delta(\b{\lambda})}$.  Then, given $\lambda\in\Complex\setminus\Real$, we have $|\delta(\lambda)|^{-1} = |\delta(\b{\lambda})| \le \ee^{\frac{1}{2}C}$, so $\delta(\lambda)^{-1}\in L^\infty(\Complex\setminus\Real)$ as well.
\end{proof}
Let $B:\Real\to[0,1]$ be a ``bump'' function of class $C^\infty(\Real)$ with the additional properties:
\begin{itemize}
\item $B(-v)=B(v)$,
\item $B(v)\equiv 0$ for $|v|>2$, and
\item $B(v)\equiv 1$ for $|v|<1$.
\end{itemize}
Defining a matrix for $(u,v)\in\Real^2$ by setting
\begin{equation}
\mathbf{T}_\mathrm{s}(u,v;t,z) \coloneq \bpm 1 & 0 \\ B(v)Q_N(u,v) \ee^{-2\ii\theta_\mathrm{s}(u+\ii v;t,z)} & 1\epm\,,\quad (u,v)\in\Real^2
\end{equation}
the jump matrix factorization in \eqref{e:three-factor} can be rewritten as
\begin{multline}
\W^\dagger(\lambda;t,z)\W(\lambda;t,z) =
\mathbf{T}_\mathrm{s}(u,0;t,z)\delta^+(u)^{\sigma_3}\cdot\delta^-(u)^{-\sigma_3}\mathbf{T}_\mathrm{s}^\dagger(u,0;t,z),\\
\lambda=u\in (-\infty,-\lambda_\circ)\cup (\lambda_\circ,+\infty),\quad v=0.
%
\end{multline}
Here, $\mathbf{T}_\mathrm{s}^\dagger(u,v;t,z)$ denotes the Schwarz reflection $\mathbf{T}_\mathrm{s}(u,-v;t,z)^\dagger$.
This motivates one to define a new matrix function $\K_\mathrm{s}(u,v;t,z)$ explicitly in terms of the solution $\M(\lambda;t,z)$ of RHP~\ref{rhp:M} by setting
\begin{equation}
\label{e:Ks-def}
\K_\mathrm{s}(u,v;t,z) \coloneq
  \begin{cases}
  \M(u+\ii v;t,z)\,\delta(u+\ii v)^{-\sigma_3}\,, &\qquad |u+\ii v|<\lambda_\circ\,,\\
  \M(u+\ii v;t,z)\mathbf{T}_\mathrm{s}^{\dagger}(u,v;t,z)^{-1}\delta(u+\ii v)^{-\sigma_3}\,, &\qquad |u+\ii v| > \lambda_\circ\,,\qquad v > 0\,,\\
  \M(u+\ii v;t,z)\mathbf{T}_\mathrm{s}(u,v;t,z)\delta(u+\ii v)^{-\sigma_3}\,, &\qquad |u+\ii v|> \lambda_\circ\,,\qquad v < 0\,.
  \end{cases}
\end{equation}
This definition is shown schematically in the left-hand panel of Figure~\ref{f:Ks}.
\begin{figure}[h]
\centering
\includegraphics[width = 0.37\textwidth]{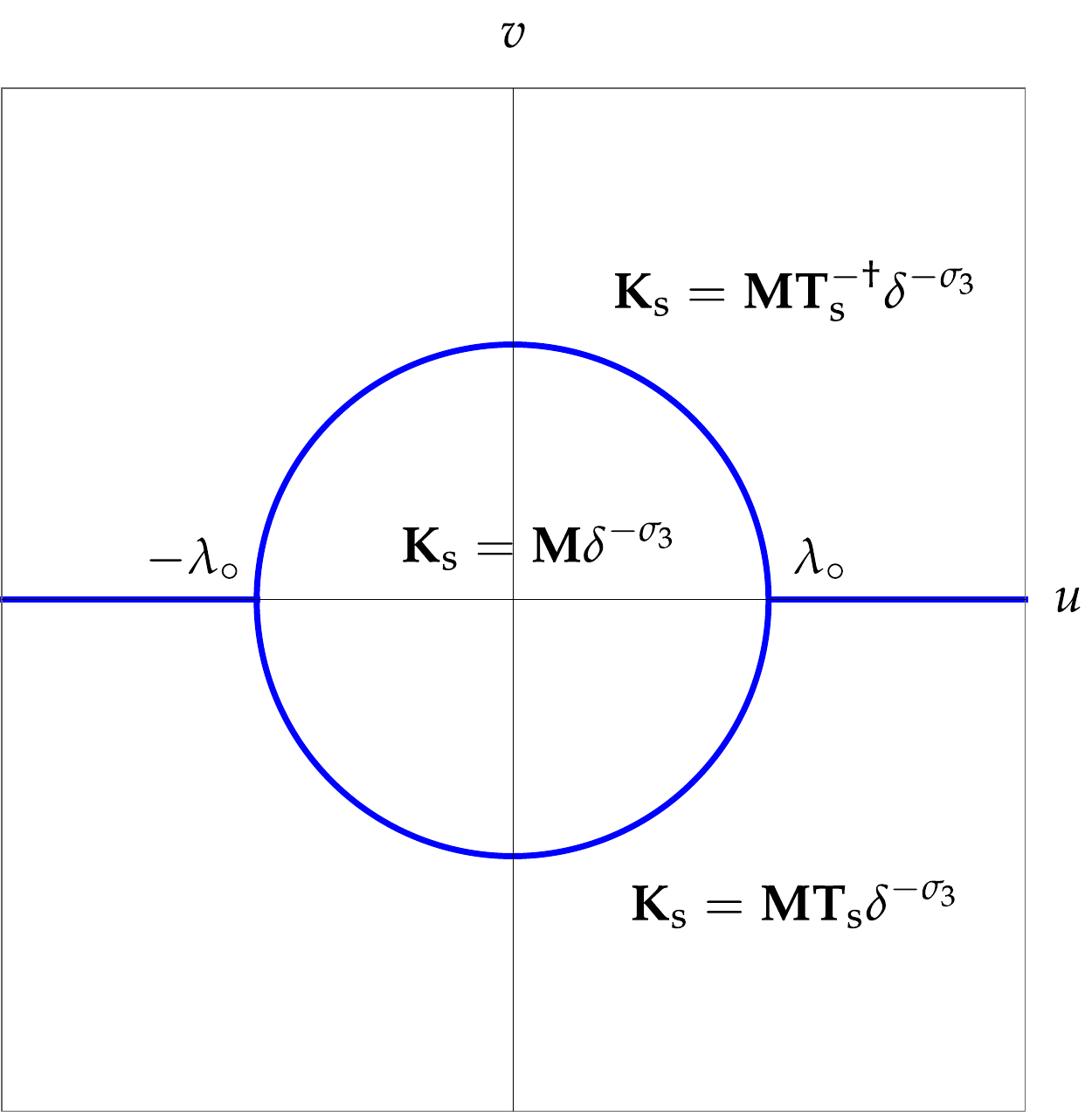}\quad
\includegraphics[width = 0.37\textwidth]{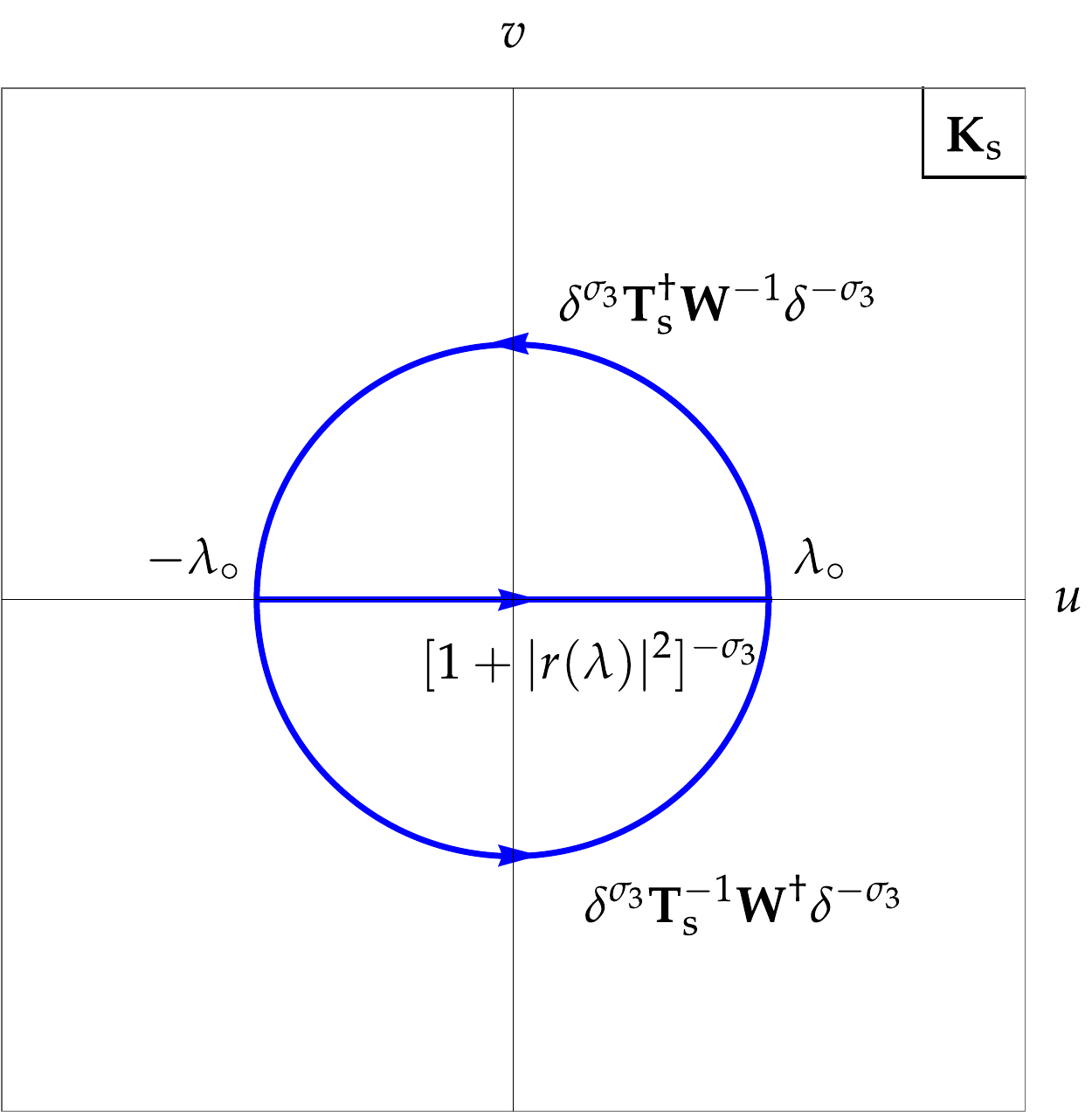}
\caption{
Left: The definition of $\K_\mathrm{s}(u,v;t,z)$.
Right: The jump for $\K_\mathrm{s}(u,v;t,z)$ with the jump contour $\Sigma$ and its orientation shown in blue.
}
\label{f:Ks}
\end{figure}
This definition implies in particular that $\K_\mathrm{s}(u,v;t,z)\to\I$ as $u+\ii v\to\infty$ because:
(i) $\M(u+\ii v;t,z)\to\I$ according to the conditions of RHP~\ref{rhp:M}; (ii) $\delta(u+\ii v)\to1$ according to \eqref{e:f-lambda} and \eqref{e:delta-def}; and
(iii) the off-diagonal entries of $\mathbf{T}_\mathrm{s}(u,v;t,z)$ and $\mathbf{T}_\mathrm{s}^{\dagger}(u,v;t,z)^{-1}$ decay rapidly at infinity in the corresponding half-planes (and actually vanish identically for $|v|>2$).
More generally, it is straightforward to confirm that the conditions of RHP~\ref{rhp:M} are equivalent to the following problem for $\K_\mathrm{s}(u,v;t,z)$.  Let $\Sigma$ denote the contour shown in the right-hand panel of Figure~\ref{f:Ks}.
\begin{rhdp}
\label{rhp:Ks}
Given $t\ge 0$ and $z\ge 0$, seek a $2\times 2$ matrix-valued function $\Real^2\ni (u,v)\mapsto \K_\mathrm{s}(u,v;t,z)$ that is continuous for $(u,v)\in\Real^2\setminus\Sigma$; that satisfies $\K_\mathrm{s}\to\I$ as $u+\ii v\to\infty$; that takes continuous boundary values on $\Sigma$ from each component of the complement related by the jump conditions $\K_\mathrm{s}^+(u,v;t,z)=\K_\mathrm{s}^-(u,v;t,z)\mathbf{J}_\mathrm{s}(u,v;t,z)$ where
\begin{equation}
\mathbf{J}_\mathrm{s}(u,v;t,z)\coloneq\begin{cases}
\delta(u+\ii v)^{\sigma_3}\mathbf{T}_\mathrm{s}^\dagger(u,v;t,z)\W(u+\ii v;t,z)^{-1}\delta(u+\ii v)^{-\sigma_3}\,,&
|u + \ii v| = \lambda_\circ\,,\quad v > 0\,,\\
\delta(u+\ii v)^{\sigma_3}\mathbf{T}_\mathrm{s}(u,v;t,z)^{-1}\W^\dagger(u+\ii v;t,z)\delta(u+\ii v)^{-\sigma_3}\,,& |u + \ii v| = \lambda_\circ\,,\quad v < 0\,,\\
(1 + |r(u)|^2)^{-\sigma_3}\,,& u\in(-\lambda_\circ,\lambda_\circ)\,,\quad v = 0\,;
\end{cases}
\label{e:Js-def}
\end{equation}
and that satisfies the following $\b{\partial}$ differential equation
\begin{equation}
\dbar \K_\mathrm{s}(u,v;t,z) = \K_\mathrm{s}(u,v;t,z)\mathbf{D}_\mathrm{s}(u,v;t,z)\,,\qquad (u,v)\in\Real^2\setminus\Sigma\,,
\label{e:dbar-equation}
\end{equation}
where the matrix $\mathbf{D}_\mathrm{s}(u,v;t,z)$ is given by
\begin{equation}
\begin{aligned}
\mathbf{D}_\mathrm{s}(u,v;t,z) \coloneq \begin{cases}
\bpm 0 & -\delta(u+\ii v)^2\ee^{2\ii\theta_\mathrm{s}(u+\ii v;t,z)}\dbar\left[B(v)\b{Q_N}(u,v)\right] \\ 0 & 0 \epm\,, & |u+\ii v| > \lambda_\circ\,,\quad v > 0\,,\\
\bpm 0 & 0 \\ \delta(u+\ii v)^{-2}\ee^{-2\ii\theta_\mathrm{s}(u+\ii v;t,z)}\dbar \left[B(v)Q_N(u,v)\right]  & 0 \epm\,, & |u+\ii v| > \lambda_\circ\,,\quad v < 0\,,\\
\mathbf0_{2\times2} \,, & |u+\ii v|<\lambda_\circ\,.
\end{cases}
\end{aligned}
\label{e:dbar-equation-D}
\end{equation}
\end{rhdp}
\begin{remark}
A Riemann-Hilbert-$\b{\partial}$ problem (RH$\b{\partial}$P) replaces the RHP condition of sectional analyticity with mere sectional continuity at the cost of an additional $\b{\partial}$ equation of the form \eqref{e:dbar-equation} as is necessary to restore well-posedness.
\end{remark}
Combining Theorem~\ref{thm:reconstruction} with the definition \eqref{e:Ks-def}, one can reconstruct the causal solution of the Cauchy problem \eqref{e:mbe} for propagation in a stable medium ($D_-=-1$) from $\K_\mathrm{s}(u,v;t,z)$ by
\begin{equation}
\label{e:Ks-reconstruction}
\begin{aligned}
q(t,z) & = -2\ii\lim_{u + \ii v\to\infty} (u+\ii v) K_{s,1,2}(u,v;t,z)\,,\\
\brho(t,z) & = -\lim_{u + \ii v\to0} \K_\mathrm{s}(u,v;t,z)\sigma_3\K_\mathrm{s}(u,v;t,z)^{-1}\,.
\end{aligned}
\end{equation}
\begin{remark}
Although $\K_\mathrm{s}(u,v;t,z)$ has a jump discontinuity across the segment $u\in (-\lambda_\circ,\lambda_\circ)$, $v=0$, its jump matrix is diagonal, so while the limit in the formula for $\brho(t,z)$ in \eqref{e:Ks-reconstruction} is necessary, it makes no difference whether it is taken from $v>0$ or from $v<0$.
\end{remark}

\subsection{Construction of a parametrix}
\label{s:stable-positive-parametrix}
We now construct a parametrix for $\K_\mathrm{s}(u,v;t,z)$ by the following steps:
\begin{itemize}
\item
we neglect the matrix $\mathbf{D}_\mathrm{s}(u,v;t,z)$ in the $\b{\partial}$ equation \eqref{e:dbar-equation} measuring deviation from analyticity, i.e., the parametrix will be sectionally analytic rather than merely sectionally continuous;
\item
we approximate the jump matrix $\mathbf{J}_\mathrm{s}(u,v;t,z)$, accounting for the fact that when $t\to+\infty$ with $z=o(t)$ the whole jump contour $\Sigma$ is small of size $\lambda_\circ\ll 1$.
\end{itemize}
Based on the first point, we will restore the complex variable $\lambda=u+\ii v$ and denote the parametrix for the solution of RH$\b{\partial}$P~\ref{rhp:Ks} by $\dot{\K}_\mathrm{s}(\lambda;t,z)$.

%
To accomplish the approximation mentioned in the second point, we begin with the following Lemma.
\begin{lemma}
\label{thm:fuv-difference}
Suppose $f(\cdot)\in C^k(\Real)$ and $0\le n_2 \le n_1 \le k$.
Recalling $u = \Re(\lambda)$ and $v = \Im(\lambda)$,
we have
\begin{equation}
\label{e:fuv-difference}
\sum_{n = 0}^{n_1-1} \frac{(\ii v)^n}{n!}f^{(n)}(u) - \sum_{n = 0}^{n_2-1} \frac{\lambda^n}{n!}f^{(n)}(0) = \O(\lambda^{n_2}),\quad\lambda\to 0.
\end{equation}
\end{lemma}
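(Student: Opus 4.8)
The plan is to expand both sides as Taylor polynomials in $\lambda$ about the origin and watch the polynomial parts cancel. Since $n_2\le n_1\le k$ we have in particular $f\in C^{n_2}(\Real)$, so for each fixed integer $n$ with $0\le n\le n_2-1$, Taylor's theorem applied to $f^{(n)}$ (using boundedness of $f^{(n_2)}$ near $0$ for the $\O$-form of the remainder) gives
\[
f^{(n)}(u)=\sum_{m=0}^{n_2-1-n}\frac{u^m}{m!}f^{(n+m)}(0)+\O\!\left(u^{\,n_2-n}\right),\qquad u\to 0.
\]
First I would split the sum $\sum_{n=0}^{n_1-1}\frac{(\ii v)^n}{n!}f^{(n)}(u)$ in \eqref{e:fuv-difference} at $n=n_2$. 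The tail $n_2\le n\le n_1-1$ consists of finitely many terms, and since each $f^{(n)}$ is continuous hence bounded near $u=0$ while $|v|\le|\lambda|$, this tail is $\O(v^{n_2})=\O(\lambda^{n_2})$; this already covers the degenerate case $n_2=0$, where the second sum is empty. For the head $0\le n\le n_2-1$ I substitute the displayed Taylor expansion. The remainder pieces contribute $\sum_{n=0}^{n_2-1}\frac{(\ii v)^n}{n!}\,\O(u^{\,n_2-n})=\O(\lambda^{n_2})$, using $|u|\le|\lambda|$ and $|v|\le|\lambda|$ so that $|v|^n|u|^{n_2-n}\le|\lambda|^{n_2}$.

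The remaining principal part of the head is the double sum $\sum_{n=0}^{n_2-1}\sum_{m=0}^{n_2-1-n}\frac{(\ii v)^n u^m}{n!\,m!}f^{(n+m)}(0)$, and the \emph{key step} is to reindex by $j=n+m$: the index set becomes $\{0\le n\le j\le n_2-1\}$, and for fixed $j$ the inner sum over $n$ is a binomial expansion, giving
\[
\sum_{j=0}^{n_2-1}\frac{f^{(j)}(0)}{j!}\sum_{n=0}^{j}\binom{j}{n}(\ii v)^n u^{\,j-n}=\sum_{j=0}^{n_2-1}\frac{f^{(j)}(0)}{j!}(u+\ii v)^j=\sum_{j=0}^{n_2-1}\frac{\lambda^j}{j!}f^{(j)}(0),
\]
which is exactly the second sum in \eqref{e:fuv-difference}. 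Subtracting, the polynomial parts cancel identically and only the $\O(\lambda^{n_2})$ contributions survive, which is the assertion. The case $n_1=n_2$ simply has an empty tail and is covered by the same bookkeeping.

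I do not anticipate a genuine obstacle; the only point needing care is to check that the smoothness hypothesis really licenses each remainder estimate (namely $f^{(n)}\in C^{n_2-n}$ for $0\le n\le n_2-1$, which holds because $f\in C^{n_2}$), and that multiplication by $(\ii v)^n/n!$ upgrades the local order $\O(u^{\,n_2-n})$ to $\O(\lambda^{n_2})$ rather than something weaker — both immediate from $|u|,|v|\le|\lambda|$. It is worth noting that only the stated local ($\lambda\to 0$) estimate is required, which is precisely what makes the crude bounds $|u|,|v|\le|\lambda|$ adequate throughout.
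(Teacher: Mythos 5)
Your proof is correct and follows essentially the same route as the paper's: Taylor-expand each $f^{(n)}(u)$ about $u=0$, exchange the order of summation, and use the binomial formula to reassemble $(u+\ii v)^j=\lambda^j$, with the remainders absorbed via $|u|,|v|\le|\lambda|$. The only (cosmetic) difference is that you split off the tail $n\ge n_2$ first and expand only to order $n_2$, whereas the paper expands everything to order $n_1$ and truncates at the end.
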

Lemma~\ref{thm:fuv-difference} is proved in Appendix~\ref{s:proof-fuv-difference}.  Now let $a_n$ denote the Taylor coefficients of $r(\lambda)$:
\begin{equation}
a_n:=\frac{r_0^{(n)}}{n!}
\label{e:avoid-factorials}
\end{equation}
and recalling the index $M\ge 0$ of the first nonzero moment of $r$, define
\begin{equation}
\label{e:DeltaM-def}
\Delta_M(\lambda)\coloneq 1 + |a_M|^2\lambda^{2M}\,,\qquad
\delta_0^\pm \coloneq \mathop{\lim_{\lambda\to 0}}_{\pm\Im(\lambda)>0}\delta(\lambda)\,.
\end{equation}
Note that $\Delta_M(\lambda)=\b{\Delta_M}(\lambda):=\b{\Delta_M(\b{\lambda})}$.
All roots of $\Delta_M(\lambda)$ lie on the circle of fixed radius $|a_M|^{-\frac{1}{M}}$, so $\Delta_M(\lambda)^{-1}$ is analytic on $\Sigma$ when its radius $\lambda_\circ$ is sufficiently small.
\begin{lemma}
\label{thm:qrdelta-leading}
Suppose that
$n\ge M+2$ and that
$|\lambda| = |u + \ii v| \le \lambda_\circ$.
Then as $\lambda_\circ\downarrow 0$,
\begin{equation}
\begin{aligned}
\label{e:qrdelta-leading}
1 + |r(u)|^2 & = \Delta_M(u) + \O(\lambda_\circ^{M+1})\,,\\
r(\lambda)
& = a_M\lambda^M + \O(\lambda_\circ^{M+1})\,,\quad\Im(\lambda)>0\,,\\
\delta(\lambda)^2 & = (\delta_0^\pm)^2 + \O(\lambda_\circ)\\
&=\ee^{-\ii\aleph}\Delta_M(\lambda)^{\pm 1}+\O(\lambda_\circ)\,,\quad\pm\Im(\lambda)>0\,,\\
Q_N(u,v)
 & = \frac{a_M(u+\ii v)^M}{\Delta_M(u+\ii v)} + \O(\lambda_\circ^{M+1})\,.
\end{aligned}
\end{equation}
\end{lemma}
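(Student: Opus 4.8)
The plan is to reduce each of the five estimates in \eqref{e:qrdelta-leading} to a Taylor expansion about $\lambda=0$, using three ingredients: (i) $r$ restricted to $\Real$ is Schwartz and, by the standard direct scattering theory for the Zakharov--Shabat operator with no spectral singularity, $r$ extends to a $C^\infty$ function on the closed upper half-plane that is holomorphic inside (equivalently, the Fourier transform of $r|_\Real$ is Schwartz and supported on $[0,\infty)$), so a complex Taylor expansion with controlled remainder is available in a full half-disk neighborhood of $\lambda=0$; (ii) by Definition~\ref{def:moments} the Taylor coefficients $a_n=r_0^{(n)}/n!$ of $r$ at $\lambda=0$ (see \eqref{e:avoid-factorials}) vanish for $n<M$, so $r(\lambda)=a_M\lambda^M+\O(\lambda^{M+1})$ and, on the real axis, $|r(u)|^2=|a_M|^2u^{2M}+\O(u^{2M+1})$; and (iii) the Plemelj identities recorded above \eqref{e:f-lambda}, namely $F^+(0)-F^-(0)=-\ln(1+|r(0)|^2)$ and $F^+(0)+F^-(0)=\ii\aleph$, which give $(\delta_0^\pm)^2=\ee^{-2F^\pm(0)}=\ee^{-\ii\aleph}(1+|r(0)|^2)^{\pm 1}$ directly from \eqref{e:delta-def} and \eqref{e:DeltaM-def}. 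Throughout, all implied constants depend only on $r$ (and on $N,M$) and not on $t,z$, since everything is written in terms of the single small parameter $\lambda_\circ$, and I will repeatedly use that $\O(\lambda^k)$ with $k\ge M+1$ becomes $\O(\lambda_\circ^{M+1})$ on the disk $|\lambda|\le\lambda_\circ$.

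For the first estimate (real $u$ with $|u|\le\lambda_\circ$) I would multiply the real-axis Taylor expansions of $r(u)$ and $\b{r(u)}$ — each valid by ordinary Taylor's theorem since $r|_\Real\in C^\infty$ — to get $1+|r(u)|^2=1+|a_M|^2u^{2M}+\O(u^{2M+1})=\Delta_M(u)+\O(\lambda_\circ^{M+1})$, using $2M+1\ge M+1$. The second estimate is precisely the analytic Taylor expansion $r(\lambda)=a_M\lambda^M+\O(\lambda^{M+1})$, valid uniformly on $\{|\lambda|\le\lambda_\circ,\ \Im\lambda\ge 0\}$ by ingredient (i) together with the vanishing of the lower coefficients from (ii); restricting to $|\lambda|\le\lambda_\circ$ turns the remainder into $\O(\lambda_\circ^{M+1})$.

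For the $\delta$ estimates I would first observe that, for $\Im\lambda>0$, $F(\lambda)$ is the Cauchy transform of the Schwartz function $\ln(1+|r(\cdot)|^2)$ (Lemma~\ref{thm:reflection2}), hence it too extends to a function that is $C^\infty$ up to the real axis and holomorphic in the open upper half-plane, and likewise on the lower half-plane; consequently $F(\lambda)=F^\pm(0)+\O(\lambda_\circ)$ for $\lambda$ in the upper/lower half-disk of radius $\lambda_\circ$. Since $|\delta|$ and $|\delta|^{-1}$ are bounded (Lemma~\ref{thm:delta}), exponentiating gives $\delta(\lambda)^2=\ee^{-2F(\lambda)}=(\delta_0^\pm)^2\bigl(1+\O(\lambda_\circ)\bigr)=(\delta_0^\pm)^2+\O(\lambda_\circ)$, which is the first displayed line. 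For the second line I would invoke ingredient (iii) to write $(\delta_0^\pm)^2=\ee^{-\ii\aleph}(1+|r(0)|^2)^{\pm 1}$ and then compare $(1+|r(0)|^2)^{\pm1}$ with $\Delta_M(\lambda)^{\pm1}$ on $|\lambda|\le\lambda_\circ$: when $M=0$ both equal the constant $1+|a_0|^2$, while when $M\ge 1$ one has $1+|r(0)|^2=1$ and $\Delta_M(\lambda)^{\pm1}=1+\O(\lambda_\circ^{2M})$; in either case $(\delta_0^\pm)^2=\ee^{-\ii\aleph}\Delta_M(\lambda)^{\pm1}+\O(\lambda_\circ)$, and combining with the previous sentence yields $\delta(\lambda)^2=\ee^{-\ii\aleph}\Delta_M(\lambda)^{\pm 1}+\O(\lambda_\circ)$.

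For the last estimate I would apply Lemma~\ref{thm:fuv-difference} to $f=R\in\mathscr{S}(\Real)$ (Lemma~\ref{thm:reflection2}, with $R$ as in \eqref{e:R-def}) with $n_1=N-1$ and $n_2=M+1$ (here $N$ is the index of the extension $Q_N$ in \eqref{e:qN}, and the standing hypothesis is exactly $n_1\ge n_2$, i.e.\ $N\ge M+2$); this gives $Q_N(u,v)=\sum_{n=0}^{M}\frac{\lambda^n}{n!}R^{(n)}(0)+\O(\lambda^{M+1})$ with $\lambda=u+\ii v$. It then remains to identify the degree-$M$ Taylor polynomial of $R$: dividing $r(u)=a_Mu^M+\O(u^{M+1})$ by $1+|r(u)|^2=\Delta_M(u)+\O(u^{2M+1})$, and using that $\Delta_M$ is bounded away from $0$ near the origin, shows $R(u)=a_Mu^M/\Delta_M(u)+\O(u^{M+1})$; since $a_M\lambda^M/\Delta_M(\lambda)=a_M\lambda^M+\O(\lambda^{\min(3M,\,M+1)})$, its degree-$M$ Taylor polynomial is $a_M\lambda^M$ (a constant when $M=0$), so $\sum_{n=0}^{M}\frac{\lambda^n}{n!}R^{(n)}(0)=a_M\lambda^M/\Delta_M(\lambda)+\O(\lambda^{M+1})$. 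Putting these together and restricting to $|\lambda|=|u+\ii v|\le\lambda_\circ$ gives $Q_N(u,v)=a_M(u+\ii v)^M/\Delta_M(u+\ii v)+\O(\lambda_\circ^{M+1})$, as claimed. The only genuinely non-elementary point is ingredient (i) — the smooth extension of $r$ (and of $F$) to the closed half-plane, which is what lets the Taylor expansions be carried off the real axis with a remainder bound; the rest is Taylor's theorem together with Lemmas~\ref{thm:reflection2}, \ref{thm:delta}, and \ref{thm:fuv-difference} and careful bookkeeping of which power of $\lambda_\circ$ dominates.
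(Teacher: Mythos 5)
Your proof is correct and follows essentially the same route as the paper's: real/complex Taylor expansion of $r$ about $\lambda=0$ for the first two estimates, the boundary regularity of $F$ together with the identities $\delta_0^+/\delta_0^-=1+|r_0|^2$ and $\delta_0^+\delta_0^-=\ee^{-\ii\aleph}$ (which you use in their additive form $F^+(0)\mp F^-(0)$) for the $\delta$ estimates, and Lemma~\ref{thm:fuv-difference} with $(n_1,n_2)=(N-1,M+1)$ for $Q_N$. The only cosmetic difference is that you identify the degree-$M$ Taylor polynomial of $R$ by dividing power series rather than by the Leibniz-rule computation of $R^{(M)}(0)$ used in the paper (and your parenthetical that this polynomial is $a_M\lambda^M$ is slightly off when $M=0$, where it is $a_0/\Delta_0$; your final identity is nonetheless exact in that case).
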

\begin{proof}
The first equation follows from (real) Taylor expansion of $1+|r(u)|^2=1+r(u)\b{r(u)}$.
The second equation follows also from Taylor expansion and boundedness of $M+1$ derivatives of $r(\lambda)$ down to the real axis from the upper half-plane according to Lemma~\ref{lemma:reflection}.

For the third equation, we note that $\delta'(\lambda)$ is bounded from each half-plane in a neighborhood of $\lambda=0$; this follows because $\ln(1+|r(s)|^2)$ in \eqref{e:f-lambda} has a H\"older continuous derivative.  This establishes that $\delta(\lambda)^2=(\delta_0^\pm)^2+\O(\lambda_\circ)$ for $\pm\Im(\lambda)>0$.  Now the jump condition \eqref{e:delta-equation} taken at $\lambda=0$ implies that
$\delta_0^+/\delta_0^-=1+|r_0|^2$.  On one hand, if $M=0$ then this can be written exactly in the form $\delta_0^+/\delta_0^-=\Delta_M(\lambda)$.  On the other hand, if $M>0$ then $r_0=0$ and the same identity can be written $\delta_0^+/\delta_0^-=1=\Delta_M(\lambda)+\O(\lambda^{2M}) = \Delta_M(\lambda)+\O(\lambda_\circ)$.  So regardless of the index $M$, $\delta_0^+/\delta_0^-=\Delta_M(\lambda)+\O(\lambda_\circ)$. Next, recalling \eqref{e:f-lambda} and \eqref{e:delta-def}, and using $F_+(0)+F_-(0)=\ii\aleph$ where $\aleph$ is defined in Definition~\ref{def:aleph}, we have $\delta_0^+\delta_0^-=\ee^{-\ii\aleph}$.  Therefore $(\delta_0^\pm)^2 = \ee^{-\ii\aleph}\Delta_M(\lambda)^{\pm 1} +\O(\lambda_\circ)$.

In order to derive the last equation,
we first apply Lemma~\ref{thm:fuv-difference}, noting that $Q_N(u,v)$ has the form of the first term on the left-hand side of \eqref{e:fuv-difference}:
\begin{equation}
Q_N(u,v)
= \sum_{n = 0}^{M} \frac{\lambda^n}{n!}\frac{\dd^nR}{\dd u^n}(0)
+ \O(\lambda_\circ^{M + 1})
= \frac{\lambda^{M}}{M!}\frac{\dd^{M}R}{\dd u^{M}}(0)
+ \O(\lambda_\circ^{M + 1})\,.
\end{equation}
The reason that only the last term in the sum survives in the second equality is that all of the lower-order derivatives of $R(u)$ are proportional to derivatives of $r(u)$ of order strictly less than $M$, all of which vanish when $u=0$ (by definition of the index $M$).
If $M=0$, then the desired result holds.  For $M>0$,
we calculate the surviving term explicitly using \eqref{e:R-def} and the product rule:
\begin{equation}
\begin{aligned}
\frac{\dd^{M}R}{\dd u^{M}}(0)
 = \sum_{n = 0}^{M}\binom{M}{n} r^{(n)}(0)\frac{\dd^{M - n}(1 + |r(u)|^2)^{-1}}{\dd u^{M - n}}\bigg|_{u = 0}
 = \binom{M}{M} \frac{r_0^{(M)}}{1 + |r(0)|^2}
 = r_0^{(M)}\quad \text{(using $M>0$)}\,.
\end{aligned}
\end{equation}
Again, the reason that all terms except for the last one vanish is that
$r_0^{(n)} = 0$ for $0 \le n < M$.
Consequently, for $M>0$ we have
\begin{equation}
\label{e:q-leading0}
\begin{aligned}
Q_N(u,v)
 = \frac{r_0^{(M)}}{M!}\lambda^{M} + \O(\lambda_\circ^{M + 1})
 = a_M\lambda^M+\O(\lambda_\circ^{M+1})=\frac{a_M\lambda^M}{\Delta_M(\lambda)} + \O(\lambda_\circ^{M + 1})\,,
\end{aligned}
\end{equation}
which proves the desired statement.
(The insertion of $\Delta_M(\lambda)=1+\O(\lambda^{2M})$ in the denominator in the last step may seem artificial, but it is important in maintaining the unit-determinant condition on the jump matrices, and it is also useful in ensuring their compatibility at self-intersection points of the jump contour later on.)
\end{proof}

With the results of Lemma~\ref{thm:qrdelta-leading} in hand, we have the following \emph{analytic} approximation of the jump matrix $\mathbf{J}_\mathrm{s}(u,v;t,z)$ defined in \eqref{e:Js-def}:
\begin{equation}
\mathbf{J}_\mathrm{s}(u,v;t,z)=\dot{\mathbf{J}}_\mathrm{s}(u+\ii v;t,z)+\O(\lambda_\circ^{M+1})
\label{e:Js-approximation}
\end{equation}
holding uniformly for $u+\ii v\in\Sigma$, where
\begin{equation}
\dot{\mathbf{J}}_\mathrm{s}(\lambda;t,z)\coloneq
\begin{cases}
\bpm \Delta_M(\lambda)^{-1} & \b{a_M}\lambda^M\ee^{2\ii\theta_\mathrm{s}(\lambda;t,z)-\ii\aleph}\\
-\Delta_M(\lambda)^{-1}a_M\lambda^M\ee^{\ii\aleph-2\ii\theta_\mathrm{s}(\lambda;t,z)} & 1\epm,&\quad |\lambda|=\lambda_\circ,\quad \Im(\lambda)>0\,,\\
\bpm 1 & \Delta_M(\lambda)^{-1}\b{a_M}\lambda^M\ee^{2\ii\theta_\mathrm{s}(\lambda;z,t)-\ii\aleph}\\
-a_M\lambda^M\ee^{\ii\aleph-2\ii\theta_\mathrm{s}(\lambda;t,z)} & \Delta_M(\lambda)^{-1}\epm,&\quad |\lambda|=\lambda_\circ,\quad\Im(\lambda)<0\,,\\
\Delta_M(\lambda)^{-\sigma_3},&\quad \lambda\in (-\lambda_\circ,\lambda_\circ)\,.
\end{cases}
\label{e:dot-Js}
\end{equation}

Note that $\det(\dot{\mathbf{J}}_\mathrm{s}(\lambda;t,z))=1$.  We therefore arrive at the following specification of a parametrix.
\begin{rhp}[Parametrix for $\K_\mathrm{s}$]
\label{rhp:dotKs}
Given $t\ge 0$ and $z\ge 0$, seek a $2\times 2$ matrix-valued function $\lambda\mapsto\dot{\K}_\mathrm{s}(\lambda;t,z)$ that is analytic for $\lambda\in\Complex\setminus\Sigma$; that satisfies $\dot{\K}_\mathrm{s}\to\I$ as $\lambda\to\infty$; and that takes continuous boundary values on $\Sigma$ from each component of the complement related by the jump conditions $\dot{\K}_\mathrm{s}^+(\lambda;t,z)=\dot{\K}_\mathrm{s}^-(\lambda;t,z)\dot{\mathbf{J}}_\mathrm{s}(\lambda;t,z)$ where the jump matrix $\dot{\mathbf{J}}_\mathrm{s}(\lambda;t,z)$ is defined on $\Sigma$ by \eqref{e:dot-Js}.
\end{rhp}
While the conditions of RHP~\ref{rhp:dotKs} have been obtained from those of RH$\b{\partial}$P~\ref{rhp:Ks} by formal unjustified approximations, it is straightforward to check that
the jump matrix $\dot{\mathbf{J}}_\mathrm{s}(\lambda;t,z)$ satisfies the necessary Schwarz symmetry $\dot{\mathbf{J}}_\mathrm{s}(\b{\lambda};t,z)^{-1}=\dot{\mathbf{J}}_\mathrm{s}(\lambda;t,z)^\dagger$ for $\lambda\in\Sigma\setminus\Real$, that $\dot{\mathbf{J}}_\mathrm{s}(\lambda;t,z)^\dagger\dot{\mathbf{J}}_\mathrm{s}(\lambda;t,z)$ is positive definite for $\lambda\in\Sigma\cap\Real$, and that $\dot{\mathbf{J}}_\mathrm{s}(\lambda;t,z)$ satisfies the necessary consistency condition\footnote{i.e., that the clockwise product of jump matrices for arcs approaching a self-intersection point is the identity.} at the two self-intersection points $\lambda=\pm \lambda_\circ$ of $\Sigma$.  Therefore, by Zhou's vanishing lemma \cite{z1989}, RHP~\ref{rhp:dotKs} has a unique solution so the parametrix is well-defined.

Since for $\lambda_\circ$ sufficiently small, $\Delta_M(\lambda)$ is an analytic nonvanishing function on the disk $|\lambda|<\lambda_\circ$ satisfying $\Delta_M(0)=1$ for $M>0$ and $\Delta_0(0)=1+|r_0|^2$, we may define on this disk an analytic square root $\Delta_M(\lambda)^\frac{1}{2}$ by the condition $\Delta_M(0)^\frac{1}{2}>0$.  Using this function to transfer the jump from the interval $(-\lambda_\circ,\lambda_\circ)$ to the upper and lower semicircles $|\lambda|=\lambda_\circ$, conjugating off some constants, and rescaling the circle $|\lambda|=\lambda_\circ$ to a fixed size results in an explicit transformation:
\begin{equation}
\ddot{\K}_\mathrm{s}(k;t,z):=\begin{cases}
\ee^{\frac{1}{2}\ii[\arg(a_M)+\aleph]\sigma_3}\dot{\K}_\mathrm{s}(\lambda_\circ k;t,z)\ee^{-\frac{1}{2}\ii[\arg(a_M)+\aleph]\sigma_3},&\quad |k|>1,\\
\ee^{\frac{1}{2}\ii[\arg(a_M)+\aleph]\sigma_3}\dot{\K}_\mathrm{s}(\lambda_\circ k;t,z)\Delta_M(\lambda_\circ k)^{\frac{1}{2}\sigma_3}\ee^{-\frac{1}{2}\ii[\arg(a_M)+\aleph]\sigma_3},&\quad |k|<1,\quad\Im(k)>0,\\
\ee^{\frac{1}{2}\ii[\arg(a_M)+\aleph]\sigma_3}\dot{\K}_\mathrm{s}(\lambda_\circ k;t,z)\Delta_M(\lambda_\circ k)^{-\frac{1}{2}\sigma_3}\ee^{-\frac{1}{2}\ii[\arg(a_M)+\aleph]\sigma_3},&\quad |k|<1,\quad\Im(k)<0.
\end{cases}
\label{e:dotKs-ddotKs}
\end{equation}
Then, since combining \eqref{e:thetas-def}--\eqref{e:lambdao-def} yields $\theta_\mathrm{s}(\lambda_\circ k;t,z)=\tfrac{1}{2}x(k+k^{-1})$ where $x:=\sqrt{2tz}$, $\ddot{\K}_\mathrm{s}(k;t,z)$ is the unique solution of the following simplified RHP equivalent to RHP~\ref{rhp:dotKs} via the substitution \eqref{e:dotKs-ddotKs}:
\begin{rhp}[Modified parametrix for $\K_\mathrm{s}$]
Given $t\ge 0$ and $z\ge 0$, seek a $2\times 2$ matrix-valued function $k\mapsto\ddot{\K}_\mathrm{s}(k;t,z)$ that is analytic for $|k|\neq 1$; that satisfies $\ddot{\K}_\mathrm{s}\to\I$ as $k\to\infty$; and that takes continuous boundary values on $|k|=1$ from the interior and exterior related by the jump condition
\begin{equation}
\ddot{\K}_\mathrm{s}^+(k;t,z)=\ddot{\K}_\mathrm{s}^-(k;t,z)\bpm \Delta_M(\lambda_\circ k)^{-\frac{1}{2}} & \lambda_\circ^M |a_M|\Delta_M(\lambda_\circ k)^{-\frac{1}{2}} k^M\ee^{\ii x(k+k^{-1})}\\
-\lambda_\circ^M |a_M|\Delta_M(\lambda_\circ k)^{-\frac{1}{2}}k^M\ee^{-\ii x(k+k^{-1})} & \Delta_M(\lambda_\circ k)^{-\frac{1}{2}}\epm,\quad |k|=1,
\end{equation}
where $\lambda_\circ$ is defined in terms of $(t,z)$ by \eqref{e:lambdao-def} and $x=\sqrt{2tz}$.
\label{rhp:ddotKs}
\end{rhp}
To prove that the parametrix $\dot{\K}_\mathrm{s}(\lambda;t,z)$ is an accurate approximation to $\K_\mathrm{s}(u,v;t,z)$ when $t>0$ is large and $z=o(t)$, we will need to first prove that $\dot{\K}_\mathrm{s}(\lambda;t,z)$ is uniformly bounded in this limit; using \eqref{e:dotKs-ddotKs} and the fact that $\Delta_M(\lambda_\circ k)$ has a positive limit as $\lambda_\circ\to 0$ for $|k|<1$ it is sufficient to show instead that $\ddot{\K}_\mathrm{s}(k;t,z)$ is bounded.  In a different direction, for the parametrix $\dot{\K}_\mathrm{s}(\lambda;t,z)$ to be a useful approximation of $\K_\mathrm{s}(u,v;t,z)$, we will need to express it in terms of known functions (or equivalently do the same for $\ddot{\K}_\mathrm{s}(k;t,z)$).  We address both of these issues next.

\subsection{Properties of the modified parametrix:  $M=0$}
\label{s:Properties-of-Y}
When $M=0$, the jump matrix in RHP~\ref{rhp:ddotKs} becomes simpler because the only dependence on $(t,z)$ or $k$ enters via the exponential factors $\ee^{\pm\ii x(k+k^{-1})}$.  At the same time, the constants $|a_0|\Delta_0^{-\frac{1}{2}}=|r_0|/\sqrt{1+|r_0|^2}>0$ and $\Delta_0^{-\frac{1}{2}}=1/\sqrt{1+|r_0|^2}>0$ are respectively the sine and cosine of an angle $\eta\in (0,\tfrac{1}{2}\pi)$.
Indeed,
\begin{equation}
\ddot{\K}_\mathrm{s}(k;t,z)=\mathbf{Y}(\ii k;-\ii x,\arctan(|r_0|)),
\label{e:ddotK-Y}
\end{equation}
where $\mathbf{Y}(\Lambda;X,\eta)$ is the solution of
\begin{rhp}[Painlev\'e-III]
Given $X\in\Complex$ and $\eta\in (0,\tfrac{1}{2}\pi)$, seek a $2\times 2$ matrix-valued function $\Lambda\mapsto\mathbf{Y}(\Lambda;X,\eta)$ that is analytic for $|\Lambda|\neq 1$; that satisfies $\mathbf{Y}\to\I$ as $\Lambda\to\infty$; and that takes continuous boundary values on $|\Lambda|=1$ from the interior and exterior related by the jump condition
\begin{equation}
\mathbf{Y}^+(\Lambda;X,\eta)=\mathbf{Y}^-(\Lambda;X,\eta)\ee^{\ii\Theta(\Lambda,X)\sigma_3}\mathbf{E}(\eta)\ee^{-\ii\Theta(\Lambda,X)\sigma_3},
\end{equation}
where
\begin{equation}
\Theta(\Lambda,X)\coloneq \frac{1}{2}X(\Lambda-\Lambda^{-1})\quad\text{and}\quad
\mathbf{E}(\eta)\coloneq\bpm\cos(\eta) & \sin(\eta)\\-\sin(\eta) & \cos(\eta)\epm,
\end{equation}
and the unit circle $|\Lambda|=1$ has counterclockwise orientation.
\label{rhp:PIII}
\end{rhp}
It follows from Liouville's theorem that for given parameters $X$ and $\eta$ there is at most one solution of this problem, and it must have unit determinant.  Likewise, given $\eta\in (0,\tfrac{1}{2}\pi)$, it follows from analytic Fredholm theory that if there is a solution for any $X\in\Complex$ then the solution is meromorphic in $X$.  Since $\ddot{\K}_\mathrm{s}(k;t,z)$ exists for all $t\ge 0$ and $z\ge 0$, we deduce from \eqref{e:ddotK-Y} existence of $\mathbf{Y}(\Lambda;X,\eta)$ for all $X$ on the closed negative imaginary axis.

RHP~\ref{rhp:PIII} can easily be related to a RHP appearing in several recent papers on the topic of high-order solitons and rogue wave solutions of the focusing nonlinear Schr\"odinger equation.  For instance, comparing with the RHP satisfied by the matrix denoted $\mathbf{B}(\Lambda;X,T)$ in \cite[Eqn.\@ (4.14)]{bb2019}, one can check that (after making a suitable choice of the arbitrary radius of the circle across which $\mathbf{B}(\Lambda;X,T)$ experiences its jump discontinuity)
\begin{equation}
\mathbf{Y}(\Lambda;X,\eta)
 =
\begin{cases}
\ee^{\frac{1}{2}\ii\arg(c_2/c_1)\sigma_3}\sigma_3\mathbf{B}\left(-\frac{4\ii}{X}\Lambda;\frac{X^2}{8},0\right)\sigma_3\ee^{-\frac{1}{2}\ii\arg(c_2/c_1)\sigma_3}\,, & \quad
|\Lambda| > 1\,,\\
\ee^{\frac{1}{2}\ii\arg(c_2/c_1)\sigma_3}\sigma_3\mathbf{B}\left(-\frac{4\ii}{X}\Lambda;\frac{X^2}{8},0\right)\sigma_3\ee^{-\frac{1}{2}\ii\arg(c_1c_2)\sigma_3}\,, & \quad
|\Lambda| < 1\,,
\end{cases}
\label{e:map-to-RWIO}
\end{equation}
and $\tan(\eta) = |c_2|/|c_1|$,
where $(c_1,c_2)\in\Complex^2$ is a parameter vector for $\mathbf{B}(\Lambda;X,T)$.  The same RHP for a special case of $(c_1,c_2)$ appeared also in \cite{blm2020}.

\subsubsection{Isomonodromic interpretation of RHP~\ref{rhp:PIII}}
\label{s:Y-isomonodromy}
Comparing with \cite[Theorem 5.4]{fikn2006}, one sees that RHP~\ref{rhp:PIII} is a special case of the inverse monodromy problem for the Painlev\'e-III equation, where the Stokes matrices are all trivial and the formal monodromy parameters $\Theta_0$ and $\Theta_\infty$ about $\Lambda=0$ and $\Lambda=\infty$ respectively both vanish.  Indeed, for fixed $\eta$, setting $\boldsymbol{\Psi}(\Lambda,X):=\mathbf{Y}(\Lambda;X,\eta)\ee^{\ii\Theta(\Lambda,X)\sigma_3}$, one sees easily that the matrices
\begin{equation}
\boldsymbol{\Lambda}(\Lambda,X)\coloneq\frac{\partial\boldsymbol{\Psi}}{\partial\Lambda}(\Lambda,X)\boldsymbol{\Psi}(\Lambda,X)^{-1}\quad\text{and}\quad
\mathbf{X}(\Lambda,X)\coloneq\frac{\partial\boldsymbol{\Psi}}{\partial X}(\Lambda,X)\boldsymbol{\Psi}(\Lambda,X)^{-1}
\label{e:Lax-matrices-1}
\end{equation}
are both analytic for $\Lambda\in\Complex\setminus\{0\}$.  Moreover they are Laurent polynomials in $\Lambda$ of degrees $(0,2)$ and $(1,1)$ respectively, and their coefficients can be written explicitly in terms of the matrix function $\mathbf{Y}(\Lambda;X,\eta)$ as follows:
\begin{equation}
\begin{split}
\boldsymbol{\Lambda}(\Lambda,X)&=\frac{\ii X}{2}\sigma_3 + \bpm 0 & y(X)\\v(X) & 0\epm\frac{1}{\Lambda} +
\bpm \tfrac{1}{2}\ii X-\ii U(X) & \ii s(X)\\-\ii s(X)^{-1}U(X)(U(X)-X) & \ii U(X)-\tfrac{1}{2}\ii X\epm
\frac{1}{\Lambda^2}\\
\mathbf{X}(\Lambda,X)&=\frac{\ii}{2}\sigma_3\Lambda + \frac{1}{X}\bpm 0 & y(X)\\v(X) & 0\epm -\frac{1}{X}\bpm \tfrac{1}{2}\ii X-\ii U(X) & \ii s(X)\\-\ii s(X)^{-1}U(X)(U(X)-X) & \ii U(X)-\tfrac{1}{2}\ii X\epm
\frac{1}{\Lambda},
\end{split}
\label{e:Lax-matrices-2}
\end{equation}
where, indexing by the equivalent parameter $\omega=-3\cos(2\eta)$,
\begin{equation}
\begin{split}
y(X)=y(X;\omega)&\coloneq -\ii X\lim_{\Lambda\to\infty}\Lambda Y_{1,2}(\Lambda;X,\eta)\\
v(X)=y(X;\omega)&\coloneq \ii X\lim_{\Lambda\to\infty}\Lambda Y_{2,1}(\Lambda;X,\eta)\\
s(X)=s(X;\omega)&\coloneq -XY_{1,1}(0;X,\eta)Y_{1,2}(0;X,\eta)\\
U(X)=U(X;\omega)&\coloneq -XY_{1,2}(0;X,\eta)Y_{2,1}(0;X,\eta).
\end{split}
\label{e:Lax-potentials}
\end{equation}
With the forms \eqref{e:Lax-matrices-2} for $\boldsymbol{\Lambda}(\Lambda,X)$ and $\mathbf{X}(\Lambda,X)$, the equations \eqref{e:Lax-matrices-1} can be re-interpreted as a compatible first-order Lax system
on the unknown $\boldsymbol{\Psi}(\Lambda,X)$.  The compatibility condition for this Lax pair is equivalent to the statement that the functions \eqref{e:Lax-potentials} satisfy the following first-order nonlinear system:
\begin{equation}
\begin{split}
y'(X)&=-2s(X)\\
v'(X)&= 2Xs(X)^{-1}U(X)-2s(X)^{-1}U(X)^2\\
Xs'(X)&=s(X)-2Xy(X)+4y(X)U(X)\\
XU'(X)&=U(X)-2Xs(X)^{-1}y(X)U(X)+2s(X)^{-1}y(X)U(X)^2+2v(X)s(X).
\end{split}
\label{e:full-PIII-system}
\end{equation}
We note here that our parametrization of the matrices $\boldsymbol{\Lambda}(\Lambda,X)$ and $\mathbf{X}(\Lambda,X)$ differs from the Jimbo-Miwa parametrization used in \cite{fikn2006} (where $w(X)=-s(X)/U(X)$ is used in place of $s(X)$) as well as from the parametrization used in \cite{bms2018} (where $t(X)=U(X)/s(X)$ is used in place of $U(X)$).  However, for the MBE system it is more natural to work with both $U(X)$ and $s(X)$, which is why we have interpolated between these two parametrizations.

\subsubsection{Basic symmetries of RHP~\ref{rhp:PIII}}
\label{s:Y-symmetries}
It is easy to check that, if $\mathbf{Y}(\Lambda;X,\eta)$ is the solution of RHP~\ref{rhp:PIII} for some $X\in\Complex$ and $\eta\in (0,\tfrac{1}{2}\pi)$, then the matrices $\mathbf{Y}(-\Lambda;X,\eta)^{-\top}=\sigma_2\mathbf{Y}(-\Lambda;X,\eta)\sigma_2$ and $\mathbf{Y}(\b{\Lambda};\b{X},\eta)^{-\dagger}$ both satisfy the conditions of RHP~\ref{rhp:PIII} and hence by uniqueness are equal to $\mathbf{Y}(\Lambda;X,\eta)$.
Expanding the identity $\mathbf{Y}(\Lambda;X,\eta)=\mathbf{Y}(-\Lambda;X,\eta)^{-\top}$ for large $\Lambda$ and using \eqref{e:Lax-potentials} gives the identity
\begin{equation}
v(X)=-y(X).
\label{e:v-eliminate}
\end{equation}
Likewise expanding the identity $\mathbf{Y}(\Lambda;X,\eta)=\mathbf{Y}(\b{\Lambda};\b{X},\eta)^{-\dagger}$ for large $\Lambda$ gives
\begin{equation}
y(X)=-\b{v(\b{X})}=\b{y(\b{X})}
\end{equation}
(which also implies $s(X)=\b{s(\b{X})}$ since $y'(X)=-2s(X)$)
and evaluating at $\Lambda=0$ gives
\begin{equation}
U(X)=\b{U(\b{X})}.
\end{equation}

These are symmetries for fixed $\eta\in (0,\tfrac{1}{2}\pi)$.  Another useful symmetry relates solutions of RHP~\ref{rhp:PIII} for different values of $\eta$.  Indeed, by a similar uniqueness argument, if $\mathbf{Y}(\Lambda;X,\eta)$ is the solution of RHP~\ref{rhp:PIII} for some $X$ and $\eta$, then
\begin{equation}
\mathbf{Y}(\Lambda;-\ii X,\tfrac{1}{2}\pi-\eta)=\begin{cases}
\sigma_3\mathbf{Y}(-\ii\Lambda;X,\eta)\ee^{X\Lambda\sigma_3}\bpm 0 & -1\\1 & 0\epm\sigma_3,&\quad|\Lambda|<1\\
\sigma_3\mathbf{Y}(-\ii\Lambda;X,\eta)\ee^{X\Lambda^{-1}\sigma_3}\sigma_3,&\quad |\Lambda|>1,
\end{cases}
\label{e:symmetry-rotate-X}
\end{equation}
because the right-hand side satisfies the conditions of RHP~\ref{rhp:PIII} with the parameters $(X,\eta)$ replaced by $(-\ii X,\tfrac{1}{2}\pi-\eta)$.
Note that the mapping $\eta\mapsto \tfrac{1}{2}\pi-\eta$ corresponds to $|r_0|\mapsto |r_0|^{-1}$ or in terms of the parameter $\omega=-3\cos(2\eta)$, $\omega\mapsto -\omega$.  Expanding for small and large $\Lambda$ using \eqref{e:Lax-potentials}, we obtain from this symmetry the identities \eqref{e:PIII-rotate-X-identities}.
Since for all $\eta\in (0,\tfrac{1}{2}\pi)$, $\mathbf{Y}(\Lambda;X,\eta)$ exists for all $X$ on the closed negative imaginary axis, it follows from \eqref{e:symmetry-rotate-X} that also $\mathbf{Y}(\Lambda;X,\eta)$ exists for all positive real $X$.  In fact, since $\eta\mapsto \tfrac{1}{2}\pi-\eta$ is an involution, iteration of \eqref{e:symmetry-rotate-X} yields the identity $\mathbf{Y}(\Lambda;-X,\eta)=\mathbf{Y}(-\Lambda;X,\eta)$.  Therefore, for $\eta\in (0,\tfrac{1}{2}\pi)$ and $\Lambda$ with $|\Lambda|\neq 1$, $X\mapsto \mathbf{Y}(\Lambda;X,\eta)$ is analytic for $X^2\in\Real$.
Combining $\mathbf{Y}(\Lambda;-X,\eta)=\mathbf{Y}(-\Lambda;X,\eta)$ with \eqref{e:Lax-potentials} then also shows that
\begin{equation}
y(-X;\omega)=y(X;\omega),\quad s(-X;\omega)=-s(X;\omega),\quad\text{and}\quad U(-X;\omega)=-U(X;\omega).
\label{e:PIII-even-odd}
\end{equation}

Using \eqref{e:v-eliminate} to eliminate $v(X)$ from the system \eqref{e:full-PIII-system} shows that the functions $y(X)$, $s(X)$, and $U(X)$ solve the coupled system \eqref{e:coupled-PIII-0-0} presented in the introduction, and hence also that the function $u(X)=-y(X)/s(X)$ satisfies the Painlev\'e-III equation in the form \eqref{e:PIII}.  Next, we show how that system can be reduced to the self-similar Maxwell-Bloch system \eqref{e:coupled-self-similar}.

\subsubsection{Expansions of the functions $y(X)$, $s(X)$, $U(X)$, and $u(X)$ near $X=0$}
\label{s:expansions-near-zero}
Since $X\mapsto\mathbf{Y}(\Lambda;X,\eta)$ is analytic for $X^2\in\Real$, it follows from \eqref{e:Lax-potentials} that the functions $y(X)$, $s(X)$, and $U(X)$ are analytic at the origin $X=0$.  We now explain how to compute their Taylor expansions.

In particular, $X\mapsto \mathbf{Y}(\Lambda;X,\eta)$ is analytic at $X=0$, and moreover RHP~\ref{rhp:PIII} is trivial to solve explicitly when $X=0$:
\begin{equation}
\mathbf{Y}(\Lambda;0,\eta)=\begin{cases}\I,&\quad |\Lambda|>1,\\
\E(\eta),&\quad |\Lambda|<1.
\end{cases}
\label{e:at-X=0}
\end{equation}
Then, using \eqref{e:Lax-potentials} gives
\begin{equation}
y(0;\omega) = U(0;\omega) = s(0;\omega) = 0\,.
\end{equation}
It is straightforward to compute as many $X$-derivatives of $\mathbf{Y}(\Lambda;X,\eta)$ at $X=0$ as desired.  These derivatives solve an inhomogeneous form of RHP~\ref{rhp:PIII} that we solve recursively as follows.
Letting $\mathbf{V}(\Lambda,X)\coloneq \ee^{\ii\Theta(\Lambda,X)\sigma_3}\E(\eta)\ee^{-\ii\Theta(\Lambda,X)\sigma_3}$ denote the jump matrix in RHP~\ref{rhp:PIII}, we introduce the notation
\begin{equation}
\V_n(\Lambda,X):=\frac{\partial^n\V}{\partial X^n}(\Lambda,X)=
\sin(\eta)(\ii(\Lambda-\Lambda^{-1}))^n\sigma_3^{n+1}\ee^{\ii\Theta(\Lambda,X)\sigma_3}\sigma_1\ee^{-\ii\Theta(\Lambda,X)\sigma_3},\quad n=1,2,3,\dots
\end{equation}
Then also
\begin{equation}
\V_n(\Lambda,X)\V(\Lambda,X)^{-1}
=
\sin(\eta)(\ii(\Lambda-\Lambda^{-1}))^n\sigma_3^{n+1}\ee^{\ii\Theta(\Lambda,X)\sigma_3}\sigma_1\E(\eta)^{-1}\ee^{-\ii\Theta(\Lambda,X)\sigma_3},\quad n=1,2,3,\dots
\label{e:V-derivs}
\end{equation}
Define a sequence of matrix functions $\F_n(\Lambda;X,\eta)$ in terms of derivatives of the solution of RHP~\ref{rhp:PIII} by
\begin{equation}
\F_n(\Lambda;X,\eta):=\frac{\partial^n\mathbf{Y}}{\partial X^n}(\Lambda;X,\eta)\mathbf{Y}(\Lambda;X,\eta)^{-1},\quad n=0,1,2,\dots,
\end{equation}
so in particular $\F_0(\Lambda;X,\eta)=\I$.  For $n\ge 1$, $\Lambda\mapsto\F_n(\Lambda;X,\eta)$ is analytic for $|\Lambda|\neq 1$, satisfies the normalization condition $\F_n(\infty;X,\eta)=\mathbf{0}$, and the jump condition
\begin{equation}
\F_n^+(\Lambda;X,\eta)-\F_n^-(\Lambda;X,\eta)=\sum_{k=1}^n\binom{n}{k}\F_{n-k}^-(\Lambda;X,\eta)\mathbf{Y}^-(\Lambda;X,\eta)\V_k(\Lambda,X)\V(\Lambda,X)^{-1}\mathbf{Y}^-(\Lambda;X,\eta)^{-1}
\end{equation}
for $|\Lambda|=1$ with counterclockwise orientation.  This immediately leads to a recursive formula for $\F_n(\Lambda;X,\eta)$ valid for $|\Lambda|\neq 1$:
\begin{equation}
\F_n(\Lambda;X,\eta)=\sum_{k=1}^n\binom{n}{k}\frac{1}{2\pi\ii}\oint_{|\mu|=1}
\frac{\F_{n-k}^-(\mu;X,\eta)\mathbf{Y}^-(\mu;X,\eta)\V_k(\mu,X)\V(\mu,X)^{-1}\mathbf{Y}^-(\mu;X,\eta)^{-1}}{\mu-\Lambda}\dd\mu.
\end{equation}
By \eqref{e:at-X=0} we have $\mathbf{Y}^-(\mu;0,\eta)=\mathbb{I}$ while $\V_k(\mu,0)\V(\mu,0)^{-1}$ is given in \eqref{e:V-derivs}.  Therefore, for $X=0$ this simplifies as follows:
\begin{equation}
\F_n(\Lambda;0,\eta)=\sin(\eta)\sum_{k=1}^n\binom{n}{k}\frac{1}{2\pi\ii}\oint_{|\mu|=1}\frac{(\mu-\mu^{-1})^k}{\mu-\Lambda}\ii^k\F_{n-k}^-(\mu;0,\eta)\dd\mu\,\sigma_3^{k+1}\sigma_1\E(\eta)^{-1},\quad |\Lambda|\neq 1.
\end{equation}
It is convenient to rescale by $\F_n(\Lambda;0,\eta)=\ii^{n}\G_n(\Lambda;0,\eta)$, giving the modified recursion
\begin{equation}
\G_n(\Lambda;0,\eta)=\sin(\eta)\sum_{k=1}^n\binom{n}{k}\frac{1}{2\pi\ii}\oint_{|\mu|=1}\frac{(\mu-\mu^{-1})^k}{\mu-\Lambda}\G_{n-k}^-(\mu;0,\eta)\dd\mu\,\sigma_3^{k+1}\sigma_1\E(\eta)^{-1}\,,\quad
|\Lambda|\neq 1.
\end{equation}
So, using $\G_0^-(\mu;0,\eta)=\F_0^-(\mu;0,\eta)=\I$ gives
\begin{equation}
\G_1(\Lambda;0,\eta) =
\begin{cases}
\sin(\eta)\sigma_1\E(\eta)^{-1}\Lambda^{-1} ,&\quad |\Lambda|>1,\\
\sin(\eta)\sigma_1\E(\eta)^{-1}\Lambda,&\quad |\Lambda|<1.
\end{cases}
\end{equation}
In particular, the boundary value from $|\mu|>1$ is
$\G_1^-(\mu;0,\eta)=\sin(\eta)\sigma_1\E(\eta)^{-1}\mu^{-1}$, which then implies
\begin{equation}
\G_2(\Lambda,0) =
\begin{cases}
[2\sin^2(\eta)\I - \sin(\eta)\sigma_3\sigma_1\E(\eta)^{-1}]\Lambda^{-2},&\quad|\Lambda|>1,\\
[2\sin^2(\eta)\I - 2\sin(\eta)\sigma_3\sigma_1\E(\eta)^{-1}]+\sin(\eta)\sigma_3\sigma_1\E(\eta)^{-1}\Lambda^2,&\quad|\Lambda|<1.
\end{cases}
\end{equation}
In particular,
$\G_2^-(\mu;0,\eta)=[2\sin^2(\eta)\I - \sin(\eta)\sigma_3\sigma_1\E(\eta)^{-1}]\mu^{-2}$.
Using this and the further identity
\begin{equation}
\E(\eta)^{-1}\sigma_3\sigma_1 = \sigma_3\sigma_1\E(\eta)^{-1},
\end{equation}
gives
\begin{equation}
\G_3(\Lambda;0,\eta)=
\begin{cases}
\A(\eta)\Lambda^{-3} + \mathbf{B}(\eta)
\Lambda^{-1},&\quad|\Lambda|>1\\
\C(\eta)\Lambda + \mathbf{D}(\eta)\Lambda^3,&\quad|\Lambda|<1,
\end{cases}
\end{equation}
where
\begin{equation}
\mathbf{B}(\eta)
\coloneq -6\sin^3(\eta)\sigma_1\E(\eta)^{-1} + 3\sin^2(\eta)\sigma_3 - 6\sin^2(\eta)\sigma_3\E(\eta)^{-2} - 3\sin(\eta)\sigma_1\E(\eta)^{-1}\,,
%
\end{equation}
and
$\A(\eta)$, $\C(\eta)$, and $\mathbf{D}(\eta)$ are unneeded matrix coefficients.
Putting the results together so far, the Taylor expansion of the first moment at $\Lambda=\infty$ is
\begin{equation}
\begin{split}
\lim_{\Lambda\to\infty}\Lambda(\mathbf{Y}(\Lambda;X,\eta)-\I)&=
\lim_{\Lambda\to\infty}\Lambda\left(
\mathbf{Y}(\Lambda;0,\eta)-\I +\frac{\partial\mathbf{Y}}{\partial X}(\Lambda;0,\eta)X +\frac{1}{2}\frac{\partial^2\mathbf{Y}}{\partial X^2}(\Lambda;0,\eta)X^2\right.\\
&\qquad\qquad{}\left. +\frac{1}{6}\frac{\partial^3\mathbf{Y}}{\partial X^3}(\Lambda;0,\eta) X^3 + \O(X^4\Lambda^{-1})\right)\\
&=\lim_{\Lambda\to\infty}\Lambda\left(\F_1(\Lambda;0,\eta)X +\frac{1}{2}\F_2(\Lambda;0,\eta)X^2 +\frac{1}{6}\F_3(\Lambda;0,\eta)X^3+\O(X^4\Lambda^{-1})\right)\\
&=  \ii\sin(\eta)\sigma_1\E(\eta)^{-1}X -\frac{\ii}{6} \mathbf{B}(\eta)X^3 + \O(X^4),\quad X\to 0.
\end{split}
\end{equation}
In particular, it follows from \eqref{e:Lax-potentials} and the even symmetry of $y(X;\omega)$ in \eqref{e:PIII-even-odd} that
\begin{equation}
\begin{split}
y(X;\omega)&=
-\ii X \lim_{\Lambda\to\infty}\Lambda Y_{1,2}(\Lambda;X,\eta) \\
&= \sin(\eta)\cos(\eta)X^2 +\frac{1}{2}\sin(\eta)\cos(\eta)[\cos^2(\eta)-\sin^2(\eta)]X^4 + \O(X^6),\quad X\to 0,
\end{split}
\end{equation}
which matches \eqref{e:y-series} with
\begin{equation}
y_2:=y''(0;\omega)=2\sin(\eta)\cos(\eta)=\sin(2\eta)=\sqrt{1-\cos^2(2\eta)}=\sqrt{1-\left(\frac{\omega}{3}\right)^2}.
\end{equation}
Also, evaluating at $\Lambda=0$, the Taylor expansion of $X\mapsto\mathbf{Y}(0;X,\eta)$ reads
\begin{equation}
\begin{split}
\mathbf{Y}(0;X,\eta)&=\mathbf{Y}(0;0,\eta) + \frac{\partial\mathbf{Y}}{\partial X}(0;0,\eta)X +\frac{1}{2}
\frac{\partial^2\mathbf{Y}}{\partial X^2}(0;0,\eta)X^2 +\frac{1}{6}\frac{\partial^3\mathbf{Y}}{\partial X^3}(0;0,\eta)X^3
+\O(X^4)\\
&=\E(\eta)+[\sin(\eta)\sigma_3\sigma_1-\sin^2(\eta)\E(\eta)]X^2 + \O(X^4),\quad X\to 0.
\end{split}
\end{equation}
In particular, it follows from \eqref{e:Lax-potentials} and the odd symmetry of $s(X;\omega)$ and $U(X;\omega)$ in \eqref{e:PIII-even-odd} that
\begin{equation}
\begin{split}
s(X;\omega)&=-XY_{1,1}(0;X,\eta)Y_{1,2}(0;X,\eta)\\
&=-\sin(\eta)\cos(\eta)X-\sin(\eta)\cos(\eta)[\cos^2(\eta)-\sin^2(\eta)]X^3+\O(X^5),\quad X\to 0,
\end{split}
\end{equation}
and that
\begin{equation}
\begin{split}
U(X;\omega)&=-XY_{1,2}(0;X,\eta)Y_{2,1}(0;X,\eta)\\
&=\sin^2(\eta)X+2\sin^2(X)\cos^2(X)X^3+\O(X^5),\quad X\to 0.
\end{split}
\end{equation}
These match the series \eqref{e:s-series} and \eqref{e:U-series} respectively.  Finally, the function $u(X;\omega)=-y(X;\omega)/s(X;\omega)$ solving the Painlev\'e-III equation \eqref{e:PIII} is easily seen to be analytic at $X=0$ with the Taylor series \eqref{e:u-series}.

We can also use these expansions to evaluate the $X$-independent first integral $J$ given in \eqref{e:J-constant} of the system \eqref{e:coupled-PIII-0-0} by computing its value at $X=0$:
\begin{equation}
J=\frac{U(X;\omega)(U(X;\omega)-X)}{s(X;\omega)^2} = \lim_{X\to 0}\frac{U(X;\omega)(U(X;\omega)-X)}{s(X;\omega)^2}=-1.
\end{equation}
As pointed out in the introduction, the fact that $J=-1$ makes the system \eqref{e:coupled-PIII-0-0} equivalent to the self-similar Maxwell-Bloch system \eqref{e:coupled-self-similar}.

\subsubsection{Boundedness of the solution of RHP~\ref{rhp:PIII}}
\label{s:Boundedness-of-RHP-PIII}
Let $\eta\in (0,\tfrac{1}{2}\pi)$ be fixed but arbitrary.  By analyticity of $X\mapsto\mathbf{Y}(\Lambda;X,\eta)$ on the coordinate axes and the normalization $\mathbf{Y}\to\mathbb{I}$ as $\Lambda\to\infty$, it is clear that for every $L>0$ however large there is some constant $C=C(L)>0$ such that
\begin{equation}
\sup_{-L^2\le X^2\le L^2}\sup_{|\Lambda|\neq 1}\|\mathbf{Y}(\Lambda;X,\eta)\|\le C(L),
\label{e:supremum-bounded}
\end{equation}
i.e., for $X$ bounded on the real and imaginary axes, $\mathbf{Y}(\Lambda;X,\eta)$ is uniformly bounded on the complex $\Lambda$-plane.  Aside from the expansions for small $X$ described in Section~\ref{s:expansions-near-zero}, there is no further simplification of the Painlev\'e-III functions $y(X;\omega)$, $s(X;\omega)$, and $U(X;\omega)$ that takes place for bounded $X$.

For the purposes of application to the analysis of $\K_\mathrm{s}(u,v;t,z)$, it has to be proved that for arbitrary fixed $\eta\in (0,\tfrac{1}{2}\pi)$, $\mathbf{Y}(\Lambda;X,\eta)$ is bounded uniformly with respect to $X$ on the whole negative imaginary axis.  In light of the symmetry \eqref{e:symmetry-rotate-X}, this will automatically imply that the matrix
\begin{equation}
\widetilde{\mathbf{Y}}(\Lambda;X,\eta):=\begin{cases}\mathbf{Y}(\Lambda;X,\eta)\ee^{\ii X\Lambda\sigma_3},&\quad |\Lambda|<1\\
\mathbf{Y}(\Lambda;X,\eta)\ee^{-\ii X\Lambda^{-1}\sigma_3},&\quad|\Lambda|>1
\end{cases}
\label{e:tilde-Y}
\end{equation}
will, for arbitrary fixed $\eta\in (0,\tfrac{1}{2}\pi)$, be bounded uniformly with respect to $X$ on the whole positive real axis.  It is exactly this latter bound that will be needed to analyze an analogous matrix $\K_\mathrm{u}(u,v;t,z)$ that will be introduced to study the behavior of solutions in an initially-unstable medium in Section~\ref{s:unstable-positive} below.  Based on the identification \eqref{e:map-to-RWIO}, the following arguments will generalize the special case of $\eta=\tfrac{1}{4}\pi$ or $\omega=0$, which was analyzed for large $X$ in \cite[Section 4.1]{blm2020}.

Referring to the regions of the complex $k$-plane indicated in the left-hand panel of Figure~\ref{f:M3},
\begin{figure}[h]
    \centering
    \includegraphics[scale = 0.53]{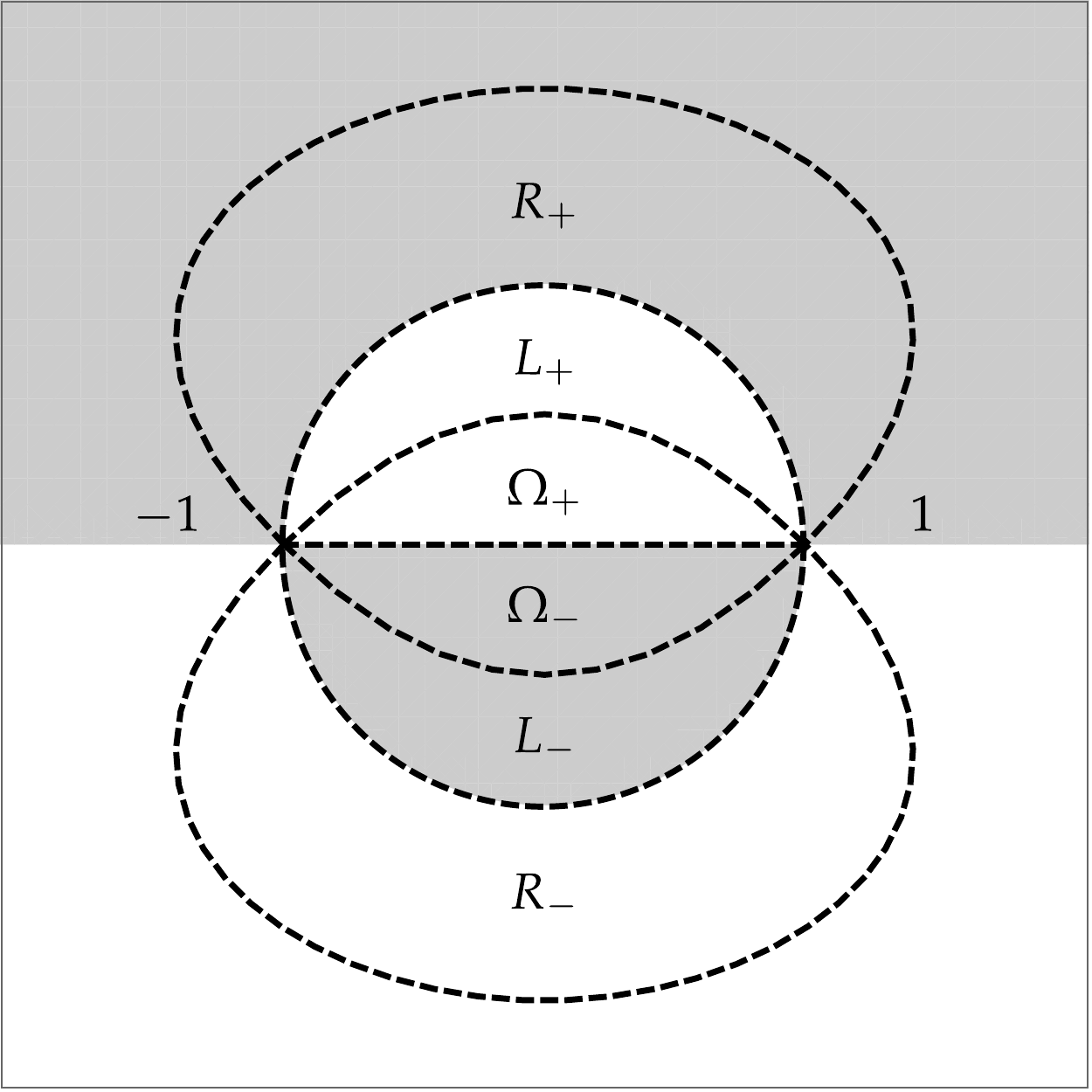}\quad
    \includegraphics[scale = 0.53]{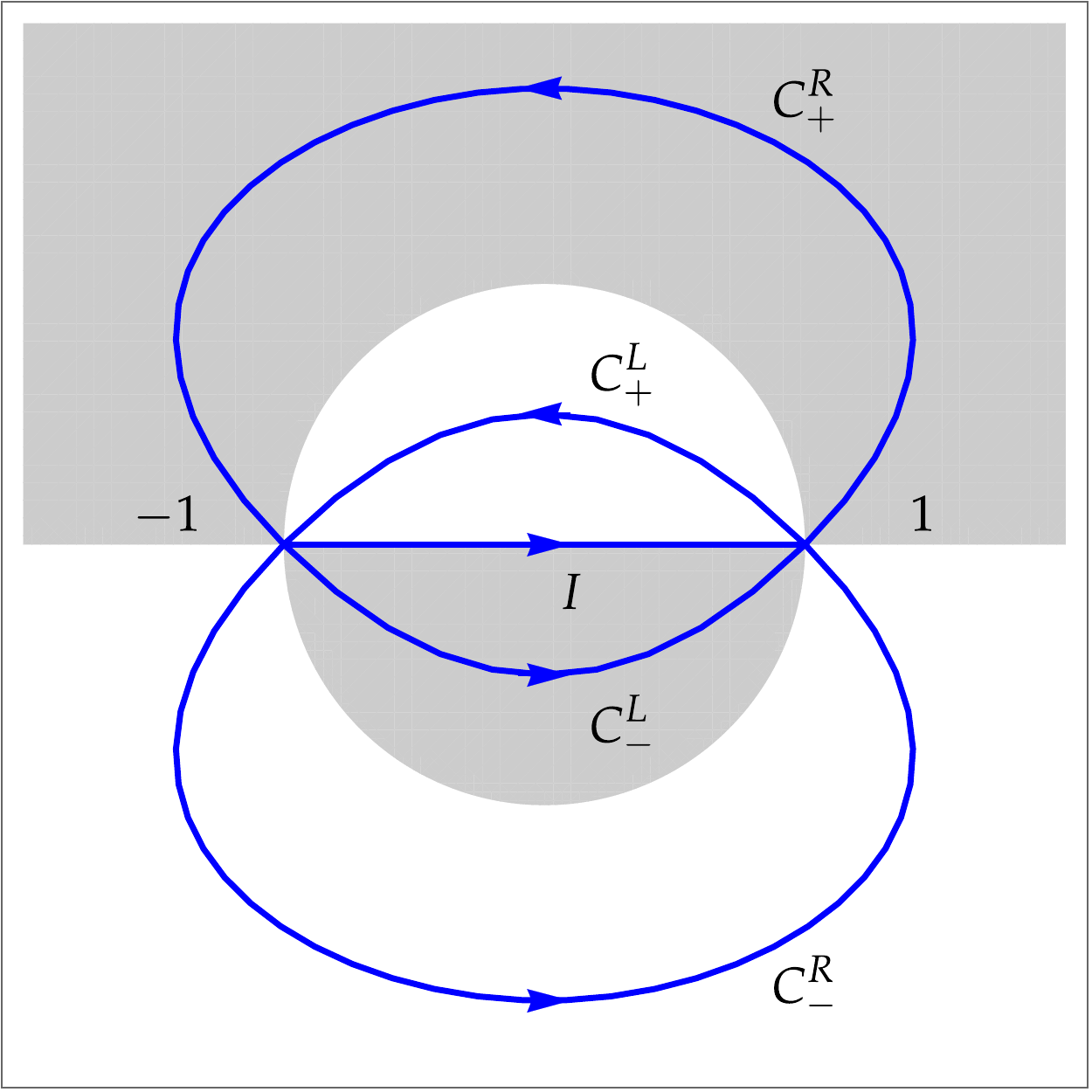}
    \caption{Left:  the definitions of the regions in the substitution \eqref{e:Y-to-Z-first}--\eqref{e:Y-to-Z-last} superimposed on the sign chart of $\Re(\ii x(k+k^{-1}))$.  Right:  the arcs of the jump contour for $\mathbf{Z}(k;x,\eta)$.
    }
    \label{f:M3}
\end{figure}
we define a new unknown from $\mathbf{Y}(\Lambda;-\ii x,\eta)$, $x>0$, as follows.
\begin{equation}
\mathbf{Z}(k;x,\eta)\coloneq \mathbf{Y}(\ii k;-\ii x,\eta)\bpm 1& \tan(\eta)\ee^{\ii x(k+k^{-1})}\\0 & 1\epm,\quad k\in R_+,
\label{e:Y-to-Z-first}
\end{equation}
\begin{equation}
\mathbf{Z}(k;x,\eta)\coloneq \mathbf{Y}(\ii k;-\ii x,\eta)\cos(\eta)^{\sigma_3}\bpm 1&0\\
\sin(\eta)\cos(\eta)\ee^{-\ii x(k+k^{-1})}&1\epm,\quad k\in L_+,
\end{equation}
\begin{equation}
\mathbf{Z}(k;x,\eta)\coloneq \mathbf{Y}(\ii k;-\ii x,\eta)\cos(\eta)^{\sigma_3},\quad k\in\Omega_+,
\end{equation}
\begin{equation}
\mathbf{Z}(k;x,\eta)\coloneq\mathbf{Y}(\ii k;-\ii x,\eta)\cos(\eta)^{-\sigma_3},\quad k\in\Omega_-,
\end{equation}
\begin{equation}
\mathbf{Z}(k;x,\eta)\coloneq\mathbf{Y}(\ii k;-\ii x,\eta)\cos(\eta)^{-\sigma_3}\bpm 1&-\sin(\eta)\cos(\eta)\ee^{\ii x(k+k^{-1})}\\ 0&1\epm,\quad k\in L_-,
\end{equation}
\begin{equation}
\mathbf{Z}(k;x,\eta)\coloneq\mathbf{Y}(\ii k;-\ii x,\eta)\bpm 1&0\\-\tan(\eta)\ee^{-\ii x(k+k^{-1})}&1\epm,\quad k\in R_-,
\label{e:Y-to-Z-last}
\end{equation}
and elsewhere we set $\mathbf{Z}(k;x,\eta)\coloneq\mathbf{Y}(\ii k;-\ii x,\eta)$.  This matrix is analytic on the complement of the contour illustrated in the right-hand panel of Figure~\ref{f:M3}, and due to the sign structure of $\Re(\ii x(k+k^{-1}))$, $\mathbf{Z}(k;x,\eta)$ is bounded uniformly with respect to $x>0$ and $k\in\mathbb{C}$ if and only if $\mathbf{Y}(\ii k;-\ii x,\eta)$ is.  The jump conditions satisfied by $\mathbf{Z}(k;x,\eta)$ across the arcs of its jump contour take the form $\mathbf{Z}^+(k;x,\eta)=\mathbf{Z}^-(k;x,\eta)\mathbf{V}_\mathbf{Z}(k;x,\eta)$, where the jump matrix $\mathbf{V}_\mathbf{Z}(k;x,\eta)$ is defined on the various arcs of jump contour as
as follows.
\begin{equation}
\mathbf{V}_\mathbf{Z}(k;x,\eta)\coloneq\bpm 1&\tan(\eta)\ee^{\ii x(k+k^{-1})}\\0&1\epm,\quad k\in C^R_+,
\label{e:Z-jump-first}
\end{equation}
\begin{equation}
\mathbf{V}_\mathbf{Z}(k;x,\eta)\coloneq\bpm 1&0\\-\sin(\eta)\cos(\eta)\ee^{-\ii x(k+k^{-1})} & 1\epm,\quad k\in C^L_+,
\end{equation}
\begin{equation}
\mathbf{V}_\mathbf{Z}(k;x,\eta)\coloneq\cos(\eta)^{2\sigma_3},\quad k\in I,
\end{equation}
\begin{equation}
\mathbf{V}_\mathbf{Z}(k;x,\eta)\coloneq\bpm 1&\sin(\eta)\cos(\eta)\ee^{\ii x(k+k^{-1})}\\0 & 1\epm,\quad k\in C_-^L,\quad\text{and}
\end{equation}
\begin{equation}
\mathbf{V}_\mathbf{Z}(k;x,\eta)\coloneq\bpm 1&0\\-\tan(\eta)\ee^{-\ii x(k+k^{-1})} & 1\epm,\quad k\in C_-^R.
\label{e:Z-jump-last}
\end{equation}
These jump matrices are all exponentially small perturbations of the identity except on the interval $I=[-1,1]$ and near its endpoints.

We deal with the jumps that are not close to identity by building a parametrix for $\mathbf{Z}(k;x,\eta)$.  For an outer parametrix, we solve the jump on $I$ exactly and satisfy the normalization condition $\mathbf{Z}(k;x,\eta)\to\I$ as $k\to\infty$ by defining
\begin{equation}
\dot{\mathbf{Z}}_{\mathrm{out}}(k;\eta):=\left(\frac{k-1}{k+1}\right)^{\ii \nu\sigma_3},\quad
\nu\coloneq -\frac{\ln(\cos(\eta))}{\pi}>0,\quad k\in \Complex\setminus I.
\label{e:Z-dot-out}
\end{equation}
Here the power function is defined using the principal branch, which is cut precisely in $I=[-1,1]$ and yields the desired asymptotic as $k\to\infty$.

The endpoints $k=\pm 1$ of $I$ are the two saddle points of the phase $k+k^{-1}$.  Let $y_{-1}(k)$ and $y_1(k)$ denote conformal coordinates defined near $k=-1$ and $k=1$ respectively:
\begin{equation}
\begin{split}
y_{-1}(k)&\coloneq (-k)^{\frac{1}{2}}-(-k)^{-\frac{1}{2}},\quad |k+1|\le\tfrac{1}{2},\\
y_1(k)&\coloneq k^{\frac{1}{2}}-k^{-\frac{1}{2}},\quad |k-1|\le\tfrac{1}{2},
\end{split}
\end{equation}
in which the principal branch square roots are meant, guaranteeing the analyticity of the coordinates.  Note that $y_{-1}(-1)=y_1(1)=0$.
Then, the exponent function appearing in \eqref{e:Z-jump-first}--\eqref{e:Z-jump-last} can be written in terms of the conformal coordinates as follows:
\begin{equation}
x(k+k^{-1}) = \begin{cases}-2x-\zeta^2,&\quad \zeta\coloneq \sqrt{x}y_{-1}(k),\quad |k+1|\le\tfrac{1}{2}\\
2x+\zeta^2,&\quad \zeta\coloneq\sqrt{x}y_{1}(k),\quad |k-1|\le\tfrac{1}{2}.
\end{cases}
\end{equation}
The outer parametrix can conveniently be expressed in terms of the coordinates $y_{-1}(k)$ and $y_1(k)$ as follows:
\begin{equation}
\dot{\mathbf{Z}}_\mathrm{out}(k;\eta)=\begin{cases}
\displaystyle x^{\frac{1}{2}\ii\nu\sigma_3}\left(\frac{1-k}{(-k)^{\frac{1}{2}}}\right)^{\ii\nu\sigma_3}\zeta^{-\ii\nu\sigma_3},&\quad \zeta\coloneq \sqrt{x}y_{-1}(k),\quad |k+1|\le\tfrac{1}{2},\\
\displaystyle x^{-\frac{1}{2}\ii\nu\sigma_3}\left(\frac{k^\frac{1}{2}}{k+1}\right)^{\ii\nu\sigma_3}\zeta^{\ii\nu\sigma_3},&\quad \zeta\coloneq\sqrt{x}y_1(k),\quad |k-1|\le \tfrac{1}{2}.
\end{cases}
\label{e:Zdot-out-zeta}
\end{equation}
Here, the factors to the left of $\zeta^{\pm\ii\nu\sigma_3}$ are analytic nonvanishing functions in the indicated disks taking the values $(2\sqrt{x})^{\mp\ii\nu\sigma_3}$ at $k=\pm 1$.  We now assume that the jump contours $C_L^\pm$ and $C_R^\pm$ are modified if necessary within the disks $|k\pm 1|\le\tfrac{1}{2}$ to lie along the rays $\arg(y_{\mp 1}(k))\in\{-\tfrac{3}{4}\pi,-\tfrac{1}{4}\pi,\tfrac{1}{4}\pi,\tfrac{3}{4}\pi\}$.  Then, in terms of the rescaled conformal coordinates $\zeta=\sqrt{x}y_{-1}(k)$ and $\zeta=\sqrt{x}y_1(k)$, the matrices $\sigma_3\ee^{\ii x\sigma_3}\mathbf{Z}(k;x,\eta)\ee^{-\ii x\sigma_3}\sigma_3$ and $\sigma_1\ee^{-\ii x\sigma_3}\mathbf{Z}(k;x,\eta)\ee^{\ii x\sigma_3}\sigma_1$ are, within the respective disks $|k+ 1|\le\tfrac{1}{2}$ and $|k-1|\le\tfrac{1}{2}$, analytic except on the same four rays along which they satisfy exactly the same jump conditions.  Those jump conditions are in turn the same as those of a matrix function $\zeta\mapsto\P(\zeta;\eta)$ that satisfies the following conditions:  $\P(\zeta;\eta)$ is analytic in five infinite sectors of the complex $\zeta$-plane bounded by the rays $\arg(\zeta)=\pm\tfrac{1}{4}\pi$, $\arg(\zeta)=\pm\tfrac{3}{4}\pi$, and $\arg(-\zeta)=0$; $\P(\zeta;\eta)$ takes continuous boundary values from each sector that are related along the five excluded rays by the jump conditions
\begin{equation}
\P^+(\zeta;\eta)=\P^-(\zeta;\eta)\bpm 1&0\\\tan(\eta)\ee^{\ii\zeta^2} & 1\epm,\quad \arg(\zeta)=\tfrac{1}{4}\pi,
\label{e:PC-jump-first}
\end{equation}
\begin{equation}
\P^+(\zeta;\eta)=\P^-(\zeta;\eta)\bpm 1&\tan(\eta)\ee^{-\ii\zeta^2}\\0 & 1\epm,\quad\arg(\zeta)=-\tfrac{1}{4}\pi,
\end{equation}
\begin{equation}
\P^+(\zeta;\eta)=\P^-(\zeta;\eta)\bpm 1&\sin(\eta)\cos(\eta)\ee^{-\ii\zeta^2}\\0&1\epm,\quad\arg(\zeta)=\tfrac{3}{4}\pi,
\end{equation}
\begin{equation}
\P^+(\zeta;\eta)=\P^-(\zeta;\eta)\bpm 1&0\\\sin(\eta)\cos(\eta)\ee^{\ii\zeta^2} & 1\epm,\quad \arg(\zeta)=-\tfrac{3}{4}\pi,\quad\text{and}
\end{equation}
\begin{equation}
\P^+(\zeta;\eta)=\P^-(\zeta;\eta)\cos(\eta)^{-2\sigma_3},\quad \arg(-\zeta)=0,
\label{e:PC-jump-last}
\end{equation}
in which all rays are taken to be oriented in the direction of increasing $\Re(\zeta)$
as indicated in the left-hand panel of Figure~\ref{f:PE1};
\begin{figure}[h]
    \centering
    \includegraphics[scale = 0.56]{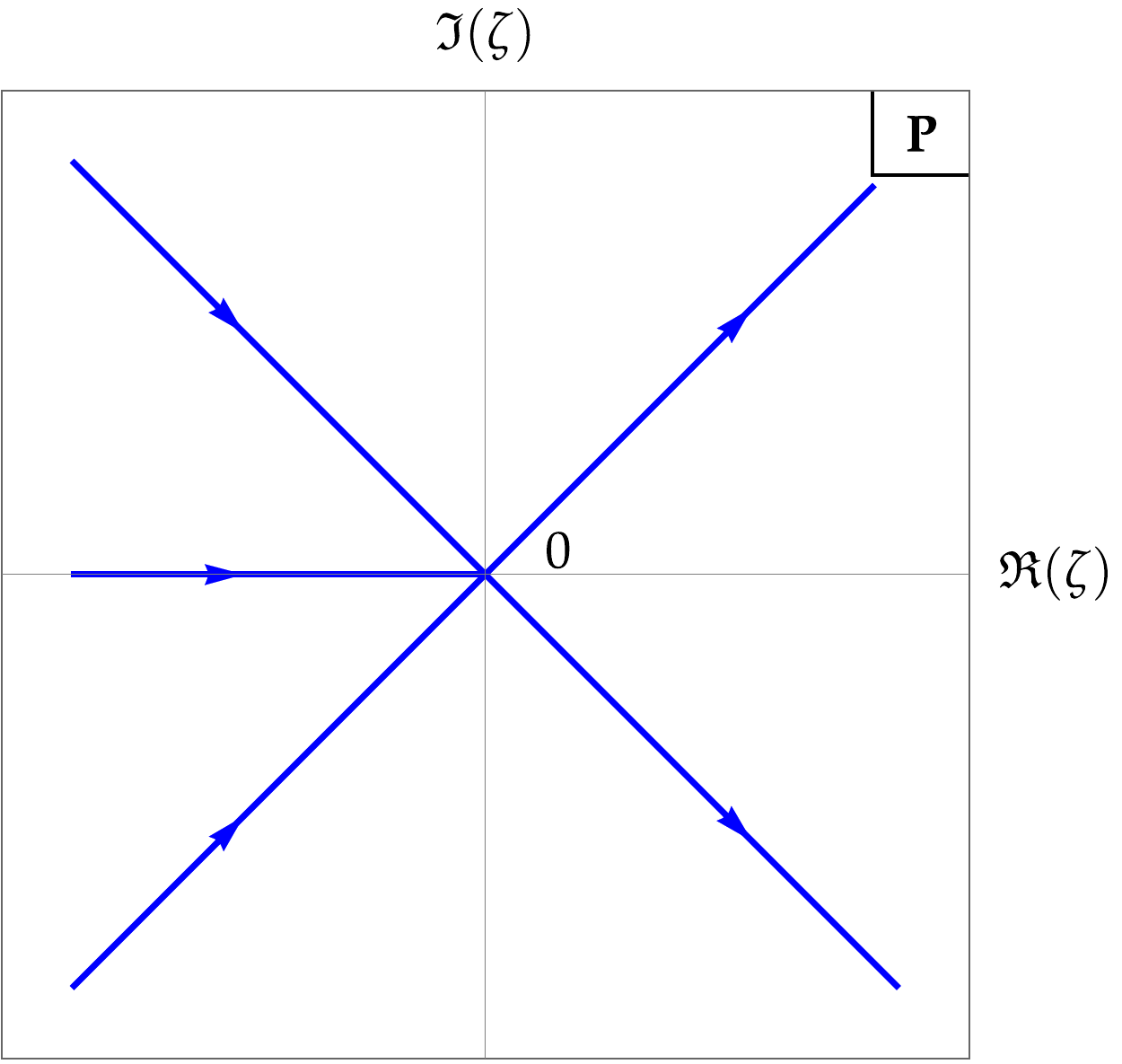}\quad
    \includegraphics[scale = 0.49]{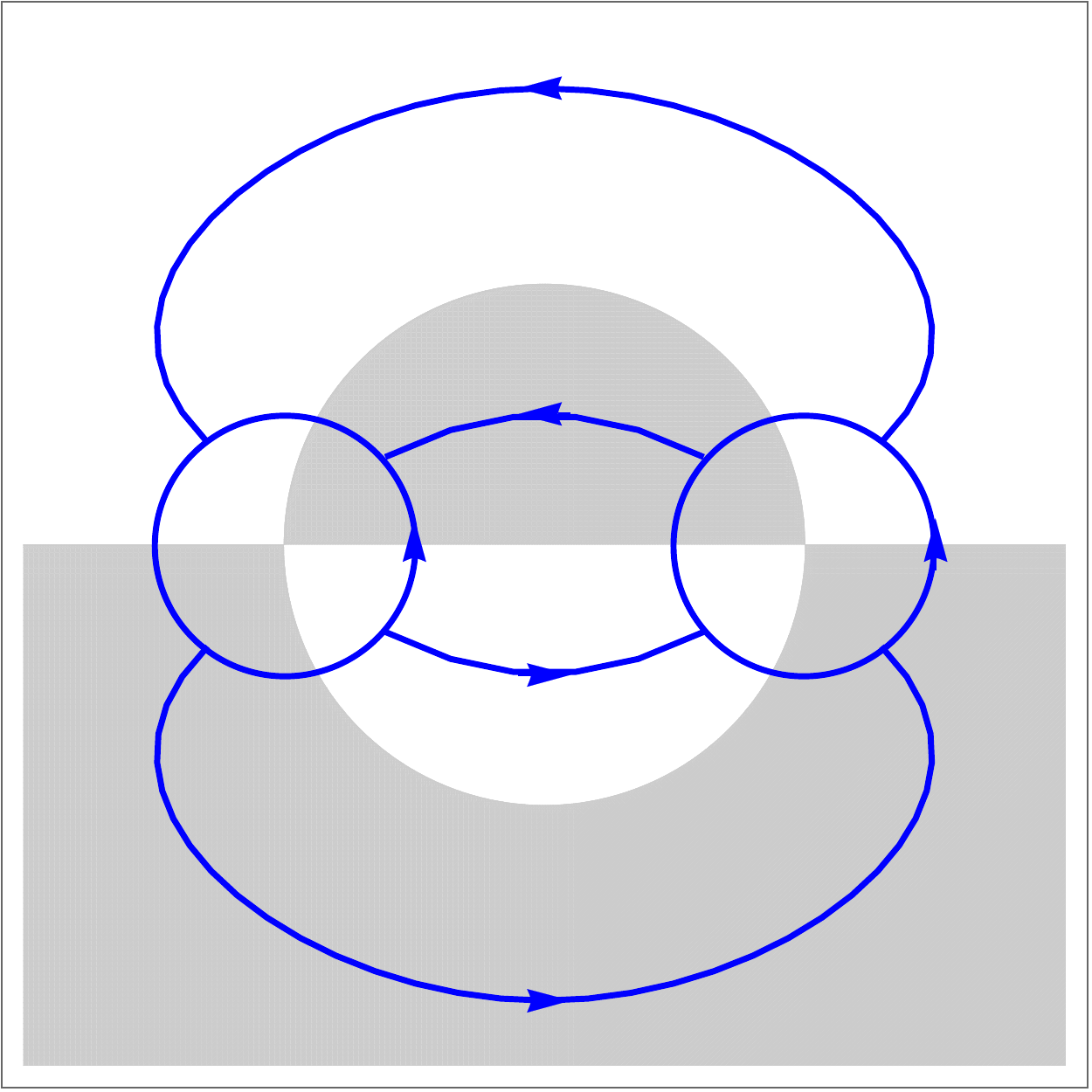}
    \caption{Left:  the jump contour for $\mathbf{P}(\zeta;\eta)$ in the $\zeta$-plane.  Right:  the jump contour $\Sigma_\mathbf{E}$ for $\mathbf{E}_\mathbf{Z}(k;x,\eta)$ in the $k$-plane.
    }
    \label{f:PE1}
\end{figure}
and $\P(\zeta;\eta)\zeta^{\ii\nu\sigma_3}\to\I$ as $\zeta\to\infty$ in every direction, where $\nu=\nu(\eta)$ is given in \eqref{e:Z-dot-out}.  It is well-known that these conditions uniquely determine $\P(\zeta;\eta)$, which can be written explicitly in terms of parabolic cylinder functions; a complete development of the solution and its properties can be found for instance in \cite[Appendix A]{m2018} in which the relevant parameters are $\tau=\tan(\eta)$ and $p=\nu$.  The defining properties of $\P(\zeta;\eta)$ listed above in fact also imply that for each $\eta\in (0,\tfrac{1}{2}\pi)$, $\zeta\mapsto \P(\zeta;\eta)$ is uniformly bounded, and that the matrix function $\zeta\mapsto \P(\zeta;\eta)\zeta^{\ii\nu\sigma_3}$ has a complete asymptotic expansion as $\zeta\to\infty$ in descending integer powers of $\zeta$ in which the leading terms read
\begin{equation}
\P(\zeta;\eta)\zeta^{\ii\nu\sigma_3}=\I +\frac{1}{2\ii\zeta}\bpm 0 & \varrho\\\b{\varrho} & 0\epm + \bpm
\O(\zeta^{-2}) & \O(\zeta^{-3})\\\O(\zeta^{-3}) & \O(\zeta^{-2})\epm,\quad \zeta\to\infty,
\label{e:PC-expansion}
\end{equation}
with
\begin{equation}
\varrho\coloneq \pi^{-\frac{3}{2}}\tan(\eta)\ln(\sec(\eta))\sqrt{\cos(\eta)}\Gamma\left(\ii\frac{\ln(\cos(\eta))}{\pi}\right)\exp\left(\ii\left[\frac{\pi}{4}-\frac{1}{\pi}\ln(2)\ln(\cos(\eta))\right]\right).
\label{e:varrho-def}
\end{equation}
We use the matrix function $\P(\zeta;\eta)$ to define local parametrices for $\mathbf{Z}(k;x,\eta)$ near $k=-1,1$ as follows:
\begin{equation}
\begin{split}
\dot{\mathbf{Z}}_{-1}(k;x,\eta)&\coloneq x^{\frac{1}{2}\ii\nu\sigma_3}\left(\frac{1-k}{(-k)^\frac{1}{2}}\right)^{\ii\nu\sigma_3}\ee^{-\ii x\sigma_3}\sigma_3\P(\sqrt{x}y_{-1}(k);\eta)\sigma_3\ee^{\ii x\sigma_3},\quad |k+1|\le\tfrac{1}{2}\\
\dot{\mathbf{Z}}_1(k;x,\eta)&\coloneq x^{-\frac{1}{2}\ii\nu\sigma_3}\left(\frac{k^\frac{1}{2}}{k+1}\right)^{\ii\nu\sigma_3}\ee^{\ii x\sigma_3}\sigma_1\P(\sqrt{x}y_1(k);\eta)\sigma_1\ee^{-\ii x\sigma_3},\quad |k-1|\le\tfrac{1}{2}.
\end{split}
\label{e:Zdot-pm1}
\end{equation}

The outer and local parametrices are combined to define a global parametrix for $\mathbf{Z}(k;x,\eta)$ by setting:
\begin{equation}
\dot{\mathbf{Z}}(k;x,\eta)\coloneq\begin{cases}
\dot{\mathbf{Z}}_{-1}(k;x,\eta),&\quad |k+1|<\tfrac{1}{2},\\
\dot{\mathbf{Z}}_1(k;x,\eta),&\quad |k-1|<\tfrac{1}{2},\\
\dot{\mathbf{Z}}_\mathrm{out}(k;\eta),&\quad \text{$|k+1|>\tfrac{1}{2}$ and $|k-1|>\tfrac{1}{2}$.}
\end{cases}
\end{equation}
To compare $\mathbf{Z}(k;x,\eta)$ with its parametrix $\dot{\mathbf{Z}}(k;x,\eta)$ we define the error by
\begin{equation}
\E_\mathbf{Z}(k;x,\eta)\coloneq\mathbf{Z}(k;x,\eta)\dot{\mathbf{Z}}(k;x,\eta)^{-1}
\label{e:EZ-def}
\end{equation}
wherever both factors are defined.  Observe that since both $\mathbf{Z}(k;x,\eta)$ and $\dot{\mathbf{Z}}(k;x,\eta)$ take continuous boundary values satisfying the same jump conditions within the disks $|k\pm 1|<\tfrac{1}{2}$ and on the segment $I$, $\E_\mathbf{Z}(k;x,\eta)$ may be extended to the latter contours as an analytic function, which we denote by the same symbol;  it is therefore analytic in the domain complementary to the jump contour illustrated in the right-hand panel of Figure~\ref{f:PE1}.  Moreover, $\E_\mathbf{Z}(k;x,\eta)\to\I$ as $k\to\infty$ because this is true of both factors on the right-hand side of \eqref{e:EZ-def}.  By direct computation, the jump conditions satisfied by $\E_\mathbf{Z}(k;x,\eta)$ take the form $\E_\mathbf{Z}^+(k;x,\eta)=\E_\mathbf{Z}^-(k;x,\eta)\mathbf{V}_\E(k;x,\eta)$ where the jump matrix $\mathbf{V}_\E(k;x,\eta)$ is defined on the arcs of its jump contour as follows.  Firstly, on all arcs outside of the two circles $|k\pm 1|=\tfrac{1}{2}$,
we have
$\mathbf{V}_\E(k;x,\eta)=\dot{\mathbf{Z}}_\mathrm{out}(k;\eta)\mathbf{V}_\mathbf{Z}(k;x,\eta)\dot{\mathbf{Z}}_\mathrm{out}(k;\eta)^{-1}$.
Note that $\dot{\mathbf{Z}}_\mathrm{out}(k;\eta)$ has unit determinant, is independent of $x$, and is bounded for $|k\pm 1|\ge \tfrac{1}{2}$; since the exponential factors in the triangular jump matrices defined in \eqref{e:Z-jump-first}--\eqref{e:Z-jump-last} are uniformly exponentially decaying as $x\to+\infty$ by virtue of the sign chart for $\Re(\ii x(k+k^{-1}))$, there is some constant $\epsilon>0$ such that $\mathbf{V}_\E(k;x,\eta)=\I+\O(\ee^{-\epsilon x})$ holds uniformly on these arcs as $x\to+\infty$.  Secondly, on the two circles $|k\pm 1|=\tfrac{1}{2}$ taken with counterclockwise orientation, the jump matrix can be written as $\mathbf{V}_\E(k;x,\eta)=\dot{\mathbf{Z}}_\mathrm{out}(k;\eta)\dot{\mathbf{Z}}_{\mp 1}(k;x,\eta)^{-1}$.  Combining \eqref{e:Zdot-out-zeta} with \eqref{e:Zdot-pm1} makes this more precise:
\begin{multline}
\mathbf{V}_\E(k;x,\eta)=x^{\frac{1}{2}\ii\nu\sigma_3}\left(\frac{1-k}{(-k)^{\frac{1}{2}}}\right)^{\ii\nu\sigma_3}\ee^{-\ii x\sigma_3}\sigma_3\left[\P(\zeta;\eta)\zeta^{\ii\nu\sigma_3}\right]^{-1}\sigma_3\ee^{\ii x\sigma_3}\left(\frac{1-k}{(-k)^\frac{1}{2}}\right)^{-\ii\nu\sigma_3}x^{-\frac{1}{2}\ii\nu\sigma_3},\\
\zeta\coloneq\sqrt{x}y_{-1}(k),\quad |k+1|=\tfrac{1}{2},\quad\text{and}
\label{e:VE-left-circle}
\end{multline}
\begin{multline}
\mathbf{V}_\E(k;x,\eta)=x^{-\frac{1}{2}\ii\nu\sigma_3}\left(\frac{k^\frac{1}{2}}{k+1}\right)^{\ii\nu\sigma_3}\ee^{\ii x\sigma_3}\sigma_1\left[\P(\zeta;\eta)\zeta^{\ii\nu\sigma_3}\right]^{-1}\sigma_1\ee^{-\ii x\sigma_3}\left(\frac{k^\frac{1}{2}}{k+1}\right)^{-\ii\nu\sigma_3}x^{\frac{1}{2}\ii\nu\sigma_3},\\ \zeta\coloneq\sqrt{x}y_1(k),\quad |k-1|=\tfrac{1}{2}.
\label{e:VE-right-circle}
\end{multline}
Due to the fact that $y_{\mp 1}(k)$ is bounded away from zero for $|k\pm 1|=\tfrac{1}{2}$, using \eqref{e:PC-expansion} immediately shows that both of these jump matrices can be written as $\mathbf{V}_\E(k;x,\eta)=\I+\O_\mathrm{OD}(x^{-\frac{1}{2}})+\O_\mathrm{D}(x^{-1})$ uniformly on $|k\pm 1|=\tfrac{1}{2}$ as $x\to+\infty$, where the subscripts on the error terms indicate off-diagonal and diagonal matrices respectively.

It follows that $\E_\mathbf{Z}(k;x,\eta)$ satisfies the conditions of a small-norm RHP.  In particular, the uniform estimate on $\mathbf{V}_\E(k;x,\eta)-\I$ carries over to a uniform estimate on $\E_\mathbf{Z}(k;x,\eta)-\I$ itself of the same order, $\O_\mathrm{OD}(x^{-\frac{1}{2}})+\O_\mathrm{D}(x^{-1})$.  Hence $\E_\mathbf{Z}(k;x,\eta)$ is bounded on the whole $k$-plane, uniformly with respect to the limit $x\to+\infty$.  Since $\dot{\mathbf{Z}}(k;x,\eta)$ also has this property, the same is also true of $\mathbf{Z}(k;x,\eta)=\E_\mathbf{Z}(k;x,\eta)\dot{\mathbf{Z}}(k;x,\eta)$.  Since the explicit transformation relating $\mathbf{Y}(\ii k;-\ii x;\eta)$ to $\mathbf{Z}(k;x,\eta)$ (see \eqref{e:Y-to-Z-first}--\eqref{e:Y-to-Z-last}) is invertible with bounded inverse, we have finally proven that $\mathbf{Y}(\Lambda;-\ii x,\eta)$ is bounded on the $\Lambda$-plane, uniformly with respect to $x\to+\infty$.

Since $\mathbf{Y}(\Lambda;X,\eta)$ is uniformly bounded for $|\Lambda|\neq 1$ and $X$ on the negative imaginary axis, it follows that also the matrix $\widetilde{\mathbf{Y}}(\Lambda;X,\eta)$ defined in \eqref{e:tilde-Y} is uniformly bounded for $|\Lambda|\neq 1$ and $X$ on the positive real axis.

\subsubsection{Asymptotic behavior of $y(X;\omega)$, $s(X;\omega)$, and $U(X;\omega)$}
The small-norm theory behind the uniform estimate $\E_\mathbf{Z}(k;x,\eta)-\I=\O_\mathrm{OD}(x^{-\frac{1}{2}})+\O_\mathrm{D}(x^{-1})$ also produces accurate asymptotic formul\ae\ for the functions $y(X;\omega)$, $s(X;\omega)$, and $U(X;\omega)$ that are valid for large $X=-\ii x$ on the negative imaginary axis.  From the jump condition $\E_\mathbf{Z}^+(k;x,\eta)=\E_\mathbf{Z}^-(k;x,\eta)\mathbf{V}_\E(k;x,\eta)$ and the condition $\E_\mathbf{Z}(k;x,\eta)\to\I$ as $k\to\infty$, the Plemelj formula implies that
\begin{equation}
\begin{split}
\E_\mathbf{Z}(k;x,\eta)&=\I+\frac{1}{2\pi\ii}\int_{\Sigma_\mathbf{E}}\frac{\E_\mathbf{Z}^-(s;x,\eta)(\mathbf{V}_\E(s;x,\eta)-\I)}{s-k}\dd s\\
&=\I+\frac{1}{2\pi\ii}\int_{\Sigma_\mathbf{E}}\frac{\mathbf{V}_\E(s;x,\eta)-\I}{s-k}\dd s +
\frac{1}{2\pi\ii}\int_{\Sigma_\mathbf{E}}\frac{(\E^-_\mathbf{Z}(s;x,\eta)-\I)(\mathbf{V}_\E(s;x,\eta)-\I)}{s-k}\dd s,\quad k\in\Complex\setminus\Sigma_\mathbf{E},
\end{split}
\label{e:E-formula}
\end{equation}
where $\Sigma_\mathbf{E}$ is the jump contour illustrated in the right-hand panel of Figure~\ref{f:PE1}.  From this formula and the uniform estimates $\E_\mathbf{Z}(\cdot;x,\eta)-\I=\O_\mathrm{OD}(x^{-\frac{1}{2}})+\O_\mathrm{D}(x^{-1})$ and $\mathbf{V}_\E(\cdot;x,\eta)-\I=\O_\mathrm{OD}(x^{-\frac{1}{2}})+\O_\mathrm{D}(x^{-1})$ we get
\begin{equation}
\E_\mathbf{Z}(k;x,\eta)=\I+\frac{1}{k}\E_\mathbf{Z}^{(1)}(x,\eta) + \O(k^{-2}),\quad k\to\infty,\quad\text{where}
\end{equation}
\begin{equation}
\E_\mathbf{Z}^{(1)}(x,\eta)=-\frac{1}{2\pi\ii}\oint_{|s\pm 1|=\frac{1}{2}}\left(\mathbf{V}_\E(s;x,\eta)-\I\right)\dd s + \O_\mathrm{D}(x^{-1})+\O_\mathrm{OD}(x^{-\frac{3}{2}}),\quad x\to +\infty,
\label{e:E1-expansion}
\end{equation}
and that
\begin{equation}
\E_\mathbf{Z}(0;x,\eta)=\I +\frac{1}{2\pi\ii}\oint_{|s\pm 1|=\frac{1}{2}}\left(\mathbf{V}_\E(s;x,\eta)-\I\right)s^{-1}\dd s
+ \O_\mathrm{D}(x^{-1})+\O_\mathrm{OD}(x^{-\frac{3}{2}}),\quad x\to+\infty.
\label{e:E-at-origin-expansion}
\end{equation}
In \eqref{e:E1-expansion}--\eqref{e:E-at-origin-expansion} we have retained the integration over just the two circles dominating the estimate for $\mathbf{V}_\E(k;x,\eta)-\I$, absorbing exponentially small contributions over the remaining arcs of $\Sigma_\E$ into the error terms.  We evaluate the remaining integrals by combining \eqref{e:PC-expansion} with \eqref{e:VE-left-circle}--\eqref{e:VE-right-circle}; the $\O_\mathrm{D}(\zeta^{-2})$ and $\O_\mathrm{OD}(\zeta^{-3})$ terms in \eqref{e:PC-expansion} are immediately absorbed into the $\O_\mathrm{D}(x^{-1})$ and $\O_\mathrm{OD}(x^{-\frac{3}{2}})$ error terms respectively, while those proportional to $\zeta^{-1}=x^{-\frac{1}{2}}y_{\mp 1}(k)^{-1}$ are integrated by residues using the fact that $y_{\mp 1}(k)$ is analytic on the indicated disk and vanishes to first order at $k=\mp 1$.  Therefore, as $x\to+\infty$
\begin{multline}
\E^{(1)}_\mathbf{Z}(x,\eta)=\frac{1}{2\ii \sqrt{x}y_{-1}'(-1)}(2\sqrt{x})^{\ii\nu\sigma_3}\ee^{-\ii x\sigma_3}\sigma_3\bpm 0 & \varrho\\\b{\varrho} & 0\epm\sigma_3\ee^{\ii x\sigma_3}(2\sqrt{x})^{-\ii\nu\sigma_3} \\
{}+\frac{1}{2\ii\sqrt{x}y_1'(1)}(2\sqrt{x})^{-\ii\nu\sigma_3}\ee^{\ii x\sigma_3}\sigma_1
\bpm 0 &\varrho\\\b{\varrho} & 0\epm\sigma_1\ee^{-\ii x\sigma_3}(2\sqrt{x})^{\ii\nu\sigma_3} + \O_\mathrm{D}(x^{-1})+\O_\mathrm{OD}(x^{-\frac{3}{2}}),
\end{multline}
and
\begin{multline}
\E_\mathbf{Z}(0;x,\eta)=\I + \frac{1}{2\ii \sqrt{x}y_{-1}'(-1)}(2\sqrt{x})^{\ii\nu\sigma_3}\ee^{-\ii x\sigma_3}\sigma_3\bpm 0 & \varrho\\\b{\varrho} & 0\epm\sigma_3\ee^{\ii x\sigma_3}(2\sqrt{x})^{-\ii\nu\sigma_3} \\
{}-\frac{1}{2\ii\sqrt{x}y_1'(1)}(2\sqrt{x})^{-\ii\nu\sigma_3}\ee^{\ii x\sigma_3}\sigma_1
\bpm 0 &\varrho\\\b{\varrho} & 0\epm\sigma_1\ee^{-\ii x\sigma_3}(2\sqrt{x})^{\ii\nu\sigma_3} + \O_\mathrm{D}(x^{-1})+\O_\mathrm{OD}(x^{-\frac{3}{2}}).
\end{multline}
Note that $y_{-1}'(-1)=-1$ and $y_1'(1)=1$.  Now recalling from \eqref{e:Lax-potentials} the definition of $y(-\ii x;\omega)$ gives
\begin{equation}
\begin{split}
y(-\ii x;\omega)&=-x\lim_{\Lambda\to\infty}\Lambda Y_{1,2}(\Lambda;-\ii x,\eta)\\
&=-\ii x\lim_{k\to\infty}kY_{1,2}(\ii k;-\ii x,\eta)\\
&=-\ii x\lim_{k\to\infty}kZ_{1,2}(k;x,\eta)\\
&=-\ii x\lim_{k\to\infty}k\left(E_{\mathbf{Z},1,1}(k;x,\eta)\dot{Z}_{\mathrm{out},1,2}(k;\eta)+
E_{\mathbf{Z},1,2}(k;x,\eta)\dot{Z}_{\mathrm{out},2,2}(k;\eta)\right)\\
&=-\ii xE^{(1)}_{1,2}(x,\eta)\\
&=-\sqrt{x}\Re\left((4x)^{\ii\nu}\ee^{-2\ii x}\varrho\right) + \O(x^{-\frac{1}{2}}),\quad x\to+\infty\\
&=-|\varrho|\sqrt{x}\sin\left(2x-\nu\ln(x)+\tfrac{1}{2}\pi-2\nu\ln(2)-\arg(\varrho)\right)+\O(x^{-\frac{1}{2}}),\quad x\to+\infty.
\end{split}
\label{e:y-im-asymp}
\end{equation}
Similarly,
\begin{equation}
\begin{split}
s(-\ii x;\omega)&=\ii xY_{1,1}(0;-\ii x,\eta)Y_{1,2}(0;-\ii x,\eta)\\
&=\ii x\mathop{\lim_{k\to 0}}_{\Im(k)\neq 0}Z_{1,1}(k;x,\eta)Z_{1,2}(k;x,\eta)\\
&=\ii xE_{\mathbf{Z},1,1}(0;x,\eta)E_{\mathbf{Z},1,2}(0;x,\eta)\mathop{\lim_{k\to 0}}_{\Im(k)\neq 0}\dot{Z}_{\mathrm{out},1,1}(k;\eta)
\dot{Z}_{\mathrm{out},2,2}(k;\eta)\\
&=\ii xE_{\mathbf{Z},1,1}(0;x,\eta)E_{\mathbf{Z},1,2}(0;x,\eta)\\
&=\ii\sqrt{x}\Im\left((4x)^{\ii\nu}\ee^{-2\ii x}\varrho\right)+\O(x^{-\frac{1}{2}}),\quad x\to+\infty\\
&=\ii|\varrho|\sqrt{x}\cos\left(2x-\nu\ln(x)+\tfrac{1}{2}\pi-2\nu\ln(2)-\arg(\varrho)\right)+\O(x^{-\frac{1}{2}}),\quad x\to+\infty,
\end{split}
\label{e:s-im-asymp}
\end{equation}
and
\begin{equation}
\begin{split}
U(-\ii x;\omega)&=\ii x Y_{1,2}(0;-\ii x,\eta)Y_{2,1}(0;-\ii x,\eta)\\
&=\ii x\mathop{\lim_{k\to 0}}_{\Im(k)\neq 0}Z_{1,2}(k;x,\eta)Z_{2,1}(k;x,\eta)\\
&=\ii xE_{\mathbf{Z},1,2}(0;x,\eta)E_{\mathbf{Z},2,1}(0;x,\eta)\mathop{\lim_{k\to 0}}_{\Im(k)\neq 0}
\dot{Z}_{\mathrm{out},2,2}(k;\eta)\dot{Z}_{\mathrm{out},1,1}(k;\eta)\\
&=\ii xE_{\mathbf{Z},1,2}(0;x,\eta)E_{\mathbf{Z},2,1}(0;x,\eta)\\
&=-\ii\left[\Im\left((4x)^{\ii\nu}\ee^{-2\ii x}\varrho\right)
\right]^2 + \O(x^{-1}),\quad x\to+\infty\\
&=-\ii |\varrho|^2\cos^2\left(2x-\nu\ln(x)+\tfrac{1}{2}\pi-2\nu\ln(2)-\arg(\varrho)\right)+\O(x^{-1}),\quad x\to+\infty.
\end{split}
\label{e:U-im-asymp}
\end{equation}
In all three of these asymptotic formul\ae, $\nu$ and $\varrho$ are defined in terms of $\eta\in (0,\tfrac{1}{2}\pi)$ by \eqref{e:Z-dot-out} and \eqref{e:varrho-def} respectively, and $\omega=-3\cos(2\eta)$.

The identities \eqref{e:PIII-rotate-X-identities} then allow translation of asymptotic formul\ae\ for $y(X;\omega)$, $s(X;\omega)$, and $U(X;\omega)$ valid for large imaginary $X$ into similar formul\ae\ valid for large real $X$.  We obtain the following results:
\begin{equation}
y(x;\omega)=|\varrho'|\sqrt{x}\sin\left(2x-\nu'\ln(x)+\tfrac{1}{2}\pi-2\nu'\ln(2)-\arg(\varrho')\right)+\O(x^{-\frac{1}{2}}),\quad x\to+\infty,
\label{e:y-re-asymp}
\end{equation}
\begin{equation}
s(x;\omega)=-|\varrho'|\sqrt{x}\cos\left(2x-\nu'\ln(x)+\tfrac{1}{2}\pi-2\nu'\ln(2)-\arg(\varrho')\right)+\O(x^{-\frac{1}{2}}),\quad x\to+\infty,
\label{e:s-re-asymp}
\end{equation}
and
\begin{equation}
U(x;\omega)=x-|\varrho'|^2\cos^2\left(2x-\nu'\ln(x)+\tfrac{1}{2}\pi-2\nu'\ln(2)-\arg(\varrho')\right)+\O(x^{-1}),\quad x\to+\infty,
\label{e:U-re-asymp}
\end{equation}
where again $\omega=-3\cos(2\eta)$ and the modified parameters $\nu'$ and $\varrho'$ are given by
\begin{equation}
\begin{split}
\nu'&\coloneq -\frac{\ln(\sin(\eta))}{\pi}>0\\
\varrho'&\coloneq \pi^{-\frac{3}{2}}\cot(\eta)\ln(\csc(\eta))\sqrt{\sin(\eta)}\Gamma\left(\ii\frac{\ln(\sin(\eta))}{\pi}\right)\exp\left(\ii\left[\frac{\pi}{4}-\frac{1}{\pi}\ln(2)\ln(\sin(\eta))\right]\right).
\end{split}
\label{e:nprime-varrhoprime}
\end{equation}

\subsection{Properties of the modified parametrix:  $M>0$}
\label{s:Properties-for-M-positive-s}
When $M>0$ (i.e., $r_0=0$), the jump matrix in RHP~\ref{rhp:ddotKs} tends uniformly to the identity matrix in the limit $t\to+\infty$ with $z=o(t)$, because $\lambda_\circ=\O((z/t)^\frac{1}{2})$.  Indeed, for the diagonal entries we have $\Delta_M(\lambda_\circ k)^{\pm 1}=1+\O((z/t)^{M})$, and since $|\ee^{-\ii x (k+k^{-1})}|=1$ for $|k|=1$ the off-diagonal elements are $\O((z/t)^{\frac{1}{2}M})$.  This makes RHP~\ref{rhp:ddotKs} a small-norm problem without further approximation when $M>0$ in the limit $t\to+\infty$ with $z=o(t)$.  As in Section~\ref{s:Boundedness-of-RHP-PIII}, it follows that $\ddot{\mathbf{K}}_\mathrm{s}(k;t,z)=\I + \O_{\mathrm{OD}}((z/t)^{\frac{1}{2}M}) + \O_{\mathrm{D}}((z/t)^{M})$ holds uniformly for $|k|\neq 1$ in this limit.  In particular, this establishes the uniform boundedness of $\ddot{\mathbf{K}}_\mathrm{s}(k;t,z)$ in this situation.

As in \eqref{e:E-formula}--\eqref{e:E-at-origin-expansion} we can then write
\begin{equation}
\ddot{\K}_\mathrm{s}(k;t,z)=\I + \frac{1}{k}\ddot{\K}_\mathrm{s}^{(1)}(t,z) + \O(k^{-2}),\quad k\to\infty,\quad \text{where}
\label{e:ddotKs-moment-define}
\end{equation}
\begin{equation}
\ddot{\K}_\mathrm{s}^{(1)}(t,z)=-\frac{|a_M|\lambda_\circ^M}{2\pi\ii}\oint_{|s|=1}\bpm
0 & s^M\ee^{\ii x(s+s^{-1})}\\-s^M\ee^{-\ii x(s+s^{-1})} & 0\epm\dd s +\O_\mathrm{D}(\lambda_\circ^{2M}) + \O_\mathrm{OD}(\lambda_\circ^{3M}),\quad\lambda_\circ\to 0,
\label{e:ddotKs-moment-approx}
\end{equation}
and
\begin{multline}
\ddot{\mathbf{K}}_\mathrm{s}(0;t,z)=\I+\frac{|a_M|\lambda_\circ^M}{2\pi\ii}\oint_{|s|=1}\bpm 0 & s^M\ee^{\ii x(s+s^{-1})}\\-s^M\ee^{-\ii x(s+s^{-1})} & 0\epm\frac{\dd s}{s} \\
{}+ \O_\mathrm{D}(\lambda_\circ^{2M})+
\O_\mathrm{OD}(\lambda_\circ^{3M}),\quad\lambda_\circ\to 0,
\label{e:ddotKs-origin-approx}
\end{multline}
where we recall that $\lambda_\circ\coloneq \sqrt{z/(2t)}>0$, that $a_M$ is a Taylor coefficient defined by \eqref{e:avoid-factorials}, and where the unit circle is oriented in the counterclockwise direction.  These formul\ae\ yield the Bessel approximations for the solution of the MBE problem when $M>0$ appearing in Theorem~\ref{thm:global-Mpos-stable}.  However first it is necessary to control the error in approximating $\K_\mathrm{s}(u,v;t,z)$ by $\dot{\K}_\mathrm{s}(\lambda;t,z)$ in both cases $M=0$ and $M>0$.  We address this next.

\subsection{Comparing $\K_\mathrm{s}(u,v;t,z)$ with its parametrix $\dot{\K}_\mathrm{s}(\lambda;t,z)$}
\label{s:Ks-vs-dotKs}
Now we return to the study of RH$\b{\partial}$P~\ref{rhp:Ks} and derive conditions on a matrix function defined in terms of its solution $\K_\mathrm{s}(u,v;t,z)$ and the parametrix $\dot{\K}_\mathrm{s}(\lambda;t,z)$ by
\begin{equation}
\mathbf{F}_\mathrm{s}(u,v;t,z)\coloneq\K_\mathrm{s}(u,v;t,z)\dot{\K}_\mathrm{s}(u+\ii v;t,z)^{-1},\quad (u,v)\in\Real^2\setminus\Sigma.
\label{e:K-dotK-F-s}
\end{equation}
This matrix function has jump discontinuities across the arcs of $\Sigma$ because the jump matrices for the two factors do not exactly agree; see \eqref{e:Js-approximation} in which $\lambda_\circ$ is small for $z/t=o(1)$.  The jump conditions for $\mathbf{F}_\mathrm{s}(u,v;t,z)$ read
\begin{equation}
\mathbf{F}_\mathrm{s}^+(u,v;t,z)=\mathbf{F}_\mathrm{s}^-(u,v;t,z)\dot{\K}^-_\mathrm{s}(u+\ii v;t,z)\mathbf{J}(u,v;t,z)\dot{\mathbf{J}}(u+\ii v;t,z)^{-1}\dot{\K}^-_\mathrm{s}(u+\ii v;t,z)^{-1},\quad (u,v)\in\Sigma.
\label{e:Fs-jump}
\end{equation}
It also fails to be analytic in the complement of the contour $\Sigma$ (at least outside of the small circle $|u+\ii v|=\lambda_\circ$ but within the strip $|v|\le 2$).  It is however continuous on each component of $\Real^2\setminus\Sigma$, and while the second factor is sectionally analytic, the first is subject to the $\b{\partial}$ differential equation \eqref{e:dbar-equation}.  This implies that on each component of $\Real^2\setminus\Sigma$, $\mathbf{F}_\mathrm{s}(u,v;t,z)$ satsifies its own $\b{\partial}$ equation:
\begin{equation}
\b{\partial}\mathbf{F}_\mathrm{s}(u,v;t,z)=\mathbf{F}_\mathrm{s}(u,v;t,z)\dot{\K}_\mathrm{s}(u+\ii v;t,z)\mathbf{D}_\mathrm{s}(u,v;t,z)\dot{\K}_\mathrm{s}(u+\ii v;t,z)^{-1},\quad (u,v)\in\Real^2\setminus\Sigma,
\label{e:Fs-dbar}
\end{equation}
in which the matrix $\mathbf{D}_\mathrm{s}(u,v;t,z)$ is given by \eqref{e:dbar-equation-D}.  We will deal with the non-analyticity of $\mathbf{F}_\mathrm{s}(u,v;t,z)$ measured by \eqref{e:Fs-dbar} and its near-identity jumps in two steps.

\subsubsection{A continuous parametrix for $\mathbf{F}_\mathrm{s}(u,v;t,z)$}
\label{s:dbar-s}
First, we model the non-analyticity of $\mathbf{F}_\mathrm{s}(u,v;t,z)$ by solving the following pure $\b{\partial}$-problem ($\dbar$P):
\begin{dbp}
Given $t\ge 0$ and $z\ge 0$, seek a $2\times 2$ matrix-valued continuous function $\Real^2\ni (u,v)\mapsto \dot{\mathbf{F}}_\mathrm{s}(u;v,t,z)$ that satisfies $\dot{\mathbf{F}}_\mathrm{s}(u,v;t,z)\to\I$ as $u+\ii v\to\infty$ and that satisfies the following $\b{\partial}$ differential equation:
\begin{equation}
\b{\partial}\dot{\mathbf{F}}_\mathrm{s}(u,v;t,z)=\dot{\mathbf{F}}_\mathrm{s}(u,v;t,z)\dot{\K}_\mathrm{s}(u+\ii v;t,z)\mathbf{D}_\mathrm{s}(u,v;t,z)\dot{\K}_\mathrm{s}(u+\ii v;t,z)^{-1},\quad (u,v)\in\Real^2.
\label{e:dotFs-dbar}
\end{equation}
\label{rhp:Fsdot}
\end{dbp}
This problem is converted into an integral equation with the help of the solid Cauchy transform:
\begin{equation}
\dot{\mathbf{F}}_\mathrm{s}(u,v)=\I + \S_{t,z}\dot{\mathbf{F}_\mathrm{s}}(u,v),\quad (u,v)\in\Real^2,\quad \dot{\mathbf{F}}_\mathrm{s}(u,v)=\dot{\mathbf{F}}_\mathrm{s}(u,v;t,z)
\label{e:dbar-integral-equation}
\end{equation}
where $\S_{t,z}$ is an integral operator defined by
\begin{equation}
\S_{t,z}\M(u,v)\coloneq -\frac{1}{\pi}\iint_{\Real^2}
\frac{\M(u',v')\dot{\K}_\mathrm{s}(u'+\ii v';t,z)\mathbf{D}_\mathrm{s}(u',v';t,z)\dot{\K}_\mathrm{s}(u'+\ii v';t,z)^{-1}}{(u'+\ii v')-(u+\ii v)}\dd A(u',v'),
\end{equation}
in which $\dd A(u',v')$ denotes the area differential in $\Real^2$.
\begin{lemma}
$\S_{t,z}$ is a bounded operator on $L^\infty(\Real^2)$ with operator norm $\|\S_{t,z}\|_\infty$ satisfying
\begin{equation}
\|\S_{t,z}\|_\infty=\O((z/t)^{\frac{1}{2}(N-1)})+\O(t^{\frac{3}{2}-N}),\quad t\to+\infty,\quad z=o(t).
\end{equation}
\label{lemma:S-bound}
\end{lemma}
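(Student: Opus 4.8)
The plan is to estimate the operator norm directly from the definition of $\S_{t,z}$ as a solid Cauchy (Beurling) transform against the matrix kernel $\dot{\K}_\mathrm{s}\mathbf D_\mathrm{s}\dot{\K}_\mathrm{s}^{-1}$. Set
\begin{equation*}
\mathcal I_{t,z}:=\sup_{(u,v)\in\Real^2}\iint_{\Real^2}\frac{\bigl\|\dot{\K}_\mathrm{s}(u'+\ii v';t,z)\,\mathbf D_\mathrm{s}(u',v';t,z)\,\dot{\K}_\mathrm{s}(u'+\ii v';t,z)^{-1}\bigr\|}{|(u'+\ii v')-(u+\ii v)|}\dd A(u',v'),
\end{equation*}
so that $\|\S_{t,z}\M\|_\infty\le\pi^{-1}\mathcal I_{t,z}\|\M\|_\infty$ for every $\M\in L^\infty(\Real^2)$, that is, $\|\S_{t,z}\|_\infty\le\pi^{-1}\mathcal I_{t,z}$. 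The conjugating factors $\dot{\K}_\mathrm{s}^{\pm1}$ are uniformly bounded in the relevant limit: when $M=0$ this follows from the boundedness of $\mathbf Y(\Lambda;X,\eta)$ established in Section~\ref{s:Boundedness-of-RHP-PIII}, transported back through the substitutions \eqref{e:ddotK-Y} and \eqref{e:dotKs-ddotKs} (using that $\Delta_M(\lambda_\circ k)$ has a strictly positive limit as $\lambda_\circ\to0$ on $|k|<1$); when $M>0$ it follows from the small-norm estimate for $\ddot{\K}_\mathrm{s}$ in Section~\ref{s:Properties-for-M-positive-s}; and since $\det\dot{\K}_\mathrm{s}\equiv1$ the bound on the inverse is automatic. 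Hence it suffices to prove $\sup_{(u,v)}\iint\|\mathbf D_\mathrm{s}(u',v';t,z)\|\,|(u'+\ii v')-(u+\ii v)|^{-1}\dd A(u',v')=\O((z/t)^{\frac12(N-1)})+\O(t^{\frac32-N})$.

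Next I would record the pointwise estimate on $\mathbf D_\mathrm{s}$. By \eqref{e:dbar-equation-D}, $\mathbf D_\mathrm{s}(u',v';t,z)=\mathbf 0$ unless $|u'+\ii v'|>\lambda_\circ$ and $|v'|\le 2$ (the latter because of the cutoff $B$), and on that set its unique nonzero entry is bounded in modulus by $|\delta(u'+\ii v')|^{\pm2}\,\bigl|\ee^{\pm2\ii\theta_\mathrm{s}(u'+\ii v';t,z)}\bigr|\,\bigl|\dbar[B(v')Q_N(u',v')]\bigr|$ or its Schwarz reflection. By Lemma~\ref{thm:delta} the $\delta$-factors are uniformly bounded; by \eqref{e:dbarQN}--\eqref{e:dbarQNbar} and the product rule $\bigl|\dbar[B(v')Q_N(u',v')]\bigr|\lesssim |v'|^{N-2}\,|R^{(N-1)}(u')|+|B'(v')|\,|Q_N(u',v')|$, where $R^{(N-1)}\in\mathscr S(\Real)$ by Lemma~\ref{thm:reflection2} and $v'\mapsto Q_N(\cdot,v')$ is a polynomial of degree $N-2$ with coefficients in $\mathscr S(\Real)$, so that $\|Q_N(\cdot,v')\|_{L^2(\Real)}\lesssim1$ uniformly for $|v'|\le2$; and finally a direct computation gives $\Re(\ii\theta_\mathrm{s}(\lambda;t,z))=-\Im(\lambda)\,t\,(1-\lambda_\circ^2/|\lambda|^2)$, whence on the support of $\mathbf D_\mathrm{s}$ the exponential factor is at most $\ee^{-2|v'|t(1-\lambda_\circ^2/|u'+\ii v'|^2)}$ (this is precisely the sign structure of $\Re(\ii\theta_\mathrm{s})$ depicted in the left panel of Figure~\ref{f:inside-itheta}).

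The estimate of $\mathcal I_{t,z}$ then splits into two regions. On the annulus $\lambda_\circ<|u'+\ii v'|\le\sqrt2\,\lambda_\circ$ (which lies inside the disk of radius $\sqrt2\,\lambda_\circ$ and, for $\lambda_\circ$ small, automatically has $|v'|\le1$ so that $B'(v')\equiv0$), one gets $\|\mathbf D_\mathrm{s}\|\lesssim\lambda_\circ^{N-2}$; combined with the elementary bound $\sup_w\iint_{\text{disk of radius }\rho}|w-w'|^{-1}\dd A\lesssim\rho$, this region contributes $\O(\lambda_\circ^{N-1})=\O((z/t)^{\frac12(N-1)})$ to $\mathcal I_{t,z}$. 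On the complementary region $|u'+\ii v'|>\sqrt2\,\lambda_\circ$ one has $1-\lambda_\circ^2/|u'+\ii v'|^2\ge\tfrac12$, so the exponential factor is at most $\ee^{-|v'|t}$: the contribution of the $|B'(v')|\,|Q_N|$ term (which forces $|v'|\ge1$) is then $\O(\ee^{-t})$, while for the $|v'|^{N-2}|R^{(N-1)}(u')|$ term one carries out the $u'$-integral by Cauchy--Schwarz using $\int_\Real[(u-u')^2+(v-v')^2]^{-1}\dd u'=\pi|v-v'|^{-1}$, which reduces matters to controlling $\sup_v\int_{-2}^2|v'|^{N-2}\ee^{-|v'|t}|v-v'|^{-1/2}\dd v'$; after the rescaling $v'=s/t$ (and a symmetrization in the sign of $v'$) this equals $t^{\frac32-N}\sup_{w\in\Real}\int_0^{2t}s^{N-2}\ee^{-s}|w-s|^{-1/2}\dd s$, and the last integral is bounded uniformly in $w$ because it is the convolution of the rapidly decaying $L^1\cap L^\infty$ function $s^{N-2}\ee^{-s}$ on $(0,\infty)$ with the locally integrable kernel $|s|^{-1/2}$ (here one uses $N\ge2$). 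Summing the two contributions yields $\mathcal I_{t,z}=\O((z/t)^{\frac12(N-1)})+\O(t^{\frac32-N})$, and the lemma follows.

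I expect the only genuinely delicate point to be the extraction of the \emph{sharp} exponent $t^{\frac32-N}$ from the far-field region: a direct Hölder estimate on the Cauchy kernel with an exponent $p>2$ would only yield $t^{\frac32-N+\epsilon}$, and it is the explicit rescaling $v'=s/t$ — isolating the $w$-uniformly bounded half-order fractional integral $\int_0^\infty s^{N-2}\ee^{-s}|w-s|^{-1/2}\dd s$ — that recovers the endpoint. The remaining ingredients (the support of $\mathbf D_\mathrm{s}$, the sign of $\Re(\ii\theta_\mathrm{s})$, the boundedness of $\dot{\K}_\mathrm{s}^{\pm1}$, and the disk estimate for the Cauchy kernel) are routine in light of the results already in hand.
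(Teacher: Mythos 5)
Your proposal is correct and follows essentially the same route as the paper's proof: reduce to bounding $\sup_{(u,v)}\iint\|\mathbf{D}_\mathrm{s}\|\,|\lambda'-\lambda|^{-1}\dd A$ using the uniform boundedness and unit determinant of $\dot{\K}_\mathrm{s}$, exploit the sign of $\Re(\ii\theta_\mathrm{s})$ off the circle $|\lambda|=\lambda_\circ$, obtain $\O(\lambda_\circ^{N-1})$ from a thin annulus around the circle, $\O(t^{\frac{3}{2}-N})$ from the strip $|v'|<1$ via Cauchy--Schwarz in $u'$ and the rescaling $v'=s/t$, and $\O(\ee^{-t})$ from the band where $B'\neq 0$. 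The only differences are cosmetic --- two regions instead of the paper's three, a direct disk estimate for the annulus, and a uniform convolution bound for $\int_0^\infty s^{N-2}\ee^{-s}|w-s|^{-1/2}\dd s$ in place of the paper's $I_A/I_B/I_C$ split with Watson's lemma --- and these are all valid.
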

\begin{proof}
Let $\|\cdot\|$ denote the norm on $2\times 2$ matrices induced from an arbitrary norm on $\Complex^2$, and for a matrix function $\Real^2\ni (u,v)\mapsto \M(u,v)$ take as the norm on $L^\infty(\Real^2)$
\begin{equation}
\|\M\|_\infty\coloneq \sup_{(u,v)\in\Real^2}\|\M(u,v)\|.
\end{equation}
Then, since it has been shown that $\dot{\K}_\mathrm{s}(\lambda;t,z)$ is bounded on $\Complex^2\setminus\Sigma$ uniformly with respect to $(t,z)$ with $t\ge 0$ and $0\le z\ll t$ as $t\to+\infty$, and since $\dot{\K}_\mathrm{s}(\lambda;t,z)$ has unit determinant, there is some constant $C>0$ independent of $(t,z)$ in the indicated regime, such that
\begin{equation}
\begin{split}
\|\S_{t,z}\M\|_\infty &\le
\sup_{(u,v)\in\Real^2}\iint_{\Real^2}\frac{\|\M(u',v')\|\|\dot{\K}_\mathrm{s}(u'+\ii v';t,z)\|\|\mathbf{D}_\mathrm{s}(u',v';t,z)\|\|\dot{\K}_\mathrm{s}(u'+\ii v';t,z)^{-1}\|}{\sqrt{(u'-u)^2+(v'-v)^2}}\dd A(u',v')
\\
&\le \|\dot{\K}_\mathrm{s}\|_\infty\|\dot{\K}_\mathrm{s}^{-1}\|_\infty
\sup_{(u,v)\in\Real^2}\iint_{\Real^2}\frac{\|\mathbf{D}_\mathrm{s}(u',v';t,z)\|\dd A(u',v')}{\sqrt{(u'-u)^2+(v'-v)^2}}\cdot \|\M\|_\infty \\
&\le C\sup_{(u,v)\in\Real^2}\iint_{\Real^2}\frac{\|\mathbf{D}_\mathrm{s}(u',v';t,z)\|\dd A(u',v')}{\sqrt{(u'-u)^2+(v'-v)^2}}\cdot \|\M\|_\infty\\
&= C\sup_{(u,v)\in\Real^2}\mathop{\iint_{|u'+\ii v'|\ge\lambda_\circ}}_{|v'|\le 2}\frac{\|\mathbf{D}_\mathrm{s}(u',v';t,z)\|\dd A(u',v')}{\sqrt{(u'-u)^2+(v'-v)^2}}\cdot \|\M\|_\infty,
\end{split}
\end{equation}
where the final equality comes from the fact that, according to \eqref{e:dbar-equation-D}, $\mathbf{D}_\mathrm{s}(u,v;t,z)\equiv \mathbf{0}_{2\times 2}$ for $|u+\ii v|<\lambda_\circ$ as well as for $|v|>2$ where $B(v)\equiv 0$ identically.
Hence an upper bound for the operator norm of $\S_{t,z}$ is
\begin{equation}
\|\S_{t,z}\|_\infty\le C\sup_{(u,v)\in\Real^2}\mathop{\iint_{|u'+\ii v'|\ge\lambda_\circ}}_{|v'|\le 2}\frac{\|\mathbf{D}_\mathrm{s}(u',v';t,z)\|\dd A(u',v')}{\sqrt{(u'-u)^2+(v'-v)^2}}.
\label{e:S-operator-norm}
\end{equation}

Again referring to \eqref{e:dbar-equation-D} and using Lemma~\ref{thm:delta}, the differentiation identities \eqref{e:dbarQN}--\eqref{e:dbarQNbar}, and the properties of the bump function $B(v)$, there is a constant $C_N>0$ independent of $(t,z)$ such that
\begin{multline}
\|\mathbf{D}_\mathrm{s}(u',v';t,z)\|\le
C_N \left(\left|R^{(N-1)}(u')\right||v'|^{N-2} + \chi_{[1,2]}(v')\sum_{n=0}^{N-2}\left|R^{(n)}(u')\right|\right)\left|\ee^{\pm 2\ii\theta_\mathrm{s}(u'+\ii v';t,z)}\right|,\\
 |u'+\ii v'|\ge\lambda_\circ,\quad  0\le \pm v'\le 2,
\label{e:D-matrix-upper-bound}
\end{multline}
in which $R(\cdot)$ is defined by \eqref{e:R-def}.
To estimate further, it is useful to split the exterior of the circle $|u'+\ii v'|=\lambda_\circ$ into the union of three regions:
\begin{equation}
\label{e:dbar-regions-def}
\begin{aligned}
\Omega_1:&\qquad \lambda_\circ \le |u'+\ii v'| < 2\lambda_\circ\,, \\ 
\Omega_2:&\qquad |u'+\ii v'| \ge 2\lambda_\circ,\quad |v'|<1\,, \\ 
\Omega_3:&\qquad |u'+\ii v'| \ge 2\lambda_\circ,\quad 1\le |v'|<2\,.
\end{aligned}
\end{equation}
Note that for $\lambda_\circ$ sufficiently small, $\Omega_1$ is completely contained in the strip $|v'|<1$ and $\Omega_3$ is a union of two horizontal strips bounded away from the real line.
Using this assumption, we can find simple and useful upper bounds for $\|\mathbf{D}_\mathrm{s}(u',v';t,z)\|$ on each of these regions as follows.
\begin{itemize}
\item For $(u',v')\in\Omega_1$ with $\pm v'\ge 0$, we will use the inequality $|\ee^{\pm 2\ii\theta_\mathrm{s}(u'+\ii v';t,z)}|\le 1$ and we have $\chi_{[1,2]}(v')\equiv 0$.
Recall Lemma~\ref{thm:reflection2} implying that $R^{(N-1)}\in \mathscr{S}(\Real)\subset L^\infty(\Real)$.
Hence from \eqref{e:D-matrix-upper-bound}, we get
\begin{equation}
\|\mathbf{D}_\mathrm{s}(u',v';t,z)\|\le C_N |v'|^{N-2},\quad (u',v')\in\Omega_1
\label{e:D-Omega-1}
\end{equation}
for some other constant $C_N$.
\item
For $(u',v')\in\Omega_2$, we use the inequality $|u'+\ii v'|^2\ge 4\lambda_\circ^2=2z/t$ and recall Definition~\ref{def:theta-s} to give for $\pm v'\ge 0$,
\begin{equation}
\left|\ee^{\pm 2\ii\theta_\mathrm{s}(u'+\ii v';t,z)}\right|=\exp\left(-2t|v'|+\frac{z|v'|}{|u'+\ii v'|^2}\right)
\le\exp\left(-2t |v'|+\frac{1}{2}t|v'|\right)\le \ee^{-t|v'|},
\label{e:exponential-estimate}
\end{equation}
and therefore since again $\chi_{[1,2]}(v')\equiv 0$ on $\Omega_2$,
\begin{equation}
\|\mathbf{D}_\mathrm{s}(u',v';t,z)\|\le C_N \left|R^{(N-1)}(u')\right||v'|^{N-2}\ee^{-t|v'|},\quad (u',v')\in\Omega_2.
\label{e:D-Omega-2}
\end{equation}
\item For $(u',v')\in\Omega_3$, we again have the inequality \eqref{e:exponential-estimate}, but we also have $|v'|\ge 1$ so $\ee^{-t|v'|}\le\ee^{-t}$, as well as the upper bound $|v'|\le 2$, so for some different constant $C_N>0$,
\begin{equation}
\|\mathbf{D}_\mathrm{s}(u',v';t,z)\|\le C_N\ee^{-t}\sum_{n=0}^{N-1}\left|R^{(n)}(u')\right|,\quad (u',v')\in\Omega_3.
\label{e:D-Omega-3}
\end{equation}
\end{itemize}

Now we use these upper bounds to estimate the double integral in \eqref{e:S-operator-norm}.  From \eqref{e:D-Omega-1} and the fact that $\Omega_1\subset [-2\lambda_\circ,2\lambda_\circ]^2\coloneq[-2\lambda_\circ,2\lambda_\circ]\times[-2\lambda_\circ,2\lambda_\circ]$,
\begin{equation}
\begin{split}
\iint_{\Omega_1}\frac{\|\mathbf{D}_\mathrm{s}(u',v';t,z)\|\dd A(u',v')}{\sqrt{(u'-u)^2+(v'-v)^2}}&\le
C_N\iint_{\Omega_1}\frac{|v'|^{N-2}\dd A(u',v')}{\sqrt{(u'-u)^2+(v'-v)^2}}\\
&\le C_N\iint_{[-2\lambda_\circ,2\lambda_\circ]^2}\frac{|v'|^{N-2}\dd A(u',v')}{\sqrt{(u'-u)^2+(v'-v)^2}}\\
&= C_N\lambda_\circ^{N-1}\iint_{[-2,2]^2}\frac{|v''|^{N-2}\dd A(u'',v'')}{\sqrt{(u''-u/\lambda_\circ)^2+(v''-v/\lambda_\circ)^2}},
\end{split}
\label{e:Omega1-integral-bound}
\end{equation}
where in the last step we rescaled by $(u',v')=\lambda_\circ(u'',v'')$.  Using iterated integration for the resulting double integral, we first integrate over $u''\in [-2,2]$ and apply the Cauchy-Schwarz inequality:
\begin{equation}
\begin{split}
\int_{-2}^2\frac{\dd u''}{\sqrt{(u''-u/\lambda_\circ)^2+(v''-v/\lambda_\circ)^2}}&\le
\left(\int_{-2}^2\dd u''\right)^\frac{1}{2}\left(\int_{-2}^2\frac{\dd u''}{(u''-u/\lambda_\circ)^2+(v''-v/\lambda_\circ)^2}\right)^\frac{1}{2}\\
&\le\left(\int_\Real\frac{4\dd u''}{(u''-u/\lambda_\circ)^2+(v''-v/\lambda_\circ)^2}\right)^\frac{1}{2}\\
&=\sqrt{\frac{4\pi}{|v''-v/\lambda_\circ|}}.
\end{split}
\label{e:Cauchy-Schwarz-I}
\end{equation}
Using this in \eqref{e:Omega1-integral-bound} gives
\begin{equation}
\iint_{\Omega_1}\frac{\|\mathbf{D}_\mathrm{s}(u',v';t,z)\|\dd A(u',v')}{\sqrt{(u'-u)^2+(v'-v)^2}}\le C_N\sqrt{4\pi}\lambda_\circ^{N-1}\int_{-2}^2\frac{|v''|^{N-2}\dd v''}{\sqrt{|v''-v/\lambda_\circ|}}.
\end{equation}
The remaining integral on $[-2,2]$ is bounded uniformly for $v/\lambda_\circ\in\Real$, which proves that
\begin{equation}
\sup_{(u,v)\in\Real^2}\iint_{\Omega_1}\frac{\|\mathbf{D}_\mathrm{s}(u',v';t,z)\|\dd A(u',v')}{\sqrt{(u'-u)^2+(v'-v)^2}} = \O(\lambda_\circ^{N-1})=\O((z/t)^{\frac{1}{2}(N-1)}).
\label{e:Omega1-final-estimate}
\end{equation}

The contribution to \eqref{e:S-operator-norm} from $\Omega_2$ is a bit more involved.  From \eqref{e:D-Omega-2} we have
\begin{equation}
\begin{split}
\iint_{\Omega_2}\frac{\|\mathbf{D}_\mathrm{s}(u',v';t,z)\|\dd A(u',v')}{\sqrt{(u'-u)^2+(v'-v)^2}}&\le C_N\iint_{\Omega_2}\frac{|R^{(N-1)}(u')||v'|^{N-2}\ee^{-t|v'|}\dd A(u',v')}{\sqrt{(u'-u)^2+(v'-v)^2}}\\
&=C_N\int_\Real|v'|^{N-2}\ee^{-t|v'|}\int_{(u')^2>4\lambda_\circ^2-(v')^2}\frac{|R^{(N-1)}(u')|\dd u'}{\sqrt{(u'-u)^2+(v'-v)^2}}\dd v'\\
&\le C_N\int_\Real |v'|^{N-2}\ee^{-t|v'|}\int_\Real\frac{|R^{(N-1)}(u')|\dd u'}{\sqrt{(u'-u)^2+(v'-v)^2}}\dd v'.
\end{split}
\label{e:Omega2-integral-bound}
\end{equation}
Since
$R^{(N-1)}(\cdot)\in L^2(\Real)$
by Lemma~\ref{thm:reflection2}, another application of the Cauchy-Schwarz inequality gives
\begin{equation}
\int_\Real\frac{|R^{(N-1)}(u')|\dd u'}{\sqrt{(u'-u)^2+(v'-v)^2}}\le \frac{\sqrt{\pi}\|R^{(N-1)}\|_{L^2(\Real)}}{\sqrt{|v'-v|}}.
\label{e:inner-integral-N-1}
\end{equation}
Using this in \eqref{e:Omega2-integral-bound} gives
\begin{equation}
\iint_{\Omega_2}\frac{\|\mathbf{D}_\mathrm{s}(u',v';t,z)\|\dd A(u',v')}{\sqrt{(u'-u)^2+(v'-v)^2}}\le C_N\sqrt{\pi}\|R^{(N-1)}\|_{L^2(\Real)}\int_\Real\frac{|v'|^{N-2}\ee^{-t|v'|}\dd v'}{\sqrt{|v'-v|}}.
\label{e:Omega2-integral-bound-II}
\end{equation}
This upper bound is obviously invariant under $v\mapsto -v$, so without loss of generality we assume $v\ge 0$ and divide up the integral over $v'$ as follows:
\begin{equation}
\int_\Real\frac{|v'|^{N-2}\ee^{-t|v'|}\dd v'}{\sqrt{|v'-v|}}=I_A(v)+I_B(v)+I_C(v),
\label{e:integral-split}
\end{equation}
where
\begin{equation}
I_A(v)\coloneq\int_{-\infty}^0\frac{|v'|^{N-2}\ee^{-t|v'|}\dd v'}{\sqrt{|v'-v|}},\;\;
I_B(v)\coloneq\int_0^v\frac{|v'|^{N-2}\ee^{-t|v'|}\dd v'}{\sqrt{|v'-v|}},\;\;
I_C(v)\coloneq\int_v^{+\infty} \frac{|v'|^{N-2}\ee^{-t|v'|}\dd v'}{\sqrt{|v'-v|}}.
\end{equation}
Firstly, writing $v'=-t^{-1}s$, $I_A(v)$ becomes
\begin{equation}
I_A(v)=t^{\frac{3}{2}-N}\int_0^{+\infty}\frac{s^{N-2}\ee^{-s}\dd s}{\sqrt{s+tv}}\le t^{\frac{3}{2}-N}\int_{0}^{+\infty}s^{N-\frac{5}{2}}\ee^{-s}\dd s = \Gamma(N-\tfrac{3}{2})t^{\frac{3}{2}-N}.
\label{e:IA-bound}
\end{equation}
Then, making the substitution $v'=vs$, $I_B(v)$ becomes
\begin{equation}
\begin{split}
I_B(v)&=v^{N-\frac{3}{2}}\int_0^1\frac{\ee^{-tvs}s^{N-2}}{\sqrt{1-s}}\dd s\\
&=t^{\frac{3}{2}-N}\psi_B(tv),\quad \psi_B(w)\coloneq w^{N-\frac{3}{2}}\int_0^1\frac{\ee^{-ws}s^{N-2}}{\sqrt{1-s}}\dd s.
\end{split}
\end{equation}
Because $N\ge 3$, $\psi_B(w)$ is a continuous function of $w\ge 0$, and it follows from Watson's Lemma that $\psi_B(w)=\O(w^{-\frac{1}{2}})$ as $w\to+\infty$.  Therefore $\psi_B(tv)$ is bounded uniformly and so
\begin{equation}
I_B(v)=\O(t^{\frac{3}{2}-N}),\quad t\to+\infty.
\label{e:IB-bound}
\end{equation}
Similarly, making the substitution $v'=v(1+s)$, $I_C(v)$ becomes
\begin{equation}
\begin{split}
I_C(v)&=v^{N-\frac{3}{2}}\int_0^{+\infty}\frac{\ee^{-tv(1+s)}(1+s)^{N-2}}{\sqrt{s}}\dd s\\
&=t^{\frac{3}{2}-N}\psi_C(tv),\quad \psi_C(w)\coloneq w^{N-\frac{3}{2}}\ee^{-w}\int_0^{+\infty}\frac{\ee^{-ws}(1+s)^{N-2}}{\sqrt{s}}\dd s.
\end{split}
\end{equation}
Clearly $\psi_C(w)$ is continuous at least for $w>0$ and by Watson's Lemma $\psi_C(w)\to 0$ rapidly as $w\to+\infty$.  By the further substitution $s=\tau/w$ we obtain
\begin{equation}
\psi_C(w)=\ee^{-w}\int_0^{+\infty}\frac{\ee^{-\tau}(\tau+w)^{N-2}}{\sqrt{\tau}}\dd\tau
\end{equation}
and by Lebesgue dominated convergence the integral factor has a finite limit as $w\to 0$ and hence so does $\psi_C(w)$.  Therefore $\psi_C(w)$ is again bounded for $w\ge 0$ and it follows that
\begin{equation}
I_C(v)=\O(t^{\frac{3}{2}-N}),\quad t\to+\infty.
\label{e:IC-bound}
\end{equation}
Combining \eqref{e:IA-bound}, \eqref{e:IB-bound}, and \eqref{e:IC-bound}, and using \eqref{e:integral-split} in \eqref{e:Omega2-integral-bound-II} gives
\begin{equation}
\sup_{(u,v)\in\Real^2}\iint_{\Omega_2}\frac{\|\mathbf{D}_\mathrm{s}(u',v';t,z)\|\dd A(u',v')}{\sqrt{(u'-u)^2+(v'-v)^2}}=\O(t^{\frac{3}{2}-N}),\quad t\to+\infty.
\label{e:Omega2-final-estimate}
\end{equation}

Finally, for the integral over $\Omega_3$, we use \eqref{e:D-Omega-3} to get
\begin{equation}
\begin{split}
\iint_{\Omega_3}\frac{\|\mathbf{D}_\mathrm{s}(u',v';t,z)\|\dd A(u',v')}{\sqrt{(u'-u)^2+(v'-v)^2}}&\le C_N\ee^{-t}\sum_{n=0}^{N-1}\iint_{\Omega_3}\frac{|R^{(n)}(u')|\dd A(u',v')}{\sqrt{(u'-u)^2+(v'-v)^2}}\\
&=C_N\ee^{-t}\sum_{n=0}^{N-1}\int_{1<|v'|<2}\int_\Real\frac{|R^{(n)}(u')|\dd u'}{\sqrt{(u'-u)^2+(v'-v)^2}}\dd v'.
\end{split}
\end{equation}
For the inner integral we again appeal to Lemma~\ref{thm:reflection2} to get $R^{(n)}(\cdot)\in L^2(\Real)$ for $n=0,\dots,N-1$, so we conclude that the inequality \eqref{e:inner-integral-N-1} holds with $R^{(N-1)}$ replaced by $R^{(n)}$.  Therefore
\begin{equation}
\iint_{\Omega_3}\frac{\|\mathbf{D}_\mathrm{s}(u',v';t,z)\|\dd A(u',v')}{\sqrt{(u'-u)^2+(v'-v)^2}}\le C_N\ee^{-t}\sum_{n=0}^{N-1}\sqrt{\pi}\|R^{(n)}\|_{L^2(\Real)}\int_{1<|v'|<2}\frac{\dd v'}{\sqrt{|v'-v|}}.
\end{equation}
The remaining integral over $v'$ is obviously bounded independently of $v\in\Real$, so clearly
\begin{equation}
\sup_{(u,v)\in\Real^2}\iint_{\Omega_3}\frac{\|\mathbf{D}_\mathrm{s}(u',v';t,z)\|\dd A(u',v')}{\sqrt{(u'-u)^2+(v'-v)^2}}=\O(\ee^{-t}),\quad t\to+\infty.
\label{e:Omega3-final-estimate}
\end{equation}

Using \eqref{e:Omega1-final-estimate}, \eqref{e:Omega2-final-estimate}, and \eqref{e:Omega3-final-estimate} in \eqref{e:S-operator-norm} completes the proof.
\end{proof}
It follows that for $t>0$ sufficiently large with $z/t\ge 0$ sufficiently small, the integral equation \eqref{e:dbar-integral-equation} has a unique solution in $L^\infty(\Real^2)$ given by the uniformly convergent Neumann series
\begin{equation}
\dot{\mathbf{F}}_\mathrm{s}(u,v)=\dot{\mathbf{F}}_\mathrm{s}(u,v;t,z)\coloneq(\mathrm{id}-\S_{t,z})^{-1}\I(u,v) = \sum_{n=0}^\infty \S_{t,z}^n\I(u,v).
\end{equation}
It follows that
\begin{equation}
\dot{\mathbf{F}}_\mathrm{s}(u,v;t,z)-\I=\O((z/t)^{\frac{1}{2}(N-1)})+\O(t^{\frac{3}{2}-N})
\label{e:dotFs-Linfty}
\end{equation}
holds in the $L^\infty(\Real^2)$ sense.

However more is true.  Firstly, $(u,v)\mapsto\dot{\mathbf{F}}_\mathrm{s}(u,v;t,z)$ is (H\"older) continuous on $\Real^2$, and $\dot{\mathbf{F}}(u,v;t,z)\to \I$ as $u+\ii v\to\infty$, which proves that $\dot{\mathbf{F}}_\mathrm{s}(u,v;t,z)$ is indeed the solution of $\dbar$P~\ref{rhp:Fsdot}.  These facts follow from the integral equation \eqref{e:dbar-integral-equation} viewed as a formula for $\dot{\mathbf{F}}_\mathrm{s}(u,v)-\I$ as the solid Cauchy transform $\dbar^{-1}$ acting on the density $\dot{\mathbf{F}}_\mathrm{s}(u,v)\dot{\K}_\mathrm{s}(u+\ii v)\mathbf{D}_\mathrm{s}(u,v)\dot{\K}_\mathrm{s}(u+\ii v)^{-1}$.  We start with the following lemma.
\begin{lemma}[$L^p(\Real^2)$ estimate of $\mathbf{D}_\mathrm{s}(u,v;t,z)$]
The matrix-valued function $(u,v)\mapsto\mathbf{D}_\mathrm{s}(u,v;t,z)$ is in $L^p(\Real^2)$ for $1\le p\le\infty$, and $\|\mathbf{D}_\mathrm{s}(\cdot,\cdot;t,z)\|_{L^1(\Real^2)} = \O((z/t)^{\frac{1}{2}N}) + \O(t^{1-N})$ as $t\to +\infty$ with $z=o(t)$.
\label{lem:Lp}
\end{lemma}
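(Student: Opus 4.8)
The plan is to recycle the pointwise estimates on $\mathbf{D}_\mathrm{s}(u,v;t,z)$ already worked out in the proof of Lemma~\ref{lemma:S-bound} and simply integrate them, instead of convolving against the Cauchy kernel as was done there. First I would record that, by \eqref{e:dbar-equation-D} and the support properties of the bump function $B$, the matrix $\mathbf{D}_\mathrm{s}(u,v;t,z)$ vanishes identically unless $|u+\ii v|\ge\lambda_\circ$ and $|v|\le 2$, and that on this set $|\ee^{\pm 2\ii\theta_\mathrm{s}(u+\ii v;t,z)}|\le 1$ by the sign chart in the left-hand panel of Figure~\ref{f:inside-itheta}. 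Combining this with the bound \eqref{e:D-matrix-upper-bound} and Lemma~\ref{thm:reflection2}, which furnishes $R^{(n)}\in\mathscr{S}(\Real)\subset L^\infty(\Real)\cap L^1(\Real)$ for every $n\ge 0$, immediately gives the $L^\infty(\Real^2)$ bound: $\|\mathbf{D}_\mathrm{s}(\cdot,\cdot;t,z)\|_{L^\infty(\Real^2)}\le C_N\bigl(2^{N-2}\|R^{(N-1)}\|_\infty+\sum_{n=0}^{N-2}\|R^{(n)}\|_\infty\bigr)$, uniformly in $(t,z)$.

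For the $L^1$ estimate I would split the support of $\mathbf{D}_\mathrm{s}$ into the three regions $\Omega_1,\Omega_2,\Omega_3$ of \eqref{e:dbar-regions-def} and use the pointwise bounds \eqref{e:D-Omega-1}--\eqref{e:D-Omega-3}. On $\Omega_1\subset[-2\lambda_\circ,2\lambda_\circ]\times[-2\lambda_\circ,2\lambda_\circ]$, \eqref{e:D-Omega-1} yields $\iint_{\Omega_1}\|\mathbf{D}_\mathrm{s}\|\,dA\le C_N\iint_{[-2\lambda_\circ,2\lambda_\circ]^2}|v'|^{N-2}\,dA(u',v')=\O(\lambda_\circ^{N})=\O((z/t)^{\frac{1}{2}N})$, using $N\ge 2$ so that $v'\mapsto |v'|^{N-2}$ is locally integrable. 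On $\Omega_2$, integrating out $u'$ against $\|R^{(N-1)}\|_{L^1(\Real)}$ and then $v'$ against the Gamma integral $\int_\Real|v'|^{N-2}\ee^{-t|v'|}\,dv'=2\Gamma(N-1)t^{1-N}$ gives $\iint_{\Omega_2}\|\mathbf{D}_\mathrm{s}\|\,dA=\O(t^{1-N})$. On $\Omega_3$, \eqref{e:D-Omega-3} together with $R^{(n)}\in L^1(\Real)$ and the bounded $v'$-extent of $\Omega_3$ gives $\iint_{\Omega_3}\|\mathbf{D}_\mathrm{s}\|\,dA=\O(\ee^{-t})=\O(t^{1-N})$. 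Adding the three contributions yields $\|\mathbf{D}_\mathrm{s}(\cdot,\cdot;t,z)\|_{L^1(\Real^2)}=\O((z/t)^{\frac{1}{2}N})+\O(t^{1-N})$. Membership in $L^p(\Real^2)$ for $1<p<\infty$ then follows from the interpolation inequality $\|\mathbf{D}_\mathrm{s}\|_{L^p(\Real^2)}\le\|\mathbf{D}_\mathrm{s}\|_{L^1(\Real^2)}^{1/p}\|\mathbf{D}_\mathrm{s}\|_{L^\infty(\Real^2)}^{1-1/p}$.

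There is no serious obstacle in this argument; it is bookkeeping over the three regions. The only two points deserving a moment's care are: (i) that $N\ge 2$ is exactly what makes $|v'|^{N-2}$ integrable near $v'=0$, and that on $\Omega_1$ the smallness of the \emph{domain} (of linear size $\lambda_\circ$) rather than any kernel is what produces the power $\lambda_\circ^N$ — note this is one power better than the $\lambda_\circ^{N-1}$ obtained in Lemma~\ref{lemma:S-bound}, precisely because there is no Cauchy kernel consuming a power of $\lambda_\circ$ here; and (ii) that it is the exponential decay of $|\ee^{\pm2\ii\theta_\mathrm{s}}|$ in $v'$ on $\Omega_2\cup\Omega_3$, combined with the $u'$-integrability of the Schwartz-class derivatives $R^{(n)}$, that converts the pointwise bounds into a global $L^1(\Real^2)$ estimate with the stated decay as $t\to+\infty$. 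All ingredients are already available from Lemma~\ref{thm:reflection2}, \eqref{e:D-matrix-upper-bound}, and the sign structure recorded in Figure~\ref{f:inside-itheta}.
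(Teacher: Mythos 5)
Your proof is correct and follows essentially the same route as the paper's: the same decomposition into the regions $\Omega_1,\Omega_2,\Omega_3$ of \eqref{e:dbar-regions-def}, the same pointwise bounds \eqref{e:D-Omega-1}--\eqref{e:D-Omega-3} integrated directly, and interpolation against the $L^\infty$ bound for the intermediate exponents. The remarks on why $\Omega_1$ yields $\lambda_\circ^N$ rather than the $\lambda_\circ^{N-1}$ of Lemma~\ref{lemma:S-bound} are accurate but not needed for the proof itself.
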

\begin{proof}
Given $t>0$ and $z/t>0$ small enough, the estimates \eqref{e:D-Omega-1}, \eqref{e:D-Omega-2}, and \eqref{e:D-Omega-3} show that $\|\mathbf{D}_\mathrm{s}(\cdot,\cdot;t,z)\|_{L^\infty(\Real^2)}<\infty$.
By interpolation, it suffices to control $\|\mathbf{D}_\mathrm{s}(\cdot,\cdot;t,z)\|_{L^1(\Real^2)}$.  We use the same three estimates in turn to calculate the contributions to the $L^1$-norm arising from integration over the corresponding sub-regions $\Omega_j$, $j=1,2,3$.  Using \eqref{e:D-Omega-1} we get
\begin{equation}
\begin{split}
\iint_{\Omega_1}\|\mathbf{D}_\mathrm{s}(u,v;t,z)\|\dd A(u,v) &\le C_N\iint_{\Omega_1}|v|^{N-2}\dd A(u,v)\\
&\le C_N\iint_{[-2\lambda_\circ,2\lambda_\circ]^2}|v|^{N-2}\dd A(u,v) = \O(\lambda_\circ^N)=\O((z/t)^{\frac{1}{2}N}).
\end{split}
\end{equation}
Then from \eqref{e:D-Omega-2},
\begin{equation}
\begin{split}
\iint_{\Omega_2}\|\mathbf{D}_\mathrm{s}(u,v;t,z)\|\dd A(u,v)&\le C_N\iint_{\Omega_2}\left|R^{(N-1)}(u)\right||v|^{N-2}\ee^{-t|v|}\dd A(u,v)\\
&\le\iint_{\Real^2}\left|R^{(N-1)}(u)\right||v|^{N-2}\ee^{-t|v|}\dd A(u,v)\\
&=\|R^{(N-1)}\|_{L^1(\Real)}\int_\Real |v|^{N-2}\ee^{-t|v|}\dd v = \O(t^{1-N})
\end{split}
\end{equation}
where we used the fact again following from Lemma~\ref{thm:reflection2} that $R(u)$ together with all of its derivatives are in $L^1(\Real)$.  Similarly, from \eqref{e:D-Omega-3},
\begin{equation}
\begin{split}
\iint_{\Omega_3}\|\mathbf{D}_\mathrm{s}(u,v;t,z)\|\dd A(u,v)&\le C_N\ee^{-t}\sum_{n=0}^{N-1}\iint_{\Omega_3}|R^{(n)}(u)|\dd A(u,v)\\
&=2C_N\ee^{-t}\sum_{n=0}^{N-1}\|R^{(n)}\|_{L^1(\Real)} = \O(\ee^{-t}).
\end{split}
\end{equation}
Summing these estimates gives the claimed bound for $\|\mathbf{D}_\mathrm{s}(\cdot,\cdot;t,z)\|_{L^1(\Real^2)}$, which then completes the proof.
\end{proof}
In particular, since the factors $\dot{\mathbf{F}}_\mathrm{s}(\cdot,\cdot;t,z)$, $\dot{\K}_\mathrm{s}(\cdot;t,z)$ and $\dot{\K}_\mathrm{s}(\cdot;t,z)^{-1}$ are all in $L^\infty(\Real^2)$, Lemma~\ref{lem:Lp} shows that $\dot{\mathbf{F}}_\mathrm{s}(u,v;t,z)-\I$ is the solid Cauchy transform $\dbar^{-1}$ acting on a function lying in $L^q(\Real^2)\cap L^p(\Real^2)$ for a H\"older pair of conjugate exponents $1< q<2<p<\infty$.  Then, according to \cite[Theorem 4.3.11]{AstalaIM09}, $(u,v)\mapsto \dot{\mathbf{F}}_\mathrm{s}(u,v;t,z)-\I$ is a continuous function that tends to zero as $u+\ii v\to\infty$ as claimed.  Moreover, \cite[Theorem 4.3.13]{AstalaIM09} shows that, since the function acted on by $\dbar^{-1}$ is in $L^p(\Real^2)$ for arbitrarily large $p$, $(u,v)\mapsto \dot{\mathbf{F}}_\mathrm{s}(u,v;t,z)-\I$ lies in $C^{0,\alpha}(\Real^2)$ for all $\alpha<1$, giving improved H\"older continuity, arbitrarily close to the threshold of Lipschitz continuity.

Secondly, we can prove the existence of the limit
\begin{equation}
\dot{\mathbf{F}}_\mathrm{s}^{(1)}(t,z)\coloneq \lim_{v\to\infty} (u+\ii v)\left[\dot{\mathbf{F}}_\mathrm{s}(u,v;t,z)-\I\right]
\label{e:F1-def}
\end{equation}
independent of $u\in\Real$, and obtain the estimate
\begin{equation}
\dot{\mathbf{F}}_\mathrm{s}^{(1)}(t,z) = \O((z/t)^{\frac{1}{2}N}) + \O(t^{1-N})
\label{e:F1-estimate}
\end{equation}
valid as $t\to+\infty$ with $z=o(t)$.  To this end, we again appeal to \eqref{e:dbar-integral-equation} to write
\begin{equation}
(u+\ii v)\left[\dot{\mathbf{F}}_\mathrm{s}(u,v;t,z)-\I\right] =
\frac{1}{\pi}\iint_{\Real^2}\beta(u'+\ii v';u+\ii v)\mathbf{H}(u',v';t,z)\dd A(t,z)
\end{equation}
where
\begin{equation}
\beta(\lambda';\lambda)\coloneq \frac{\lambda}{\lambda-\lambda'}
\end{equation}
and where
\begin{equation}
\mathbf{H}(u',v';t,z)\coloneq
\dot{\mathbf{F}}_\mathrm{s}(u',v';t,z)\dot{\K}_\mathrm{s}(u'+\ii v';t,z)\mathbf{D}_\mathrm{s}(u',v';t,z)\dot{\mathbf{K}}_\mathrm{s}(u'+\ii v';t,z)^{-1}.
\end{equation}
Now, the support of $\mathbf{H}(u',v';t,z)$ lies in the strip $|v'|\le 2$, so if $|v|\ge 4$ we get
\begin{equation}
\begin{split}
|\beta(u'+\ii v';u+\ii v)| &= \left|1 + \frac{u'+\ii v'}{(u-u')+\ii (v-v')}\right|\\
&\le 1+\sqrt{\frac{(u')^2+(v')^2}{(u-u')^2+(v-v')^2}} \\
&\le 1+\sqrt{\frac{(u')^2+4}{(u-u')^2+4}}.
\end{split}
\end{equation}
Given $u\in\Real$, this is in $L^\infty(\Real)$ with respect to $u'$.  So there is a constant $C_u$ such that $|v|\ge 4$ implies
\begin{equation}
\mathop{\sup_{(u',v')\in\Real^2}}_{|v'|\le 2}|\beta(u'+\ii v';u+\ii v)|\le C_u.
\end{equation}
Since also $\beta(u'+\ii v';u+\ii v)\to 1$ as $v\to\infty$ in $\Real$ pointwise with respect to $(u',v')$ in the support of $\mathbf{H}(u',v';t,z)$, and since by Lemma~\ref{lem:Lp} and the boundedness of the factors $\dot{\mathbf{F}}_\mathrm{s}(u',v';t,z)$, $\dot{\K}_\mathrm{s}(u'+\ii v';t,z)$, and $\dot{\K}_\mathrm{s}(u'+\ii v';t,z)^{-1}$ we have $\mathbf{H}(\cdot,\cdot;t,z)\in L^1(\Real^2)$, it follows by Lebesgue dominated convergence that the limit in \eqref{e:F1-def} exists, and that
\begin{equation}
\dot{\mathbf{F}}_\mathrm{s}^{(1)}(t,z)=\frac{1}{\pi}\iint_{\Real^2}\mathbf{H}(u',v';t,z)\dd A(u',v').
\end{equation}
Then using the $L^\infty(\Real^2)$ bounds $\|\dot{\mathbf{F}}_\mathrm{s}(\cdot,\cdot;t,z)\|_{L^\infty(\Real^2)}=\O(1)$, $\|\dot{\K}_\mathrm{s}(\cdot;t,z)\|_{L^\infty(\Real^2)} = \O(1)$, and a corresponding bound for the inverse following from $\det(\dot{\K}_\mathrm{s}(u'+\ii v';t,z))=1$, the $L^1(\Real^2)$ control of $\mathbf{D}_\mathrm{s}(u,v;t,z)$ in Lemma~\ref{lem:Lp} proves the estimate \eqref{e:F1-estimate}.


\subsubsection{Small-norm RHP}
\label{s:small-norm-RHP-s}
Comparing $\mathbf{F}_\mathrm{s}(u,v;t,z)$ with its continuous parametrix $\dot{\mathbf{F}}_\mathrm{s}(u,v;t,z)$ solving $\dbar$P~\ref{rhp:Fsdot}, we define
\begin{equation}
\ddot{\mathbf{F}}_\mathrm{s}(u+\ii v;t,z)\coloneq\mathbf{F}_\mathrm{s}(u,v;t,z)\dot{\mathbf{F}}_\mathrm{s}(u,v;t,z)^{-1},\quad (u,v)\in\Real^2.
\label{e:F-dotF-ddotF-s}
\end{equation}
Note that this definition makes sense because $\dot{\mathbf{F}}_\mathrm{s}(u,v;t,z)-\I$ is uniformly small for $t>0$ large and $z/t$ sufficiently small.  This matrix function tends to the identity as $u+\ii v\to\infty$ because this is true of both factors (by hypothesis for $\mathbf{F}_\mathrm{s}(u,v;t,z)$ and by construction for $\dot{\mathbf{F}}_\mathrm{s}(u,v;t,z)$), and for $u+\ii v\in\Complex\setminus\Sigma$ we have
\begin{multline}
\dbar\ddot{\mathbf{F}}_\mathrm{s}(u+\ii v;t,z)=\dbar\mathbf{F}_\mathrm{s}(u,v;t,z)\cdot\dot{\mathbf{F}}_\mathrm{s}(u,v;t,z)^{-1}\\ -
\mathbf{F}_\mathrm{s}(u,v;t,z)\dot{\mathbf{F}}_\mathrm{s}(u,v;t,z)^{-1}\dbar\dot{\mathbf{F}}_\mathrm{s}(u,v;t,z)\cdot
\dot{\mathbf{F}}_\mathrm{s}(u,v;t,z)^{-1},
\end{multline}
which vanishes identically by using \eqref{e:Fs-dbar} and \eqref{e:dotFs-dbar} (justifying our use of the complex notation $u+\ii v$ for the argument).  The relation between the boundary values taken by $\ddot{\mathbf{F}}_\mathrm{s}(u+\ii v;t,z)$ from opposite sides on the three arcs of $\Sigma$ can be easily computed from the established continuity of $\dot{\mathbf{F}}_\mathrm{s}(u,v;t,z)$ on the $(u,v)$-plane and the jump condition \eqref{e:Fs-jump}.  These facts imply that $\ddot{\mathbf{F}}_\mathrm{s}(u+\ii v;t,z)$ is the solution of the following RHP.
\begin{rhp}[Small norm problem]
Given $t\ge 0$ and $z\ge 0$, seek a $2\times 2$ matrix-valued function $\lambda\mapsto\ddot{\mathbf{F}}_\mathrm{s}(\lambda;t,z)$ that is analytic for $\lambda\in\Complex\setminus\Sigma$; that satisfies $\ddot{\mathbf{F}}_\mathrm{s}\to\I$ as $\lambda\to\infty$; and that takes continuous boundary values on $\Sigma$ from each component of the complement related by the jump condition $\ddot{\mathbf{F}}_\mathrm{s}^+(\lambda;t,z)=\ddot{\mathbf{F}}_\mathrm{s}^-(\lambda;t,z)\mathbf{G}_\mathrm{s}(u,v;t,z)$, $\lambda=u+\ii v$, where the jump matrix $\mathbf{G}_\mathrm{s}(u,v;t,z)$ is defined on the arcs of $\Sigma$ by
\begin{equation}
\mathbf{G}_\mathrm{s}(u,v;t,z)\coloneq
\dot{\mathbf{F}}_\mathrm{s}(u,v;t,z)\dot{\K}^-_\mathrm{s}(u+\ii v;t,z)\mathbf{J}_\mathrm{s}(u,v;t,z)\dot{\mathbf{J}}_\mathrm{s}(u+\ii v;t,z)^{-1}\dot{\K}^-_\mathrm{s}(u+\ii v;t,z)^{-1}\dot{\mathbf{F}}_\mathrm{s}(u,v;t,z)^{-1}.
\end{equation}
Here $\dot{\mathbf{F}}_\mathrm{s}(u,v;t,z)$ denotes the solution of $\dbar$P~\ref{rhp:Fsdot} analyzed in Section~\ref{s:dbar-s}; $\dot{\K}_\mathrm{s}(\lambda;t,z)$ denotes the solution of RHP~\ref{rhp:dotKs} analyzed (by means of the equivalent RHP~\ref{rhp:ddotKs}) in Sections~\ref{s:Properties-of-Y} and \ref{s:Properties-for-M-positive-s}; and $\dot{\mathbf{J}}_\mathrm{s}(\lambda;t,z)$ is a bounded, unit determinant jump matrix that is explicitly defined in \eqref{e:dot-Js} and is compared with $\mathbf{J}_\mathrm{s}(u,v;t,z)$ in \eqref{e:Js-approximation}.
\label{rhp:s-small-norm}
\end{rhp}

It follows from the uniform bound \eqref{e:dotFs-Linfty}, the uniform bounds for $\dot{\K}_\mathrm{s}(\lambda;t,z)$ and its inverse, the boundedness of $\dot{\mathbf{J}}_\mathrm{s}(\lambda;t,z)^{-1}$ and the estimate \eqref{e:Js-approximation} that $\mathbf{G}_\mathrm{s}(u,v;t,z)=\I + \O(\lambda_\circ^{M+1}) = \I+\O((z/t)^{\frac{1}{2}(M+1)})$ holds uniformly on $\Sigma$ as $t\to+\infty$ with $z=o(t)$.  The jump matrix $\mathbf{G}_\mathrm{s}(u,v;t,z)$ is also clearly H\"older continuous on each arc of $\Sigma$ with H\"older index $\alpha$ as close to $1$ as desired, and satisfies the necessary consistency condition at the two self-intersection points to admit a classical solution.  By the theory of RHPs in H\"older spaces (see, e.g., \cite[Appendix A]{KamvissisMM03}), using also the fact that although the contour $\Sigma$ depends on the parameters $(t,z)\in\Real^2$ it does so by scaling which commutes with the Cauchy integral operators on $\Sigma$ hence leaving their norms invariant, RHP~\ref{rhp:s-small-norm} is a classical small-norm problem, having a unique classical solution taking H\"older-continuous boundary values on the arcs of $\Sigma$ and satisfying the estimates
\begin{equation}
\ddot{\mathbf{F}}_\mathrm{s}(\lambda;t,z)=\I + \O((z/t)^{\frac{1}{2}(M+1)})
\label{e:ddotF-uniform-bound-s}
\end{equation}
holding uniformly for $\lambda\in\Complex\setminus\Sigma$, and
\begin{equation}
\begin{split}
\ddot{\mathbf{F}}_\mathrm{s}^{(1)}(t,z)\coloneq\lim_{\lambda\to\infty}\lambda\left[\ddot{\mathbf{F}}_\mathrm{s}(\lambda;t,z)-\I\right] &= -\frac{1}{2\pi\ii}\int_\Sigma \ddot{\mathbf{F}}_\mathrm{s}^-(\lambda;t,z)\left[\mathbf{G}_\mathrm{s}(\Re(\lambda),\Im(\lambda);t,z)-\I\right]\dd\lambda \\
&= \O((z/t)^{\frac{1}{2}(M+2)})
\end{split}
\label{e:ddotF-moment-bound-s}
\end{equation}
(the extra factor of $\lambda_\circ=(z/t)^\frac{1}{2}$ in the estimate \eqref{e:ddotF-moment-bound-s} comes from the total arc length of $\Sigma$) both in the limit that $t\to\infty$ with $z=o(t)$.

\begin{remark}
Based on the numerical experiments reported in Section~\ref{s:numerical-verification}, the estimates \eqref{e:ddotF-uniform-bound-s}--\eqref{e:ddotF-moment-bound-s} might not be sharp.  The same comment applies to the estimates \eqref{e:ddotF-uniform-bound-u}--\eqref{e:ddotF-moment-bound-u} below.
\label{rem:not-sharp}
\end{remark}

\subsubsection{Combining the parametrices}
From \eqref{e:K-dotK-F-s} and \eqref{e:F-dotF-ddotF-s} we obtain a representation of the solution $\K_\mathrm{s}(u,v;t,z)$ of RH$\dbar$P~\ref{rhp:Ks} in the form
\begin{equation}
\begin{split}
\K_\mathrm{s}(u,v;t,z)&=\mathbf{F}_\mathrm{s}(u,v;t,z)\dot{\K}_\mathrm{s}(u+\ii v;t,z)\\ &=\ddot{\mathbf{F}}_\mathrm{s}(u+\ii v;t,z)\dot{\mathbf{F}}_\mathrm{s}(u,v;t,z)\dot{\K}_\mathrm{s}(u+\ii v;t,z).
\end{split}
\label{e:Ks-formula}
\end{equation}

\subsection{Asymptotic formul\ae\ for $q(t,z)$, $P(t,z)$, and $D(t,z)$}
\subsubsection{Expressing $q$, $P$, and $D$ in terms of the modified parametrix $\ddot{\K}_\mathrm{s}$}
\label{s:potentials-in-terms-of-dotKs}
Since each of the three factors on the right-hand side of \eqref{e:Ks-formula} tends to $\I$ and has a finite first moment at $\lambda=u+\ii v=\infty$ for $t>0$ sufficiently large and $z/t>0$ sufficiently small, combining \eqref{e:Ks-reconstruction} with
\eqref{e:F1-def}--\eqref{e:F1-estimate} and \eqref{e:ddotF-moment-bound-s}
gives
\begin{equation}
\begin{split}
q(t,z)&=-2\ii\lim_{\lambda\to\infty}\lambda\ddot{F}_{s,1,2}(\lambda;t,z) - 2\ii\lim_{u+\ii v\to\infty}(u+\ii v)\dot{F}_{s,1,2}(\lambda;t,z) - 2\ii\lim_{\lambda\to\infty}\lambda \dot{K}_{s,1,2}(\lambda;t,z)\\
&=- 2\ii\lim_{\lambda\to\infty}\lambda \dot{K}_{s,1,2}(\lambda;t,z) + \O((z/t)^{\frac{1}{2}(M+2)}) + \O((z/t)^{\frac{1}{2}N}) + \O(t^{1-N}).
\end{split}
\label{e:q-formula1-s}
\end{equation}
Likewise, using the fact that the boundary values $\dot{\K}_\mathrm{s}^\pm(0;t,z)$ are bounded and have unit determinant, combining \eqref{e:Ks-reconstruction} with \eqref{e:dotFs-Linfty} and \eqref{e:ddotF-uniform-bound-s}
gives
\begin{equation}
\brho(t,z)=-\lim_{\lambda\to 0}\dot{\K}_\mathrm{s}(\lambda;t,z)\sigma_3\dot{\K}_\mathrm{s}(\lambda;t,z)^{-1} +
\O((z/t)^{\frac{1}{2}(M+1)}) + \O((z/t)^{\frac{1}{2}(N-1)}) + \O(t^{\frac{3}{2}-N}).
\label{e:rho-formula1-s}
\end{equation}
Now we calculate the explicit terms in \eqref{e:q-formula1-s} and \eqref{e:rho-formula1-s}.  From \eqref{e:dotKs-ddotKs} we get
\begin{equation}
\begin{split}
-2\ii\lim_{\lambda\to\infty}\lambda\dot{K}_{s,1,2}(\lambda;t,z) &= -2\ii\lambda_\circ\ee^{-\ii(\arg(a_M)+\aleph)}\lim_{k\to\infty}k\ddot{K}_{s,1,2}(k;t,z)\\
-\lim_{\lambda\to 0}\dot{\K}_\mathrm{s}(\lambda;t,z)\sigma_3\dot{\K}_\mathrm{s}(\lambda;t,z)^{-1}&=
-\ee^{-\frac{1}{2}\ii[\arg(a_M)+\aleph]\sigma_3}\ddot{\K}_\mathrm{s}(0;t,z)\sigma_3\ddot{\K}_\mathrm{s}(0;t,z)^{-1}\ee^{\frac{1}{2}\ii[\arg(a_M)+\aleph]\sigma_3}.
\end{split}
\label{e:explicit-terms-1}
\end{equation}
To continue the calculation, for the modified parametrix $\ddot{\K}_\mathrm{s}(k;t,z)$ we need to distinguish the case $M=0$ (see Section~\ref{s:Properties-of-Y}) from $M>0$ (see Section~\ref{s:Properties-for-M-positive-s}).

\subsubsection{Expansions for $M=0$}
\label{s:potentials-s-M-zero}
If $M=0$, we recall $a_0=r_0\neq 0$ and use \eqref{e:ddotK-Y} and \eqref{e:Lax-potentials} to get
\begin{equation}
\begin{split}
-2\ii\lambda_\circ\ee^{-\ii(\arg(a_0)+\aleph)}\lim_{k\to\infty}k\ddot{K}_{s,1,2}(k;t,z) &=
-2\lambda_\circ\ee^{-\ii(\arg(r_0)+\aleph)}\lim_{\Lambda\to\infty}\Lambda Y_{1,2}(\Lambda;-\ii\sqrt{2tz},\arctan(|r_0|))\\
&=2\frac{\lambda_\circ}{\sqrt{2tz}}\ee^{-\ii(\arg(r_0)+\aleph)}y(-\ii \sqrt{2tz};\omega)\\
&=t^{-1}\ee^{-\ii(\arg(r_0)+\aleph)}y(-\ii\sqrt{2tz};\omega),
\end{split}
\end{equation}
where $\omega=-3\cos(2\arctan(|r_0|))$ is given equivalently by \eqref{e:omega-r0}.  Recalling from \eqref{e:mbe-matrix} that $P(t,z)=\rho_{1,2}(t,z)$ and $D(t,z)=\rho_{1,1}(t,z)$, we take advantage of the fact that $\ddot{\K}_\mathrm{s}(k;t,z)$ has unit determinant to similarly compute
\begin{multline}
\left[-\ee^{-\frac{1}{2}\ii[\arg(a_0)+\aleph]\sigma_3}\ddot{\K}_\mathrm{s}(0;t,z)\sigma_3\ddot{\K}_\mathrm{s}(0;t,z)^{-1}\ee^{\frac{1}{2}\ii[\arg(a_0)+\aleph]\sigma_3}\right]_{1,2}\\
\begin{aligned}
&=
2\ee^{-\ii(\arg(r_0)+\aleph)}Y_{1,1}(0;-\ii\sqrt{2tz},\arctan(|r_0|))Y_{1,2}(0;-\ii\sqrt{2tz},\arctan(|r_0|))\\
&=-\frac{2\ii}{\sqrt{2tz}}\ee^{-\ii(\arg(r_0)+\aleph)}s(-\ii\sqrt{2tz};\omega)
\end{aligned}
\end{multline}
and
\begin{multline}
\left[-\ee^{-\frac{1}{2}\ii[\arg(a_0)+\aleph]\sigma_3}\ddot{\K}_\mathrm{s}(0;t,z)\sigma_3\ddot{\K}_\mathrm{s}(0;t,z)^{-1}\ee^{\frac{1}{2}\ii[\arg(a_0)+\aleph]\sigma_3}\right]_{1,1}\\
\begin{aligned}
&=-1-2Y_{1,2}(0;-\ii\sqrt{2tz},\arctan(|r_0|))Y_{2,1}(0;-\ii\sqrt{2tz},\arctan(|r_0|))\\
&=-1+\frac{2\ii}{\sqrt{2tz}}U(-\ii\sqrt{2tz};\omega).
\end{aligned}
\end{multline}
Using these formul\ae\ in \eqref{e:q-formula1-s}--\eqref{e:explicit-terms-1} for $M=0$ and comparing with Definition~\ref{def:self-similar} then gives \eqref{e:qPD-generic-global-stable} and proves Theorem~\ref{thm:global-M0-stable-and-unstable} in the case of propagation in an initially-stable medium.
To prove Corollary~\ref{cor:medium-bulk-M0-stable-and-unstable} for this case, we simply use the asymptotic formul\ae\ \eqref{e:y-im-asymp}--\eqref{e:U-im-asymp} in the expressions \eqref{e:stable-self-similar} for the leading self-similar terms in terms of PIII functions on the imaginary axis.

\subsubsection{Expansions for $M>0$}
\label{s:potentials-s-M-positive}
For $M>0$, we recall $a_M=r_0^{(M)}/M!$ and use \eqref{e:ddotKs-moment-define}--\eqref{e:ddotKs-moment-approx} with $x=\sqrt{2tz}$ and $\lambda_\circ=\sqrt{z/(2t)}$ to get
\begin{multline}
-2\ii\lambda_\circ\ee^{-\ii(\arg(a_M)+\aleph)}\lim_{k\to\infty}k\ddot{K}_{s,1,2}(k;t,z)\\
\begin{aligned}
&=
\frac{\b{r_0^{(M)}}\ee^{-\ii\aleph}}{\pi M!}\left(\frac{z}{2t}\right)^{\frac{1}{2}(M+1)}\oint_{|s|=1}s^M\ee^{\ii \sqrt{2tz}(s+s^{-1})}\dd s + \O((z/t)^{\frac{1}{2}(3M+1)})\\
&=-\ii^M\frac{\b{r_0^{(M)}}\ee^{-\ii\aleph}}{M!}2^{\frac{1}{2}(1-M)}\left(\frac{z}{t}\right)^{\frac{1}{2}(M+1)}J_{M+1}(2\sqrt{2tz})+\O((z/t)^{\frac{1}{2}(3M+1)}),
\end{aligned}
\end{multline}
where we used the integral representation of the Bessel function $J_{n}(\cdot)$ given in \cite[Eqn.\@ 10.9.2]{dlmf}.  Similarly, from \eqref{e:ddotKs-origin-approx} and $\det(\ddot{\K}_\mathrm{s}(k;t,z))=1$ we get
\begin{multline}
\left[-\ee^{-\frac{1}{2}\ii[\arg(a_M)+\aleph]\sigma_3}\ddot{\K}_\mathrm{s}(0;t,z)\sigma_3\ddot{\K}_\mathrm{s}(0;t,z)^{-1}\ee^{\frac{1}{2}\ii[\arg(a_M)+\aleph]\sigma_3}\right]_{1,2}\\
\begin{aligned}
&=2\ee^{-\ii(\arg(r_0^{(M)})+\aleph)}\ddot{K}_{s,1,1}(0;t,z)\ddot{K}_{s,1,2}(0;t,z)\\
&=\frac{\b{r_0^{(M)}}\ee^{-\ii\aleph}}{\ii\pi M!}\left(\frac{z}{2t}\right)^{\frac{1}{2}M}\oint_{|s|=1}s^{M-1}\ee^{\ii\sqrt{2tz}(s+s^{-1})}\dd s + \O((z/t)^{\frac{3}{2}M})\\
&=\ii^M\frac{\b{r_0^{(M)}}\ee^{-\ii\aleph}}{M!}2^{1-\frac{1}{2}M}\left(\frac{z}{t}\right)^{\frac{1}{2}M}J_M(2\sqrt{2tz}) + \O((z/t)^{\frac{3}{2}M})
\end{aligned}
\end{multline}
and
\begin{multline}
\left[-\ee^{-\frac{1}{2}\ii[\arg(a_M)+\aleph]\sigma_3}\ddot{\K}_\mathrm{s}(0;t,z)\sigma_3\ddot{\K}_\mathrm{s}(0;t,z)^{-1}\ee^{\frac{1}{2}\ii[\arg(a_M)+\aleph]\sigma_3}\right]_{1,1}\\
\begin{aligned}
&=-1-2\ddot{K}_{s,1,2}(0;t,z)\ddot{K}_{s,2,1}(0,t;z)\\
&=-1-\left(\frac{|r_0^{(M)}|}{2\pi M!}\right)^22^{1-M}\left(\frac{z}{t}\right)^{M}\left[\oint_{|s|=1}s^{M-1}\ee^{\ii\sqrt{2tz}(s+s^{-1})}\dd s\right]\left[\oint_{|s|=1}s^{M-1}\ee^{-\ii\sqrt{2tz}(s+s^{-1})}\dd s\right] + \O((z/t)^{2M})\\
&= -1+\left(\frac{|r_0^{(M)}|}{M!}\right)^22^{1-M}\left(\frac{z}{t}\right)^MJ_M(2\sqrt{2tz})^2 + \O((z/t)^{2M}),
\end{aligned}
\end{multline}
where we also used the fact that $J_n(-\cdot)=(-1)^nJ_n(\cdot)$.  In all three of these formul\ae, the error terms can be absorbed into those already present in \eqref{e:q-formula1-s}--\eqref{e:rho-formula1-s} under the condition $M\ge 1$ and $z=o(t)$.  Also, the condition $N\ge M+2$ allows two of the error terms in each of \eqref{e:q-formula1-s}--\eqref{e:rho-formula1-s} to be combined.  This establishes the asymptotic formul\ae\ \eqref{e:qPD-global-Mpos-stable} and completes the proof of Theorem~\ref{thm:global-Mpos-stable}.

\section{Analysis for propagation in an initially-unstable medium}
\label{s:unstable-positive}
For propagation in an initially-unstable medium, the coefficient of $\lambda^{-1}$ in the phase $\theta(\lambda;t,z)$ has opposite sign compared to the stable medium case.  We make the following definition:
\begin{definition}[the phase for $D_-=1$]
In the unstable case, we denote the phase $\theta(\lambda;t,z)$ appearing in \eqref{e:phase-def-general} as
$\theta(\lambda;t,z) = \theta_\mathrm{u}(\lambda;t,z)$, where
\begin{equation}
\label{e:thetau-def}
\theta_\mathrm{u}(\lambda;t,z) \coloneq \lambda t - \frac{z}{2\lambda}\,.
\end{equation}
\label{def:theta-u}
\end{definition}
The analysis is again driven by the sign chart of the real part of $\ii\theta(\lambda;t,z)$, but this is now changed compared to the stable case, as seen in the right-hand panel of Figure~\ref{f:inside-itheta}.

\subsection{Setting up a Riemann-Hilbert-$\dbar$ problem}
\label{s:unstable-positive-1}

If we directly mimic the transformation from $\M(\lambda;t,z)$ to $\K_\mathrm{s}(u,v;t,z)$ as in \eqref{e:Ks-def} simply replacing $\theta_\mathrm{s}(\lambda;t,z)$ with $\theta_\mathrm{u}(\lambda;t,z)$, then the resulting jump matrix on the circle $|\lambda|=\lambda_\circ$ has an exponential rather than oscillatory character.  This problem requires further stabilization of exactly the type that is supplied by the mechanism of a ``$g$-function'' in the Deift-Zhou steepest descent theory.  Here, the implementation is particularly simple:  we multiply on the right by $\ee^{\ii g(\lambda;t,z)\sigma_3}$ where
\begin{equation}
g(\lambda;t,z)\coloneq\begin{cases} -z\lambda^{-1},&\quad |\lambda|>\lambda_\circ\\
2t\lambda,&\quad |\lambda|<\lambda_\circ.
\end{cases}
\end{equation}
Thus, rather than \eqref{e:Ks-def} we define in this case
\begin{equation}
\label{e:Ku-def}
\K_\mathrm{u}(u,v;t,z) \coloneq
  \begin{cases}
  \M(u+\ii v;t,z)\,\delta(u+\ii v)^{-\sigma_3}\ee^{2\ii t\lambda\sigma_3}\,, &\qquad |u+\ii v|<\lambda_\circ\,,\\
  \M(u+\ii v;t,z)\mathbf{T}_\mathrm{u}^{\dagger}(u,v;t,z)^{-1}\delta(u+\ii v)^{-\sigma_3}\ee^{-\ii z\lambda^{-1}\sigma_3}\,, &\qquad |u+\ii v| > \lambda_\circ\,,\qquad v > 0\,,\\
  \M(u+\ii v;t,z)\mathbf{T}_\mathrm{u}(u,v;t,z)\delta(u+\ii v)^{-\sigma_3}\ee^{-\ii z\lambda^{-1}\sigma_3}\,, &\qquad |u+\ii v|> \lambda_\circ\,,\qquad v < 0\,.
  \end{cases}
\end{equation}
Here the matrix $\mathbf{T}_\mathrm{u}(u,v;t,z)$ is given by
\begin{equation}
\mathbf{T}_\mathrm{u}(u,v;t,z) \coloneq \bpm 1 & 0 \\ B(v)Q_N(u,v) \ee^{-2\ii\theta_\mathrm{u}(u+\ii v;t,z)} & 1\epm\,,\quad (u,v)\in\Real^2,
\end{equation}
in which the ``bump'' function $B$ and the non-analytic extension $Q_N(u,v)$ of $R(u)$ are exactly as defined in Section~\ref{s:stable-positive-1}.  The main point of introducing the $g$-function in the transformation is that $\K_\mathrm{u}(u,v;t,z)$ satisfies a RH$\dbar$P that \emph{only involves the stable-medium phase $\theta_\mathrm{s}(\lambda;t,z)$, which is real-valued on the jump contour $\Sigma$}.  That problem is the following.
\begin{rhdp}
\label{rhp:Ku}
Given $t\ge 0$ and $z\ge 0$, seek a $2\times 2$ matrix-valued function $\Real^2\ni (u,v)\mapsto \K_\mathrm{u}(u,v;t,z)$ that is continuous for $(u,v)\in\Real^2\setminus\Sigma$; that satisfies $\K_\mathrm{u}\to\I$ as $u+\ii v\to\infty$; that takes continuous boundary values on $\Sigma$ from each component of the complement related by the jump conditions $\K_\mathrm{u}^+(u,v;t,z)=\K_\mathrm{u}^-(u,v;t,z)\mathbf{J}_\mathrm{u}(u,v;t,z)$ where
\begin{multline}
\mathbf{J}_\mathrm{u}(u,v;t,z)\\
\coloneq\begin{cases}
\ee^{\ii z\lambda^{-1}\sigma_3}\delta(u+\ii v)^{\sigma_3}\mathbf{T}_\mathrm{u}^\dagger(u,v;t,z)\W(u+\ii v;t,z)^{-1}\delta(u+\ii v)^{-\sigma_3}\ee^{2\ii t\lambda\sigma_3}\,,&
|u + \ii v| = \lambda_\circ\,,\quad v > 0\,,\\
\ee^{\ii z\lambda^{-1}\sigma_3}\delta(u+\ii v)^{\sigma_3}\mathbf{T}_\mathrm{u}(u,v;t,z)^{-1}\W^\dagger(u+\ii v;t,z)\delta(u+\ii v)^{-\sigma_3}\ee^{2\ii t\lambda\sigma_3}\,,& |u + \ii v| = \lambda_\circ\,,\quad v < 0\,,\\
(1 + |r(u)|^2)^{-\sigma_3}\,,& u\in(-\lambda_\circ,\lambda_\circ)\,,\quad v = 0\,;
\end{cases}
\label{e:Ju-def}
\end{multline}
and that satisfies the following $\b{\partial}$ differential equation
\begin{equation}
\dbar \K_\mathrm{u}(u,v;t,z) = \K_\mathrm{u}(u,v;t,z)\mathbf{D}_\mathrm{s}(u,v;t,z)\,,\qquad (u,v)\in\Real^2\setminus\Sigma\,,
\label{e:dbar-equation-u}
\end{equation}
where the matrix $\mathbf{D}_\mathrm{s}(u,v;t,z)$ is given by \eqref{e:dbar-equation-D}.
\end{rhdp}
Thus, the $\dbar$ equation \eqref{e:dbar-equation-u} is \emph{exactly the same as in the stable-medium case}.  The jump matrix $\mathbf{J}_\mathrm{u}(u,v;t,z)$ is the same as $\mathbf{J}_\mathrm{s}(u,v;t,z)$ for $-\lambda_\circ<u<\lambda_\circ$ and $v=0$, while for $|u+\ii v|=\lambda_\circ$ it is obtained from $\mathbf{J}_\mathrm{s}(u,v;t,z)$ defined in \eqref{e:Js-def} by simply omitting the oscillatory exponentials $\ee^{\pm 2\ii\theta_\mathrm{s}(u+\ii v;t,z)}$ from the off-diagonal elements and instead moving them onto the diagonal elements.

Since $\ee^{\ii g(\lambda;t,z)\sigma_3}\to\I$ as $\lambda\to\infty$ and as $\lambda\to 0$, the optical field $q(t,z)$ and density matrix $\brho(t,z)$ are obtained from $\K_\mathrm{u}(u,v;t,z)$ by
formul\ae\ similar to \eqref{e:Ks-reconstruction} (but accounting for the change of sign of $D_-$ in the formula for $\brho(t,z)$):
\begin{equation}
\begin{split}
q(t,z)&=-2\ii\lim_{u+\ii v\to\infty}(u+\ii v)K_{u,1,2}(u,v;t,z),\\
\brho(t,z)&=\lim_{u+\ii v\to 0}\K_\mathrm{u}(u,v;t,z)\sigma_3\K_\mathrm{u}(u,v;t,z)^{-1}.
\end{split}
\label{e:Ku-reconstruction}
\end{equation}

\subsection{Construction of a parametrix}
To obtain a parametrix, we neglect the matrix $\mathbf{D}_\mathrm{s}(u,v;t,z)$ in the $\dbar$ equation and approximate the jump matrix $\mathbf{J}_\mathrm{u}(u,v;t,z)$ accurately on $\Sigma$ by exactly the same arguments as in Section~\ref{s:stable-positive-parametrix} leading to the uniform approximation
\begin{equation}
\mathbf{J}_\mathrm{u}(u,v;t,z)=\dot{\mathbf{J}}_\mathrm{u}(u+\ii v;t,z)+\O(\lambda_\circ^{M+1}),\quad u+\ii v\in\Sigma,
\label{e:Ju-approximation}
\end{equation}
where
\begin{equation}
\dot{\mathbf{J}}_\mathrm{u}(\lambda;t,z)\coloneq
\begin{cases}
\bpm \Delta_M(\lambda)^{-1}\ee^{2\ii\theta_\mathrm{s}(\lambda;t,z)} & \b{a_M}\lambda^M\ee^{-\ii\aleph}\\
-\Delta_M(\lambda)^{-1}a_M\lambda^M\ee^{\ii\aleph} & \ee^{-2\ii\theta_\mathrm{s}(\lambda;t,z)}\epm,&\quad |\lambda|=\lambda_\circ,\quad \Im(\lambda)>0\,,\\
\bpm \ee^{2\ii\theta_\mathrm{s}(\lambda;t,z)} & \Delta_M(\lambda)^{-1}\b{a_M}\lambda^M\ee^{-\ii\aleph}\\
-a_M\lambda^M\ee^{\ii\aleph} & \Delta_M(\lambda)^{-1}\ee^{-2\ii\theta_\mathrm{s}(\lambda;t,z)}\epm,&\quad |\lambda|=\lambda_\circ,\quad\Im(\lambda)<0\,,\\
\Delta_M(\lambda)^{-\sigma_3},&\quad \lambda\in (-\lambda_\circ,\lambda_\circ)\,,
\end{cases}
\label{e:dot-Ju}
\end{equation}
in which $a_M$ and $\Delta_M(\lambda)$ are defined in \eqref{e:avoid-factorials}--\eqref{e:DeltaM-def} and $\aleph$ is from Definition~\ref{def:aleph}.
We are therefore led to consider the following RHP:
\begin{rhp}[Parametrix for $\K_\mathrm{u}$]
\label{rhp:dotKu}
Given $t\ge 0$ and $z\ge 0$, seek a $2\times 2$ matrix-valued function $\lambda\mapsto\dot{\K}_\mathrm{u}(\lambda;t,z)$ that is analytic for $\lambda\in\Complex\setminus\Sigma$; that satisfies $\dot{\K}_\mathrm{u}\to\I$ as $\lambda\to\infty$; and that takes continuous boundary values on $\Sigma$ from each component of the complement related by the jump conditions $\dot{\K}_\mathrm{u}^+(\lambda;t,z)=\dot{\K}_\mathrm{u}^-(\lambda;t,z)\dot{\mathbf{J}}_\mathrm{u}(\lambda;t,z)$ where the jump matrix $\dot{\mathbf{J}}_\mathrm{u}(\lambda;t,z)$ is defined on $\Sigma$ by \eqref{e:dot-Ju}.
\end{rhp}
By following the same procedure as indicated for RHP~\ref{rhp:dotKs} in Section~\ref{s:stable-positive-parametrix}, one easily checks that this problem has a unique solution.  Then, by the exact same substitution \eqref{e:dotKs-ddotKs} replacing $\dot{\K}_\mathrm{s}(\lambda;t,z)$ and $\ddot{\K}_\mathrm{s}(k;t,z)$ by $\dot{\K}_\mathrm{u}(\lambda;t,z)$ and $\ddot{\K}_\mathrm{u}(k;t,z)$ respectively we obtain a simpler RHP equivalent to RHP~\ref{rhp:dotKu}:
\begin{rhp}[Modified parametrix for $\K_\mathrm{u}$]
Given $t\ge 0$ and $z\ge 0$, seek a $2\times 2$ matrix-valued function $k\mapsto\ddot{\K}_\mathrm{u}(k;t,z)$ that is analytic for $|k|\neq 1$; that satisfies $\ddot{\K}_\mathrm{u}\to\I$ as $k\to\infty$; and that takes continuous boundary values on $|k|=1$ from the interior and exterior related by the jump condition
\begin{equation}
\ddot{\K}_\mathrm{u}^+(k;t,z)=\ddot{\K}_\mathrm{u}^-(k;t,z)\bpm \Delta_M(\lambda_\circ k)^{-\frac{1}{2}}\ee^{\ii x(k+k^{-1})} & \lambda_\circ^M |a_M|\Delta_M(\lambda_\circ k)^{-\frac{1}{2}} k^M\\
-\lambda_\circ^M |a_M|\Delta_M(\lambda_\circ k)^{-\frac{1}{2}}k^M & \Delta_M(\lambda_\circ k)^{-\frac{1}{2}}\ee^{-\ii x(k+k^{-1})}\epm,\quad |k|=1,
\label{e:ddotKu-jump}
\end{equation}
where $\lambda_\circ$ is defined in terms of $(t,z)$ by \eqref{e:lambdao-def} and $x=\sqrt{2tz}$.
\label{rhp:ddotKu}
\end{rhp}

\subsection{Properties of the modified parametrix:  $M=0$}
It is easy to check that when $M=0$,
\begin{equation}
\ddot{\K}_\mathrm{u}(k;t,z)=
\widetilde{\mathbf{Y}}(k;x,\arctan(|r_0|)),\quad x=\sqrt{2tz},
\label{e:ddotKu-tilde-Y}
\end{equation}
where $\widetilde{\mathbf{Y}}(\Lambda;X,\eta)$ is explicitly defined by \eqref{e:tilde-Y} in terms of the solution $\mathbf{Y}(\Lambda;X,\eta)$ of RHP~\ref{rhp:PIII} described in detail in Section~\ref{s:Properties-of-Y}.  It follows from the discussion in Section~\ref{s:Boundedness-of-RHP-PIII} that $\ddot{\K}_\mathrm{u}(k;t,z)$ is uniformly bounded for $|k|\neq 1$ and $x>0$.

\subsection{Properties of the modified parametrix:  $M>0$}
Unlike $\ddot{\K}_\mathrm{s}(k;t,z)$, when $M>0$, $\ddot{\K}_\mathrm{u}(k;t,z)$ does \emph{not} satisfy the conditions of a small-norm RHP, the main obstruction being the oscillatory factors $\ee^{\pm\ii x(k+k^{-1})}$ on the diagonal elements of the jump matrix in \eqref{e:ddotKu-jump}.  These are easily removed, essentially by reversing the introduction of the $g$-function, but the cost is that non-oscillatory (exponentially growing in $x$) factors $\ee^{\pm\ii (k-k^{-1})}$ will appear on the off-diagonal elements of the jump matrix.  However, these potentially-dangerous exponential factors will be multiplied by $\lambda_\circ^M$, which is small.  More precisely, setting
\begin{equation}
\dddot{\K}_\mathrm{u}(k;t,z)\coloneq
\begin{cases}
\ddot{\K}_\mathrm{u}(k;t,z)\ee^{-\ii xk\sigma_3},&\quad |k|<1,\\
\ddot{\K}_\mathrm{u}(k;t,z)\ee^{\ii xk^{-1}\sigma_3},&\quad |k|>1,
\end{cases}
\label{e:dddotKu-ddotKu}
\end{equation}
we see that $\dddot{\K}_\mathrm{u}(k;t,z)$ is analytic for $|k|\neq 1$ and tends to the identity as $k\to\infty$, and that the jump
condition \eqref{e:ddotKu-jump} on the counterclockwise-oriented unit circle $|k|=1$ can be written in the form
\begin{equation}
\dddot{\K}^+_\mathrm{u}(k;t,z)=\dddot{\K}^-_\mathrm{u}(k;t,z)
\bpm 1+\O(\lambda_\circ^{2M}) & \left[\lambda_\circ^M|a_M|k^M+\O(\lambda_\circ^{3M})\right]\ee^{\ii x(k-k^{-1})}\\
-\left[\lambda_\circ^M|a_M|k^M+\O(\lambda_\circ^{3M})\right]\ee^{-\ii x(k-k^{-1})} & 1+\O(\lambda_\circ^{2M})\epm.
\end{equation}
Since $\pm\ii(k-k^{-1})$ is real-valued for $|k|=1$ with maximum value of $2$, $\dddot{\K}_\mathrm{u}(k;t,z)$ will indeed satisfy the conditions of a small-norm RHP provided that
\begin{itemize}
\item $\lambda_\circ$ is small, i.e., $z=o(t)$ as $t\to+\infty$, and
\item $\lambda_\circ^M\ee^{2x}$ is small, i.e., $x=\sqrt{2tz}=o(\ln(t/z))$.
\end{itemize}
For simplicity, we will assume that $x=\O(1)$ as $t\to+\infty$, i.e., $z=\O(t^{-1})$.  Then, as in Section~\ref{s:Properties-for-M-positive-s}, we obtain $\dddot{\K}_\mathrm{u}(k;t,z)=\I +\O_\mathrm{OD}((z/t)^{\frac{1}{2}M}) + \O_\mathrm{D}((z/t)^M) = \I + \O_\mathrm{OD}(t^{-M}) + \O_\mathrm{D}(t^{-2M})$ uniformly for $|k|\neq 1$ as $t\to+\infty$ with $z=\O(t^{-1})$.  Since the exponential factors in \eqref{e:dddotKu-ddotKu} are bounded on their respective domains of the $k$-plane when $x$ is bounded, it follows that $\ddot{\K}_\mathrm{u}(k;t,z)$ is uniformly bounded and hence so is $\dot{\K}_\mathrm{u}(\lambda;t,z)$.  It also follows from the same estimate of $\dddot{\K}_\mathrm{u}(k;t,z)$ that the analogues of \eqref{e:ddotKs-moment-define}--\eqref{e:ddotKs-origin-approx} are:
\begin{equation}
\dddot{\K}_\mathrm{u}(k;t,z)=\I+\frac{1}{k}\dddot{\K}_\mathrm{u}^{(1)}(t,z)+\O(k^{-2}),\quad k\to\infty\quad\text{where}
\label{e:dddotKu-moment-define}
\end{equation}
\begin{equation}
\dddot{\K}_\mathrm{u}^{(1)}(t,z)=-\frac{|a_M|\lambda_\circ^M}{2\pi\ii}\oint_{|s|=1}\bpm
0 & s^M\ee^{\ii x(s-s^{-1})}\\-s^M\ee^{-\ii x(s-s^{-1})} & 0\epm\dd s + \O_\mathrm{D}(\lambda_\circ^{2M}) +
\O_{\mathrm{OD}}(\lambda_\circ^{3M}),\quad\lambda_\circ\to 0
\label{e:dddotKu-moment-approx}
\end{equation}
and
\begin{multline}
\dddot{\K}_\mathrm{u}(0;t,z)=\I + \frac{|a_M|\lambda_\circ^M}{2\pi\ii}\oint_{|s|=1}\bpm
0 & s^M\ee^{\ii x(s-s^{-1})}\\-s^M\ee^{-\ii x(s-s^{-1})}&0\epm\frac{\dd s}{s}\\
+\O_\mathrm{D}(\lambda_\circ^{2M})+\O_{\mathrm{OD}}(\lambda_\circ^{3M}),\quad\lambda_\circ\to 0,
\label{e:dddotKu-origin-approx}
\end{multline}
assuming that $x=\sqrt{2tz}$ is bounded, where the unit circle has counterclockwise orientation.

\subsection{Comparing $\K_\mathrm{u}(u,v;t,z)$ with its parametrix $\dot{\K}_\mathrm{u}(\lambda;t,z)$}
As in Section~\ref{s:Ks-vs-dotKs}, we have an analogue of \eqref{e:Ks-formula} for the solution $\K_\mathrm{u}(u,v;t,z)$ of RH$\dbar$P~\ref{rhp:Ku}:
\begin{equation}
\K_\mathrm{u}(u,v;t,z)=\ddot{\mathbf{F}}_\mathrm{u}(u+\ii v;t,z)\dot{\mathbf{F}}_\mathrm{u}(u,v;t,z)\dot{\K}_\mathrm{u}(u+\ii v;t,z)
\label{e:Ku-formula}
\end{equation}
where $\dot{\mathbf{F}}_\mathrm{u}(u,v;t,z)$ satisfies an analogue of $\dbar$P~\ref{rhp:Fsdot} in which the $\dbar$ equation \eqref{e:dotFs-dbar} is modified slightly to read
\begin{equation}
\dbar\dot{\mathbf{F}}_\mathrm{u}(u,v;t,z)=\dot{\mathbf{F}}_\mathrm{u}(u,v;t,z)\dot{\K}_\mathrm{u}(u+\ii v;t,z)\mathbf{D}_\mathrm{s}(u,v;t,z)\dot{\K}_\mathrm{u}(u+\ii v;t,z)^{-1},\quad (u,v)\in\Real^2,
\label{e:dotFu-dbar}
\end{equation}
and where $\ddot{\mathbf{F}}_\mathrm{u}(\lambda;t,z)$ satisfies an analogue of RHP~\ref{rhp:s-small-norm} in which the jump condition reads $\ddot{\mathbf{F}}_\mathrm{u}^+(\lambda;t,z)=\ddot{\mathbf{F}}_\mathrm{u}^-(\lambda;t,z)\mathbf{G}_\mathrm{u}(u,v;t,z)$, $\lambda=u+\ii v$, with jump matrix
\begin{equation}
\mathbf{G}_\mathrm{u}(u,v;t,z)\coloneq
\dot{\mathbf{F}}_\mathrm{u}(u,v;t,z)\dot{\K}_\mathrm{u}^-(u+\ii v;t,z)\mathbf{J}_\mathrm{u}(u,v;t,z)\dot{\mathbf{J}}_\mathrm{u}(u+\ii v;t,z)^{-1}\dot{\K}_\mathrm{u}^-(u+\ii v;t,z)\dot{\mathbf{F}}_\mathrm{u}(u,v;t,z)^{-1}.
\end{equation}

Since the conjugating factors $\dot{\K}_\mathrm{s}(u+\ii v;t,z)$ and $\dot{\K}_\mathrm{s}(u+\ii v;t,z)^{-1}$ in \eqref{e:dotFs-dbar} played no role whatsoever in the analysis of $\dot{\mathbf{F}}_\mathrm{s}(u,v;t,z)$ in Section~\ref{s:dbar-s} once it was noted that both factors were uniformly bounded, so also under the conditions that either $M=0$ or $M>0$ and $t>0$ is large while $z=O(t^{-1})$ guaranteeing the same properties of $\dot{\K}_\mathrm{u}(u+\ii v;t,z)$ and $\dot{\K}_\mathrm{u}(u+\ii v;t,z)^{-1}$ all of the analysis from that section applies also to $\dot{\mathbf{F}}_\mathrm{u}(u,v;t,z)$ because the ``core'' matrix $\mathbf{D}_\mathrm{s}(u,v;t,z)$ is common to \eqref{e:dotFs-dbar} and \eqref{e:dotFu-dbar}.  It follows that under these conditions $\dot{\mathbf{F}}_\mathrm{u}(u,v;t,z)$ is H\"older continuous and tends to $\I$ as $u+\ii v\to\infty$,
\begin{equation}
\dot{\mathbf{F}}_\mathrm{u}(u,v;t,z)-\I=\O((z/t)^{\frac{1}{2}(N-1)}) + \O(t^{\frac{3}{2}-N})
\label{e:dotFu-uniform}
\end{equation}
uniformly for $(u,v)\in \Real^2$, and
\begin{equation}
\dot{\mathbf{F}}_\mathrm{u}^{(1)}(t,z)\coloneq\lim_{v\to\infty}(u+\ii v)\left[\dot{\mathbf{F}}_\mathrm{u}(u,v;t,z)-\I\right]=\O((z/t)^{\frac{1}{2}N})+\O(t^{1-N})
\label{e:dotF1u}
\end{equation}
with both estimates \eqref{e:dotFu-uniform} and \eqref{e:dotF1u} valid as $t\to+\infty$ with $z=\O(t)$ (or $z=\O(t^{-1})$ if $M>0$), where the limit in \eqref{e:dotF1u}
exists independently of $u\in\Real$ fixed.

One then checks that $\dot{\mathbf{J}}_\mathrm{u}(\lambda;t,z)$ defined by \eqref{e:dot-Ju} has unit determinant and is bounded on $\Sigma$, which implies via \eqref{e:Ju-approximation} that $\mathbf{J}_\mathrm{u}(u,v;t,z)\dot{\mathbf{J}}_\mathrm{u}(u+\ii v;t,z)^{-1}=\I+\O(\lambda_\circ^{M+1})$.  Then, under the conditions yielding the above estimates of $\dot{\mathbf{F}}_\mathrm{u}(u,v;t,z)$ it follows that also $\mathbf{G}_\mathrm{u}(u,v;t,z)=\I+\O(\lambda_\circ^{M+1})$.  Hence, by the same arguments as given in Section~\ref{s:small-norm-RHP-s} we
obtain also
\begin{equation}
\ddot{\mathbf{F}}_\mathrm{u}(\lambda;t,z)=\I+\O((z/t)^{\frac{1}{2}(M+1)})
\label{e:ddotF-uniform-bound-u}
\end{equation}
holding uniformly for $\lambda\in\Complex\setminus\Sigma$, and
\begin{equation}
\ddot{\mathbf{F}}_\mathrm{u}^{(1)}(t,z)\coloneq\lim_{\lambda\to\infty}\lambda\left[\ddot{\mathbf{F}}_\mathrm{u}(\lambda;t,z)-\I\right]=\O((z/t)^{\frac{1}{2}(M+2)}).
\label{e:ddotF-moment-bound-u}
\end{equation}

\subsection{Asymptotic formul\ae\ for $q(t,z)$, $P(t,z)$, and $D(t,z)$}
Assuming that $t\to+\infty$ and $z>0$ satisfies either $z=o(t)$ if $M=0$ or $z=O(t^{-1})$ if $M>0$ as needed to control the factors in \eqref{e:Ku-formula}, the calculations in Section~\ref{s:potentials-in-terms-of-dotKs} go through in the current setting, \emph{mutatis mutandis}.  We obtain that
\begin{equation}
q(t,z)=-2\ii\lambda_\circ\ee^{-\ii(\arg(a_M)+\aleph)}\lim_{k\to\infty}k\ddot{K}_{u,1,2}(k;t,z) + \O((z/t)^{\frac{1}{2}(M+2)})+\O((z/t)^{\frac{1}{2}N})+\O(t^{1-N})
\label{q:q-u-formula}
\end{equation}
and
\begin{multline}
\brho(t,z)=\ee^{-\frac{1}{2}\ii[\arg(a_M)+\aleph]\sigma_3}\ddot{\K}_\mathrm{u}(0;t,z)\sigma_3\ddot{\K}_\mathrm{u}(0;t,z)^{-1}
\ee^{\frac{1}{2}\ii[\arg(a_M)+\aleph]\sigma_3} \\
{}+ \O((z/t)^{\frac{1}{2}(M+1)})+\O((z/t)^{\frac{1}{2}(N-1)})+\O(t^{\frac{3}{2}-N}).
\label{e:rho-u-formula}
\end{multline}
We are primarily concerned with the first row of $\brho(t,z)$, which gives $P(t,z)=\rho_{1,2}(t,z)$ and $D(t,z)=\rho_{1,1}(t,z)$.

For the case that $M=0$, we then use \eqref{e:tilde-Y} and \eqref{e:ddotKu-tilde-Y} together with \eqref{e:Lax-potentials} to get
\begin{equation}
\begin{split}
-2\ii\lambda_\circ\ee^{-\ii(\arg(a_0)+\aleph)}\lim_{k\to\infty}k\ddot{K}_{u,1,2}(k;t,z)&=
-2\ii\lambda_\circ\ee^{-\ii(\arg(r_0)+\aleph)}\lim_{\Lambda\to\infty}\Lambda\widetilde{Y}_{1,2}(\Lambda;\sqrt{2tz},\arctan(|r_0|))\\
&=-2\ii\lambda_\circ\ee^{-\ii(\arg(r_0)+\aleph)}\lim_{\Lambda\to\infty}\Lambda Y_{1,2}(\Lambda;\sqrt{2tz},\arctan(|r_0|))\\
&=\frac{2\lambda_\circ}{\sqrt{2tz}}\ee^{-\ii(\arg(r_0)+\aleph)}y(\sqrt{2tz};\omega)\\
&=t^{-1}\ee^{-\ii(\arg(r_0)+\aleph)}y(\sqrt{2tz};\omega)
\end{split}
\end{equation}
where $\omega$ is defined in terms of $|r_0|$ by \eqref{e:omega-r0}.  Similarly,
\begin{multline}
\left[\ee^{-\frac{1}{2}\ii[\arg(a_0)+\aleph]\sigma_3}\ddot{\K}_\mathrm{u}(0;t,z)\sigma_3\ddot{\K}_\mathrm{u}(0;t,z)^{-1}\ee^{\frac{1}{2}\ii[\arg(a_0)+\aleph]\sigma_3}\right]_{1,2}\\
\begin{aligned}
&=
-2\ee^{-\ii(\arg(r_0)+\aleph)}Y_{1,1}(0;\sqrt{2tz},\arctan(|r_0|))Y_{1,2}(0;\sqrt{2tz},\arctan(|r_0|))\\
&=\frac{2}{\sqrt{2tz}}\ee^{-\ii(\arg(r_0)+\aleph)}s(\sqrt{2tz};\omega)
\end{aligned}
\end{multline}
and
\begin{multline}
\left[\ee^{-\frac{1}{2}\ii[\arg(a_0)+\aleph]\sigma_3}\ddot{\K}_\mathrm{u}(0;t,z)\sigma_3\ddot{\K}_\mathrm{u}(0;t,z)^{-1}\ee^{\frac{1}{2}\ii[\arg(a_0)+\aleph]\sigma_3}\right]_{1,1}\\
\begin{aligned}
&=1+2Y_{1,2}(0;\sqrt{2tz},\arctan(|r_0|))Y_{2,1}(0;\sqrt{2tz},\arctan(|r_0|))\\
&=1-\frac{2}{\sqrt{2tz}}U(\sqrt{2tz};\omega).
\end{aligned}
\end{multline}
Comparing with \eqref{e:unstable-self-similar} in Definition~\ref{def:self-similar} verifies the asymptotic formul\ae\ \eqref{e:qPD-generic-global-unstable} and hence finishes the proof of Theorem~\ref{thm:global-M0-stable-and-unstable} in the unstable-medium case of $D_-=1$.  Using the asymptotic formul\ae\ \eqref{e:y-re-asymp}--\eqref{e:U-re-asymp} with parameters \eqref{e:nprime-varrhoprime} and evaluating the error terms in Theorem~\ref{thm:global-M0-stable-and-unstable} for $z=O(t^\alpha)$ with $-1<\alpha<1$ then proves Corollary~\ref{cor:medium-bulk-M0-stable-and-unstable} in the unstable-medium case as well.

For the case $M>0$ with the additional assumption that $z=\O(t^{-1})$ we obtain from \eqref{e:dddotKu-ddotKu} and \eqref{e:dddotKu-moment-define}--\eqref{e:dddotKu-moment-approx} that
\begin{equation}
\begin{split}
-2\ii\lambda_\circ\ee^{-\ii(\arg(a_M)+\aleph)}\lim_{k\to\infty}k\ddot{K}_{u,1,2}(k;t,z)&=
-2\ii\lambda_\circ\ee^{-\ii(\arg(r_0^{(M)})+\aleph)}\lim_{k\to\infty}k\dddot{K}_{u,1,2}(k;t,z)\\
&=-2\ii\lambda_\circ\ee^{-\ii(\arg(r_0^{(M)})+\aleph)}\dddot{K}_{u,1,2}^{(1)}(t,z)\\
&=\frac{\b{r_0^{(M)}}\ee^{-\ii\aleph}}{\pi M!}\lambda_\circ^{M+1}\oint_{|s|=1}s^M\ee^{\ii \sqrt{2tz}(s-s^{-1})}\dd s + \O(\lambda_\circ^{3M+1})\\
&=\ii(-1)^{M+1}\frac{\b{r_0^{(M)}}\ee^{-\ii\aleph}}{M!}2^{\frac{1}{2}(1-M)}\left(\frac{z}{t}\right)^{\frac{1}{2}(M+1)}J_{M+1}(2\ii\sqrt{2tz}) \\
&\qquad\qquad\qquad{}+ \O((z/t)^{\frac{1}{2}(3M+1)}).
\end{split}
\end{equation}
Similarly, from \eqref{e:dddotKu-ddotKu} and \eqref{e:dddotKu-origin-approx} we find
\begin{multline}
\left[\ee^{-\frac{1}{2}[\arg(a_M)+\aleph]\sigma_3}\ddot{\K}_\mathrm{u}(0;t,z)\sigma_3\ddot{\K}_\mathrm{u}(0;t,z)^{-1}
\ee^{\frac{1}{2}\ii[\arg(a_M)+\aleph]\sigma_3}\right]_{1,2}\\
\begin{aligned}
&=-2\ee^{-\ii(\arg(r_0^{(M)})+\aleph)}\ddot{K}_{u,1,1}(0;t,z)\ddot{K}_{u,1,2}(0;t,z)\\
&=-2\ee^{-\ii(\arg(r_0^{(M)})+\aleph)}\dddot{K}_{u,1,1}(0;t,z)\dddot{K}_{u,1,2}(0;t,z)\\
&=-\frac{\b{r_0^{(M)}}\ee^{-\ii\aleph}}{\ii \pi M!}\lambda_\circ^{M}\oint_{|s|=1}s^{M-1}\ee^{\ii\sqrt{2tz}(s-s^{-1})}\dd s + \O(\lambda_\circ^{3M})\\
&=(-1)^{M+1}\frac{\b{r_0^{(M)}}\ee^{-\ii\aleph}}{M!}2^{1-\frac{1}{2}M}\left(\frac{z}{t}\right)^{\frac{1}{2}M}J_M(2\ii\sqrt{2tz}) + O((z/t)^{\frac{3}{2}M}),
\end{aligned}
\end{multline}
and
\begin{multline}
\left[\ee^{-\frac{1}{2}[\arg(a_M)+\aleph]\sigma_3}\ddot{\K}_\mathrm{u}(0;t,z)\sigma_3\ddot{\K}_\mathrm{u}(0;t,z)^{-1}
\ee^{\frac{1}{2}\ii[\arg(a_M)+\aleph]\sigma_3}\right]_{1,1}\\
\begin{aligned}
&=1+2\ddot{K}_{u,1,2}(0;t,z)\ddot{K}_{u,2,1}(0;t,z)\\
&=1+2\dddot{K}_{u,1,2}(0;t,z)\dddot{K}_{u,2,1}(0;t,z)\\
&=1+\left(\frac{|r_0^{(M)}|}{2\pi M!}\right)^22^{1-M}\left(\frac{z}{t}\right)^M\left[\oint_{|s|=1}s^{M-1}\ee^{\ii\sqrt{2tz}(s-s^{-1})}\dd s\right]\left[\oint_{|s|=1}s^{M-1}\ee^{-\ii \sqrt{2tz}(s-s^{-1})}\dd s\right]+\O((z/t)^{2M})\\
&=1+(-1)^{M+1}\left(\frac{|r_0^{(M)}|}{M!}\right)^22^{1-M}\left(\frac{z}{t}\right)^MJ_M(2\ii\sqrt{2tz}) +
\O((z/t)^{2M})^2.
\end{aligned}
\end{multline}
These computations then establish the asymptotic formul\ae\ \eqref{e:qPD-edge-Mpos-unstable}, which proves Theorem~\ref{thm:edge-Mpos-unstable}.

\appendix
\section{Uniqueness of causal solutions}
\label{s:causal-uniqueness}
In this appendix, we prove Theorem~\ref{thm:causal-uniqueness}.  Let $(q^{(j)}(t,z),P^{(j)}(t,z),D^{(j)}(t,z))$, $j=1,2$ denote two causal solutions of the
same Cauchy problem~\eqref{e:mbe}.
%
If we introduce the notation
\begin{equation}
\begin{split}
\Delta q(t,z)&:=q^{(2)}(t,z)-q^{(1)}(t,z)\\
\Delta P(t,z)&:=P^{(2)}(t,z)-P^{(1)}(t,z)\\
\Delta D(t,z)&:=D^{(2)}(t,z)-D^{(1)}(t,z),
\end{split}
\end{equation}
then because the incident pulse $q_0(\cdot)$ is the same for both solutions,
\begin{equation}
\Delta q(t,z)=0\quad\text{for $z=0$ and $t\ge 0$, }
\end{equation}
and because both solutions are causal,
\begin{equation}
\Delta q(t,z)=\Delta P(t,z)=\Delta D(t,z)=0\quad\text{for $t=0$ and $z\ge 0$.}
\end{equation}
The goal is to show that these boundary conditions on the quarter plane $t\ge 0$ and $z\ge 0$ imply that
$\Delta q(t,z)=\Delta P(t,z)=\Delta D(t,z)=0$ in the interior of the quarter plane.  In fact, we will not even require the condition $\Delta q(0,z)=0$ for $z\ge 0$ to prove this result.


From 
the Maxwell equation and Bloch subsystem governing $(q^{(j)}(t,z),P^{(j)}(t,z),D^{(j)}(t,z))$, $j=1,2$,
we deduce equations satisfied by the differences of two solutions:
\begin{equation}
\begin{split}
\Delta q_z(t,z)&=-\Delta P(t,z),\\
\Delta P_t(t,z)&=-2[q^{(2)}(t,z)D^{(2)}(t,z)-q^{(1)}(t,z)D^{(1)}(t,z)]\\
&=-2[D^{(2)}(t,z)\Delta q(t,z)+q^{(1)}(t,z)\Delta D(t,z)],\\
\Delta D_t(t,z)&=2\Re(\b{q^{(2)}(t,z)}P^{(2)}(t,z)-\b{q^{(1)}(t,z)}P^{(1)}(t,z))\\
&=2\Re(P^{(2)}(t,z)\b{\Delta q(t,z)}+\b{q^{(1)}(t,z)}\Delta P(t,z)).
\end{split}
\end{equation}
From the conservation law $D^{(j)}(t,z)^2+|P^{(j)}(t,z)|^2=1$ and the Maxwell equation governing $q^{(j)}(t,z)$, we obtain the following a priori estimates:
\begin{equation}
|P^{(j)}(t,z)|\le 1,\quad |D^{(j)}(t,z)|\le 1,\quad\text{and}\quad |q^{(j)}(t,z)|\le |q_0(t)| +z
\label{eq:apriori}
\end{equation}
for $t\ge 0$ and $z\ge 0$.
\begin{lemma}
Suppose that for $0\le t_0<t_1$ and $0\le z_0<z_1$ it holds that
\begin{equation}
\Delta q(t,z_0)=0\quad\text{for $t_0\le t\le t_1$, and}
\end{equation}
\begin{equation}
\Delta P(t_0,z)=\Delta D(t_0,z)=0\quad\text{for $z_0\le z\le z_1$.}
\end{equation}
Then, setting
\begin{equation}
M(t_1,z_1):=\int_{t_0}^{t_1}|q_0(\tau)|\dd\tau + z_1(t_1-t_0),
\label{eq:M-define}
\end{equation}
the condition $z_1-z_0+4(t_1-t_0)+4M(t_1,z_1)<1$ implies that $\Delta q(t,z)=\Delta P(t,z)=\Delta D(t,z)=0$ for all $(t,z)\in [t_0,t_1]\times [z_0,z_1]$.
\label{lem:complex-rectangle}
\end{lemma}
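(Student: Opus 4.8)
The plan is to recast the system for the differences as a closed set of \emph{linear} integral equations on the closed rectangle $\mathcal{R}:=[t_0,t_1]\times[z_0,z_1]$ and to verify that the associated integral operator is a strict contraction on $C(\mathcal{R};\Complex^3)$ in a suitable norm precisely when the stated smallness condition holds. Since both the zero triple and the triple $(\Delta q,\Delta P,\Delta D)$ are fixed points of this operator, the Banach fixed point theorem then forces $\Delta q=\Delta P=\Delta D=0$ throughout $\mathcal{R}$, which is the assertion.

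First I would integrate the three differential relations derived just above the statement: $\Delta q_z=-\Delta P$ in $z$ from $z=z_0$, and the equations for $\Delta P_t$ and $\Delta D_t$ in $t$ from $t=t_0$. The hypotheses $\Delta q(t,z_0)=0$ for $t_0\le t\le t_1$ and $\Delta P(t_0,z)=\Delta D(t_0,z)=0$ for $z_0\le z\le z_1$ are exactly what is needed to annihilate all three constants of integration (and the value $\Delta q(0,z)$ is never used), giving for $(t,z)\in\mathcal{R}$
\begin{equation}
\begin{split}
\Delta q(t,z)&=-\int_{z_0}^z\Delta P(t,z')\dd z',\\
\Delta P(t,z)&=-2\int_{t_0}^t\bigl[D^{(2)}(\tau,z)\Delta q(\tau,z)+q^{(1)}(\tau,z)\Delta D(\tau,z)\bigr]\dd\tau,\\
\Delta D(t,z)&=2\int_{t_0}^t\Re\bigl(P^{(2)}(\tau,z)\b{\Delta q(\tau,z)}+\b{q^{(1)}(\tau,z)}\Delta P(\tau,z)\bigr)\dd\tau.
\end{split}
\end{equation}
Let $\Phi$ denote the linear operator sending a continuous triple $v=(v_1,v_2,v_3)$ on $\mathcal{R}$ to the triple obtained by substituting it into the right-hand sides; it maps $C(\mathcal{R};\Complex^3)$ into itself, and $(\Delta q,\Delta P,\Delta D)$ is a fixed point of $\Phi$.

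Next I would estimate $\Phi$ using the a priori bounds \eqref{eq:apriori}, namely $|P^{(2)}|,|D^{(2)}|\le 1$ and $|q^{(1)}(\tau,z)|\le|q_0(\tau)|+z\le|q_0(\tau)|+z_1$ on $\mathcal{R}$, together with $|\Re(\cdot)|\le|\cdot|$ and $\int_{t_0}^t|q^{(1)}(\tau,z)|\dd\tau\le\int_{t_0}^{t_1}|q_0(\tau)|\dd\tau+z_1(t_1-t_0)=M(t_1,z_1)$. Writing $\|\cdot\|_\infty$ for the supremum over $\mathcal{R}$, this yields
\begin{equation}
\begin{split}
\|(\Phi v)_1\|_\infty&\le(z_1-z_0)\|v_2\|_\infty,\\
\|(\Phi v)_2\|_\infty&\le 2(t_1-t_0)\|v_1\|_\infty+2M(t_1,z_1)\|v_3\|_\infty,\\
\|(\Phi v)_3\|_\infty&\le 2(t_1-t_0)\|v_1\|_\infty+2M(t_1,z_1)\|v_2\|_\infty.
\end{split}
\end{equation}
Equipping $C(\mathcal{R};\Complex^3)$ with the summed norm $\|v\|:=\|v_1\|_\infty+\|v_2\|_\infty+\|v_3\|_\infty$ and adding the three displayed bounds (applied to $v-w$, using linearity of $\Phi$) gives $\|\Phi v-\Phi w\|\le\theta\|v-w\|$ with $\theta=\max\{4(t_1-t_0),\,z_1-z_0+2M(t_1,z_1),\,2M(t_1,z_1)\}$. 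Since $M(t_1,z_1)\ge 0$ and $t_1>t_0$, $z_1>z_0$, each of these three numbers is at most $z_1-z_0+4(t_1-t_0)+4M(t_1,z_1)$, which is $<1$ by hypothesis. Hence $\Phi$ is a strict contraction, its unique fixed point is $(0,0,0)$, and therefore $\Delta q=\Delta P=\Delta D=0$ on $\mathcal{R}$.

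The argument is otherwise routine, being nothing more than the contraction mapping principle applied to a linear Volterra-type system on a small box; the one point that genuinely requires care is the choice of the summed norm in the last step, which is what makes the contraction constant be controlled \emph{exactly} by the combination $z_1-z_0+4(t_1-t_0)+4M(t_1,z_1)$ rather than by a weaker product-type condition such as $\max\{z_1-z_0,\,2(t_1-t_0)+2M(t_1,z_1)\}<1$. The other place where the causality hypotheses on the two solutions enter is the bookkeeping that ensures the three integration constants really do vanish.
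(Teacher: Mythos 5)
Your proposal is correct and follows essentially the same route as the paper: the identical integral equations, the identical a priori bounds leading to the coefficients $z_1-z_0$, $2(t_1-t_0)$, and $2M(t_1,z_1)$, and the same summation of the three sup-norm estimates. The only (cosmetic) difference is packaging — the paper derives the self-bounding inequality $S\le\bigl(z_1-z_0+4(t_1-t_0)+4M(t_1,z_1)\bigr)S$ for $S=S_q+S_P+S_D$ and concludes $S=0$ directly, whereas you phrase the same estimate as a contraction bound for the linear operator $\Phi$ and invoke uniqueness of its fixed point.
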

\begin{proof}
Whenever $(t,z)\in [t_0,t_1]\times [z_0,z_1]$,
\begin{equation}
\begin{split}
\Delta q(t,z)&=-\int_{z_0}^z\Delta P(t,\zeta)\dd\zeta\\
\Delta P(t,z)&=-2\int_{t_0}^t D^{(2)}(\tau,z)\Delta q(\tau,z)\dd\tau -2\int_{t_0}^t q^{(1)}(\tau,z)\Delta D(\tau,z)\dd\tau\\
\Delta D(t,z)&=2\int_{t_0}^t\Re(P^{(2)}(\tau,z)\b{\Delta q(t,z)})\dd\tau + 2\int_{t_0}^t\Re(\b{q^{(1)}(\tau,z)}\Delta P(\tau,z))\dd\tau.
\end{split}
\end{equation}
Using \eqref{eq:apriori} we then get the inequalities
\begin{equation}
\begin{split}
|\Delta q(t,z)|&\le\int_{z_0}^z|\Delta P(t,\zeta)|\dd\zeta\\
|\Delta P(t,z)|&\le 2\int_{t_0}^t|\Delta q(\tau,z)|\dd\tau + 2\int_{t_0}^t(|q_0(\tau)|+z)|\Delta D(\tau,z)|\dd\tau\\
|\Delta D(t,z)|&\le 2\int_{t_0}^t|\Delta q(\tau,z)|\dd\tau + 2\int_{t_0}^t(|q_0(\tau)|+z)|\Delta P(\tau,z)|\dd\tau.
\end{split}
\end{equation}
If we denote
\begin{equation}
\begin{split}
S_q(t,z)&:=\sup_{(\tau,\zeta)\in [t_0,t]\times[z_0,z]}|\Delta q(\tau,\zeta)|\\
S_P(t,z)&:=\sup_{(\tau,\zeta)\in [t_0,t]\times[z_0,z]}|\Delta P(\tau,\zeta)|\\
S_D(t,z)&:=\sup_{(\tau,\zeta)\in [t_0,t]\times[z_0,z]}|\Delta D(\tau,\zeta)|,
\end{split}
\end{equation}
then for $(t,z)\in [t_0,t_1]\times[z_0,z_1]$ we get
\begin{equation}
\begin{split}
|\Delta q(t,z)|&\le (z-z_0)S_P(t,z)\\
&\le (z_1-z_0)S_P(t_1,z_1)\\
|\Delta P(t,z)|&\le 2(t-t_0)S_q(t,z) + 2\left[\int_{t_0}^t|q_0(\tau)|\dd\tau + z(t-t_0)\right]S_D(t,z)\\
&\le
2(t_1-t_0)S_q(t_1,z_1)+2M(t_1,z_1)S_D(t_1,z_1)\\
|\Delta D(t,z)|&\le 2(t-t_0)S_q(t,z)+2\left[\int_{t_0}^t|q_0(\tau)|\dd\tau + z(t-t_0)\right]S_P(t,z)\\
&\le
2(t_1-t_0)S_q(t_1,z_1)+2M(t_1,z_1)S_P(t_1,z_1),
\end{split}
\end{equation}
where we recall the definition \eqref{eq:M-define}.
Since the final upper bounds are independent of $(t,z)\in [t_0,t_1]\times[z_0,z_1]$, taking the supremum in each case over this enclosing rectangle gives
\begin{equation}
\begin{split}
S_q(t_1,z_1)&\le(z_1-z_0)S_P(t_1,z_1)\\
S_P(t_1,z_1)&\le
2(t_1-t_0)S_q(t_1,z_1)+2M(t_1,z_1)S_D(t_1,z_1)\\
S_D(t_1,z_1)&\le
2(t_1-t_0)S_q(t_1,z_1)+2M(t_1,z_1)S_P(t_1,z_1).
\end{split}
\label{eq:3-inequalities}
\end{equation}
Now set
\begin{equation}
S(t,z):=S_q(t,z)+S_P(t,z)+S_D(t,z).
\end{equation}
Then since $S_q(t,z)\le S(t,z)$, $S_P(t,z)\le S(t,z)$, and $S_D(t,z)\le S(t,z)$, summing the three inequalities in \eqref{eq:3-inequalities} gives
\begin{equation}
S(t_1,z_1)\le \left( z_1-z_0 + 4(t_1-t_0) + 4M(t_1,z_1)\right)S(t_1,z_1).
\end{equation}
Suppose that $z_1-z_0+4(t_1-t_0)+4M(t_1,z_1)<1$.
If $S(t_1,z_1)>0$, then dividing by $S(t_1,z_1)$ gives $z_1-z_0+4(t_1-t_0)+4M(t_1,z_1)\ge 1$, a contradiction.  Since $S(t_1,z_1)\ge 0$, it follows that $S(t_1,z_1)=0$, which implies that also $S_q(t_1,z_1)=S_P(t_1,z_1)=S_D(t_1,z_1)=0$, i.e., $\Delta q(t,z)$, $\Delta P(t,z)$, and $\Delta D(t,z)$ all vanish identically on $[t_0,t_1]\times[z_0,z_1]$.
\end{proof}

\begin{lemma}
Suppose that for $0\le z_0<z_1$ it holds that
\begin{equation}
\Delta q(t,z_0)=0\quad\text{for all $t\ge 0$, and}
\end{equation}
\begin{equation}
\Delta P(0,z)=\Delta D(0,z)=0\quad\text{for $z_0\le z\le z_1$.}
\end{equation}
If $z_1-z_0<1$ then $\Delta q(t,z)=\Delta P(t,z)=\Delta D(t,z)=0$ for all $t\ge 0$ and $z\in [z_0,z_1]$.
\label{lem:horizontal-strip}
\end{lemma}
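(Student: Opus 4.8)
The plan is to exhaust the infinite strip $[0,+\infty)\times[z_0,z_1]$ by a sequence of small rectangles, on each of which Lemma~\ref{lem:complex-rectangle} applies, and to propagate the vanishing of $\Delta q$, $\Delta P$, and $\Delta D$ from one rectangle to the next by induction in the $t$-direction. Since $z_1-z_0<1$ by hypothesis, set $\epsilon:=1-(z_1-z_0)>0$ and introduce the auxiliary function
\[
\Phi(t):=4(1+z_1)\,t+4\int_0^t|q_0(\tau)|\dd\tau,\qquad t\ge 0.
\]
Because $q_0\in\mathscr{S}(\Real)$ is in particular locally integrable, $\Phi$ is continuous and strictly increasing with $\Phi(0)=0$, and crucially $\Phi(t)\to+\infty$ as $t\to+\infty$ (forced already by the linear term $4(1+z_1)t$). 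Hence $\Phi$ is a homeomorphism of $[0,+\infty)$ onto itself, and I would choose partition points $0=t_0<t_1<t_2<\cdots$ by $t_j:=\Phi^{-1}(j\epsilon/2)$, so that $t_j\to+\infty$ while $\Phi(t_{j+1})-\Phi(t_j)=\epsilon/2<\epsilon$ for every $j\ge 0$.

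The reason for this choice is that, unwinding the definition $M(t_{j+1},z_1)=\int_{t_j}^{t_{j+1}}|q_0(\tau)|\dd\tau+z_1(t_{j+1}-t_j)$ in Lemma~\ref{lem:complex-rectangle}, the smallness hypothesis $z_1-z_0+4(t_{j+1}-t_j)+4M(t_{j+1},z_1)<1$ reads exactly $z_1-z_0+\bigl(\Phi(t_{j+1})-\Phi(t_j)\bigr)<1$, i.e.\ $\Phi(t_{j+1})-\Phi(t_j)<\epsilon$, which holds by construction on each rectangle $R_j:=[t_j,t_{j+1}]\times[z_0,z_1]$. I would then induct on $j$. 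For $j=0$: the hypotheses of the present lemma give $\Delta q(t,z_0)=0$ for $0\le t\le t_1$ and $\Delta P(0,z)=\Delta D(0,z)=0$ for $z_0\le z\le z_1$, so Lemma~\ref{lem:complex-rectangle} applies on $R_0$ and yields $\Delta q\equiv\Delta P\equiv\Delta D\equiv 0$ there. For the inductive step, assuming these three functions vanish on $[0,t_j]\times[z_0,z_1]$, we have in particular $\Delta P(t_j,z)=\Delta D(t_j,z)=0$ for $z_0\le z\le z_1$, while $\Delta q(t,z_0)=0$ for $t_j\le t\le t_{j+1}$ is part of the standing hypothesis; Lemma~\ref{lem:complex-rectangle} applied on $R_j$ then gives vanishing on $R_j$, hence on $[0,t_{j+1}]\times[z_0,z_1]$. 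Since $\bigcup_j[0,t_j]\times[z_0,z_1]$ exhausts $[0,+\infty)\times[z_0,z_1]$ as $t_j\to+\infty$, the claim follows.

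The only point requiring care — and the one I would stress — is that the partition must actually reach every finite $t$. This is precisely why the linear-in-$t$ contributions $4(t_{j+1}-t_j)$ and $z_1(t_{j+1}-t_j)$ hidden in $\Phi$ are essential: without them one could only demand $\int_{t_j}^{t_{j+1}}|q_0|\dd\tau<\epsilon$, and since $q_0\in L^1(\Real)$ this would be met by \emph{finitely} many intervals, the last being a half-line $[t_N,+\infty)$, which is not an admissible input to Lemma~\ref{lem:complex-rectangle} (a bounded rectangle). With the linear terms present, $\Phi$ is proper, so $t_j=\Phi^{-1}(j\epsilon/2)$ is defined for all $j$ and tends to $+\infty$; infinitely many rectangles are used but they cover the whole strip. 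Everything else is routine bookkeeping of the hypotheses of Lemma~\ref{lem:complex-rectangle}.
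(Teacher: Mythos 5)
Your proposal is correct and follows essentially the same route as the paper: tile the strip $[0,+\infty)\times[z_0,z_1]$ by rectangles in the $t$-direction that are small enough for Lemma~\ref{lem:complex-rectangle} to apply, and propagate the vanishing by induction on the rectangle index. The only (harmless) difference is that the paper uses a uniform spacing $\Delta t$, relying on the absolute continuity of the integral of $q_0\in L^1(\Real)$ to make $\int_{n\Delta t}^{(n+1)\Delta t}|q_0|$ uniformly small in $n$, whereas you build an adaptive partition via the proper increasing function $\Phi$; both yield the same conclusion.
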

\begin{proof}
Since $z_1-z_0<1$, let $\epsilon>0$ satisfy $\epsilon < 1-(z_1-z_0)$.  Then, since $q_0\in L^1(\Real)$, there exists $\Delta t>0$ such that
\begin{equation}
4\Delta t + 4\int_{n\Delta t}^{(n+1)\Delta t}|q_0(\tau)|\dd\tau + 4z_1\Delta t\le\epsilon,\quad\forall n\in\mathbb{Z}_{\ge 0}.
\end{equation}
Note that $\Delta t$ will depend on $z_1$.

We prove the lemma by showing that given $n\in\mathbb{Z}_{\ge 0}$, $\Delta q(t,z)=\Delta P(t,z)=\Delta D(t,z)=0$ holds for $(t,z)\in [n\Delta t,(n+1)\Delta t]\times[z_0,z_1]$.  The base case for induction on $n$ is $n=0$.  By hypothesis, we have $\Delta q(t,z_0)=0$ for $0\le t\le \Delta t$ and $\Delta P(0,z)=\Delta D(0,z)=0$ for $z_0\le z\le z_1$.  Then taking $t_0=0$ and $t_1=\Delta t$,
\begin{equation}
z_1-z_0 + 4(t_1-t_0) + 4M(t_1,z_1)\le z_1-z_0+\epsilon<1
\label{eq:piece-of-the-strip}
\end{equation}
so Lemma~\ref{lem:complex-rectangle} implies that $\Delta q(t,z)=\Delta P(t,z)=\Delta D(t,z)=0$ for $(t,z)\in [t_0,t_1]\times[z_0,z_1]=[0,\Delta t]\times[z_0,z_1]$, which establishes the $n=0$ statement.  Taking the inductive hypothesis that $\Delta q(t,z)=\Delta P(t,z)=\Delta D(t,z)=0$ for $(t,z)\in [(n-1)\Delta t,n\Delta t]\times[z_0,z_1]$, we have in particular that $\Delta P(n\Delta t,z)=\Delta D(n\Delta t,z)=0$ for $z_0\le z\le z_1$.  We are also given that $\Delta q(t,z_0)=0$ for $n\Delta t\le t\le (n+1)\Delta t$.
Taking $t_0=n\Delta t$ and $t_1=(n+1)\Delta t$ and applying Lemma~\ref{lem:complex-rectangle} shows that
since \eqref{eq:piece-of-the-strip} holds, $\Delta q(t,z)=\Delta P(t,z)=\Delta D(t,z)=0$ holds for $(t,z)\in [t_0,t_1]\times[z_0,z_1]=[n\Delta t,(n+1)\Delta t]\times[z_0,z_1]$.  This completes the induction argument and the proof.
\end{proof}

Now we can give the proof of Theorem~\ref{thm:causal-uniqueness}.  If $(q^{(j)}(t,z),P^{(j)}(t,z),D^{(j)}(t,z))$, $j=1,2$ are two causal solutions of the MBE Cauchy problem~\eqref{e:mbe}, then in particular
$\Delta q(t,0)=0$ for all $t\ge 0$ and $\Delta P(0,z)=\Delta D(0,z)=0$ for all $z\ge 0$.  Combining Lemma~\ref{lem:horizontal-strip} with an induction argument then shows that
$\Delta q(t,z)=\Delta P(t,z)=\Delta D(t,z)=0$ holds on any horizontal strip of the form $t\ge 0$ and $\tfrac{1}{2}n\le z\le \tfrac{1}{2}(n+1)$ for $n\in\mathbb{Z}_{\ge 0}$.  Therefore the solutions
$(q^{(j)}(t,z),P^{(j)}(t,z),D^{(j)}(t,z))$, $j=1,2$ agree on the whole quarter plane $t\ge 0$ and $z\ge 0$, which completes the proof.

\section{Proof of Theorem~\ref{thm:reconstruction}}
\label{s:proof-reconstruction}

This proof that the reconstruction formula~\eqref{e:reconstruction} yields a causal solution of the Cauchy problem~\eqref{e:mbe} consists of three steps:
\begin{itemize}
\item[(i)]
The Lax pair~\eqref{e:LP} in which the coefficients are expressed in terms of $\M(\lambda;t,z)$ via the reconstruction formula~\eqref{e:reconstruction} is derived directly from RHP~\ref{rhp:M},
implying by compatibility that the two functions $q(t,z)$ and $\brho(t,z)$ from the reconstruction formula solve the Maxwell-Bloch equations;
\item[(ii)] The optical field, when evaluated at $z=0$, is shown to reproduce the required incident pulse,
i.e., $q(t,0) = q_0(t)$;
\item[(iii)] The density matrix is shown to satisfy the required initial values in the distant past,
i.e., $\lim_{t\to-\infty}\brho(t,z) = D_-\sigma_3$.  This part will be proved by showing first that the solution is causal.
\end{itemize}
The three parts combined prove that the reconstruction formula~\eqref{e:reconstruction} solves the Cauchy problem~\eqref{e:mbe} and is causal (and hence, by Theorem~\ref{thm:causal-uniqueness}, unique).

For step (i),
one defines in terms of the solution of RHP~\ref{rhp:M} a new matrix function $\bphi(t,z;\lambda) = \M(\lambda;t,z)\ee^{\ii\theta(\lambda;t,z)\sigma_3}$, which is analytic for $\lambda\in\Complex\setminus\Sigma_\M$ except at the origin, and which has the asymptotic behavior that $\bphi(t,z;\lambda)\ee^{-\ii\lambda t\sigma_3}=\mathbb{I}+\lambda^{-1}\M^{(1)}(t,z) + o(\lambda^{-1})$ as $\lambda\to\infty$ and that $\bphi(t,z;\lambda)\ee^{D_-z\sigma_3/(2\lambda)}$ is analytic at $\lambda=0$.
The jump conditions for $\bphi(t,z;\lambda)$ induced on $\Sigma_\M$ are easily checked to be independent of $(t,z)\in\Complex^2$.  Therefore, the matrix function $\bphi_t(t,z;\lambda)\bphi(t,z;\lambda)^{-1}$ is analytic in the whole complex plane with the possible exception of $\lambda=0$, however one easily checks that the singularity at the origin is removable.  Computing the asymptotic behavior as $\lambda\to\infty$ shows via Liouville's theorem that $\bphi_t(t,z;\lambda)\bphi(t,z;\lambda)^{-1}$ is a linear function of the form $\ii\lambda\sigma_3+\Q(t,z)$, where $\Q(t,z)$ has the form shown in \eqref{e:mbe-matrix} in which the function $q(t,z)$ is obtained from $\M(\lambda;t,z)$ via the reconstruction formula \eqref{e:reconstruction}.  Similarly, the matrix function $\bphi_z(t,z;\lambda)\bphi(t,z;\lambda)^{-1}$ is analytic except for a simple pole at the origin, and it vanishes for large $\lambda$.  Therefore by Liouville's theorem again it has the form $-\ii\brho(t,z)/(2\lambda)$, and the matrix coefficient $\brho(t,z)$ is obtained from $\M(\lambda;t,z)$ by the reconstruction formula \eqref{e:reconstruction}.
It follows that $\bphi(t,z;\lambda)$ is a simultaneous fundamental solution matrix of the Lax pair equations \eqref{e:LP}, and by compatibility, that the matrix coefficients $\Q(t,z)$ and $\brho(t,z)$
extracted from $\M(\lambda;t,z)$ via \eqref{e:reconstruction} constitute a solution of the MBE system in matrix form \eqref{e:mbe-matrix}.


For step (ii), we start by setting $z=0$ in RHP~\ref{rhp:M}, which results in a simplification of the phase:  $\theta(\lambda;t,0)=\lambda t$.  Therefore, the essential singularity at the origin $\lambda=0$ is removed from the jump matrices.  The analyticity of $r(\lambda)$ for $\Im(\lambda)>0$ and continuity for $\Im(\lambda)\ge 0$ then allows for a simple analytic deformation:
\begin{equation}
\N(\lambda;t)\coloneq
  \begin{cases}
    \M(\lambda;t,0)\W(\lambda;t,0)\,, & \quad |\lambda| < \gamma\,,\quad \Im(\lambda) > 0\,,\\
    \M(\lambda;t,0)\W^{\dagger}(\lambda;t,0)^{-1}\,, & \quad |\lambda| < \gamma\,,\quad \Im(\lambda) < 0\,,\\
    \M(\lambda;t,0)\,, & \quad |\lambda| > \gamma\,.
  \end{cases}
\end{equation}
The new function has the asymptotics $\N(\lambda;t)\to\I$ as $\lambda\to\infty$,
and a jump on the real line only:
\begin{equation}
\everymath{\displaystyle}
\setlength\arraycolsep{4pt}
\N^+(\lambda;t) = \N^-(\lambda;t) \bpm 1 + |r(\lambda)|^2 & \b{r(\lambda)}\ee^{2\ii\lambda t} \\ r(\lambda)\ee^{-2\ii\lambda t} & 1 \epm\,,\qquad \lambda\in\Real\,.
\end{equation}
This is the classic RHP associated with the non-selfadjoint Zakharov-Shabat scattering problem.
Also, Eq.~\eqref{e:reconstruction} yields $q(t,0) = -2\ii\lim_{\lambda\to\infty}\lambda M_{1,2}(\lambda;t,0) = -2\ii\lim_{\lambda\to\infty}\lambda N_{1,2}(\lambda;t)$.
Hence, the well-known Schwartz-space bijection between $q_0(t)$ and $r(\lambda)$ in absence of discrete spectrum or spectral singularities yields $q(t,0) = q_0(t)$.

For step (iii), we will prove that the solution is causal.
With $t < 0$, regardless of the value of $D_-=\pm 1$ the phase $\theta(\lambda;t,z)$ behaves in a similar way for $|\lambda| \gg 1$ where the linear term dominates.
The sign structure of $\Re(\ii\theta(\lambda;t,z))$ is shown in Figure~\ref{f:outside-itheta} in the case $D_-=-1$ (left panel) and $D_-=1$ (right panel), and
in both cases the sign is the same outside the circle $|\lambda| = \lambda_\circ=\sqrt{z/(2t)}$.
Hence, both stable and unstable cases with $t < 0$ can be treated in an identical way.
In particular, one notices that $\ee^{\mp 2\ii\theta(\lambda;t,z)}\to 0$ as $\pm \Im(\lambda)\to +\infty$ regardless of the sign $D_-$.
%
%
We also make use of the following standard result:
\begin{lemma}
\label{lem:r-asymptotics}
If $q_0$ satisfies Assumption~\ref{ass:q0-assumption}, then there exists $L>0$ such that $r(\lambda)$ is analytic for $|\lambda|>L$ and $\Im(\lambda)>0$, continuous for $|\lambda|>L$ and $\Im(\lambda)\ge 0$, and $r(\lambda)\to 0$ as $\lambda\to\infty$ uniformly with $\Im(\lambda)\ge 0$.
\end{lemma}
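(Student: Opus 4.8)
The plan is to derive everything from the Volterra integral equation satisfied by the Jost solution $\bphi_+(t;\lambda)$ of the $t$-equation in the Lax pair \eqref{e:LP} with $z=0$ and $\Q=\Q(t,0)$ built from $q_0$. Writing $\bphi_+(t;\lambda)=\mathbf{m}_+(t;\lambda)\ee^{\ii\lambda t\sigma_3}$, the normalization $\mathbf{m}_+\to\I$ as $t\to+\infty$ converts the scattering problem into
\[
\mathbf{m}_+(t;\lambda)=\I-\int_t^{+\infty}\ee^{\ii\lambda(t-s)\,\mathrm{ad}\,\sigma_3}\big[\Q(s)\mathbf{m}_+(s;\lambda)\big]\dd s,
\]
where $\mathrm{ad}\,\sigma_3$ acts on off-diagonal entries by multiplication by $\pm2$. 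First I would isolate the first column of this equation: its kernel carries only the exponential $\ee^{2\ii\lambda(s-t)}$ with $s\ge t$, whose modulus is $\le 1$ whenever $\Im(\lambda)\ge 0$. Since $q_0\in\mathscr{S}(\Real)\subset L^1(\Real)$, the associated Neumann series converges absolutely and uniformly for $t\in\Real$ and $\Im(\lambda)\ge 0$; each term is analytic in $\lambda$ for $\Im(\lambda)>0$ and continuous up to the real axis, so the same is true of $\mathbf{m}_+^{(1)}(t;\lambda)$.

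Second, I would extract the large-$\lambda$ behaviour of $\mathbf{m}_+^{(1)}(t;\lambda)$ uniformly in the closed upper half-plane. Integrating by parts once in the leading Neumann iterate (using that $q_0$ and all its derivatives decay, so the boundary term at $s=+\infty$ vanishes and the remainder is controlled by $\|q_0'\|_{L^1(\Real)}$), and bounding the tail of the series by a power of $\|q_0\|_{L^1(\Real)}$, gives $\mathbf{m}_+^{(1)}(t;\lambda)=(1,0)^\top+\O(\lambda^{-1})$ as $\lambda\to\infty$ with $\Im(\lambda)\ge 0$, uniformly in $t\in\Real$. For the statement as phrased only the $o(1)$ estimate is needed, which already follows from the Riemann--Lebesgue lemma applied to the Neumann series, but the $\O(\lambda^{-1})$ bound is just as easy.

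Third, I would evaluate at $t=0$. Because $q_0(t)\equiv 0$ for $t<0$ we have $\bphi_-(t;\lambda)\equiv\ee^{\ii\lambda t\sigma_3}$ for $t\le 0$, hence $\bS(\lambda)=\bphi_+(0;\lambda)=\mathbf{m}_+(0;\lambda)$ as noted in the excerpt; thus $S_{1,1}(\lambda)=m_{+,1,1}(0;\lambda)$ and $S_{2,1}(\lambda)=m_{+,2,1}(0;\lambda)$ are analytic for $\Im(\lambda)>0$, continuous for $\Im(\lambda)\ge0$, and satisfy $S_{1,1}(\lambda)\to1$, $S_{2,1}(\lambda)\to0$ as $\lambda\to\infty$ in the closed upper half-plane. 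Choosing $L>0$ so that $|S_{1,1}(\lambda)-1|<\tfrac12$, and hence $|S_{1,1}(\lambda)|>\tfrac12$, for $|\lambda|>L$ and $\Im(\lambda)\ge0$, the quotient $r(\lambda)=S_{2,1}(\lambda)/S_{1,1}(\lambda)$ from \eqref{e:reflection-coefficient} is well-defined on this region, inherits analyticity in $\Im(\lambda)>0$ and continuity in $\Im(\lambda)\ge0$, and satisfies $r(\lambda)\to0$ as $\lambda\to\infty$ uniformly with $\Im(\lambda)\ge0$, which is the assertion.

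The only point requiring care is the uniformity of the Volterra/Neumann estimates up to the real axis $\Im(\lambda)=0$: one must verify that the relevant exponential factor in the kernel for the \emph{first} column is $\ee^{2\ii\lambda(s-t)}$ with $s\ge t$ (so of modulus at most one when $\Im(\lambda)\ge0$) rather than its reciprocal, and then propagate this through both the series and the integration by parts. This is routine, but it is where the cutoff hypothesis $q_0(t)\equiv0$ for $t<0$ (which makes $t=0$ the natural evaluation point, with $\bphi_-(0;\lambda)=\I$) and the sign conventions in \eqref{e:LP} must be used consistently. No assumption on discrete spectrum or spectral singularities enters, since for $|\lambda|>L$ the denominator $S_{1,1}(\lambda)$ is automatically nonvanishing.
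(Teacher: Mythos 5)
Your proposal is correct and follows essentially the same route as the paper: Volterra/Neumann analysis of the first column of the Jost solution (where the kernel carries $\ee^{2\ii\lambda(s-t)}$ with $s\ge t$, hence is bounded in the closed upper half-plane), decay at infinity via integration by parts, and evaluation at $t=0$ with $S_{1,1}$ bounded away from zero for $|\lambda|>L$. The only cosmetic difference is that the paper first replaces $q_0$ by a compactly supported mollification $q_\epsilon$ and closes a self-consistent absorption inequality for $\sup_t|m_{+,2,1}|$, rather than integrating by parts directly on the Schwartz-class $q_0$; both yield the required uniform $o(1)$ (indeed $\O(\lambda^{-1})$) estimate.
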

\begin{proof}
Recall the Jost matrix $\bphi_+(t;\lambda)$ satisfying the Zakharov-Shabat equation in \eqref{e:LP} and the boundary condition $\bphi_+(t;\lambda)=\ee^{\ii\lambda t\sigma_3}+o(1)$ as $t\to+\infty$ with $\lambda\in\mathbb{R}$.  Setting
\begin{equation}
u(t;\lambda):=\phi_{+,1,1}(t;\lambda)\ee^{-\ii\lambda t}\quad\text{and}\quad v(t;\lambda):=\phi_{+,2,1}(t;\lambda)\ee^{\ii \lambda t},
\end{equation}
the functions $u(\cdot;\lambda)$ and $v(\cdot;\lambda)$ satisfy the coupled differential equations
\begin{equation}
u'(t;\lambda)=q_0(t)\ee^{-2\ii\lambda t}v(t;\lambda)\quad\text{and}\quad
v'(t;\lambda)=-\b{q_0(t)}\ee^{2\ii\lambda t}u(t;\lambda),
\label{e:coupled-eqns}
\end{equation}
or, building in the boundary conditions, the coupled Volterra equations
\begin{equation}
u(t;\lambda)=1-\int_t^{+\infty}q_0(s)\ee^{-2\ii s\lambda}v(s;\lambda)\dd s,\quad v(t;\lambda)=\int_t^{+\infty}\b{q_0(s)}\ee^{2\ii\lambda s}u(s;\lambda)\dd s.
\label{e:coupled-Volterra}
\end{equation}
Eliminating $v(t;\lambda)$ gives
\begin{equation}
u(t;\lambda)=1+\int_t^{+\infty}K(t,w;\lambda)u(w;\lambda)\dd w,\quad K(t,w;\lambda):=-\b{q_0(w)}\int_t^wq_0(s)\ee^{2\ii \lambda(w-s)}\dd s.
\end{equation}
Noting that Assumption~\ref{ass:q0-assumption} implies that $q_0\in L^1(\mathbb{R})$, the kernel $K$ satisfies the estimate $|K(t,w;\lambda)|\le \|q_0\|_1\cdot |q_0(w)|$ whenever $w\ge t$ and $\Im(\lambda)\ge 0$. Straightforward analysis of the iterates proves that for each $t\in\mathbb{R}$, $u(t;\lambda)$ is analytic for $\Im(\lambda)>0$ and continuous for $\Im(\lambda)\ge 0$, and that
\begin{equation}
\|u(\cdot;\lambda)\|_\infty\le\ee^{\|q_0\|_1^2},\quad \Im(\lambda)\ge 0.
\label{e:u-bound}
\end{equation}
Using the uniformly-convergent iterates for $u(t;\lambda)$ in the integral equation for $v(t;\lambda)$ in \eqref{e:coupled-Volterra} shows that for each $t\in\mathbb{R}$, $v(t;\lambda)$ is also a function analytic for $\Im(\lambda)>0$ and continuous for $\Im(\lambda)\ge 0$.

Now let $\epsilon>0$ be given, arbitrarily small, and let $q_\epsilon(t)$ be an infinitely differentiable function with compact support for which
\begin{equation}
\|q_0-q_\epsilon\|_1\le\frac{\epsilon}{4} \ee^{-\|q_0\|_1^2}.
\label{e:q0-approximate}
\end{equation}
Then using \eqref{e:coupled-Volterra} we can express $\ee^{-2\ii\lambda t}v(t;\lambda)$ in terms of $u(t;\lambda)$ in the form
$\ee^{-2\ii\lambda t}v(t;\lambda)=I_1(t;\lambda)+I_2(t;\lambda)$, where
\begin{equation}
I_1(t;\lambda)=\int_t^{+\infty}[\b{q_0(s)}-\b{q_\epsilon(s)}]\ee^{2\ii\lambda (s-t)}u(s;\lambda)\dd s,\quad
I_2(t;\lambda)=\int_t^{+\infty}\b{q_\epsilon(s)}\ee^{2\ii\lambda(s-t)}u(s;\lambda)\dd s.
\end{equation}
Obviously, if $\Im(\lambda)\ge 0$, combining \eqref{e:u-bound} and \eqref{e:q0-approximate} shows that
\begin{equation}
|I_1|\le \|q_\epsilon-q_0\|_1\cdot\|u(\cdot;\lambda)\|_\infty \le \frac{1}{4}\epsilon.
\end{equation}
For $I_2$, we take advantage of smoothness and compact support of $q_\epsilon$ to integrate by parts:
\begin{equation}
I_2(t;\lambda) = -\frac{1}{2\ii\lambda}\left[\b{q_\epsilon(t)}u(t;\lambda) + \int_t^{+\infty}\b{q_\epsilon'(s)}u(s;\lambda)\ee^{2\ii\lambda (s-t)}\dd s + \int_t^{+\infty}\b{q_\epsilon(s)}u'(s;\lambda)\ee^{2\ii\lambda(s-t)}\dd s\right].
\end{equation}
Using \eqref{e:coupled-eqns} to eliminate $u'(s;\lambda)$ gives
\begin{multline}
I_2(t;\lambda)=-\frac{1}{2\ii\lambda}\left[\b{q_\epsilon(t)}u(t;\lambda) + \int_t^{+\infty}\b{q_\epsilon'(s)}u(s;\lambda)\ee^{2\ii\lambda (s-t)}\dd s\right.\\{}\left. +\int_t^{+\infty}\b{q_\epsilon(s)}q_0(s)\ee^{-2\ii\lambda s}v(s;\lambda)\ee^{2\ii\lambda(s-t)}\dd s\right].
\end{multline}
Therefore for $\Im(\lambda)\ge 0$,
\begin{equation}
\begin{split}
|\ee^{-2\ii\lambda t}v(t;\lambda)|&\le |I_1(t;\lambda)|+|I_2(t;\lambda)|\\
&\le \frac{\epsilon}{4} + \frac{\ee^{\|q_0\|_1^2}}{2|\lambda|}\left(\|q_\epsilon\|_\infty + \|q_\epsilon'\|_1\right) + \frac{1}{2|\lambda|}\|q_\epsilon\|_\infty\|q_0\|_1\|\ee^{-2\ii\lambda(\cdot)}v(\cdot;\lambda)\|_\infty,
\end{split}
\end{equation}
so
\begin{equation}
\left(1-\frac{1}{2|\lambda|}\|q_\epsilon\|_\infty\|q_0\|_1\right)\|\ee^{-2\ii\lambda(\cdot)}v(\cdot;\lambda)\|_\infty\le \frac{\epsilon}{4} + \frac{\ee^{\|q_0\|_1^2}}{2|\lambda|}\left(\|q_\epsilon\|_\infty+ \|q_\epsilon'\|_1\right).
\end{equation}
Therefore, taking $\Im(\lambda)\ge 0$ and $|\lambda|>L=L(\epsilon)$ where
\begin{equation}
L(\epsilon):=\max\left\{\|q_\epsilon\|_\infty\|q_0\|_1,\frac{2}{\epsilon}\left(\|q_\epsilon\|_\infty + \|q_\epsilon'\|_1\right)\ee^{\|q_0\|_1^2}\right\}
\end{equation}
gives $\|\ee^{-2\ii\lambda(\cdot)}v(\cdot;\lambda)\|_\infty<\epsilon$.  From \eqref{e:coupled-Volterra} it then follows that also $\|u(\cdot;\lambda)-1\|_\infty<\epsilon\|q_0\|_1$.

Now, under the cutoff condition on $q_0$ in Assumption~\ref{ass:q0-assumption}, the reflection coefficient is given by
\begin{equation}
r(\lambda):=\frac{\phi_{+,2,1}(0;\lambda)}{\phi_{+,1,1}(0;\lambda)} = \left.\frac{\ee^{-2\ii\lambda t}v(t;\lambda)}{u(t;\lambda)}\right|_{t=0},
\end{equation}
from which the claim follows.
\end{proof}
Now suppose that $t\le 0$.  Combining the boundedness of the exponential factors $\ee^{\pm2\ii\theta(\lambda;t,z)}$ in appropriate half-planes away from the origin with Lemma~\ref{lem:r-asymptotics},
one concludes that $\W(\lambda;t,z)\to \I$ as $\lambda\to\infty$ with $\Im(\lambda)\ge 0$ while $\W^\dagger(\lambda;t,z)\to\I$ as $\lambda\to\infty$ with $\Im(\lambda)\le 0$.
Thus, RHP~\ref{rhp:M} can be solved exactly for $t\le 0$ as follows,
\begin{equation}
\M(\lambda;t,z) = \begin{cases}
\W(\lambda;t,z)\,, & |\lambda| > \gamma \quad\text{and}\quad \Im(\lambda) > 0\,,\\
\I\,, & |\lambda| < \gamma\,,\\
\W^\dagger(\lambda;t,z)^{-1}\,, & |\lambda| > \gamma \quad\text{and}\quad \Im(\lambda) < 0\,,
\end{cases}
\end{equation}
where $\gamma > 0$ is the radius of the circular component of the jump contour $\Sigma_\M$.
Assuming further that $t<0$ and using the reconstruction formula~\eqref{e:reconstruction} (taking the limit $\lambda\to\infty$ for $q(t,z)$ in any direction non-tangential to the real axis to exploit the exponential decay of the factors $\ee^{\pm 2\ii\theta(\lambda;t,z)}$ in appropriate half-planes) one gets that
\begin{equation}
q(t,z) = 0\,,\qquad \brho(t,z) = D_-\sigma_3\,,
\end{equation}
for all $z \in\Real$ and $t < 0$.  This proves causality of the solution.
In particular, one notes
\begin{equation}
\lim_{t\to-\infty}\brho(t,z) = D_-\sigma_3\,,
\end{equation}
which establishes the prescribed initial state of the medium in the limit $t\to -\infty$.
This finishes the proof of Theorem~\ref{thm:reconstruction}.
\begin{figure}[t]
\centering
\includegraphics[width = 0.4\textwidth]{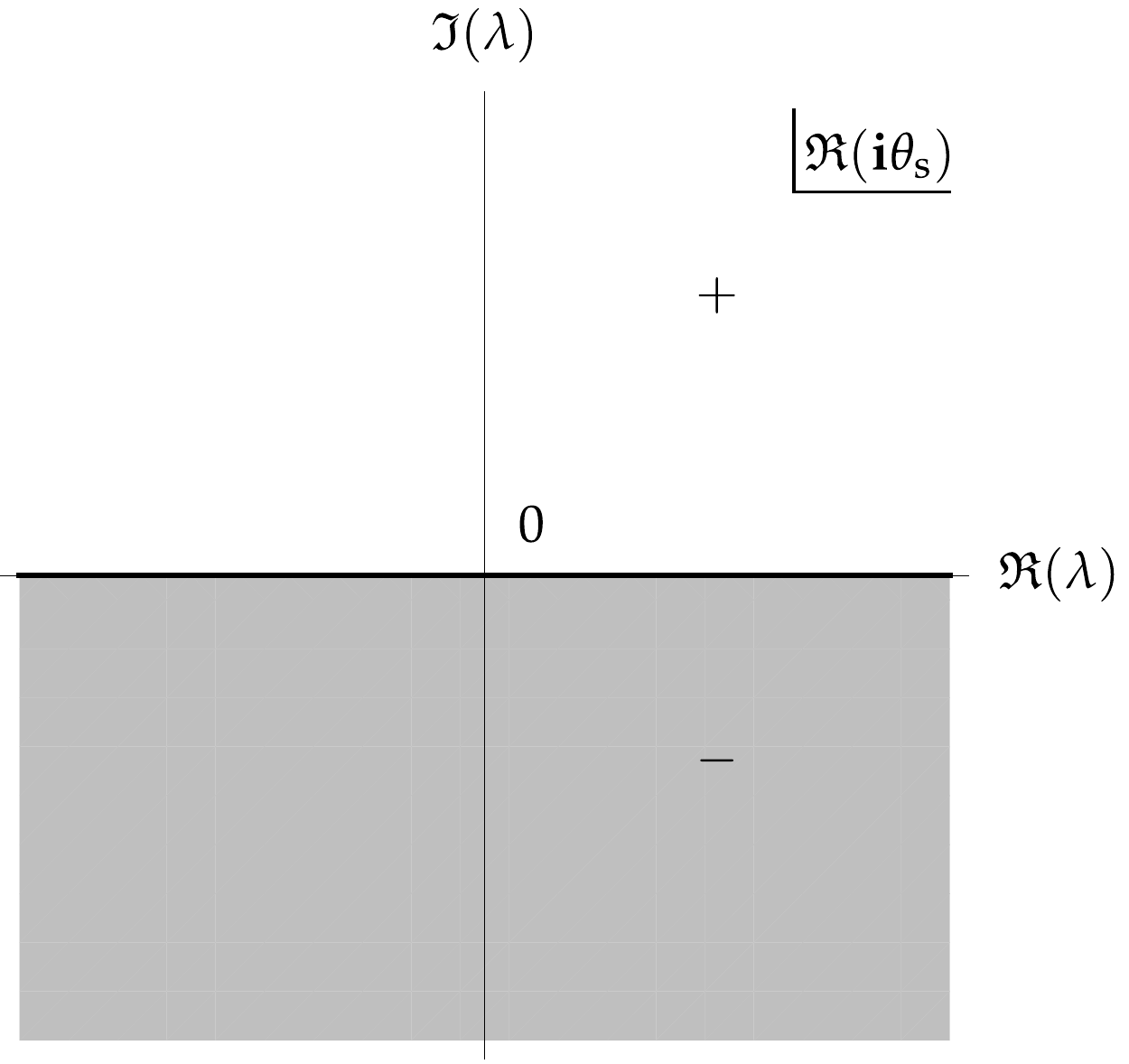}\qquad
\includegraphics[width = 0.4\textwidth]{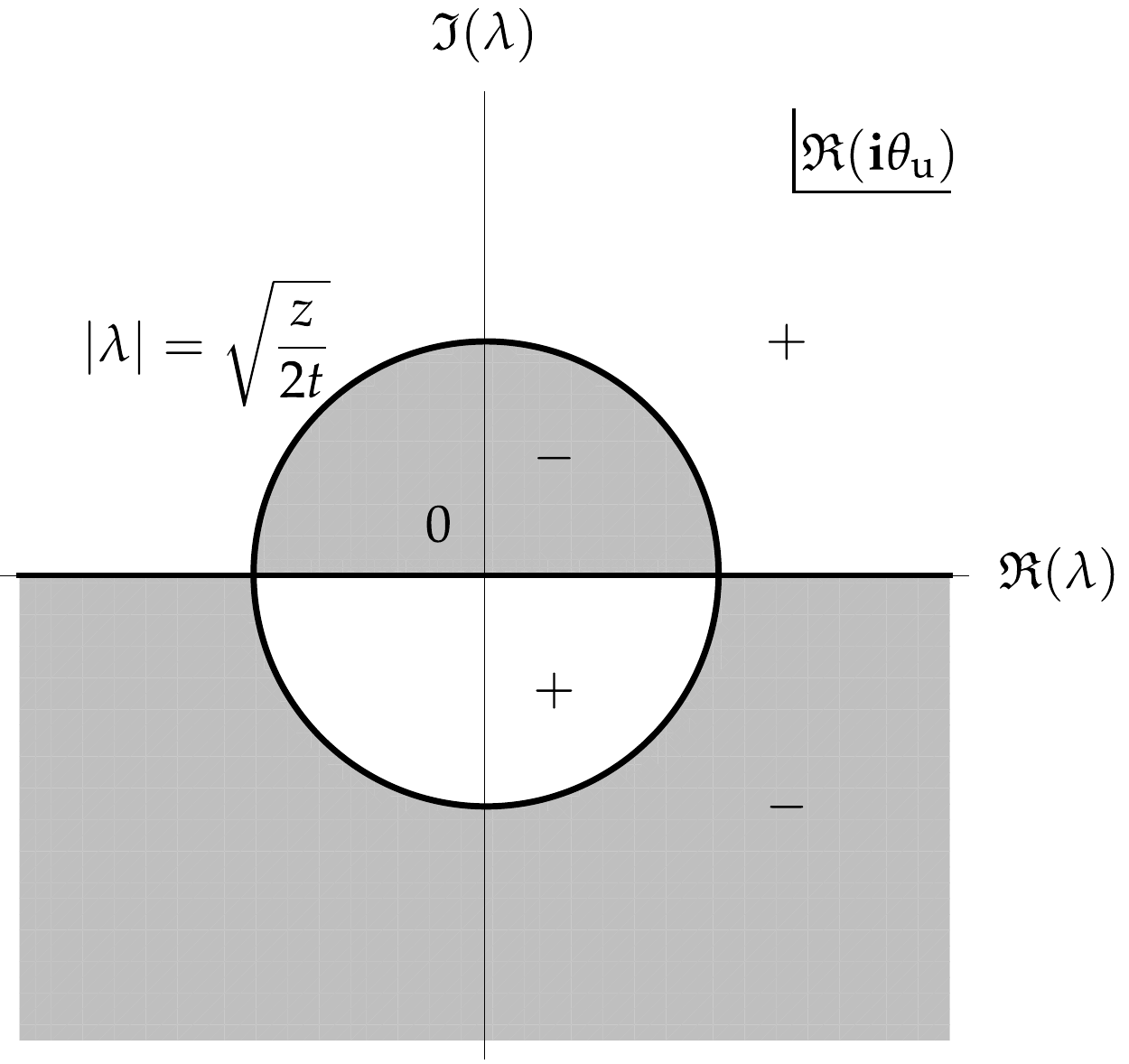}
\caption{For $t<0$ and $z>0$, the sign structure of $\Re(\ii\theta(\lambda;t,z))$ in the complex plane for an initially-stable medium $D_- = -1$ (left) and for an initially-unstable medium $D_- = 1$ (right).
White (gray) shading corresponds to positive (negative) values of $\Re(\ii\theta(\lambda;t,z))$.
}
\label{f:outside-itheta}
\end{figure}
%

\section{Proof of Lemma~\ref{thm:fuv-difference}}
\label{s:proof-fuv-difference}

We consider the Taylor expansion of $f^{(n)}(u)$ about $u=0$, truncated at the derivative of order $n_1$,  for $n=0,\dots,n_1-1$:
\begin{equation}
\begin{split}
f^{(n)}(u)&=\sum_{j=0}^{n_1-n-1}\frac{f^{(n+j)}(0)}{j!}u^j+\frac{f^{(n_1)}(\xi_n)}{(n_1-n)!}u^{n_1-n}\\
&=\sum_{m=n}^{n_1-1}\frac{f^{(m)}(0)}{(m-n)!}u^{m-n}+\frac{f^{(n_1)}(\xi_n)}{(n_1-n)!}u^{n_1-n},\quad n=0,\dots,n_1-1.
\end{split}
\end{equation}
Here $\xi_n$, $n=0,\dots,n_1-1$, are parameters lying between $0$ and $u$ on the real line.
Then substituting into the first term in \eqref{e:fuv-difference} gives
\begin{equation}
\sum_{n=0}^{n_1-1}\frac{(\ii v)^n}{n!}f^{(n)}(u) = \sum_{n=0}^{n_1-1}\sum_{m=n}^{n_1-1}\frac{f^{(m)}(0)(\ii v)^nu^{m-n}}{n!(m-n)!} + \sum_{n=0}^{n_1-1}\frac{f^{(n_1)}(\xi_n)}{n!(n_1-n)!}(\ii v)^nu^{n_1-n}.
\end{equation}
Exchanging the order of summation in the double sum gives
\begin{equation}
\begin{split}
\sum_{n=0}^{n_1-1}\frac{(\ii v)^n}{n!}f^{(n)}(u) &= \sum_{m=0}^{n_1-1}f^{(m)}(0)\sum_{n=0}^{m}\frac{(\ii v)^nu^{m-n}}{n!(m-n)!} + \sum_{n=0}^{n_1-1}\frac{f^{(n_1)}(\xi_n)}{n!(n_1-n)!}(\ii v)^nu^{n_1-n}\\
&= \sum_{m=0}^{n_1-1}\frac{f^{(m)}(0)}{m!}(u+\ii v)^m + \sum_{n=0}^{n_1-1}\frac{f^{(n_1)}(\xi_n)}{n!(n_1-n)!}(\ii v)^nu^{n_1-n},
\end{split}
\end{equation}
where on the second line we used the binomial formula.
Therefore, relabeling an index and recalling $\lambda=u+\ii v$ we have, for $n_2\le n_1$,
\begin{equation}
\begin{split}
\sum_{n=0}^{n_1-1}\frac{(\ii v)^n}{n!}f^{(n)}(u)&=\sum_{n=0}^{n_1-1}\frac{f^{(n)}(0)}{n!}\lambda^n +\O(\lambda^{n_1})\\
&=\sum_{n=0}^{n_2-1}\frac{f^{(n)}(0)}{n!}\lambda^n +\O(\lambda^{n_2})
\end{split}
\end{equation}
as $\lambda\to 0$.  Here we used the fact that
$f(u)\in C^k(\Real)$ so
$f^{(n_1)}(u)$ is continuous and therefore
$|f^{(n_1)}(\xi_n)|\le \max\limits_{u\in [-1,1]}|f^{(n_1)}(u)| = \O(1)$.
The lemma is proved.

\section{Numerical methods}
\label{s:numerics}

This appendix describes the numerical methods used in this paper.  Aside from evaluations of classical special functions and integration of ordinary differential equations by means of built-in tools available in many familiar platforms, we need to compute the values of Painlev\'e-III functions and to numerically compute causal solutions of the Cauchy problem for the MBE system.
%

\subsection{Computation of Painlev\'e-III functions}
The functions $y(X;\omega)$, $s(X;\omega)$, and $U(X;\omega)$ are defined by \eqref{e:Lax-potentials} in terms of the solution $\mathbf{Y}(\Lambda;X;\eta)$ of RHP~\ref{rhp:PIII}, where $\omega=-3\cos(2\eta)$.
Given $X\in\Complex$ and $\omega\in (-3,3)$, they may therefore be computed by finding a sufficiently accurate numerical approximation of the solution of RHP~\ref{rhp:PIII}.

To solve RHP~\ref{rhp:PIII} numerically, we use S. Olver's method \cite{olver2012} as implemented in the \textit{Mathematica} package \texttt{RHPackage} \cite{rhpackage}.  This method takes as input data a parametrization of the jump contour for the problem and the corresponding jump matrix, and it converts that given data into a system of singular integral equations which is then solved numerically with high accuracy for ``reasonable'' data.  In particular, a ``reasonable'' jump matrix should not have very large elements, so the case that $X\in\ii\Real$ is particularly good for applying the method to RHP~\ref{rhp:PIII} as the jump matrix is bounded on the unit circle.  When $X\in\Real$ instead, the jump matrix elements grow rapidly with $X$, so we may use the relation \eqref{e:symmetry-rotate-X} to convert RHP~\ref{rhp:PIII} into an equivalent problem with a bounded jump matrix on the unit circle.  Equivalently, it suffices to compute $y(X;-\omega)$, $s(X;-\omega)$, and $U(X;-\omega)$ for imaginary $X$ and then use \eqref{e:PIII-rotate-X-identities} to obtain $y(X;\omega)$, $s(X;\omega)$, and $U(X;\omega)$ for $X$ on the real axis.

When $X\in\ii\Real$ becomes large, the jump matrix becomes highly oscillatory, which is another difficulty in the numerical solution of RHPs that leads to large numerical errors.  In this situation, a strategy that has been used successfully \cite{to2014,to2016,blm2020} is to follow the lead of the Deift-Zhou steepest descent method by first introducing suitable contour deformations like the transformation between $\mathbf{Y}$ and $\mathbf{Z}$ given in \eqref{e:Y-to-Z-first}--\eqref{e:Y-to-Z-last} and illustrated in Figure~\ref{f:M3}.  Then, the numerical method can be applied to the deformed RHP which requires more complicated input data but yields superior results.  In our paper, we refrained from computing solutions for large values of $X$, so we did not use this modified technique.  Indeed for the plots in Section~\ref{s:numerical-verification} of this paper, we took $t\in [-1,1000]$ and $z\in [0,1]$, implying that $x=\sqrt{2tz}\in [0,20\sqrt{5}]\approx[0,45]$.  For the plots in Figures~\ref{f:unstable-self-similar}--\ref{f:stable-self-similar} we
only needed $x\in [0,8]$.

\subsection{A numerical method for causal solutions of the Cauchy problem}
\label{s:simulations}
The key to developing a numerical method that produces the unique causal solution of the generally ill-posed Cauchy problem \eqref{e:mbe} is to interpret the Maxwell equation and the Bloch subsystem as \emph{ordinary differential equations} (ODEs) in $z$ and $t$ respectively, and to select numerical methods for the initial-value problems of these two ODEs that advance the solution in the positive $z$ and $t$ directions.  Although the causal solution for an incident pulse $q_0(t)$ that vanishes for $t\le 0$ is theoretically trivial for $t\le 0$ and all $z\ge 0$, we test this numerically by solving the system on a time interval $t\in [-1,T]$ and a spatial interval $z\in [0,Z]$ for some given $T>0$ and $Z>0$.

Our numerical method is based on finite differences, so we introduce a grid on the independent variables $(t,z)\in [-1,T]\times[0,Z]$ by setting $t_m=-1+m\Delta t$ and $z_n=n\Delta z$ for integers $(m,n)\in\mathbb{Z}_{\ge 0}^2$, where $\Delta t$ and $\Delta z$ are sufficiently small lattice spacings.  We then introduce approximations for the causal solution at the grid points:
\begin{equation}
q_m^n \approx q(t_m,z_n)\,,\qquad
D_m^n \approx D(t_m,z_n)\,,\qquad
P_m^n \approx P(t_m,z_n)\,.
\end{equation}
The asymptotic formul\ae\ obtained in this paper prove that solutions have a multiscale structure with slowly-varying envelopes and rapid oscillations.  To deal with the stiffness expected on these grounds and to maintain accuracy for large $t$ to be able to compare with the asymptotics, we need implicit methods of high order for the two ODEs.  We use the backward differentiation formulas of various orders $j=1,\dots,4$ denoted BDF$j$ \cite{ch1952,Iserles1996,sm2003}.  Generally, BDF$j$ is a rule equating the right-hand side of a first-order equation evaluated at the current grid point to finite difference approximation of the derivative that is of order $h^j$ where $h=\Delta t$ or $h=\Delta z$, and it involves the unknown at the current grid point as well as the $j$ previous grid points relative to the direction of integration.  In particular, BDF1 is the well-known backward/implicit Euler scheme.  For increased accuracy for $(m,n)$ sufficiently large, we select BDF3 to integrate the Bloch subsystem and BDF4 to integrate the Maxwell equation, in the direction of increasing $t$ and $z$ respectively.  (These choices are somewhat arbitrary, but their accuracy is sufficient for our purposes.)  The basic scheme for such $(m,n)$ reads as follows:
\begin{align}
q_m^n&=\frac{1}{25}\left(48q_m^{n-1}-36q_m^{n-2}+16q_m^{n-3}-3q_m^{n-4}-12\Delta z\,P_m^n\right),
\label{e:q-numerical}\\
P_m^n&=\frac{1}{11}\left(18P_{m-1}^n-9P_{m-2}^n+2P_{m-3}^n-12\Delta t\, q_m^nD_m^n\right),
\label{e:P-numerical}\\
D_m^n&=\frac{1}{11}\left(18D_{m-1}^n-9D_{m-2}^n+2D_{m-3}^n+12\Delta t\,\Re\left(\b{q_m^n}P_m^n\right)\right).
\label{e:D-numerical-1}
\end{align}

To make the implicit nature of this scheme less daunting, we substitute the right-hand side of \eqref{e:P-numerical} for $P_m^n$ in the right-hand side of \eqref{e:D-numerical-1} which then becomes a linear equation for $D_m^n$ that is easily solved.  In this way, we replace \eqref{e:D-numerical-1} with the explicit update rule
\begin{equation}
D_m^n  = \frac{11(18D_{m-1}^n - 9D_{m-2}^n + 2D_{m-3}^n) + 12\Delta t\,\Re(18\b{P_{m-1}^n}q_m^n - 9\b{P_{m-2}^n}q_m^n + 2\b{P_{m-3}^n}q_m^n)}
 {121 + 144\Delta t^2|q_m^n|^2}\,.
\label{e:D-numerical-2}
\end{equation}
It is clear that for the scheme consisting of \eqref{e:q-numerical}, \eqref{e:P-numerical}, and \eqref{e:D-numerical-2}, $(q_m^n,P_m^n,D_m^n)$ only depends on approximations at grid points $(t_\mu,z_\nu)$ with $\mu\le m$ and $\nu\le n$.  This captures precisely the domain of dependence for the causal solution of the Cauchy problem \eqref{e:mbe}, and it is why the scheme produces a numerical approximation of the unique causal solution.

Even with \eqref{e:D-numerical-1} replaced by \eqref{e:D-numerical-2}, the scheme is still implicit, so we use an iteration to solve it.  For each $(m,n)$ sufficiently large for the values in the domain of dependence to be known, we use the following algorithm.
\begin{enumerate}
\item[1.] We use a low-order explicit method for the Maxwell equation $q_z=-P$ such as the forward Euler method to obtain an initial approximation for $q_m^n$ denoted $q_{m,0}^n$:
\begin{equation}
q_{m,0}^n := q_m^{n-1} - \Delta z\,P_m^{n-1}.
\end{equation}
\item[2.] Then, set $k=0$ and repeat:
\begin{enumerate}
\item[2a.] set $k\coloneq k+1$;
\item[2b.] from \eqref{e:D-numerical-2}, set
\begin{equation}
D_{m,k}^n \coloneq \frac{11(18D_{m-1}^n - 9D_{m-2}^n + 2D_{m-3}^n) + 12\Delta t\,\Re(18\b{P_{m-1}^n}q_{m,k-1}^n - 9\b{P_{m-2}^n}q_{m,k-1}^n + 2\b{P_{m-3}^n}q_{m,k-1}^n)}
 {121 + 144\Delta t^2|q_{m,k-1}^n|^2};
\end{equation}
\item[2c.] from \eqref{e:P-numerical}, set
\begin{equation}
P_{m,k}^n\coloneq\frac{1}{11}\left(18P^n_{m-1}-9P_{m-1}^n+2P_{m-3}^n-12\Delta t\,q_{m,k-1}^nD_{m,k}^n\right);
\end{equation}
\item[2d.] from \eqref{e:q-numerical}, set
\begin{equation}
q_{m,k}^n\coloneq\frac{1}{25}\left(48q_m^{n-1}-36q_m^{n-2}+16q_m^{n-3}-3q_m^{n-4}-12\Delta z\,P_{m,k}^n\right);
\end{equation}
\end{enumerate}
until $|q_{m,k}^n-q_{m,k-1}^n|<10^{-15}$.  Set $(q_m^n,P_m^n,D_m^n):=(q_{m,k}^n,P_{m,k}^n,D_{m,k}^n)$.
\end{enumerate}
This scheme assumes that $m\ge 3$ and $n\ge 4$.  For $m=1,2$ (respectively, for $n=1,2,3$), there is less data in the domain of dependence, so we use a scheme based on BDF$m$ for the Bloch subsystem (respectively, based on BDF$n$ for the Maxwell equation).  The use of lower-order BDFs near the initial boundaries of the domain $[-1,T]\times[0,Z]$ does not contaminate the overall accuracy of the scheme because
\begin{itemize}
\item for integration of the Bloch subsystem near $t=-1$ the exact solution is trivial until $t=0$, and the numerical method here is exact to machine precision errors obtained in the iteration, regardless of the order of the method;
\item for integration of the Maxwell equation near $z=0$ any errors introduced by the use of low-order methods are not given much chance to grow because we always keep $Z$ small compared to $T$.
\end{itemize}

The accuracy of the algorithm can be illustrated by taking an initial pulse $q_0(t)$ corresponding to an exact soliton solution of the MBE system:
\begin{equation}
\label{e:soliton}
\begin{aligned}
q(t,z) & = \ii\,\sech(t - 2z - 40)\,,\\
P(t,z) & = -2\ii\tanh(t - 2z - 40)\,\sech(t - 2z - 40)\,,\\
D(t,z) & = -1 + 2\,\sech^2(t- 2z - 40)\,.\\
\end{aligned}
\end{equation}
For this experiment, we choose $t \in [-1,100]$, $z\in[0,1]$, $\Delta t = 0.01$, and $\Delta z = 0.002$.
(In fact, we used the same values of $\Delta t$ and $\Delta z$ for all numerical solutions of the MBE system in this paper.)  Although technically this is not a causal solution, causality holds to machine precision in the domain $(t,z)\in [-1,0]\times[0,1]$ so the numerical method would be expected to reproduce the exact solution with high accuracy.
The results are shown in Figure~\ref{f:soliton}.
\begin{figure}[tp]
    \centering
    \begin{minipage}[b]{.52\textwidth}
    \includegraphics[width = 1\textwidth]{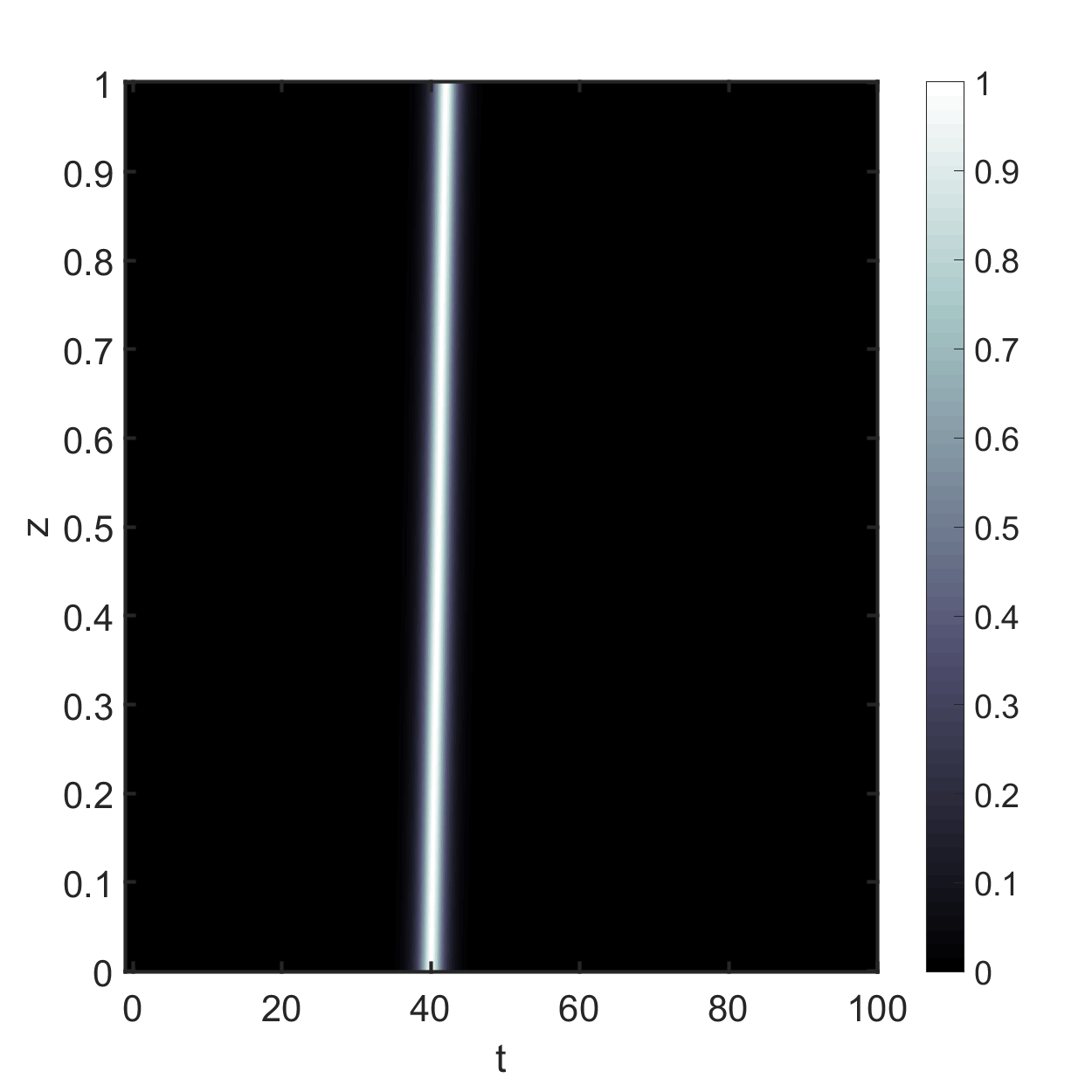}
    \end{minipage}\hspace{-1.5em}
    \begin{minipage}[b]{.49\textwidth}
    \includegraphics[width = 1\textwidth]{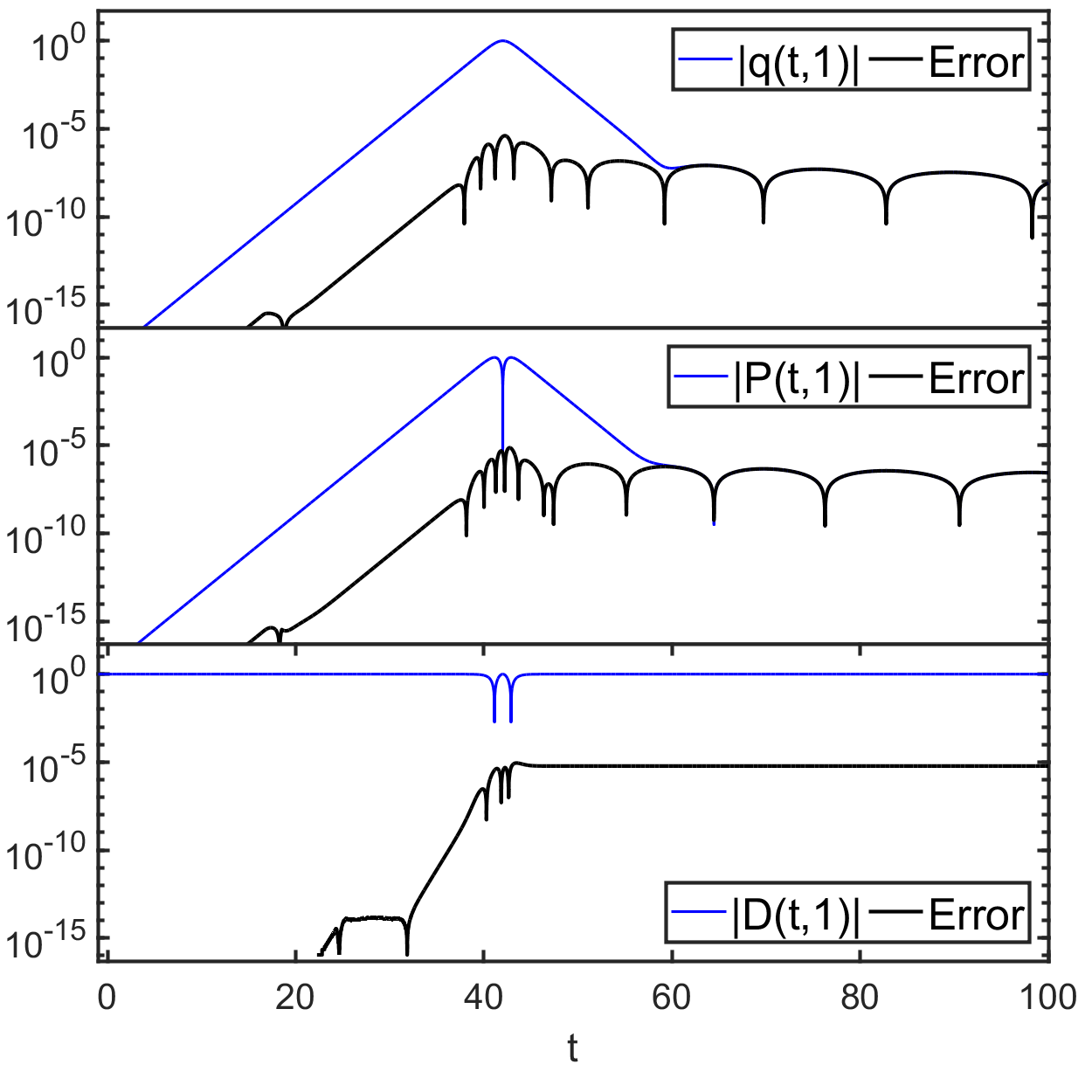}
    \end{minipage}
    \caption{
    Test of the numerical method for the exact soliton solution~\eqref{e:soliton}.
    Left: Density plot of $|q(t,z)|$.
    Right: For all three functions, the absolute errors (black curves) comparing the numerical approximations (blue curves) with the exact solution formul\ae\ \eqref{e:soliton} at $z = 1$.
    }
    \label{f:soliton}
\end{figure}
%



\end{document}